\theoremstyle{plain}
\newtheorem{lemma}{Lemma}[section]
\newtheorem{theorem}[lemma]{Theorem}
\newtheorem{corollary}[lemma]{Corollary}
\newtheorem{proposition}[lemma]{Proposition}
\theoremstyle{definition}
\newtheorem{example}[lemma]{Example}
\newtheorem{remark}[lemma]{Remark}
\newtheorem{definition}[lemma]{Definition}
\newtheorem{construction}[lemma]{Construction}
\begin{document}

\title[ ]{Lcm-lattice, Taylor Bases and Minimal Free Resolutions of a Monomial ideal}
\author[ ]{\text{Ri-Xiang Chen}}
\address{Department of Mathematics, Nanjing University of Science and Technology, Nanjing, Jiangsu, 210094, P.R.China}
\email{rc429@cornell.edu}
\date{}
\begin{abstract}
We use the lcm-lattice of a monomial ideal to study its minimal free resolutions. 
A new concept called a Taylor basis of a minimal free resolution is introduced 
and then used throughout the paper. We give a method of constructing minimal 
free resolutions of a monomial ideal from its lcm-lattice, which is called the atomic 
lattice resolution theory. Some applications of this theory is given. 
As the main application, we rewrite the theory of poset resolutions, and we obtain
an approximation formula for minimal free resolutions of \emph{all} monomial ideals. 
\end{abstract}
\maketitle
\raggedbottom

\section{Introduction}

\noindent Throughout the paper let $S=k[x_1, \ldots, x_n]$ be the polynomial
ring in $n$ variables over a field $k$, and let $M$ be a monomial ideal in
$S$ minimally generated by monomials $m_1, \ldots, m_r$. In this paper we
will try to construct a minimal free resolution of $M$, which is a problem
first posed by Kaplansky in the early 1960s. Note that sometimes when we say  
a minimal free resolution of $M$, what we actually mean is  a minimal free 
resolution of $S/M$.

There are several methods for computing the multigraded Betti numbers of $M$.
These methods often involve computing the homologies of some simplicial
complexes. However, not much is known about finding the differential maps
in a minimal free resolution of $M$. For some specific classes of monomial
ideals, one can use their combinatorial structures to get explicit formulas
for their minimal free resolutions. For example, we have the Eliahou-Kervaire
resolutions \cite{B:EK} for stable ideals. But it seems impossible to get
such an explicit formula for all monomial ideals. 

Let $\mathbf{F}$ be a minimal free resolution of $M$. In \cite{B:PV} Peeva and
Velasco introduce the concept of the frame of $\mathbf{F}$, which is a complex
of $k$-vector spaces and has the same amount of information as $\mathbf{F}$.
So the problem of constructing a minimal free resolution of $M$ is reduced to
the problem of building the frame of a minimal free resolution.

For example, if $M$ has a simplicial resolution $\mathbf{F}$, then there exists
a simplicial complex $\Delta$ such that the simplicial chain complex
$C(\Delta; k)$ is the frame of $\mathbf{F}$, or equivalently, $\mathbf{F}$
is the $M$-homogenization of $C(\Delta; k)$. In this case, we say that
$\mathbf{F}$ is supported on $\Delta$. Similarly, we can consider
minimal free resolutions supported on cell complexes or CW-complexes. However,
in \cite{B:Ve} Velasco gives an example of a monomial ideal such that none of
its minimal free resolutions can be supported on a CW-complex. Hence,
CW-complexes do not provide all the frames we need. Moreover, except for some
special classes of monomial ideals, say, Scarf ideals, it seems hard to first
get a CW-complex and then use it to obtain a minimal free resolution. Also,
a cellular resolution often requires a special choice of basis for the minimal
free resolution, whereas many important concepts in mathematics do not depend
on the choice of basis.

In \cite{B:GPW} Gasharov, Peeva and Welker introduce the concept of the
lcm-lattice $L_M$ of $M$. They show that minimal free resolutions of $M$ are
determined by $L_M$. So one may wonder if it is possible to build the frame of
a minimal free resolution of $M$ from $L_M$. Some important work has been done
in this direction. For example, in \cite{B:Cl} Clark proves that if $M$ is
lattice-linear then the poset resolution constructed from $L_M$ is a minimal
free resolution of $S/M$; in \cite{B:CM1} and \cite{B:CM2} Clark and Mapes
prove that if $M$ is a rigid monomial ideal then the poset resolution
constructed from the Betti poset of $M$, which is a subposet of $L_M$, is a
minimal free resolution of $S/M$; and in \cite{B:Wo} Wood generalizes Clark's
result to Betti-linear monomial ideals.

In this paper we will use $L_M$ in a different way to construct minimal free
resolutions for \emph{all} monomial ideals. This method is originated from
the concept of nearly Scarf monomial ideals introduced by Peeva and Velasco in
\cite{B:PV}. For convenience, a minimal free resolution of $S/M$ constructed
by this method is called an atomic lattice resolution. We show that every
minimal free resolution of $S/M$ is an atomic lattice resolution.

A major feature of this paper is that any complex of multigraded free $S$-modules or
any complex of $k$-vector spaces is always equipped with a fixed basis. Let
$\mathbf{F}$ be a minimal free resolution of $S/M$, we introduce a new concept
called a Taylor basis of $\mathbf{F}$, which will be frequently used in this
paper. Let $\mathbf{T}$ be the Taylor resolution \cite{B:Ta} of $S/M$. Let $\Omega$ be
the simplex with the vertex set $\{1,2,\ldots,r\}$. Then the frame of $\mathbf{T}$
is the augmented chain complex $\widetilde{C}(\Omega; k)[-1]$ 
with a shift in homological degree and a basis consisting of all
the faces of $\Omega$. This basis can be viewed as a multigraded basis of
$\mathbf{T}$. Since $\mathbf{F}$ is a multigraded free submodule of
$\mathbf{T}$, we can fix a multigraded basis of $\mathbf{F}$. Dehomogenizing
these basis elements, we get a basis for the frame of $\mathbf{F}$. For
convenience, we call this basis a Taylor basis of $\mathbf{F}$. The elements
in a Taylor basis are chains in $\Omega$. If we apply the boundary map $d$
of $\Omega$ to the Taylor basis elements, then we get the frame of
$\mathbf{F}$. So the problem of building a frame of a minimal free resolution
is equivalent to the problem of finding a Taylor basis of a minimal free
resolution.

It is well-known that any two minimal free resolutions of $S/M$ are isomorphic,
because of which we often call \emph{a} minimal free resolution of $S/M$ by
\emph{the} minimal free resolution of $S/M$. In this paper we will distinguish
different minimal free resolutions of $S/M$, namely, we will rigorously define
when two minimal free resolutions of $S/M$ are called being equal to each other.
As a result, $S/M$ will have many different minimal free resolutions, which
correspond to many different frames and many different Taylor bases.

This paper is organized as follows. 

In Section 2 we prepare definitions, notations
and basic results which will be used in later sections. This section has four subsections.
Subsection 2.1  begins with the automorphism group of a multigraded free $S$-module.
Then we show that an isomorphism between two complexes of multigraded free
$S$-modules is equivalent to a change of basis in one of the complexes. The
definition of two complexes being equal to each other and the definition of a
subcomplex are a little different from the traditional definitions, because in
this paper a complex is always equipped with a fixed basis.
In Subsection 2.2, we introduce the concept of a Taylor basis of a minimal free
resolution $\mathbf{F}$ of $S/M$, which characterizes how $\mathbf{F}$ can be
embedded in the Taylor resolution $\mathbf{T}$. $\mathbf{F}$ may have many
different Taylor bases, each of which determines a submodule of $\mathbf{T}$. Such
a submodule is called a Taylor submodule for $M$.
Subsection 2.3 is about consecutive cancellations, which is a technique
introduced by Peeva. In this subsection, from the Taylor
resolution of $S/M$ we use consecutive cancellations to get a minimal free
resolution with a Taylor basis. The idea of consecutive cancellations will 
be used in the proofs of some theorems and propositions in Section 3.
Subsection 2.4 is about the lcm-lattice $L_M$. Many results in this paper benefit
from our new labeling of the elements in $L_M$. Each element $m$ in $L_M$ is
labeled by a subset $A_m$ of $\{1,\ldots,r\}$ such that
$A_m=\{1\leq i \leq r|m_i \mbox{ divides } m\}$; and then each $m\neq 1$ in
$L_M$ is associated with a simplicial complex $\Delta_m$ with facets
$A_{\beta_1}, \ldots, A_{\beta_t}$ where $\beta_1, \ldots, \beta_t$ are all the
elements in $L_M$ covered by $m$. By the crosscut theorem, $\Delta_m$ is
homotopic to the order complex of $(1,m)$. This fact is used in the proof of
Theorem 58.8 in \cite{B:Pe}, in the proof of Theorem 3.3 in \cite{B:Cl} and in
the proof of Theorem 2.5 in \cite{B:Wo}. We feel that in $L_M$ it is more
natural and much simpler to work with $\Delta_m$ than the order complex of
$(1,m)$. Some basic properties about $A_m$ are proved. In Theorem \ref{T:basis}  an interesting
and useful relation between a Taylor basis and homology groups
$\widetilde{H}(\Delta_m;k)$ is revealed.

Section 3 contains the main theory of this paper, which has three subsections.
In Subsection 3.1, we introduce a new concept called the exact closure of a
complex of $k$-vector spaces, namely, given any complex $\mathbb{U}$ over
$k$ we construct an exact complex $\mathbb{V}$ such that $\mathbb{U}$ is a
subcomplex of $\mathbb{V}$ and $\mathbb{V}$ is in a sense the smallest such
complex. Some propositions about exact closures are proved.
In Subsection 3.2, we use $L_M$ and exact closures to construct a minimal
free resolution of $S/M$ with a Taylor basis. Specifically, we build the
frame of a minimal free resolution of $S/M$, which is a generalization of
Theorem 6.1 in \cite{B:PV} about nearly Scarf monomial ideals. Because
given any atomic lattice $L$ we can use this method to associate $L$ with
an exact complex of $k$-vector spaces, we call the minimal free resolution
constructed this way an atomic lattice resolution of $S/M$. Conversely, we
prove that every minimal free resolution of $S/M$ is an atomic lattice
resolution. Theorem \ref{T:main1} and Theorem \ref{T:main2} are the main 
theorems of this paper, and the results in Subsection 3.2 are called the 
atomic lattice resolution theory.
Subsection 3.3 discusses some applications of the atomic lattice resolution
theory. Besides the computation of minimal free resolutions, this theory
can be applied to some theoretical problems. First, converse to Theorem 
\ref{T:basis}, Theorem \ref{T:cbasis} gives a simple
criteria for a set of chains to form a Taylor basis, which is very handy
for finding minimal free resolutions in examples. Next, in Construction 
\ref{C:basis3} we develop a method to find a Taylor basis of a given 
minimal free resolution of $S/M$. Then we show that  the intersection 
of all Taylor submodules for $M$ is the submodule
of the Taylor resolution $\mathbf{T}$ generated by the faces corresponding
to the Scarf multidegrees, which was also proved by Mermin in \cite{B:Me}. 
After that, we prove that the Betti poset of $M$ determines minimal 
free resolutions of $S/M$, which is also proved in \cite{B:CM2} and 
\cite{B:TV}.  Finally, we use $L_M$ to get a bound for the projective 
dimension of $S/M$.

Section 4 can also be viewed as an application of the atomic lattice
resolution theory. This section has two similar subsections. The idea in both
subsections is to construct differential maps in a minimal free resolution by
using homology groups $\tilde{H}(\Delta_m,k)$ and the connecting
homomorphism in the Mayer-Vietoris sequence.
In Subsection 4.1, we use our language to rewrite the theory of poset
resolutions. And then the relations among rigid monomial ideals in 
\cite{B:CM1}, homologically monotonic monomial ideals in \cite{B:FMS},
lattice-linear monomial ideals in \cite{B:Cl} and Betti-linear monomial
ideals in \cite{B:Wo} are studied. After that, we give a new proof of
a rigid monomial ideal having a poset resolution, and this proof is
generalized to show that a monomial ideal $M$ has a poset resolution
if and only if $M$ is Betti-linear. Our proofs are different from those
in \cite{B:CM2}, \cite{B:Cl} and \cite{B:Wo}.
In Subsection 4.2, we give a construction (Definition \ref{D:RLM} ) which is very similar to the
poset construction, and we introduce a new concept called the maximal
approximation of a minimal free resolution of $S/M$. In Theorem \ref{T:approximation} we show
that any maximal approximation of a minimal free resolution can be
obtained by Definition \ref{D:RLM}, and conversely, any sequence of 
multigraded free $S$-modules and multigraded homomorphisms 
obtained by Definition \ref{D:RLM} is the maximal approximation of a minimal
free resolution of $S/M$. In other words, we have an approximation formula for
minimal free resolutions of \emph{all} monomial ideals. Then similar to Betti-linear monomial
ideals, we introduce a new class of momomial ideals called homology-linear monomial ideals. 
After that we introduce the class of strongly homology-linear monomial ideals and 
the class of nearly homologically monotonic monomial ideals. 
The latter is a generalization of homologically monotonic momomial 
ideals and nearly Scarf monomial ideals. The relations among these classes 
of monomial ideals are studied. 
\vspace{0.1cm}

\noindent \textbf{Acknowledgments}. The author learned monomial resolutions 
from Irena Peeva, and he wants to thank Irena 
for her help and encouragement during his graduate study at Cornell University.

\section{Preliminaries}
\noindent In this section we introduce definitions, notations and basic
results which will be used in later sections. This section has four
subsections.

\subsection{Complexes of  Multigraded Free $S$-modules}
In this paper the polynomial ring $S$ is multigraded. Let
$\mathbb{N}=\{0,1,2,\ldots\}$. For any $\mathbf{a}=(a_1,\ldots,a_n)\in \mathbb{N}^n$,
let $x^{\mathbf{a}}=x_1^{a_1}\cdots x_n^{a_n}$. The multigraded elements
in $S$ are scalar multiples of monomials. Let $\lambda\neq 0 \in k$ then
$\lambda x^{\mathbf{a}}$ is multigraded with multidegree $x^{\mathbf{a}}$,
written as $\mbox{mdeg}(\lambda x^{\mathbf{a}})=x^{\mathbf{a}}$. The free
$S$-module generated by one element $f$ in multidegree $x^{\mathbf{a}}$ is
denoted by $S(-x^{\mathbf{a}})$. Then for any $\lambda\neq 0 \in k$ and
$\mathbf{b}\in \mathbb{N}^n$, $\lambda x^{\mathbf{b}}f$ is multigraded in
$S(-x^{\mathbf{a}})$ with $\mbox{mdeg}(\lambda x^{\mathbf{b}}f)=x^{\mathbf{a}+\mathbf{b}}$.

Let $F=S(-x^{\mathbf{a}_1})\oplus \cdots \oplus S(-x^{\mathbf{a}_p})$ be a
multigraded free $S$-module. For any $1\leq i \leq p$ let $f_i$ be a basis
element of $S(-x^{\mathbf{a}_i})$, then $f_1, \ldots, f_p$ is a multigraded
basis of $F$. Let $f=u_1f_{i_1}+\cdots+u_qf_{i_q}\in F$ with $1\leq i_1<\cdots<i_q\leq p$
and $u_1,\ldots, u_q$ being nozero elements in $S$, then $f$ is called
\emph{multigraded} if $u_1,\ldots, u_q$ are multigraded elements $S$ and
$\mbox{mdeg}(u_1f_{i_1})=\cdots=\mbox{mdeg}(u_qf_{i_q})$. In this case,
we write $\mbox{mdeg}(f)=\mbox{mdeg}(u_1f_{i_1})$.

Let $\sigma:F \to F$ be a multigraded isomorphism, then $\sigma(f_1),\ldots, \sigma(f_p)$
is a multigraded basis of $F$ with
$\mbox{mdeg}(\sigma(f_1))=x^{\mathbf{a}_1}, \ldots, \mbox{mdeg}(\sigma(f_p))=x^{\mathbf{a}_p}$.
Let $A=(a_{ij})_{p\times p}$ be the matrix of $\sigma$ with respect to the basis $f_1,\ldots, f_p$ of $F$,
then for any $1\leq j \leq p$, we have that $\sigma(f_j)=\displaystyle\sum_{i=1}^p a_{ij}f_i$;
or in abbreviation, we can write them as $\sigma(f_1,\ldots,f_p)=(\sigma(f_1),\ldots,\sigma(f_p))=(f_1,\ldots,f_p)A$.
It is easy to see that $A$ is characterized by the following three properties:
   \begin{itemize}
   \item[(i)] $A$ is invertible, i.e., $\det(A)\neq 0\in k$;
   \item[(ii)] If $x^{\mathbf{a}_i}$ does not divide $x^{\mathbf{a}_j}$, then $a_{ij}=0$;
   \item[(iii)] If $x^{\mathbf{a}_i}$ divides $x^{\mathbf{a}_j}$, then either $a_{ij}=0$
   or $a_{ij}$ is a nonzero multigraded element in $S$ with $\mbox{mdeg}(a_{ij})=x^{\mathbf{a}_j-\mathbf{a}_i}$.
   \end{itemize}
By the above conditions, using the Laplace expansion along the columns of $A$,
we see that in every column of $A$ there is a nonzero entry belonging to $k$.
Let $\mbox{Aut}(F)$ be the set of all such $A$. Because the composition of
two multigraded isomorphisms is a multigraded isomorphism, it is easy to see
that $\mbox{Aut}(F)$ is a group under matrix multiplication. We call
$\mbox{Aut}(F)$ the \emph{automorphism group} of $F$. Note that $\mbox{Aut}(F)$ does
not depend on the basis of $F$. It only depends on the ordered multidegrees
$x^{\mathbf{a}_1},\ldots, x^{\mathbf{a}_p}$. 
Here the order of the multidegrees matters because for example, 
$\mbox{Aut}(S(-x_1)\oplus S(-x_1^2)) \neq \mbox{Aut}(S(-x_1^2)\oplus S(-x_1))$. 
If for any $i\neq j$, $x^{\mathbf{a}_i}$
does not divide $x^{\mathbf{a}_j}$, then $\mbox{Aut}(F)$ is the multiplicative
group of invertible diagonal matrices.

Let $g_1,\ldots, g_p$ be another multigraded basis of $F$ with multidegrees
$x^{\mathbf{a}_1},\ldots, x^{\mathbf{a}_p}$, respectively. For any $1\leq j \leq p$,
let $g_j=\displaystyle\sum_{i=1}^p b_{ij}f_i$, where $b_{ij}\in S$;
or in abbreviation, we can write $(g_1,\ldots,g_p)=(f_1,\ldots,f_p)B$,
where $B=(b_{ij})_{p\times p}$.
Then we have a multigraded isomorphism $\rho:F\to F$ such that
$\rho(f_1)=g_1,\ldots,\rho(f_p)=g_p$, and the matrix of $\rho$ under basis
$f_1,\ldots,f_p$ is $B\in \mbox{Aut}(F)$.
Hence, any change of basis in $F$ is corresponding to an automorphism of $F$
and is corresponding to a matrix in $\mbox{Aut}(F)$.
Therefore, if we fix a basis $f_1,\ldots,f_p$ of $F$ then there is a bijection
between $\mbox{Aut}(F)$ and the set of all multigraded bases of $F$ with
multidegrees $x^{\mathbf{a}_1},\ldots, x^{\mathbf{a}_p}$; in other words,
we can use $\mbox{Aut}(F)$ to get all the multigraded bases of $F$.

Let $t_1,\ldots, t_p$ be a permutation of $1, \ldots, p$, then $f_{t_1}, \ldots, f_{t_p}$ 
may not be a basis of $F$. However,  $f_{t_1}, \ldots, f_{t_p}$ is a basis of 
$\widehat{F}=S(-x^{\mathbf{a}_{t_1}})\oplus \cdots \oplus S(-x^{\mathbf{a}_{t_p}})$. 
Hence,  in the rest of the paper, when we say a permutation of the basis elements 
of $F$, $F$ is simultaneously changed to $\widehat{F}$. 

Let $\phi:F=S(-x^{\mathbf{a}_1})\oplus \cdots \oplus S(-x^{\mathbf{a}_p})
\to G=S(-x^{\mathbf{b}_1})\oplus \cdots \oplus S(-x^{\mathbf{b}_q})$
be a multigraded homomorphism.
Let $f_1,\ldots, f_p$ and $e_1,\ldots, e_p$ be two multigraded bases of $F$
such that $(e_1,\ldots,e_p)=(f_1,\ldots,f_p)U$ with $U\in \mbox{Aut}(F)$.
Let $g_1,\ldots, g_q$ and $h_1,\ldots, h_q$ be two multigraded bases of $G$
such that $(h_1,\ldots,h_q)=(g_1,\ldots,g_q)V$ with $V\in \mbox{Aut}(G)$.
Let $A$ be the $q\times p$ matrix of $\phi$ under the basis $f_1,\ldots,f_p$
of $F$ and the basis $g_1,\ldots, g_q$ of $G$, i.e., $\phi(f_1,\ldots,f_p)=(g_1,\ldots,g_q)A$.
Similarly, let $\phi(e_1,\ldots,e_p)=(h_1,\ldots,h_q)B$.
Then similar to the results in linear algebra, it is easy to prove that $B=V^{-1}AU$.

Let $\sigma_1:F\to F$ be the isomorphism defined by $\sigma_1(f_1,\ldots,f_p)=(e_1,\ldots,e_p)$.
Let $\sigma_2:G\to G$ be the isomorphism defined by $\sigma_2(g_1,\ldots,g_q)=(h_1,\ldots,h_q)$.
Let $\psi=\sigma_2^{-1}\phi\sigma_1:F\to G$. Then $\psi$ is a
multigraded homomorphism and it is easy to prove that
$\psi(f_1,\ldots,f_p)=(g_1,\ldots,g_q)V^{-1}AU=(g_1,\ldots,g_q)B$. Hence,
the change of bases in $F$ and $G$ is equivalent to the following
commutative diagram:
\[
\begin{tikzcd}[column sep =2cm]
  F \arrow[r,"\phi"] & G   \\
	F \arrow[u,"\sigma_1"] \arrow[r,"\psi=\sigma_2^{-1}\psi \sigma_1"] & G \arrow[u,"\sigma_2"]
\end{tikzcd}
\]
And under the basis $f_1,\ldots,f_p$ of $F$ and the basis $g_1,\ldots,g_q$ of $G$, 
this diagram can be written as:
\[
\begin{tikzcd}[column sep =2cm]
  F \arrow[r,"A"] & G   \\
	F \arrow[u,"U"] \arrow[r,"B=V^{-1}AU"] & G \arrow[u,"V"]
\end{tikzcd}
\]
Note that in this paper when we write $F \xrightarrow{\ A\  }  G$, we mean 
that there exists a multigraded homomorphism $\phi: F\to G$ such that under 
a basis of $F$ and a basis of $G$ the matrix of $\phi$ is $A$.

In this paper a complex $(\mathbf{F},d)$ of multigraded free $S$-modules is 
a finite sequence of multigraded homomorphisms:
\[
\begin{tikzcd}
\mathbf{F}:\  0 \arrow{r} & F_l \arrow[r,"d_l"] &F_{l-1} \arrow[r]& \cdots \arrow[r] 
& F_2 \arrow[r,"d_2"]  & F_1 \arrow[r,"d_1"] & F_0 \arrow[r] &0
\end{tikzcd}
\]
with a fixed multigraded basis for each $F_i$.
Let $A_i$ be the matrix of $d_i$ under the basis of $F_i$ and $F_{i-1}$, then 
$\mathbf{F}$ is often written as
\[
\begin{tikzcd}
\mathbf{F}:\  0 \arrow{r} & F_l \arrow[r,"A_l"] &F_{l-1} \arrow[r]& \cdots \arrow[r] 
& F_2 \arrow[r,"A_2"]  & F_1 \arrow[r,"A_1"] & F_0 \arrow[r] &0.
\end{tikzcd}
\]

\begin{definition}\label{D:isomorphism}
Let $(\mathbf{F},d)$ and $(\mathbf{G},\partial)$ be two complexes of multigraded 
free $S$-modules. $\varphi: \mathbf{F} \to \mathbf{G}$ is called an \emph{isomorphism} if 
for each $i$, $\varphi_i: F_i \to  G_i$  is a multigraded isomorphism and 
$\varphi_{i-1}d_i=\partial_{i}\varphi_{i}$.  If there exist an isomorphism between 
$\mathbf{F}$ and $\mathbf{G}$ then we say that $\mathbf{F}$ and $\mathbf{G}$ 
are \emph{isomorphic} and we write $\mathbf{F} \cong \mathbf{G}$.
\end{definition}

\begin{definition}\label{D:equal}
Let $(\mathbf{F},d)$ and $(\mathbf{G},\partial)$ be two complexes of multigraded 
free $S$-modules. If there exists an isomorphism $\varphi: \mathbf{F} \to \mathbf{G}$ 
such that $\varphi$ induces a bijection between the fixed bases of $\mathbf{F}$ and 
$\mathbf{G}$, then we say that $\mathbf{F}$ is equal to $\mathbf{G}$ and we 
write $\mathbf{F}=\mathbf{G}$.
\end{definition}

\begin{remark}\label{R:equal}
  \begin{itemize}
    \item[(1)] Let $F=S(-x_1) \oplus S(-x_2)$ and $G=S(-x_2) \oplus S(-x_1)$, then $F$ 
                    and $G$ are isomorphic, but $F\neq G$. However, by Definition \ref{D:equal} 
                   the complexes $0\to F \to 0$ and $0\to G \to 0$ are equal.  
   \item[(2)] In Definition \ref{D:equal} let $A_i$ be the matrix of $d_i$ under the basis of 
                   $\mathbf{F}$. Then it is easy to see that if $\mathbf{F}=\mathbf{G}$ then 
                   there exists a permutation of the basis elements of $\mathbf{G}$ such that 
                   under the new basis the  matrix of $\partial_i$ is $A_i$;  and conversely, if 
                   $F_i=G_i$ for all $i$ and the matrix of $\partial_i$ is $A_i$, then $\mathbf{F}=\mathbf{G}$.
  \item[(3)] Let $(\mathbf{F}, d)$ and $(\mathbf{G}, \partial)$ be two sequences of multigraded 
                  free $S$-modules and multigraded homomorphisms. Then  we have similar 
                  definitions for $\mathbf{F}\cong \mathbf{G}$ and $\mathbf{F}=\mathbf{G}$. 
  \end{itemize}
\end{remark}

Let $(\mathbf{F},d)$ be a complex of multigraded free $S$-modules and $A_i$ the 
matrix of $d_i$ under the basis of $\mathbf{F}$. 
Let $\sigma_i: F_i \to F_i$ be a change of basis isomorphism corresponding to 
$U_i\in \mbox{Aut}(F_i)$. Then under the new basis, the matrix of $d_i$ is 
$U_{i-1}^{-1}A_iU_i$. The new complex with the new basis is denoted 
by $\sigma(\mathbf{F})$, where $\sigma$ is the collection of isomorphisms 
$\sigma_i$ of $F_i$. We call $\sigma$ a \emph{change of basis map} of $\mathbf{F}$. 
Note that $\mathbf{F}$ and $\sigma(\mathbf{F})$ are isomorphic. Indeed, 
we have the following commutative diagram:
\[
 \begin{tikzcd}
 \mathbf{F}:\  \cdots \arrow{r} & F_i \arrow[r,"A_i"] &[2em] F_{i-1} \arrow[r]& \cdots \arrow[r] 
 & F_1 \arrow[r,"A_1"] &[2em]  F_0 \arrow[r] &0 \\
 \sigma(\mathbf{F}):\  \cdots \arrow{r} & F_i \arrow[r,"U_{i-1}^{-1}A_iU_i"] \arrow[u,"U_i"] 
 &F_{i-1} \arrow[r] \arrow[u,"U_{i-1}"]& \cdots \arrow[r] 
 & F_1 \arrow[r,"U_0^{-1}A_1U_1"]\arrow[u,"U_1"] & F_0 \arrow[r] \arrow[u,"U_0"]&0,
 \end{tikzcd}
\]
where $\mathbf{F}$ has the old basis and $\sigma(\mathbf{F})$ has the new basis. 

Conversely, if $\mathbf{F} \cong \mathbf{G}$ we will show that there exists a 
change of basis map $\sigma$ of $\mathbf{F}$ such that $\sigma(\mathbf{F})=\mathbf{G}$.
Indeed, let $\varphi: \mathbf{F}\to \mathbf{G}$ be an isomorphism, then we have the 
following commutative diagram:
\[
 \begin{tikzcd}
 \mathbf{F}:\  \cdots \arrow{r} 
 & F_i \arrow[r,"d_i"] \arrow[d,"\varphi_i"]
 & F_{i-1} \arrow[r]\arrow[d,"\varphi_{i-1}"]
& \cdots \arrow[r] 
 & F_1 \arrow[r,"d_1"]\arrow[d,"\varphi_1"] 
 & F_0 \arrow[r]\arrow[d,"\varphi_0"] &0
 \\
 \mathbf{G}:\  \cdots \arrow{r} 
 & G_i \arrow[r,"\partial_i"]
 &G_{i-1} \arrow[r]
 & \cdots \arrow[r] 
 & G_1 \arrow[r,"\partial_1"]
 & G_0 \arrow[r] &0.
 \end{tikzcd}
\]
By using a permutation of the basis elements of $\mathbf{G}$, without the 
loss of generality we can assume that $F_i=G_i$ for all $i$. Let $U_i$ be the 
matrix of $\varphi_i^{-1}$ under the basis of $F_i$ and the basis of $G_i$. It is 
easy to see that $U_i \in \mbox{Aut}(F_i)$.
Let $A_i$ be the matrix of $d_i$ under the basis of $\mathbf{F}$ and $B_i$ 
the basis of $\partial_i$ under the basis of $\mathbf{G}$, then from the 
previous commutative diagram, we get the following commutative diagram: 
\[
 \begin{tikzcd}
 \mathbf{F}:\  \cdots \arrow{r} & F_i \arrow[r,"A_i"] & F_{i-1} \arrow[r]& \cdots \arrow[r] 
 & F_1 \arrow[r,"A_1"] &  F_0 \arrow[r] &0 \\
 \mathbf{G}:\  \cdots \arrow{r} & G_i \arrow[r,"B_i"] \arrow[u,"U_i"] 
 &G_{i-1} \arrow[r] \arrow[u,"U_{i-1}"]& \cdots \arrow[r] 
 & G_1 \arrow[r,"B_1"]\arrow[u,"U_1"] & G_0 \arrow[r] \arrow[u,"U_0"]&0,
 \end{tikzcd}
\]
which implies that $B_i=U_{i-1}^{-1}A_iU_i$. 
Let $\sigma$ be the change of basis map of $\mathbf{F}$ corresponding 
to all the $U_i\in \mbox{Aut}(F_i)$, then by Remark \ref{R:equal} (2) we 
see that $\sigma(\mathbf{F})=\mathbf{G}$.

Let $\mathbf{F}$ and $\mathbf{G}$ be two minimal free resolutions of 
$S/M$. It is well-known that $\mathbf{F}\cong \mathbf{G}$. Hence, 
there exists a change of basis map $\sigma$ of $\mathbf{F}$ such that 
$\sigma(\mathbf{F})=\mathbf{G}$. So given any minimal free resolution 
of $S/M$, we can use change of basis to get all the minimal free resolutions 
of $S/M$. 

\begin{definition}\label{D:subcomplex}
Let $(\mathbf{F},d)$ be a complex of multigraded free $S$-modules with a 
fixed basis $f_1,\ldots, f_p$. Let $(\mathbf{G},\partial)$ be a complex 
of multigraded free $S$-modules. If there exist $1\leq i_1 <\cdots <i_q\leq p$ 
such that the free submodule $\mathbf{H}$ of $\mathbf{F}$ generated 
by $f_{i_1}, \ldots, f_{i_q}$ satisfies the following two conditions:
\begin{itemize}
 \item[(1)] $d(\mathbf{H})\subseteq \mathbf{H}$, i.e., $(\mathbf{H},d)$ is 
                 a complex of multigraded free $S$-modules;
 \item[(2)] $(\mathbf{H},d)=(\mathbf{G},\partial)$;
\end{itemize}
then we call $(\mathbf{G},\partial)$ a \emph{subcomplex} of 
$(\mathbf{F},d)$, and we write 
$(\mathbf{G},\partial)=(\mathbf{F}|_{\{f_{i_1},\ldots,f_{i_q}\}},d)$.
\end{definition}

\begin{remark}\label{R:subcomplex}
This definition of a subcomplex is different from the usual definition of 
 a subcomplex (for example, Definition 3.5 in \cite{B:Pe}). The reason is 
 that in this paper any complex of multigraded free $S$-modules is 
 assumed to have a fixed basis. 
\end{remark}

\begin{remark}\label{R:kcomplex}
In this paper any complex of $k$-vector spaces is also assumed to be finite and have 
a fixed basis. Then for complexes of $k$-vector spaces, we have definitions 
similar to those in Definitions \ref{D:isomorphism}, \ref{D:equal} and \ref{D:subcomplex}.
The results about change of basis also hold for complexes of $k$-vector spaces. 
\end{remark}

\subsection{Taylor Bases}

In this paper we will frequently use the notations of the frame of an $M$-complex 
and the $M$-homogenization of an $r$-frame, which are defined in 
Section 3 of \cite{B:PV} or in Section 55 of \cite{B:Pe}. 
Also, let $\mathbf{F}$ be a minimal free resolution of $S/M$; sometimes, for 
convenience, we write the resolution as $\mathbf{F} \to 0$. 

\begin{definition}\label{D:multidegree}
Let $\Omega$ be the simplex with the vertex set $\{1,\ldots,r\}$, where the 
vertex $i$ is corresponding to the minimal monomial generator $m_i$ of $M$.  For any 
$A\subseteq \{1,\ldots,r\}$,  we define the \emph{multidegree} of $A$ as 
$\mbox{mdeg}(A)=\mbox{lcm}(m_i | i\in A)$.  
Note that $\mbox{mdeg}(\emptyset)=1$.
Let $c=\lambda_1c_1+\cdots+\lambda_tc_t$ be a chain in $\Omega$, 
where $\lambda_1,\ldots, \lambda_t$ are nonzero elements in $k$ and 
$c_1,\ldots, c_t$ are some different faces of the same dimension in $\Omega$, then we define  
the \emph{multidegree} of $c$ as 
$\mbox{mdeg}(c)=\mbox{lcm}(\mbox{mdeg}(c_i) |1\leq i \leq t)$, 
and we define the \emph{support} of $c$ as 
$\mbox{supp}(c)=c_1\cup \cdots \cup c_t$.  
Let $f=\lambda_1x^{\mathbf{a}_1}c_1+\cdots+\lambda_tx^{\mathbf{a}_t}c_t$, 
where $x^{\mathbf{a}_1},\ldots, x^{\mathbf{a}_t}$ are some monomials 
in $S$. If there exists a monomial $m\in S$ such that 
$x^{\mathbf{a}_1}\mbox{mdeg}(c_1)=\cdots=x^{\mathbf{a}_t}\mbox{mdeg}(c_t)=m$, 
then we say that $f$ is \emph{multigraded} with multidegree $m$, written as $\mbox{mdeg}(f)=m$. 
By setting $x_1=\cdots=x_n=1$ in $f$, we get the chain $c$ back and $c$ is 
called the \emph{dehomogenization} of $f$. 
Let 
\[
g=\lambda_1\frac{\mbox{mdeg}(c)}{\mbox{mdeg}(c_1)}c_1+
    \cdots+\lambda_t\frac{\mbox{mdeg}(c)}{\mbox{mdeg}(c_t)}c_t,
\]
then $g$ is multigraded with multidegree $\mbox{mdeg}(c)$. 
We call $g$ the \emph{homogenization} of $c$ and write 
$g=\hbar(c)$. Note that the dehomogenization 
of $g$ is $c$.
By setting $x_1=\cdots=x_n=0$ in $f$, we get a chain which is called 
the \emph{initial part} of $f$, denoted by $\mbox{in}(f)$. 
$\mbox{in}(g)$ is called the \emph{initial part} of $c$, denoted by $\mbox{in}(c)$.
If $\mbox{in}(c)\neq 0$ then we call $c$ a \emph{Taylor chain}. 
\end{definition}

Let $\mathbf{T}$ be the Taylor resolution of $S/M$, then the set of 
faces of $\Omega$, including $\emptyset$, can be viewed as a multigraded 
basis of $\mathbf{T}$. 

\begin{definition}\label{D:Taylorbasis}
Let $\mathbf{F}$ be a minimal free resolution of $S/M$. It is well-known that 
there exists a trivial complex $\mathcal{E}$ such that 
$\mathbf{T}\cong \mathbf{F}\oplus \mathcal{E}$.
Then there exists a change of basis map $\sigma$ such that 
$\sigma(\mathbf{T})= \mathbf{F}\oplus \mathcal{E}$. 
Let $f_1, \ldots, f_p$ be the multigraded basis of $\sigma(\mathbf{T})$. 
Since $\mathbf{F}$ is a subcomplex of $\sigma(\mathbf{T})$, it follows 
that there exist $1\leq i_1<\cdots <i_q \leq p$ such that 
$\mathbf{F}=\sigma(\mathbf{T})|_{\{f_{i_1},\ldots,f_{i_q}\}}$. 
Let $g_1,\ldots,g_p$ be the dehomogenizations of  $f_1,\ldots,f_p$, respectively.  
Then $g_{i_1},\ldots,g_{i_q}$ is called a \emph{Taylor basis} of $\mathbf{F}$.
Let $N$ be the multigraded free $S$-module generated by 
$f_{i_1},\ldots,f_{i_q}$, then $N$ is a submodule of $\mathbf{T}$.
We call $N$ a \emph{Taylor submodule} for $M$. 
\end{definition}

\begin{remark}\label{R:TBin}
Let $f$ be a multigraded basis element of $\sigma(\mathbf{T})$ and 
let $g$ be the dehomogenization of $f$. By the discussion about the 
automorphism group of a multigraded free $S$-module at the beginning 
of Subsection 2.1, we see that $\mbox{in}(f)\neq 0$, which implies 
that $f$ is the homogenization of $g$.  Hence, 
$\mbox{in}(f)=\mbox{in}(g)$ and $g$ is a Taylor chain.
\end{remark}

\begin{remark}\label{R:TBuse}
Let $(\mathbb{W},d)$ be the frame of $\sigma(\mathbf{T})$ where $d$ 
is the boundary map of $\Omega$, then $g_1, \ldots, g_p$ is a basis of 
$\mathbb{W}$.  Let $(\mathbb{V},d)$ be the frame of $\mathbf{F}$, 
then $g_{i_1},\ldots, g_{i_q}$ is a basis of $\mathbb{V}$.  From the 
Taylor basis $g_{i_1},\ldots, g_{i_q}$, one can use the boundary map 
$d$ to obtain $\mathbb{V}$, and then the $M$-homogenization of 
$\mathbb{V}$ is $\mathbf{F}$. So, to construct a minimal free resolution 
of $S/M$, it is equivalent to obtain a Taylor basis. Note that because of 
Definition \ref{D:equal}, we do not need to worry about how the elements 
in a Taylor basis are ordered, and a Taylor basis determines a unique 
minimal free resolution. 
\end{remark}

\begin{remark}\label{R:TBmany}
Given a Taylor submodule $N$ for $M$ with a basis $f_{i_1},\ldots,f_{i_q}$ 
as in Definition \ref{D:Taylorbasis}, if we think of $N$ as a complex of 
multigraded free $S$-modules, then $N$ is a minimal free resolution of $S/M$;
indeed, $N=\mathbf{F}$.
Hence, by using change of basis in each homological degree of $N$, we can 
get \emph{every} minimal free resolutions of $S/M$. 
On the other hand, as is shown by the next example, given a minimal 
free resolution $\mathbf{F}$ of $S/M$, $\mathbf{F}$ may have many 
different Taylor bases, and consequently, there may be many different 
Taylor submodules for $M$. Since every Taylor submodule for $M$ 
with a suitable multigraded basis can give rise to $\mathbf{F}$, it follows 
that from $\mathbf{F}$ we can get \emph{all} the Taylor submodules 
for $M$. The set of all Taylor submodules for $M$ is denoted by $\Sigma_M$.
\end{remark}

\begin{example}\label{E:TBmany}
Let $S=k[x,y,z]$ and $M$ the monomial ideal generated by $m_1=xy, m_2
=xz, m_3=yz$. Then 
\[
\begin{tikzcd}[ampersand replacement=\&]
\mathbf{F}: \  0\to S(-xyz)^2  \ar[r, "{\begin{pmatrix} -z & -z \\ y & 0\\ 0 & x  \end{pmatrix}}"]
\& [2.2em] S(-xy) \oplus S(-xz) \oplus S(-yz) \arrow[r, "{\begin{pmatrix} xy & xz & yz \end{pmatrix}}"]
\& [3.3em] S
\end{tikzcd}
\]
is a minimal free resolution of $S/M$. 
For any $\lambda, \mu \in k$, let $B_{\lambda,\mu}$ be the set of 
chains consisting of $\emptyset$, $\{1\}$, $\{2\}$, $\{3\}$, 
$\lambda\{1,2\}+(1-\lambda)\{1,3\}-(1-\lambda)\{2,3\}$, 
$\mu \{1,3\}+(1-\mu)\{1,2\}+(1-\mu)\{2,3\}$. 
Then $B_{\lambda,\mu}$  is a Taylor basis of $\mathbf{F}$, 
and every Taylor basis of $\mathbf{F}$ is some $B_{\lambda,\mu}$. 
Let $N_{\lambda,\mu}$ be the multigraded free submodule of the 
Taylor resolution $\mathbf{T}$ generated by the elements in $B_{\lambda,\mu}$, 
then $N_{\lambda,\mu}$ is a Taylor submodule for $M$ and 
$\Sigma_M=\{N_{\lambda,\mu}|\lambda,\mu \in k\}$.
Note that $T_2=S(-xyz)^3$ with basis $\{1,2\}$, $\{1,3\}$, $\{2,3\}$. 
Let $\mathbf{a}=(a_1,a_2,a_3), \mathbf{b}=(b_1,b_2,b_3)\in k^3$ 
such that $e_1=\{1,2\}-\{1,3\}+\{2,3\}, e_2=a_1\{1,2\}+a_2\{1,3\}+a_3\{2,3\}, 
e_3=b_1\{1,2\}+b_2\{1,3\}+b_3\{2,3\}$ is a basis of $T_2$. 
Let $G_{\mathbf{a},\mathbf{b}}\subset T_2$ be the multigraded free
$S$-module generated by $e_2, e_3$. 
Then by using consecutive cancellations in Subsection 2.3, 
it is easy to see that $T_0\oplus T_1\oplus G_{\mathbf{a}, \mathbf{b}}$ 
is a Taylor submodule for $M$. 
Let $\widetilde{\Sigma} \subseteq \Sigma_M$ be the set of all such 
Taylor submodules. Since
\[
\begin{vmatrix}
1     &-1     &1 \\
\lambda &1-\lambda & -1+\lambda \\
1-\mu & \mu & 1-\mu
\end{vmatrix}
=1 \neq 0,
\]
it follows that $N_{\lambda, \mu} \in \widetilde{\Sigma}$, which 
implies that $\widetilde{\Sigma}=\Sigma_M$. 
So every Taylor submodule for $M$ can be written as $T_0\oplus T_1\oplus G$, 
where $G=S(-xyz)^2 \subset T_2$ and $\{1,2\}-\{1,3\}+\{2,3\} \notin G$.
\end{example}

\begin{remark}\label{R:simplicial}
Let $\mathbf{F}$ be a minimal free resolution of $S/M$. It is easy to see that 
$\mathbf{F}$ is a simplicial resolution if and only if there exists a Taylor basis 
of $\mathbf{F}$ consisting of some faces of $\Omega$; and $S/M$ has a 
simplicial resolution if and only of there exists a Taylor submodule $N$ for $M$ 
such that $N$ has a basis consisting of some faces of $\Omega$. In the above 
example if we take $\lambda=\mu=1$ then $\mathbf{F}$ is simplicial. 
\end{remark}

\subsection{Consecutive Cancellations}
Consecutive cancellation is a technique introduced by Peeva (for example, 
see Section 7 in \cite{B:Pe}) for
removing short trivial complexes from a nonminimal free resolution. 

Given a graded module $N$ over $S$, building a free resolution of $N$ 
over $S$ consists of repeatedly solving systems of polynomial equations, 
which is not easy and the computation often involves the Gr\"{o}bner 
basis theory. However, if a free resolution of $N$ is given, then we can 
use consecutive cancellations to get a minimal free resolution of $N$. 

Since any monomial ideal $M$ has the Taylor resolution, in this subsection we 
will discuss how to use consecutive cancellations to get a minimal free 
resolution of $S/M$ with a Taylor basis.  

\begin{proposition}\label{P:CCbasis}
Let $F_1, F_2, F_3, F_4$ be multigraded free $S$-modules with bases 
$h_1,\ldots, h_t$; $g_1, \ldots, g_q$; $f_1, \ldots, f_p$; $e_1, \ldots, e_s$; 
respectively. Let
\[
\begin{tikzcd}
F_4 \arrow[r,"d_4"] & F_3 \arrow[r, "d_3"] & F_2 \arrow[r, "d_2"] & F_1
\end{tikzcd}
\]
be a complex of multigraded free $S$-modules. 
Let $C, A, B$ be the matrices of $d_2, d_3, d_4$, respectively, under 
the given bases.  Assume that
\[
A=
\begin{pmatrix}
A_1 & \beta \\
\alpha & a
\end{pmatrix}_{q \times p}
\ \ \ 
B=
\begin{pmatrix}
B_1 \\
\gamma
\end{pmatrix}_{p \times s}
\ \ \ 
C=
\begin{pmatrix}
C_1 & \eta
\end{pmatrix}_{t \times q}
\]
where $a\neq 0 \in k$; $\alpha=(a_1, \ldots, a_{p-1})$ with $a_i=0$ 
or $a_i$ being a scalar multiple of some monomial in $S$, and similarly, 
$\beta$ is a column vector with entries $b_1, \ldots, b_{q-1}$; 
$\gamma$ is a row vector and $\eta$ is a column vector. Let 
\begin{align*}
\widetilde{f_1} &=f_1-a^{-1}a_1f_p, \\
 & \vdots \\
\widetilde{f_{p-1}} &=f_{p-1} -a^{-1}a_{p-1}f_p, \\
\widetilde{g_q} &=b_1g_1+\cdots +b_{q-1}g_{q-1}+ag_q.
\end{align*}
Let $\widetilde{F_2}$ be the multigraded free submodule of $F_2$ 
generated by $g_1, \ldots, g_{q-1}$; let $\widetilde{F_3}$ be the 
multigraded free submodule of $F_3$ generated by $\widetilde{f_1}, 
\ldots, \widetilde{f_{p-1}}$. Let $\mbox{mdeg}(f_p)=m$, then we 
have that $F_2=\widetilde{F_2}\oplus S(-m)$ and 
$F_3=\widetilde{F_3}\oplus S(-m)$.
And under the new bases 
$h_1,\ldots, h_t$; $g_1, \ldots, g_{q-1}, \widetilde{g_q}$; 
$\widetilde{f_1}, \ldots, \widetilde{f_{p-1}},  f_p$; $e_1, \ldots, e_s$,
the complex can be written as
\[
\begin{tikzcd}[ampersand replacement=\&]
F_4  \ar[r, "{\begin{pmatrix} B_1 \\  0  \end{pmatrix}}"]
\& [1.2em] F_3 \arrow[r, "{\begin{pmatrix} A_1-a^{-1}\beta\alpha & 0 \\ 0 & 1 \end{pmatrix}}"]
\& [3.5em] F_2  \ar[r, "{\begin{pmatrix} C_1 &  0  \end{pmatrix}}"]
\& [2.2em] F_1,
\end{tikzcd}
\]
which is equal to 
\[
\begin{tikzcd}[ampersand replacement=\&]
(F_4  \ar[r, " B_1 "]
\& \widetilde{F_3} \arrow[r, "A_1-a^{-1}\beta\alpha"]
\& [2.5em] \widetilde{F_2}  \ar[r, " C_1"]
\&  F_1)\oplus (0 \arrow[r]
\& [-1em] S(-m) \arrow[r,"1"] 
\&  S(-m) \arrow[r]
\&  [-1em] 0).
\end{tikzcd}
\]
\end{proposition}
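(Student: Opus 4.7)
The plan is to verify this by an explicit change of basis computation in two stages, using the matrix formula $B = V^{-1}AU$ from Subsection 2.1, and then invoke the complex condition $d^2=0$ to kill the off-diagonal blocks.

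First I would record the change of basis matrices. The prescribed new basis of $F_3$ is $(\widetilde{f_1},\dots,\widetilde{f_{p-1}},f_p)=(f_1,\dots,f_p)U_3$ with
\[
U_3=\begin{pmatrix} I_{p-1} & 0 \\ -a^{-1}\alpha & 1 \end{pmatrix},
\]
and the new basis of $F_2$ is $(g_1,\dots,g_{q-1},\widetilde{g_q})=(g_1,\dots,g_q)U_2$ with
\[
U_2=\begin{pmatrix} I_{q-1} & \beta \\ 0 & a \end{pmatrix}.
\]
The first real step is to check that $U_3\in\mathrm{Aut}(F_3)$ and $U_2\in\mathrm{Aut}(F_2)$. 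Property (i) is immediate from $\det U_3=1$ and $\det U_2=a\neq 0$. Since $a\in k^\times$, the entry $a=a_{qp}$ being a multigraded scalar forces $\mathrm{mdeg}(g_q)=\mathrm{mdeg}(f_p)=m$; then the hypothesis that each $a_i$ (resp.\ $b_j$) is either $0$ or a scalar multiple of a monomial, together with the condition that $A$ is the matrix of a multigraded homomorphism, gives $\mathrm{mdeg}(a^{-1}a_i)=\mathrm{mdeg}(f_i)/m$ and $\mathrm{mdeg}(b_j)=m/\mathrm{mdeg}(g_j)$. This verifies properties (ii) and (iii) for both matrices, so both changes of basis are legitimate.

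Next I would compute the three new matrices using $B=V^{-1}AU$ from Subsection 2.1. A direct block multiplication gives
\[
U_2^{-1}A U_3=\begin{pmatrix} I_{q-1} & -a^{-1}\beta \\ 0 & a^{-1} \end{pmatrix}\begin{pmatrix} A_1-a^{-1}\beta\alpha & \beta \\ 0 & a \end{pmatrix}=\begin{pmatrix} A_1-a^{-1}\beta\alpha & 0 \\ 0 & 1 \end{pmatrix},
\]
which is the stated matrix for $d_3$. For $d_4$, multiplication yields
\[
U_3^{-1}B=\begin{pmatrix} B_1 \\ a^{-1}\alpha B_1+\gamma \end{pmatrix},
\]
and for $d_2$,
\[
CU_2=\begin{pmatrix} C_1 & C_1\beta+a\eta \end{pmatrix}.
\]
The last step is to invoke the complex condition. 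Expanding $AB=0$ in blocks gives $\alpha B_1+a\gamma=0$, hence the bottom block of $U_3^{-1}B$ vanishes. Expanding $CA=0$ gives $C_1\beta+a\eta=0$, hence the last column of $CU_2$ vanishes. These are precisely the matrices in the statement.

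Finally, the block-diagonal form of the new $d_3$, together with the vanishing last row of $U_3^{-1}B$ and vanishing last column of $CU_2$, shows that the multigraded free summand generated by $f_p$ (in $F_3$) and by $\widetilde{g_q}$ (in $F_2$) form a subcomplex isomorphic to $0\to S(-m)\xrightarrow{1}S(-m)\to 0$, orthogonal to the complement. By Remark \ref{R:equal}(2), this gives the claimed equality of complexes, not merely an isomorphism. The only real obstacle is the bookkeeping of blocks and the verification that the change-of-basis matrices lie in the automorphism groups (which needs the multidegree compatibilities); everything else is forced by $d^2=0$.
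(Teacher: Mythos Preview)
Your proof is correct and follows essentially the same approach as the paper: the same change-of-basis matrices $U_2,U_3$, the same block computations for $U_2^{-1}AU_3$, $U_3^{-1}B$, $CU_2$, and the same use of $AB=0$ and $CA=0$ to eliminate the off-diagonal blocks. Your verification that $U_2\in\mathrm{Aut}(F_2)$ and $U_3\in\mathrm{Aut}(F_3)$ via the multidegree conditions is slightly more explicit than the paper's, but otherwise the arguments coincide.
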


\begin{proof}
Let $\widetilde{C}, \widetilde{A}, \widetilde{B}$ be the matrices of 
$d_2, d_3, d_4$, respectively, under the new bases. Let 
\[
U_2=\begin{pmatrix} E_{q-1} & \beta \\ 0 & a \end {pmatrix}
\ \ \ \ \ 
U_3=\begin{pmatrix} E_{p-1} & 0 \\ -a^{-1}\alpha & 1 \end{pmatrix},
\]
then $U_2 \in \mbox{Aut}(F_2), U_3 \in \mbox{Aut}(F_3)$ correspond 
to the change of bases in $F_2$ and $F_3$, so that by Subsection 2.1 
we have the following commutative diagram:
\[
 \begin{tikzcd}
  F_4 \arrow[r,"B"] &  F_3 \arrow[r, "A"]&  F_2 \arrow[r,"C"] &  F_1  \\
  F_4 \arrow[r,"\widetilde{B}"] \arrow[u,"E_s"] 
 &F_3 \arrow[r,"\widetilde{A}"] \arrow[u,"U_3"] 
 & F_2 \arrow[r,"\widetilde{C}"]\arrow[u,"U_2"] 
 & F_1  \arrow[u,"E_t"].
 \end{tikzcd}
\]
Hence, we have that
\begin{align*}
\widetilde{B} &=U_3^{-1}B=\begin{pmatrix} E_{p-1} & 0 \\ a^{-1}\alpha & 1 \end{pmatrix}
                                   \begin{pmatrix}B_1 \\ \gamma \end{pmatrix}
                             =\begin{pmatrix}B_1 \\ a^{-1}\alpha B_1+\gamma \end{pmatrix} \\
\widetilde{A} &=U_2^{-1}AU_3=\begin{pmatrix} E_{q-1} & -a^{-1}\beta \\ 0 & a^{-1} \end {pmatrix}
                                  \begin{pmatrix}A_1 & \beta \\ \alpha & a \end{pmatrix}
                                 \begin{pmatrix} E_{p-1} & 0 \\ -a^{-1}\alpha & 1 \end{pmatrix}
                               =\begin{pmatrix} A_1-a^{-1}\beta\alpha & 0 \\ 0 & 1 \end{pmatrix}\\
\widetilde{C} &=CU_2=\begin{pmatrix} C_1 & \eta \end{pmatrix} 
                                      \begin{pmatrix} E_{q-1} & \beta \\ 0 & a \end {pmatrix}
                                 =\begin{pmatrix} C_1 & C_1\beta+a\eta \end{pmatrix}.
\end{align*}
Since $CA=0$ and $AB=0$, it follows that $C_1\beta +a\eta =0$ 
and $\alpha B_1+a\gamma=0$, which implies that 
$\widetilde{B}=\begin{pmatrix} B_1 \\ 0 \end{pmatrix}$ and 
$\widetilde{C}=\begin{pmatrix} C_1 & 0 \end{pmatrix}$. 
\end{proof}

\begin{remark}\label{R:CCgeneral}
First, the matrix $A_1-a^{-1}\beta\alpha$ 
can be obtained from $ \left(\begin{smallmatrix}A_1 & \beta \\ \alpha & a \end{smallmatrix}\right)$ 
by using elementary row operations. 
Second, if a is not at the $q^{\mbox{th}}$ row or $p^{\mbox{th}}$ column, 
similar result holds. 
Third, if $F_1, F_2, F_3, F_4$ are graded free $S$-modules, similar result holds. 
Finally, let $R=S/I$ where $I$ is a graded ideal in $S$, then similar result holds 
for graded free $R$-modules. In particular, we have similar results for complexes 
of $k$-vector spaces. 
\end{remark}

\begin{definition}\label{D:CC}
As in Proposition \ref{P:CCbasis},  the process of changing the bases of $F_2$ 
and $F_3$, and then removing the short trivial complex 
$0\to S(-m) \to S(-m) \to 0 $ to obtain 
$F_4 \to \widetilde{F_3} \to \widetilde{F_2} \to F_1$ 
is called a \emph{consecutive cancellation} at the entry $a$, 
or called a \emph{consecutive cancellation} with respect to $f_p$ and $g_q$. 
\end{definition}

It is easy to see that $F_4 \to F_3 \to F_2 \to F_1$ 
and $F_4 \to \widetilde{F_3} \to \widetilde{F_2} \to F_1$  have the same 
homology. In particular, the former is exact if and only if the latter is exact.

\begin{theorem}\label{T:CCminimal}
Let $M$ be a monomial ideal minimally generated by $r$ monomials. Let $\Omega$ 
be  the simplex with the vertex set $\{1,\ldots, r\}$. Let $\mathbf{T}$ be the 
Taylor resolution of $S/M$ with basis consisting of the faces of $\Omega$. 
By using a series of consecutive cancellations we can get a minimal free resolution 
$\mathbf{F}$ of $S/M$. By dehomogenizing the basis elements of $\mathbf{F}$ 
we get a Taylor basis of $\mathbf{F}$. And there exists a trivial complex 
$\mathcal{E}$ such that $\mathbf{T} \cong \mathbf{F} \oplus \mathcal{E}$. 
\end{theorem}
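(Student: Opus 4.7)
My plan is to apply Proposition~\ref{P:CCbasis} repeatedly, terminate via a finiteness argument, and then track the accumulated basis changes to identify the outcome as a Taylor basis. I would begin by observing that $\mathbf{T}$ has exactly $2^r$ basis elements (the faces of $\Omega$), so the total rank of the complex is finite; since each consecutive cancellation strictly decreases this total rank by $2$, any sequence of consecutive cancellations terminates.

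Next, I would invoke the standard characterization of minimality for complexes of multigraded free $S$-modules: $\mathbf{F}$ is minimal if and only if every entry of every differential matrix lies in $(x_1,\ldots,x_n)$, equivalently, if and only if no differential matrix contains an entry $a \in k \setminus \{0\}$. Whenever a scalar entry still exists, Proposition~\ref{P:CCbasis}, together with Remark~\ref{R:CCgeneral} (which handles entries not at the corner position), permits one further consecutive cancellation. Hence the iteration stops precisely at a minimal complex $\mathbf{F}$, and since, as observed following Definition~\ref{D:CC}, each consecutive cancellation preserves the homology of the complex, $\mathbf{F}$ remains exact in positive homological degrees and has $H_0 = S/M$; thus $\mathbf{F}$ is a minimal free resolution of $S/M$.

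To get both the Taylor basis and the direct sum decomposition $\mathbf{T} \cong \mathbf{F}\oplus\mathcal{E}$, I would track all basis changes across the iteration. By Proposition~\ref{P:CCbasis}, a single cancellation at an entry of the matrix of $d_{i+1}\colon F_{i+1}\to F_i$ alters the bases of $F_i$ and $F_{i+1}$ by matrices $U_i \in \mbox{Aut}(F_i)$ and $U_{i+1}\in \mbox{Aut}(F_{i+1})$, and in the new bases the complex becomes an external direct sum of a smaller complex and a trivial complex $0 \to S(-m) \xrightarrow{\ 1\ } S(-m) \to 0$ whose two copies of $S(-m)$ are basis elements. Since $\mbox{Aut}(F_i)$ is a group, composing all the basis-change matrices occurring at each homological degree produces a single change of basis map $\sigma$ of $\mathbf{T}$ such that $\sigma(\mathbf{T}) = \mathbf{F}\oplus \mathcal{E}$, where $\mathcal{E}$ is the direct sum of all the trivial complexes that were split off. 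By the discussion of change of basis in Subsection~2.1, this gives $\mathbf{T} \cong \mathbf{F}\oplus \mathcal{E}$.

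Finally, the basis of $\mathbf{F}$ obtained this way is a multigraded basis of a subcomplex of $\sigma(\mathbf{T})$, so by Remark~\ref{R:TBin} each of its elements $f$ has a dehomogenization $g$ that is a Taylor chain with $\hbar(g)=f$; applying Definition~\ref{D:Taylorbasis} directly, the collection of these dehomogenizations is a Taylor basis of $\mathbf{F}$. The main obstacle I anticipate is the bookkeeping needed to ensure that at every intermediate stage the modified bases remain multigraded and the complex still admits Proposition~\ref{P:CCbasis}; this is handled by the fact that each $U_i$ lies in $\mbox{Aut}(F_i)$, so every successive basis is multigraded, and the group structure of $\mbox{Aut}(F_i)$ makes the composition across iterations well-defined.
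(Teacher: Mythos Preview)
Your proposal is correct and follows essentially the same approach as the paper: iterate consecutive cancellations until no nonzero scalar entries remain, use the finiteness of $\operatorname{rank}(\mathbf{T})=2^r$ for termination, collect the removed short trivial complexes into $\mathcal{E}$, and then invoke Definition~\ref{D:Taylorbasis} to identify the dehomogenized basis as a Taylor basis. Your version is slightly more explicit about tracking the accumulated change-of-basis map $\sigma$ through the groups $\mbox{Aut}(F_i)$, whereas the paper simply asserts that $\mathbf{T}\cong\mathbf{F}\oplus\mathcal{E}$ is ``easy to see'', but the underlying argument is the same.
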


\begin{proof}
If no entry in the matrices of $\mathbf{T}$ is a nonzero scalar in $k$, then 
$\mathbf{T}$ is a minimal free resolution of $S/M$ with the faces of $\Omega$ 
as a Taylor basis. 

Otherwise, pick any entry in any matrix of $\mathbf{T}$ which is a nonzero 
scalar and do a consecutive cancellation at that entry. After deleting a short 
trivial complex, we obtain a new complex $\mathbf{U}$. Note that 
$H_0(\mathbf{U})=H_0(\mathbf{T})=S/M$ and 
$H_i(\mathbf{U})=H_i(\mathbf{T})=0$  for $i\geq 1$, so that $\mathbf{U}$ 
is a multigraded free resolution of $S/M$ with a multigraded basis and 
$\mbox{rank}(\mathbf{U})=\mbox{rank}(\mathbf{T})-2$. If no entry in the 
matrices of $\mathbf{U}$ is a nonzero scalar, then $\mathbf{U}$ is a 
minimal free resolution of $S/M$ and the dehomogenization of the basis 
elements of $\mathbf{U}$ gives a Taylor basis of $\mathbf{U}$. 

Otherwise, pick any entry in any matrix of $\mathbf{U}$ which is a nonzero 
scalar and do a consecutive cancellation at that entry, then we get a 
multigraded free resolution $\mathbf{V}$ of $S/M$ with a multigraded basis and 
$\mbox{rank}(\mathbf{V})=\mbox{rank}(\mathbf{U})-2$. 

Since $\mbox{rank}(\mathbf{T})=2^r < \infty$, it follows that after doing a 
finite number of consecutive cancellations, we will get a minimal free resolution 
$\mathbf{F}$ of $S/M$.  Let $\mathcal{E}$ be 
the direct sum of the short trivial complexes associated to the consecutive 
cancellations, then $\mathcal{E}$ is a trivial complex and it is easy to see 
that $\mathbf{T} \cong \mathbf{F} \oplus \mathcal{E}$.
Thus, dehomogenizing the multigraded basis elements of 
$\mathbf{F}$, we get a Taylor basis of $\mathbf{F}$.
\end{proof}

\begin{remark}\label{R:CCnotall}
Not all minimal free resolutions of $S/M$ can be obtained by using consecutive 
cancellations. For example, let $S$ and $M$ be as in Example \ref{E:TBmany}, 
then the following minimal free resolution of $S/M$ 
\[
\begin{tikzcd}[ampersand replacement=\&]
\mathbf{F}: \  0\to S(-xyz)^2  \ar[r, "{\begin{pmatrix} -z & -z \\ y & -y\\ 0 & 2x  \end{pmatrix}}"]
\& [2.2em] S(-xy) \oplus S(-xz) \oplus S(-yz) \arrow[r, "{\begin{pmatrix} xy & xz & yz \end{pmatrix}}"]
\& [3.3em] S
\end{tikzcd}
\]
with its Taylor basis  $\emptyset$, $\{1\}$, $\{2\}$, $\{3\}$, 
$\{1,2\}$, $\{1,3\}+\{2,3\}$, can not be obtained by using
consecutive cancellations. 
\end{remark}

Similar to Theorem \ref{T:CCminimal}, in general, if $N$ is a graded 
$R$-module and $\mathbf{G}$ is a graded free resolution of $N$ 
over $R$, then we can use consecutive cancellations to get a minimal 
free resolution of $N$. As an application we prove the following lemma, 
which is Lemma 9.3 in \cite{B:Pe}. 

\begin{lemma}\label{L:trivial}
Let $\mathbf{G}$ be a finite exact graded complex of finitely generated 
free $R$-modules, then $\mathbf{G}$ is isomorphic to a trivial complex. 
\end{lemma}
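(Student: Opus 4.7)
The plan is to mimic the proof of Theorem \ref{T:CCminimal}: iteratively apply consecutive cancellations to $\mathbf{G}$, peeling off short trivial complexes as summands, until nothing is left. Since $\mathbf{G}$ is exact, the ``minimal'' version of $\mathbf{G}$ obtained at the end of this process must be the zero complex, so the accumulated direct sum $\mathcal{E}$ of short trivial summands must itself be isomorphic to $\mathbf{G}$. By Remark \ref{R:CCgeneral}, the consecutive-cancellation machinery of Proposition \ref{P:CCbasis} is available for graded free $R$-modules, and by the remark following Definition \ref{D:CC} each cancellation preserves homology, hence preserves exactness.

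The crucial input is the following claim: if $\mathbf{G}\neq 0$ is exact, then some entry of some differential $d_i$ in $\mathbf{G}$ is a nonzero scalar in $k\subseteq R$. To prove it, let $j$ be the smallest index with $G_j\neq 0$. By exactness at $G_j$, the differential $d_{j+1}\colon G_{j+1}\to G_j$ is surjective. Tensoring with $k=R/\mathfrak{m}$ (where $\mathfrak{m}$ is the unique maximal graded ideal of $R$) yields a surjection of nonzero finite-dimensional graded $k$-vector spaces
\[
G_{j+1}/\mathfrak{m}G_{j+1}\twoheadrightarrow G_j/\mathfrak{m}G_j,
\]
so the matrix of $d_{j+1}\otimes_R k$, which is obtained from the matrix of $d_{j+1}$ by taking constant terms, cannot be the zero matrix. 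This forces some entry of $d_{j+1}$ to have a nonzero scalar term, and after a multigraded change of basis we may assume that entry itself is a unit.

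With this in hand, I would perform a consecutive cancellation at a unit entry as in Proposition \ref{P:CCbasis}, obtaining $\mathbf{G}\cong \mathbf{G}'\oplus \mathcal{E}_1$ where $\mathcal{E}_1$ is a short trivial complex $0\to R(-m)\to R(-m)\to 0$ and $\mathbf{G}'$ is still a finite exact graded complex of finitely generated free $R$-modules with $\mathrm{rank}(\mathbf{G}')=\mathrm{rank}(\mathbf{G})-2$. Inducting on the total rank (finite by hypothesis), after finitely many steps the process terminates at the zero complex, yielding $\mathbf{G}\cong \mathcal{E}_1\oplus \mathcal{E}_2\oplus\cdots\oplus \mathcal{E}_N=:\mathcal{E}$, a trivial complex. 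The main obstacle is the unit-entry claim; once that is established, the rest is the same bookkeeping argument used in Theorem \ref{T:CCminimal}.
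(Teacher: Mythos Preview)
Your proof is correct and follows essentially the same strategy as the paper: apply consecutive cancellations iteratively until the complex vanishes, accumulating short trivial summands along the way. The paper phrases this more tersely by observing that $\mathbf{G}$ is a free resolution of $N=0$, whose only minimal free resolution is $\mathbf{F}=0$, and then invoking the general principle (stated just before the lemma) that any graded free resolution can be reduced to the minimal one via consecutive cancellations; your version makes the ``unit entry'' step explicit via the surjectivity argument at the lowest nonzero spot, which is exactly the content hidden behind that invocation. One small simplification: in the graded setting the matrix entries are homogeneous, so an entry surviving in $R/\mathfrak{m}$ is already a nonzero scalar and no further change of basis is needed.
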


\begin{proof}
Let $N=0$, then $\mathbf{G}$ is a free resolution of $N$ and 
$\mathbf{F}=0$ is the only minimal free resolution of $N$, so 
that similar to Theorem \ref{T:CCminimal}, there is a trivial complex 
$\mathcal{E}$ such that $\mathbf{G}\cong \mathbf{F}\oplus \mathcal{E}=\mathcal{E}$.
\end{proof}

Next we use consecutive cancellations to prove a result about 
Taylor basis, which will be used in Section 3. 

\begin{proposition}\label{P:CCTB}
Let $(\mathbf{T},d)$ be the Taylor resolution of $S/M$. Let $\sigma$ 
be a change of basis map of $\mathbf{T}$ and $f_1, \ldots, f_p$ 
the multigraded basis of $(\sigma(\mathbf{T}),d)$. 
Let $\mathbf{F}=\sigma(\mathbf{T})|_{\{f_{i_1}, \ldots, f_{i_q}\}}$ 
be a subcomplex of $\sigma(\mathbf{T})$ such that $\mathbf{F}$ 
is a minimal free resolution of $S/M$. 
Then the dehomogenizations of $f_{i_1}, \ldots, f_{i_q}$ is a 
Taylor basis of $\mathbf{F}$. 
\end{proposition}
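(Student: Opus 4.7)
The plan is to construct a further change of basis $\tau$ of $\sigma(\mathbf{T})$ that leaves each $f_{i_j}$ literally fixed and produces $\tau(\sigma(\mathbf{T})) = \mathbf{F} \oplus \mathcal{E}$ for some trivial complex $\mathcal{E}$. Once this is achieved, Definition \ref{D:Taylorbasis} applied to the composite change of basis map $\tau \circ \sigma$ immediately gives that the dehomogenizations of $f_{i_1}, \ldots, f_{i_q}$ form a Taylor basis of $\mathbf{F}$.

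First I would form the quotient complex $\mathbf{Q} = \sigma(\mathbf{T})/\mathbf{F}$. Since $\mathbf{F}$ is generated by the basis elements $f_{i_1}, \ldots, f_{i_q}$ of $\sigma(\mathbf{T})$, $\mathbf{Q}$ is a complex of multigraded free $S$-modules whose basis is given by the images of the complementary $f_j$'s. I would then show that $\mathbf{Q}$ is exact using the long exact homology sequence of $0 \to \mathbf{F} \to \sigma(\mathbf{T}) \to \mathbf{Q} \to 0$: both $\mathbf{F}$ and $\sigma(\mathbf{T})$ resolve $S/M$, and the inclusion $\mathbf{F}_0 \hookrightarrow (\sigma(\mathbf{T}))_0$ is the identity on $S$ because $(\sigma(\mathbf{T}))_0$ has rank one and $\mathbf{F}$ is a subcomplex, forcing $H_i(\mathbf{Q}) = 0$ for every $i$.

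Next I would iterate consecutive cancellations on $\sigma(\mathbf{T})$, each chosen to involve only complementary basis elements. If $\mathbf{Q}$ is nonzero, then being a nonzero exact finite complex of free modules, some differential matrix of $\mathbf{Q}$ must contain a nonzero scalar entry (otherwise $\mathbf{Q}$ would be a minimal exact complex, hence zero). Lifting such an entry yields a scalar entry at some position $(q,p)$ in the corresponding matrix of $\sigma(\mathbf{T})$ where both $f_p$ and $g_q$ are complementary. I would then apply Proposition \ref{P:CCbasis} at this entry. The crucial observation is that for any $k \in \{i_1, \ldots, i_q\}$, we have $f_k \in \mathbf{F}$, so $d(f_k) \in \mathbf{F}$; as $g_q$ is complementary, its coefficient $a_k$ in $d(f_k)$ must vanish. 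Thus the update $\widetilde{f}_k = f_k - a^{-1}a_k f_p$ reduces to $\widetilde{f}_k = f_k$ for every such $k$, and the only basis change at the lower degree replaces $g_q$ itself, not any $\mathbf{F}$-basis element. Hence $\mathbf{F}$ persists as a subcomplex generated by the same, literally unchanged $f_{i_j}$'s, and a trivial summand $0 \to S(-m) \to S(-m) \to 0$ splits off.

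Iterating this procedure drives $\mathbf{Q}$ to zero in finitely many steps, since the Taylor resolution has finite rank, producing $\tau(\sigma(\mathbf{T})) = \mathbf{F} \oplus \mathcal{E}$ with $\mathcal{E}$ the direct sum of all the split-off trivial complexes. The main obstacle is exactly the persistence of the $f_{i_j}$'s under every cancellation: one must verify that the subcomplex hypothesis forces the offending entries of the differential matrix to vanish, so that the cancellations fix the $\mathbf{F}$-basis on the nose rather than merely preserving $\mathbf{F}$ up to isomorphism. The remainder is an inductive bookkeeping on the rank.
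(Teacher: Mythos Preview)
Your proposal is correct and follows essentially the same approach as the paper's proof: form the quotient $\sigma(\mathbf{T})/\mathbf{F}$, use its exactness to locate a scalar differential entry between two complementary basis elements, perform the consecutive cancellation there, and iterate until only $\mathbf{F}$ and a trivial summand remain. Your explicit verification that each $f_{i_j}$ is literally fixed by the cancellation (because $d(f_{i_j})\in\mathbf{F}$ forces the relevant coefficient $a_{i_j}$ to vanish, and the only modified target basis element is the complementary $g_q$) spells out precisely what the paper abbreviates as ``by Proposition~\ref{P:CCbasis} it is easy to see.''
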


\begin{proof}
Without the loss of generality we assume that 
$\{f_{i_1}, \ldots, f_{i_q}\}=\{f_1, \ldots, f_q\}$. 
If $q=p$ then the result obviously holds. 

If $q<p$ then we consider the quotient complex $\sigma(\mathbf{T})/\mathbf{F}$. 
Since $H(\sigma(\mathbf{T}))=H(\mathbf{F})$, it follows that 
$\sigma(\mathbf{T})/\mathbf{F}$ is an exact complex of multigraded 
free $S$-modules with basis $\overline{f_{q+1}}, \ldots, \overline{f_p}$. 
By Lemma \ref{L:trivial} $\sigma(\mathbf{T})/\mathbf{F}$ is isomorphic 
to a trivial complex, which implies that we can use a consecutive 
cancellation to separate a short trivial complex from $\sigma(\mathbf{T})/\mathbf{F}$. 
Hence, there exist $q+1 \leq j \neq l \leq p$ such that 
\[
d(\overline{f_j})=\lambda_{q+1}\overline{f_{q+1}}+\cdots +\lambda_p\overline{f_p},
\]
where $\lambda_{q+1}, \ldots, \lambda_{p}$ are multigraded elements in $S$ and 
$\lambda_l\neq 0 \in k$. 
Thus, there exist multigraded elements $\lambda_1, \ldots, \lambda_q \in S$ 
such that 
\[
d(f_j)=\lambda_1f_1+\cdots +\lambda_qf_q+\lambda_{q+1}f_{q+1}+\cdots +\lambda_pf_p,
\]
with $\lambda_l\neq 0 \in k$. 
After doing a consecutive cancellation at the entry $\lambda_l$ in some 
matrix of $\sigma(\mathbf{T})$, we will get a complex of multigraded free 
$S$-modules $\mathbf{G}$ and a short trivial complex.  Note that $\mathbf{G}$ 
is a free resolution of $S/M$, and by Proposition \ref{P:CCbasis} it is easy to 
see that $\mathbf{G}$ has a multigraded basis $f_1, \ldots, f_q, g_1, \ldots, g_{p-2}$ 
and $\mathbf{G}|_{\{f_1, \ldots, f_q\}}=\mathbf{F}$. 

If $q=p-2$ then $\sigma(\mathbf{T})$ is isomorphic to the direct sum of $\mathbf{F}$ 
and a short trivial complex and the result holds. Otherwise, we can use the above 
method to separate another short trivial complex from $\mathbf{G}$.  
In general, after using $(q-p)/2$ consecutive cancellations, we will get 
$\mathbf{F}$ with the multigraded basis $f_1, \ldots, f_q$. 
So $\sigma(\mathbf{T})$ is isomorphic to the direct sum of $\mathbf{F}$ and 
a trivial complex, which implies that the dehomogenizations of $f_1, \ldots, f_q$ 
is a Taylor basis of $\mathbf{F}$. 
\end{proof}

\subsection{Atomic Lattices and the Lcm-lattice of a Monomial Ideal}

In \cite{B:GPW} Gasharov, Peeva and Welker introduce the concept of the 
lcm-lattice of a monomial ideal. The \emph{lcm-lattice} $L_M$ of $M$ is defined as 
the lattice with elements labelled by the least common multiples of subsets of 
$\{m_1,\ldots, m_r\}$ ordered by divisibility. They prove that minimal free 
resolutions of $S/M$ are determined by $L_M$. 

$L_M$ is an atomic lattice. The bottom element in $L_M$ is $1$ regarded 
as the lcm of the empty set; the atoms are $m_1, \ldots, m_r$; 
and the top  element is $\mbox{lcm}(m_1, \ldots, m_r)$. 
Conversely, in \cite{B:Ph} Phan proves that every atomic lattice is 
the lcm-lattice of some monomial ideal. So the study of monomial resolutions 
is closely related to the study of atomic lattices. 

Let $L$ be an atomic lattice with atoms $\alpha_1, \ldots, \alpha_r$. 
In this paper we will use the following method 
to label the elements in $L$: for any element $\alpha \in L$, $\alpha$ is 
labeled by $A_{\alpha}=\{1\leq i \leq r| \alpha_i \leq \alpha \}$. 
Hence, the bottom element $\hat{0}$ is 
labeled by the empty set $\emptyset$; 
the atoms $\alpha_1, \ldots, \alpha_r$ 
are labeled by $\{1\}, \ldots, \{r\}$, respectively; 
and the top element $\hat{1}$ is labeled by $\{1, \ldots, r\}$. 

Since for any $\alpha \neq \hat{0} \in L$ we have that $\alpha$ is the 
join of all the atoms below $\alpha$, that is, 
$\alpha=\bigvee\limits_{i\in A_{\alpha}} \alpha_i$. 
Thus, it is easy to see that $\alpha\leq \beta$ if and only if 
$A_{\alpha} \subseteq A_{\beta}$, and $\alpha=\beta$ if and 
only if $A_{\alpha}=A_{\beta}$. So the set of all $A_{\alpha}$ ordered 
by inclusion is a poset which is equal to $L$. 

For any $\alpha\in L$, following \cite{B:Cl} we define the \emph{rank} 
of $\alpha$ to be the maximal length of the chains from $\hat{0}$ to 
$\alpha$ and denote it by $\mbox{rk}(\alpha)$. Then we have that 
$\mbox{rk}(\hat{0})=0$, $\mbox{rk}(\alpha_i)=1$ for $1\leq i \leq r$, 
and if $\alpha$ is not the bottom element or an atom, then 
$\mbox{rk}(\alpha)\geq 2$. $\mbox{rk}(\hat{1})$ is called the  \emph{rank} 
of $L$, denoted by $\mbox{rk}(L)$. Also, it is easy to see that 
$\mbox{rk}(\alpha) \leq |A_\alpha|$ for any $\alpha\in L$.

\begin{definition}\label{D:closure}
Let $L$ be an atomic lattice with atoms $\alpha_1, \ldots, \alpha_r$.  
For any $A\subseteq \{1, \ldots, r\}$, let 
$\overline{A}=\bigcap\limits_{A_{\alpha} \supseteq A}  A_{\alpha}$,
then $\overline{A}$ is called the \emph{closure of $A$}. 
Let $\Omega$ be the simplex with the vertex set $\{1,\ldots, r\}$. 
Let $c=\lambda_1c_1+\cdots+\lambda_lc_l$ be a chain in $\Omega$, 
where $\lambda_1, \ldots, \lambda_l$ are nonzero scalars in $k$ and 
$c_1, \ldots, c_l$ are some different faces in $\Omega$. 
If there exists $c_i$ such that 
$\overline{c_i}=\overline{\mbox{supp}(c)}=A_\alpha$ for some 
$\alpha \in L$ then we say that $c$ is a \emph{Taylor chain at $\alpha$}. 
\end{definition}

Next we prove some basic results about atomic lattices. 

\begin{proposition}\label{P:closure}
$\overline{A}=A_{\beta}$ for some $\beta \in L$. In particular, if 
$L=L_M$ then $\overline{A}=A_m$ with $m=\mbox{lcm}(m_i|i\in A)=\mbox{mdeg}(A)$. 
\end{proposition}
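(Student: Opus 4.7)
The plan is to exhibit explicitly the element $\beta\in L$ such that $A_\beta=\overline{A}$. I would set $\beta=\bigwedge\{\alpha\in L : A_\alpha\supseteq A\}$; this meet is taken over a nonempty set since $A_{\hat{1}}=\{1,\ldots,r\}\supseteq A$, so $\beta$ is well-defined.

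The key claim to establish is $A_\beta\supseteq A$, i.e.\ that $\beta$ itself participates in the intersection defining $\overline{A}$. For this I would pick any atom index $i\in A$ and observe that $\alpha_i\leq\alpha$ for every $\alpha$ occurring in the meet (since $i\in A\subseteq A_\alpha$ means $\alpha_i\leq\alpha$ by the characterization $\alpha\leq\beta\iff A_\alpha\subseteq A_\beta$ recorded earlier in the subsection). Taking the meet over all such $\alpha$, I get $\alpha_i\leq\beta$, hence $i\in A_\beta$. This gives $A\subseteq A_\beta$, so $A_\beta$ is one of the sets being intersected and therefore $\overline{A}\subseteq A_\beta$. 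The reverse inclusion $A_\beta\subseteq\overline{A}$ is immediate: $\beta\leq\alpha$ for every $\alpha$ in the indexing set, so $A_\beta\subseteq A_\alpha$ for each such $\alpha$, and intersecting yields $A_\beta\subseteq\overline{A}$.

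For the second assertion, I would specialize to $L=L_M$ and take $m=\operatorname{lcm}(m_i\mid i\in A)=\operatorname{mdeg}(A)$. Two direct divisibility checks suffice. First, for each $i\in A$ we have $m_i\mid m$, so $\alpha_i\leq m$ in $L_M$ and thus $A\subseteq A_m$; this shows $A_m$ appears in the intersection, giving $\overline{A}\subseteq A_m$. Second, any $\alpha\in L_M$ with $A_\alpha\supseteq A$ is a common multiple of the $m_i$ for $i\in A$, hence $m\mid\alpha$, i.e.\ $m\leq\alpha$ in $L_M$, which yields $A_m\subseteq A_\alpha$; intersecting over all such $\alpha$ gives $A_m\subseteq\overline{A}$. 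Combining the two inclusions completes the proof.

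I do not expect a genuine obstacle here: the statement is essentially a translation lemma between the lattice order on $L$ and the inclusion order on the labels $A_\alpha$, and the only subtlety is checking that the set of labels containing $A$ is closed under meets, which is handled by the atomwise argument above. The main value of the lemma is bookkeeping: it lets every closure $\overline{A}$ be freely identified with an element of $L$, and in the case $L=L_M$ with the multidegree $\operatorname{mdeg}(A)$ introduced in Definition \ref{D:multidegree}.
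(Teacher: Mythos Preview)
Your proof is correct, but it takes a dual route to the paper's. The paper defines $\beta=\bigvee_{i\in A}\alpha_i$, the join of the atoms indexed by $A$, whereas you define $\beta=\bigwedge\{\alpha:A_\alpha\supseteq A\}$, the meet of all elements whose label contains $A$. These are the same element, of course, but the paper builds it ``from below'' and you build it ``from above.'' The paper's choice has the small advantage that the $L_M$ case is then a one-liner: in $L_M$ the join is the lcm, so $\beta=\operatorname{lcm}(m_i:i\in A)=\operatorname{mdeg}(A)$ comes for free, whereas you redo that identification with a separate divisibility argument. Your approach, on the other hand, makes the equality $\overline{A}=A_\beta$ almost tautological once you check $A\subseteq A_\beta$, and it highlights the closure-operator flavor of the definition more directly.
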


\begin{proof}
Let $\beta=\bigvee\limits_{i\in A}\alpha_i$. Then $A_{\beta}\supseteq A$, 
which implies that $A_\beta \supseteq \overline{A}$; on the other hand, 
for any $A_\alpha \supseteq A$ we have that 
$\alpha=\bigvee\limits_{i\in A_\alpha} \alpha_i \geq \bigvee\limits_{i\in A} \alpha_i=\beta$, 
which implies that $\overline{A}=\bigcap\limits_{A_\alpha \supseteq A}A_\alpha \supseteq A_\beta$. 
So, $\overline{A}=A_\beta$. 
Note that if $L=L_M$ then we have that $\bigvee\limits_{i\in A} m_i=\mbox{lcm}(m_i|i\in A)$, 
so that $\overline{A}=A_m$ with $m=\mbox{lcm}(m_i|i\in A)$. 
\end{proof}

Note that for the lcm-lattice $L_M$, every element $m\in L_M$ is labeled 
by $A_m=\{1\leq i \leq r|m_i \ \mbox{divides}\  m\}$ with 
$\mbox{mdeg}(A_m)=m$, and for any $A\subseteq \{1,\ldots, r\}$ we 
have that $\mbox{mdeg}(A)=m$ if and only if $\overline{A}=A_m$. 
If $c$ is a Taylor chain as in Definition \ref{D:multidegree} with 
$\mbox{mdeg}(c)=m\in L_M$, then we have that 
$\overline{\mbox{supp}(c)}=A_m$ and $c$ is a Taylor chain at $m$. 

\begin{proposition}\label{P:meetjoin}
Let $L$ be an atomic lattice with atoms $\alpha_1, \ldots, \alpha_r$. 
Let $\beta_1, \ldots, \beta_s$ be some elements in  $L$,  then we have that
\[
 A_{\beta_1}\cap \cdots \cap A_{\beta_s}=A_{\beta_1\wedge \cdots \wedge \beta_s} \ \ \ \ \ \ \ 
 \overline{A_{\beta_1}\cup \cdots \cup A_{\beta_s}}=A_{\beta_1\vee \cdots \vee \beta_s}.
\]
\end{proposition}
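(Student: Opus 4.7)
The plan is to prove the two equalities separately, with the first following directly from the definitions of meet and the labelling $A_\beta = \{i : \alpha_i \leq \beta\}$, and the second reducing to Proposition \ref{P:closure} together with the fact that every element of $L$ is the join of the atoms below it.

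First I would handle the meet identity. By definition, $i \in A_{\beta_1} \cap \cdots \cap A_{\beta_s}$ if and only if $\alpha_i \leq \beta_j$ for every $j$, which is equivalent to $\alpha_i$ being a lower bound of $\{\beta_1,\ldots,\beta_s\}$, i.e., $\alpha_i \leq \beta_1 \wedge \cdots \wedge \beta_s$. This is exactly $i \in A_{\beta_1 \wedge \cdots \wedge \beta_s}$, giving the first equality. This step is purely formal and does not use anything beyond the definition of $A_\beta$ and the universal property of the meet.

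For the join identity, I would apply Proposition \ref{P:closure} to the set $A = A_{\beta_1} \cup \cdots \cup A_{\beta_s}$, which yields $\overline{A} = A_\gamma$ where $\gamma = \bigvee_{i \in A} \alpha_i$. The task then reduces to showing $\gamma = \beta_1 \vee \cdots \vee \beta_s$. Since $L$ is atomic, each $\beta_j$ equals $\bigvee_{i \in A_{\beta_j}} \alpha_i$, so
\[
\beta_1 \vee \cdots \vee \beta_s \;=\; \bigvee_{j=1}^{s} \bigvee_{i \in A_{\beta_j}} \alpha_i \;=\; \bigvee_{i \in A_{\beta_1} \cup \cdots \cup A_{\beta_s}} \alpha_i \;=\; \gamma,
\]
by the associativity and commutativity of the join. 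Combining this with $\overline{A} = A_\gamma$ gives the second equality.

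There is no real obstacle here: both parts are essentially bookkeeping that translates between elements of $L$ and their atom-support subsets. The only subtle point is to make sure to cite Proposition \ref{P:closure} for the closure side rather than redoing that argument, and to use the atomicity of $L$ (every element is the join of the atoms it dominates) when rewriting the $\beta_j$.
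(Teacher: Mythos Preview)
Your proof is correct and essentially the same as the paper's. For the meet identity you and the paper both use the universal property of $\wedge$; your chain of equivalences is just a more compact rendering of the paper's two inclusions. For the join identity the paper argues by double inclusion (using that each $A_{\beta_j}\subseteq A_{\beta_1\vee\cdots\vee\beta_s}$ for one direction, and Proposition~\ref{P:closure} to write $\overline{A_{\beta_1}\cup\cdots\cup A_{\beta_s}}=A_\beta$ and then compare $\beta$ with $\beta_1\vee\cdots\vee\beta_s$ for the other), whereas you invoke the explicit identification $\overline{A}=A_{\gamma}$ with $\gamma=\bigvee_{i\in A}\alpha_i$ from the \emph{proof} of Proposition~\ref{P:closure} and then compute $\gamma$ directly; this is the same idea, and in fact the paper uses your exact shortcut later in the proof of Corollary~\ref{C:unionclosure}.
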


\begin{proof}
For any $1\leq j \leq s$ we have that $\beta_1 \wedge \cdots \wedge \beta_s \leq \beta_j$, 
which implies that $A_{\beta_1\wedge \cdots \wedge \beta_s}\subseteq A_{\beta_j}$, 
and then $A_{\beta_1\wedge \cdots \wedge \beta_s} \subseteq  A_{\beta_1}\cap \cdots \cap A_{\beta_s}$;
On the other hand, for any $i\in  A_{\beta_1}\cap \cdots \cap A_{\beta_s}$ we have 
that $i\in A_{\beta_1}$, which means $\alpha_i \leq \beta_1$, and similarly, 
$\alpha_i\leq \beta_2, \ldots, \alpha_i\leq \beta_s$, so that 
$\alpha_i\leq \beta_1\wedge \cdots \wedge \beta_s$, which implies that 
$i\in  A_{\beta_1\wedge \cdots \wedge \beta_s}$, and then 
$A_{\beta_1}\cap \cdots \cap A_{\beta_s} \subseteq A_{\beta_1\wedge \cdots \wedge \beta_s}$. 
So, $A_{\beta_1}\cap \cdots \cap A_{\beta_s}=A_{\beta_1\wedge \cdots \wedge \beta_s}$.

For any $1\leq j \leq s$ we have that $\beta_j \leq \beta_1\vee \cdots \vee \beta_s$, 
which implies that $A_{\beta_j} \subseteq A_{\beta_1\vee \cdots \vee \beta_s}$, 
and then $A_{\beta_1}\cup \cdots \cup A_{\beta_s}\subseteq A_{\beta_1\vee \cdots \vee \beta_s}$, 
so that $\overline{A_{\beta_1}\cup \cdots \cup A_{\beta_s}}\subseteq A_{\beta_1\vee \cdots \vee \beta_s}$;
On the other hand, let $\overline{A_{\beta_1}\cup \cdots \cup A_{\beta_s}}=A_\beta$ 
for some $\beta \in L$, then $A_{\beta_1}\subseteq A_\beta$ which implies that 
$\beta_1 \leq \beta$, and similarly, $\beta_2\leq \beta, \ldots, \beta_s\leq \beta$, 
so that $\beta_1\vee \cdots \vee \beta_s \leq \beta$, which implies that 
$A_{\beta_1\vee \cdots \vee \beta_s} \subseteq A_\beta=\overline{A_{\beta_1}\cup \cdots \cup A_{\beta_s}}$. 
So, $\overline{A_{\beta_1}\cup \cdots \cup A_{\beta_s}}=A_{\beta_1\vee \cdots \vee \beta_s}$.
\end{proof}

\begin{corollary}\label{C:unionclosure}
Let $L$ be an atomic lattice with atoms $\alpha_1, \ldots, \alpha_r$. 
Let $A_1, \ldots, A_s$ be some subsets of $\{1, \ldots, r\}$. Then we have that 
\[
\overline{A_1 \cup \cdots \cup A_s}=\overline{\overline{A_1} \cup \cdots \cup \overline{A_s}}. 
\]
\end{corollary}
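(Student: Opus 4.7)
The plan is to reduce the statement to a direct application of the two preceding results, Proposition \ref{P:closure} and Proposition \ref{P:meetjoin}. By Proposition \ref{P:closure}, for each $j$ there is an element $\beta_j \in L$ (namely $\beta_j = \bigvee_{i \in A_j}\alpha_i$) such that $\overline{A_j} = A_{\beta_j}$. So the right-hand side becomes $\overline{A_{\beta_1} \cup \cdots \cup A_{\beta_s}}$, which by the second identity of Proposition \ref{P:meetjoin} equals $A_{\beta_1 \vee \cdots \vee \beta_s}$.

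For the left-hand side, I would again invoke Proposition \ref{P:closure}: $\overline{A_1 \cup \cdots \cup A_s} = A_\gamma$ where $\gamma$ is the join of all atoms indexed by $A_1 \cup \cdots \cup A_s$. Since the join operation is associative, this $\gamma$ equals $\bigvee_{j=1}^s \bigl(\bigvee_{i \in A_j} \alpha_i\bigr) = \beta_1 \vee \cdots \vee \beta_s$. Hence both sides equal $A_{\beta_1 \vee \cdots \vee \beta_s}$ and the identity follows.

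Alternatively, and perhaps cleaner, I would simply establish two abstract properties of the operator $A \mapsto \overline{A}$ and deduce the corollary formally: monotonicity ($A \subseteq B \Rightarrow \overline{A} \subseteq \overline{B}$, immediate from the definition since every $A_\alpha$ containing $B$ also contains $A$), and idempotence ($\overline{\overline{A}} = \overline{A}$, because $\overline{A} = A_\beta$ by Proposition \ref{P:closure} and $A_\beta$ is itself one of the sets appearing in the intersection defining $\overline{A_\beta}$). Then the containment $A_1 \cup \cdots \cup A_s \subseteq \overline{A_1} \cup \cdots \cup \overline{A_s}$ and monotonicity give one inclusion, while the containments $\overline{A_j} \subseteq \overline{A_1 \cup \cdots \cup A_s}$ together with monotonicity and idempotence give the reverse inclusion.

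There is no real obstacle here; the statement is a routine consistency check that the closure operator associated to $L$ behaves like an algebraic closure. The only thing one has to be careful about is not to circularly reuse the identity being proved; this is avoided by working through the atoms and the join operation directly, as in the first paragraph above.
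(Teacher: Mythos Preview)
Your primary argument is essentially the paper's proof: both use Proposition~\ref{P:closure} to write $\overline{A_j}=A_{\beta_j}$ with $\beta_j=\bigvee_{i\in A_j}\alpha_i$ and $\overline{A_1\cup\cdots\cup A_s}=A_\beta$ with $\beta=\bigvee_{i\in A_1\cup\cdots\cup A_s}\alpha_i$, observe by associativity that $\beta=\beta_1\vee\cdots\vee\beta_s$, and then invoke Proposition~\ref{P:meetjoin} to identify the right-hand side with $A_{\beta_1\vee\cdots\vee\beta_s}$. Your alternative closure-operator argument (monotonicity plus idempotence) is also correct and is a nice self-contained variant the paper does not use.
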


\begin{proof}
By Proposition \ref{P:closure} and its proof we see that 
$\overline{A_1 \cup \cdots \cup A_s}=A_\beta$, where 
$\beta=\bigvee\limits_{i\in A_1 \cup \cdots \cup A_s}\alpha_i$, 
and for any $1\leq j\leq s$, $\overline{A_j}=A_{\beta_j}$,  
where $\beta_j=\bigvee\limits_{i\in A_j}\alpha_i$. 
Hence, we have that $\beta=\beta_1\vee \cdots \vee \beta_s$. 
So by Proposition \ref{P:meetjoin} we have that 
$\overline{A_1 \cup \cdots \cup A_s}=\overline{\overline{A_1} \cup \cdots \cup \overline{A_s}}$. 
\end{proof}

\begin{proposition}\label{P:cover}
Let $L$ be an atomic lattice with atoms $\alpha_1, \ldots, \alpha_r$. 
Assume $\beta\neq \hat{0} \in L$ is not an atom. Let $\beta_1, \ldots, \beta_t$ 
be all the elements covered by $\beta$ in $L$. 
Then $t\geq 2$; $\forall 1\leq i<j \leq t, \beta=\beta_i \vee \beta_j$; and 
$A_\beta=A_{\beta_1}\cup \cdots \cup A_{\beta_t}$.
\end{proposition}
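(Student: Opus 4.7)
The plan is to prove all three conclusions by repeatedly exploiting the basic property that, in a finite atomic lattice, any element strictly below $\beta$ lies below at least one of the $\beta_k$'s (pick any maximal chain from that element up to $\beta$; its next-to-last link is covered by $\beta$, hence equals some $\beta_k$).

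For the first claim ($t\geq 2$), I would argue by contradiction: if $t=1$, so only $\beta_1$ is covered by $\beta$, then every atom $\alpha_i\leq \beta$ is either equal to $\beta$ (impossible, since $\beta$ is not an atom) or strictly below $\beta$, and by the observation above must then satisfy $\alpha_i\leq \beta_1$. Because $\beta=\bigvee_{i\in A_\beta}\alpha_i$, this would force $\beta\leq\beta_1$, contradicting $\beta_1<\beta$.

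For the second claim, fix $1\leq i<j\leq t$. Clearly $\beta_i\vee\beta_j\leq\beta$. If this inequality were strict, then $\beta_i\vee\beta_j$ would lie below some element $\beta_k$ covered by $\beta$; in particular $\beta_i\leq\beta_k\leq\beta$. Since $\beta_i$ is itself covered by $\beta$, the only possibilities are $\beta_k=\beta_i$ or $\beta_k=\beta$, and the latter is excluded, so $\beta_k=\beta_i$. By symmetry $\beta_k=\beta_j$, forcing $i=j$, a contradiction. Hence $\beta_i\vee\beta_j=\beta$.

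For the third claim, the inclusion $A_{\beta_1}\cup\cdots\cup A_{\beta_t}\subseteq A_\beta$ is immediate from $\beta_k\leq\beta$. For the reverse inclusion, take any $i\in A_\beta$, so $\alpha_i\leq\beta$; since $\beta$ is not an atom we have $\alpha_i<\beta$, and the preliminary observation gives $\alpha_i\leq\beta_k$ for some $k$, i.e.\ $i\in A_{\beta_k}$. The only subtle point in the whole proof is this preliminary observation; it is where finiteness of $L$ (which holds for lcm-lattices and is implicit throughout the paper) is used, and once it is stated all three parts follow cleanly without further obstacle.
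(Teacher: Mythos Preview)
Your proof is correct and follows essentially the same approach as the paper. The paper phrases your ``preliminary observation'' as ``$\beta_1,\ldots,\beta_t$ are the maximal elements in $(\hat 0,\beta)$,'' and for the second claim it argues slightly more directly (showing $\beta_i\vee\beta_j>\beta_i$ by maximality, then using that $\beta_i\lessdot\beta$ forces $\beta_i\vee\beta_j=\beta$) rather than invoking the observation again, but the ideas are the same.
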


\begin{proof}
Assume $t=1$, then $A_\beta=A_{\beta_1}$ so that $\beta=\beta_1$, 
contradicting to the assumption that $\beta_1$ is covered by $\beta$. 
Hence, $t\geq 2$. 

Let $(\hat{0},\beta)=\{\alpha \in L |\hat{0} <\alpha <\beta\}$. Since 
$\beta_1, \ldots, \beta_t$ are all the elements covered by $\beta$, it 
follows that $\beta_1, \ldots, \beta_t$ are the maximal elements in 
$(\hat{0},\beta)$. For any $1\leq i<j\leq t$ we have that 
$\beta_i \leq \beta_i \vee \beta_j \leq \beta$. 
Assume $\beta_i = \beta_i \vee \beta_j$, then $\beta_j \leq \beta_i$, 
which contradicts to the maximality of $\beta_i$ and $\beta_j$ in 
$(\hat{0}, \beta)$. Hence, $\beta_i \neq \beta_i \vee \beta_j$. 
Since $\beta_i$ is covered by $\beta$, it follows that 
$\beta_i \vee \beta_j=\beta$. 

For any  $1\leq j \leq t$ we have that $A_{\beta_j}\subseteq A_\beta$, 
so that $A_{\beta_1}\cup \cdots \cup A_{\beta_t}\subseteq A_\beta$. 
On the other hand, for any $i\in A_\beta$ we have that $\alpha_i <\beta$. 
Hence, $\alpha_i \in (\hat{0}, \beta)$, so that there exists $1\leq j\leq t$ 
such that $\alpha_i \leq \beta_j$, which implies that $i\in A_{\beta_j}$. 
Thus, $A_\beta \subseteq A_{\beta_1}\cup \cdots \cup A_{\beta_t}$. 
So, $A_\beta=A_{\beta_1}\cup \cdots \cup A_{\beta_t}$.
\end{proof}

By Proposition \ref{P:cover} we have the following definition. 

\begin{definition}\label{D:simplicialcomplex}
Let $L$ be an atomic lattice with atoms $\alpha_1, \ldots, \alpha_r$. 
Let $\beta\neq \hat{0} \in L$. If $\beta$ is an atom of $L$,
then we set $\Delta_\beta=\{\emptyset \}$; 
if $\beta$ is not an atom,  let $\beta_1, \ldots, \beta_t$ be all the 
elements in $L$ covered by $\beta$, then 
we set $\Delta_\beta$ to be the simplicial complex with the vertex 
set $A_\beta$ and facets $A_{\beta_1}, \ldots, A_{\beta_t}$, 
that is, 
\[
\Delta_\beta=\langle A_{\beta_1}, \ldots, A_{\beta_t} \rangle.
\]
We call $\Delta_\beta$ the \emph{simplicial complex at $\beta$}. 
For any $\beta\neq \hat{0} \in L$ let $\Omega_\beta$ be the simplex 
with the vertex set $A_\beta$. By Proposition \ref{P:cover} it is easy 
to see that $\Delta_\beta \neq \Omega_\beta$. 
We call $\Omega_\beta$ the \emph{simplex at $\beta$}. 
Let $\mathbb{Q}_\beta$ be the quotient complex 
$\widetilde{C}(\Omega_\beta;k)/\widetilde{C}(\Delta_\beta;k)=\widetilde{C}(\Omega_\beta, \Delta_\beta;k)$. 
We call $\mathbb{Q}_\beta$ the \emph{quotient complex at $\beta$}. 
Let $Q_\beta$ be the set of faces $\{ A\subseteq A_\beta| \overline{A}=A_\beta \}$, 
then $Q_\beta$ is a basis of $\mathbb{Q}_\beta$. 
\end{definition}

Let $L=L_M$, then for any $m\neq 1 \in L_M$ we have a simplicial 
complex $\Delta_m$ at $m$. By Theorem 57.9 and Theorem 58.8 
in \cite{B:Pe}, we have the following formula for the multigraded 
Betti numbers of $S/M$:
\[
\mbox{for \ any}\ i\geq 1 \  \mbox{and}\ m\neq 1\in L_M, \ 
b_{i,m}(S/M)=\dim_k\widetilde{H}_{i-2}(\Delta_m;k).
\]
For example, if $m_j$ is an atom in $L_M$ then we have that 
$\Delta_{m_j}=\{\emptyset\}$ and 
$b_{1,m_j}=\dim_k \widetilde{H}_{-1}(\{\emptyset\};k)=1$. 
Next we will prove a stronger result which relates a Taylor basis 
of a minimal free resolution of $S/M$ 
and a basis of $\widetilde{H}_{i-2}(\Delta_m;k)$ for all $m\neq 1 \in L_M$.

\begin{theorem}\label{T:basis}
Let $M$ be a monomial ideal in $S$ minimally generated by $r$ monomials. 
Let $(\Omega, d)$ be the simplex with the vertex set $\{1, \ldots, r\}$ .
Let $i\geq 1$ and $m\neq 1 \in L_M$ such that $b_{i,m}(S/M)=p\geq 1$. 
Let $g_1, \ldots, g_p$ be all the elements in a Taylor basis $B$ of a 
minimal free resolution $\mathbf{F}$ of $S/M$ such that $g_1, \ldots, g_p$ 
are of multidegree $m$ and in homological degree $i$. 
Then $[d(g_1)], \ldots, [d(g_p)]$ is a basis of $\widetilde{H}_{i-2}(\Delta_m;k)$,
and in particular, 
\[
b_{i,m}(S/M)=\dim_k\widetilde{H}_{i-2}(\Delta_m;k).
\]
\end{theorem}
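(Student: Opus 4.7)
The plan is to run everything through the pair $(\Omega_m,\Delta_m)$ together with its long exact sequence, identifying the multidegree-$m$ part of $\mathbf{T}/\mathfrak{m}\mathbf{T}$ with $\mathbb{Q}_m$ shifted by one in homological degree. Pick $\sigma$ and $\mathcal{E}$ as in Definition~\ref{D:Taylorbasis}, so that $\sigma(\mathbf{T})=\mathbf{F}\oplus\mathcal{E}$ with $\mathcal{E}$ trivial, and let $f_1,\dots,f_p$ be the basis elements of $\mathbf{F}$ of multidegree $m$ in homological degree $i$ whose dehomogenizations are $g_1,\dots,g_p$. For the original face basis, $(T_i)_m$ has $k$-basis $\{(m/\mbox{mdeg}(A))A:\ A\subseteq A_m,\ |A|=i\}$, the induced differential is the simplicial boundary of $\Omega_m$, and quotienting by $\mathfrak{m}$ kills exactly the faces with $\mbox{mdeg}(A)<m$ (i.e., the faces of $\Delta_m$); thus $(T_*/\mathfrak{m}T_*)_m\cong\mathbb{Q}_m$ after a shift of one. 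Because $\mathcal{E}/\mathfrak{m}\mathcal{E}$ is exact and $\mathbf{F}/\mathfrak{m}\mathbf{F}$ has zero differential, $\widetilde{H}_{i-1}(\mathbb{Q}_m)\cong H_i(\sigma(\mathbf{T})/\mathfrak{m}\sigma(\mathbf{T}))_m=(F_i/\mathfrak{m}F_i)_m=k^p$, and under the identification $(T_i/\mathfrak{m}T_i)_m\cong(\mathbb{Q}_m)_{i-1}$ each residue $\overline{f_j}$ maps to $\mbox{in}(g_j)$, so $[\mbox{in}(g_1)],\dots,[\mbox{in}(g_p)]$ is a $k$-basis of $\widetilde{H}_{i-1}(\mathbb{Q}_m)$.

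Next I verify the statement makes sense by showing $d(g_j)\in\widetilde{C}(\Delta_m;k)_{i-2}$. Writing $d_{\mathbf{F}}(f_j)=\sum_l c_{lj}f_l$, minimality of $\mathbf{F}$ forces each nonzero $c_{lj}$ into $\mathfrak{m}$; since $c_{lj}$ is a scalar multiple of $m/\mbox{mdeg}(f_l)$, this means $\mbox{mdeg}(f_l)$ is a proper divisor of $m$ whenever $c_{lj}\neq 0$. A direct computation (homogenize a face, apply $d_{\mathbf{T}}$, set all $x_i=1$) shows that dehomogenization intertwines $d_{\mathbf{F}}$ with the simplicial boundary $d$ of $\Omega$, so $d(g_j)=\sum_l\overline{c}_{lj}\,g_l$ with $\overline{c}_{lj}\in k$ and every occurring $g_l$ of multidegree strictly dividing $m$. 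The support of each such $g_l$ consists only of faces $A$ with $A\subseteq A_{\mbox{mdeg}(g_l)}\subseteq A_m$ and $\mbox{mdeg}(A)<m$, which places them inside $\Delta_m$ by Definition~\ref{D:simplicialcomplex}; hence $d(g_j)\in\widetilde{C}(\Delta_m;k)_{i-2}$ and it is a cycle because $d^2=0$.

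Since $\Omega_m$ is a simplex, $\widetilde{C}(\Omega_m;k)$ is acyclic, and the long exact sequence of the pair $(\Omega_m,\Delta_m)$ supplies an isomorphism $\partial:\widetilde{H}_{i-1}(\mathbb{Q}_m)\xrightarrow{\ \sim\ }\widetilde{H}_{i-2}(\Delta_m;k)$. To compute $\partial[\mbox{in}(g_j)]$, lift $\mbox{in}(g_j)$ to $\widetilde{C}(\Omega_m;k)$; the chain $g_j$ itself is a valid lift, since $g_j-\mbox{in}(g_j)$ has all its faces of multidegree strictly less than $m$ and therefore lies in $\widetilde{C}(\Delta_m;k)$. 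The snake-lemma recipe then yields $\partial[\mbox{in}(g_j)]=[d(g_j)]$, so $[d(g_1)],\dots,[d(g_p)]$ is the image of a basis of $\widetilde{H}_{i-1}(\mathbb{Q}_m)$ under an isomorphism, hence a basis of $\widetilde{H}_{i-2}(\Delta_m;k)$; in particular $b_{i,m}(S/M)=\dim_k\widetilde{H}_{i-2}(\Delta_m;k)$. The main obstacle is the bookkeeping in the first paragraph: one has to check carefully that the natural identification $(T_*/\mathfrak{m}T_*)_m\cong\mathbb{Q}_m[-1]$ intertwines the induced differential with the simplicial boundary of $\mathbb{Q}_m$, and that $\mathcal{E}/\mathfrak{m}\mathcal{E}$ contributes nothing to the multidegree-$m$ homology so that the $\overline{f_j}$ genuinely produce a basis on the $\mathbf{F}$ side.
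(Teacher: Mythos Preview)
Your proof is correct and follows essentially the same route as the paper: identify the multidegree-$m$ strand of $\mathbf{T}\otimes k$ with $\mathbb{Q}_m$, use $\sigma(\mathbf{T})=\mathbf{F}\oplus\mathcal{E}$ together with minimality of $\mathbf{F}$ and triviality of $\mathcal{E}$ to see that $[\mbox{in}(g_j)]$ form a basis of $H_{i-1}(\mathbb{Q}_m)$, and then transport through the connecting isomorphism $H_{i-1}(\mathbb{Q}_m)\cong\widetilde{H}_{i-2}(\Delta_m;k)$. The only cosmetic difference is that the paper first shows $[d(\mbox{in}(g_j))]$ is a basis and then argues $[d(g_j)]=[d(\mbox{in}(g_j))]$, whereas you directly take $g_j$ as the lift of $\mbox{in}(g_j)$ in the snake-lemma computation; these are the same step said two ways.
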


\begin{proof}
Let $(\mathbf{T}, \partial)$ be the Taylor resolution of $S/M$ with a basis 
containing all the faces of $\Omega$. Let $\sigma$ be a change of basis 
map of $\mathbf{T}$ such that $\sigma(\mathbf{T})=\mathbf{F}\oplus \mathcal{E}$, 
where $\mathcal{E}$ is a trivial complex and the dehomogenization of the 
basis elements of $\mathbf{F}$ is the Taylor basis $B$. Then we have 
the following commutative diagram:
\[
 \begin{tikzcd}
 \mathbf{T}:\  \ \ 0 \arrow{r} & T_r \arrow[r,"A_r"] &[2em] T_{r-1} \arrow[r]& \cdots \arrow[r] 
 & T_1 \arrow[r,"A_1"] &[2em]  T_0 \arrow[r] &0 \\
 \sigma(\mathbf{T}):\  0 \arrow{r} & T_r \arrow[r,"U_{r-1}^{-1}A_rU_r"] \arrow[u,"U_r"] 
 &T_{r-1} \arrow[r] \arrow[u,"U_{r-1}"]& \cdots \arrow[r] 
 & T_1 \arrow[r,"U_0^{-1}A_1U_1"]\arrow[u,"U_1"] & T_0 \arrow[r] \arrow[u,"U_0"]&0,
 \end{tikzcd}
\]
where $A_i$ is the matrix of $\partial_i$ under the basis of $\mathbf{T}$, 
$U_j \in \mbox{Aut}(T_j)$ is corresponding to the change of basis map 
$\sigma_j$ of $T_j$, and the matrix of $\partial_i$ under the new basis 
of $\mathbf{T}$ is $U_{i-1}^{-1}A_iU_i$. 
Let $\widetilde{A}_i$ and $\widetilde{U}_j$ be obtained from $A_i$ 
and $U_j$, respectively, by setting $x_1=\cdots=x_n=0$ in these matrices. 
Since $U_j$ is invertible, it follows that $\det(U_j)=\det(\widetilde{U}_j)$, 
and then $\widetilde{U}_j$ is also invertible. Then we have the following 
commutative diagram:
\[
 \begin{tikzcd}
 \mathbf{T}\otimes k: 0 \arrow{r} & [-1.5em]T_r\otimes k \arrow[r,"\widetilde{A}_r"] 
 &[1.2em] T_{r-1}\otimes k \arrow[r]& [-1.6em]\cdots \arrow[r] 
 &[-1.6em] T_1\otimes k \arrow[r,"\widetilde{A}_1"] &[1.2em]  T_0\otimes k \arrow[r] & [-1.6em]0 \\
 \sigma(\mathbf{T})\otimes k: 0 \arrow{r} 
 & T_r\otimes k \arrow[r,"\widetilde{U}_{r-1}^{-1}\widetilde{A}_r\widetilde{U}_r"] \arrow[u,"\widetilde{U}_r"] 
 &T_{r-1}\otimes k \arrow[r] \arrow[u,"\widetilde{U}_{r-1}"]& \cdots \arrow[r] 
 & T_1\otimes k \arrow[r,"\widetilde{U}_0^{-1}\widetilde{A}_1\widetilde{U}_1"]\arrow[u,"\widetilde{U}_1"] 
 & T_0\otimes k \arrow[r] \arrow[u,"\widetilde{U}_0"]&0.
 \end{tikzcd}
\]
Since $\widetilde{U}_0, \ldots, \widetilde{U}_r$ are invertible, it follows 
that $\mathbf{T}\otimes k \cong \sigma(\mathbf{T})\otimes k$. 
Let $\widetilde{\sigma}$ be the change of basis map of $\mathbf{T}\otimes k$ 
corresponding to $\widetilde{U}_0, \ldots, \widetilde{U}_r$, then 
we have that $\widetilde{\sigma}(\mathbf{T}\otimes k)=  \sigma(\mathbf{T})\otimes k$.

Note that $\mathbf{T}\otimes k$ has the same basis as that of $\mathbf{T}$ and 
the initial parts of the basis elements of $\sigma(\mathbf{T})$ are the basis 
elements of $\sigma(\mathbf{T})\otimes k$. 
Since $\sigma(\mathbf{T})\otimes k=(\mathbf{F}\otimes k) \oplus (\mathcal{E}\otimes k)$, 
it follows that $\mathbf{F}\otimes k$ is a direct sum of complexes of the form 
$0\to k \to 0$, placed in different homological degrees, each corresponding to 
an element in the Taylor basis of $\mathbf{F}$; and $\mathcal{E}\otimes k$ is 
a direct sum of complexes of the form $0\to k \xrightarrow{1} k \to 0$, placed 
in different homological degrees.  Note that 
\[
\mathbf{T}\otimes k=\left(\bigoplus\limits_{m\neq 1 \in L_M}\mathbb{Q}_m\right)\oplus(T_0\otimes k).
\]
Since the change of basis map $\widetilde{\sigma}$ does not mix basis 
elements of different multidegrees, it follows that $\widetilde{\sigma}$ 
can be restricted to each $\mathbb{Q}_m$. Hence, $\widetilde{\sigma}(\mathbb{Q}_m)$ 
is the direct sum of all $0\to k\to 0$ in $\mathbf{F}\otimes k$ with a basis 
element of multidegree $m$ and all $0 \to k \xrightarrow{1} k \to 0$ in 
$\mathcal{E}\otimes k$ with basis elements of multidegree $m$, placed 
in appropriate homological degrees. 

Let $h_1, \ldots, h_t$ be all the basis elements of $\mathbf{F}\otimes k$ 
with multidegree $m$. Then $[h_1], \ldots, [h_t]$ is a basis of 
$H(\widetilde{\sigma}(\mathbb{Q}_m);k)=H(\mathbb{Q}_m;k)$. 
Since $\mbox{in}(g_1), \ldots, \mbox{in}(g_p)$ are the basis elements 
of $\mathbf{F}\otimes k$ with multidegree $m$ in homological degree $i$, 
it follows that $\mbox{in}(g_1), \ldots, \mbox{in}(g_p)$ are $k$-linear 
combinations of some $(i-1)$-dimensional faces in $Q_m$, and then 
$[\mbox{in}(g_1)], \ldots, [\mbox{in}(g_p)]$ is a basis of $H_{i-1}(\mathbb{Q}_m;k)$.

Note that for any $1\leq j\leq p$, $\mbox{in}(g_j)$ is mapped to zero in 
$\mathbb{Q}_m$, so that $d(\mbox{in}(g_j))$ 
is a cycle in $\Omega_m$ containing no faces in $Q_m$, which implies that 
$d(\mbox{in}(g_j))$ is a cycle in $\Delta_m$. 
Since $\Omega_m$ is a simplex, by the exact sequence of complexes:
\[
0\to \widetilde{C}(\Delta_m;k) \to \widetilde{C}(\Omega_m;k) \to \mathbb{Q}_m \to 0,
\]
we have that $H_{i-1}(\mathbb{Q}_m;k)\cong \widetilde{H}_{i-2}(\Delta_m;k)$ 
and moreover, $[d(\mbox{in}(g_1))], \ldots, [d(\mbox{in}(g_p))]$ 
is a basis of $\widetilde{H}_{i-2}(\Delta_m;k)$.

Note that for any $1\leq j\leq p$, $g_j-\mbox{in}(g_j)$ is a $k$-linear 
combination of some $(i-1)$-dimensional faces of $\Delta_m$, so that 
$[d(g_j-\mbox{in}(g_j))]=0$ in $\widetilde{H}_{i-2}(\Delta_m;k)$, which 
implies that $[d(g_j)]=[d(\mbox{in}(g_j))]$ in $\widetilde{H}_{i-2}(\Delta_m;k)$. 
So $[d(g_1)], \ldots, [d(g_p)]$ is a basis of $\widetilde{H}_{i-2}(\Delta_m;k)$. 
\end{proof}

By Theorem \ref{T:basis} we can easily get a basis for 
$\widetilde{H}_{i-2}(\Delta_m;k)$ from a Taylor basis.  
Conversely, one may wonder if it is possible to obtain a Taylor 
basis form the bases of $\widetilde{H}(\Delta_m;k)$ for 
all $m\neq 1 \in L_M$. We will discuss this problem in 
Theorem \ref{T:cbasis}.

\section{Atomic Lattice Resolutions}

In this section, given any atomic lattice $L$ with $r$ atoms, we  
construct an $r$-frame $\mathbb{V}(L)$. Let $L_M$ be the lcm-lattice 
of $M$. We show that the $M$-homogenization of $\mathbb{V}(L_M)$  
is a minimal free resolution of $S/M$, which is called an atomic lattice 
resolution of $S/M$. And conversely, we show that every minimal 
resolution of $S/M$ can be obtained as an atomic lattice resolution.  
These results are called  the atomic lattice resolution theory, which 
can  be used to prove some results about monomial resolutions.  
But first we need to do some preparations and introduce a new 
concept called the exact closure of a complex of $k$-vector spaces.

\subsection{Exact Closures}
As mentioned in Remark \ref{R:kcomplex}, every complex of $k$-vector 
spaces $(\mathbb{U},d)$ is assumed to be finite and have a fixed basis. 
More specifically, we always assume that for any $i<0$ or $i\gg 0$, $U_i=0$;
and we set $\dim_k\mathbb{U}=\sum\limits_i\dim_kU_i$. 

\begin{definition}\label{D:exactclosure}
Let $(\mathbb{U},d)$ be a complex of $k$-vector spaces. 
Let $(\mathbb{V},\partial)$ be an exact complex of $k$-vector spaces 
such that $(\mathbb{U}, d)$ is a subcomplex of $(\mathbb{V},\partial)$, 
that is, there exists a subset $\{f_{i_1}, \ldots, f_{i_t}\}$ of the basis 
$\{f_1, \ldots, f_s\}$ of $\mathbb{V}$ such that 
$(\mathbb{U},d)=(\mathbb{V}|_{\{f_{i_1}, \ldots, f_{i_t}\}}, \partial)$.
If there does not exist an exact complex of $k$-vector spaces 
$(\mathbb{W}, \delta)$ such that $(\mathbb{U},d)$ is a subcomplex 
of $(\mathbb{W}, \delta)$ and $\dim_k\mathbb{W}<\dim_k\mathbb{V}$, 
then we say that $(\mathbb{V},\partial)$ is an \emph{exact closure} of 
$(\mathbb{U},d)$. 
\end{definition}

The next construction and proposition imply the existence of an exact closure.

\begin{construction}\label{C:exactclosure}
Let $(\mathbb{U},d): 0\to U_p \xrightarrow{d_p} U_{p-1} \to \cdots \to U_1 
\xrightarrow{d_1} U_0 \xrightarrow{d_0} 0$ be a complex of $k$-vector 
spaces with a fixed basis $f_1, \ldots, f_s$. For any $0\leq i\leq p$, let 
\[
\mu_i=\dim_k\frac{\mbox{Ker}d_i}{\mbox{Im}d_{i+1}},
\]
where we assume that $U_{p+1}=0$ and $d_{p+1}=0$. 
Pick $c_1^i, \ldots, c_{\mu_i}^i \in \mbox{Ker}d_i \subseteq U_i$ 
such that $[c_1^i], \ldots, [c_{\mu_i}^i]$ is a basis of 
$\mbox{Ker}d_i/\mbox{Im}d_{i+1}$. 
Let $G_{i+1}$ be a $\mu_i$-dimensional $k$-vector space with basis 
$g_1^i, \ldots, g_{\mu_i}^i$. We define a linear map 
$\varphi_{i+1}: G_{i+1}\to U_i$ such that 
$\varphi_{i+1}(g_j^i)=c_j^i$ for all $1\leq j\leq \mu_i$. 

Let $V_0=U_0$ and for any $1\leq i\leq p+1$  let $V_i=U_i\oplus G_i$. 
Let $\partial_0=d_0$ and for any $1\leq i\leq p+1$  let 
\[ 
\partial_i=\begin{pmatrix} d_i & \varphi_i \\ 0 & 0 \end{pmatrix} : 
V_i=U_i\oplus G_i \to V_{i-1}=U_{i-1}\oplus G_{i-1}. 
\]
Then by the next proposition, the complex 
\[
(\mathbb{V}, \partial): 0\to V_{p+1} \xrightarrow{\partial_{p+1}} 
V_p \to \cdots \to V_1 \xrightarrow{\partial_1} V_0 \xrightarrow{\partial_0} 0
\]
with basis $\{f_1, \ldots, f_s\}\cup \{g_j^i| 0\leq i\leq p, 1\leq j \leq \mu_i \}$ 
is an exact closure of $(\mathbb{U}, d)$. 
\end{construction}

In the proof of the next proposition we will use the following fact from 
elementary linear algebra: Let $\varphi : V\to W$ be a linear map between 
finite $k$-vector spaces $V$ and $W$; let $U$ be a subspace of $V$; then 
we have that 
\[
\dim_kV-\dim_kU\geq \dim_k\varphi(V)-\dim_k\varphi(U). 
\]

\begin{proposition}\label{P:exactclosure}
$(\mathbb{V},\partial)$ obtained in Construction \ref{C:exactclosure} is an 
exact closure of $(\mathbb{U}, d)$. 
\end{proposition}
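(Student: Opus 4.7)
The plan is to check four things in order: that $(\mathbb{V},\partial)$ is a complex, that $(\mathbb{U},d)$ is a subcomplex of it, that $(\mathbb{V},\partial)$ is exact, and that $\dim_k\mathbb{V}$ is minimal among exact complexes containing $(\mathbb{U},d)$.

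First I would expand $\partial_i\partial_{i+1}$ in block form; it equals
$\begin{pmatrix}d_id_{i+1} & d_i\varphi_{i+1}\\0 & 0\end{pmatrix}$,
which vanishes because $d^2=0$ and because $\varphi_{i+1}$ takes values in $\operatorname{Ker}d_i$ by construction. The subcomplex claim is then immediate from the same block form: $\partial_i$ restricted to $U_i\oplus 0$ equals $d_i$, so the original basis $\{f_1,\ldots,f_s\}$ of $\mathbb{U}$ sits inside the chosen basis of $\mathbb{V}$ in the manner required by Definition \ref{D:subcomplex}.

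For exactness I would verify $\operatorname{Ker}\partial_i=\operatorname{Im}\partial_{i+1}$ at each $V_i$ directly from the block form. On the one hand, $\operatorname{Im}\partial_{i+1}=(\operatorname{Im}d_{i+1}+\varphi_{i+1}(G_{i+1}))\oplus 0$, and this equals $\operatorname{Ker}d_i\oplus 0$ because by construction the $c_j^i$ lift a basis of $\operatorname{Ker}d_i/\operatorname{Im}d_{i+1}$. On the other hand, if $\partial_i(u,g)=0$ then $\varphi_ig=-d_iu\in\operatorname{Im}d_i$, and the linear independence of the classes $[c_j^{i-1}]$ in $H_{i-1}(\mathbb{U})$ forces $g=0$, whence $u\in\operatorname{Ker}d_i$; so the two sets agree. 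The boundary cases $V_0=U_0$ (where $\partial_1$ must be surjective) and $V_{p+1}=G_{p+1}$ (where $\partial_{p+1}=\varphi_{p+1}$ must be injective) reduce to the same basis property of the $c_j^i$.

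The hard step — and the one that uses the cited linear-algebra inequality — is minimality. Let $(\mathbb{W},\delta)$ be any exact complex of $k$-vector spaces containing $(\mathbb{U},d)$ as a subcomplex. Since $\mathbb{U}$ is a subcomplex, $\delta_i(U_i)=d_i(U_i)=\operatorname{Im}d_i$; applying the hint to $\delta_i:W_i\to W_{i-1}$ with subspace $U_i$ gives
\[
\dim W_i-\dim U_i\ \ge\ \dim\operatorname{Im}\delta_i-\dim\operatorname{Im}d_i
\]
for every $i$. Summing over $i$ and substituting the rank--nullity identities $\sum_i\dim\operatorname{Im}\delta_i=\tfrac{1}{2}\dim\mathbb{W}$ (from exactness of $\mathbb{W}$) and $\sum_i\dim\operatorname{Im}d_i=\tfrac{1}{2}\bigl(\dim\mathbb{U}-\sum_i\mu_i\bigr)$ (from the definition of $\mu_i$ and rank--nullity in $\mathbb{U}$), a short algebraic manipulation collapses the resulting inequality to $\dim\mathbb{W}-\dim\mathbb{U}\ge\sum_i\mu_i$, which is exactly $\dim\mathbb{W}\ge\dim\mathbb{V}$. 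The main obstacle is bookkeeping in this last step: one must ensure that the boundary terms (in particular the possibly strict inclusion $U_0\subseteq W_0$) contribute nonnegatively, and that the two $\tfrac{1}{2}$-identities are set up precisely so that the algebraic simplification actually produces $\sum_i\mu_i$ on the right-hand side.
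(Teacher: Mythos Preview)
Your argument is correct. The verification that $(\mathbb{V},\partial)$ is a complex and is exact matches the paper essentially line for line; the only difference is in the minimality step.

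The paper proves the sharper degree-by-degree inequality $\dim_k W_{i+1}-\dim_k U_{i+1}\ge\mu_i$ for each $i$ separately, via the chain
\[
\dim_k W_{i+1}-\dim_k U_{i+1}\ \ge\ \dim_k\operatorname{Im}\delta_{i+1}-\dim_k\operatorname{Im}d_{i+1}
\ =\ \dim_k\operatorname{Ker}\delta_i-\dim_k\operatorname{Im}d_{i+1}
\ \ge\ \dim_k\operatorname{Ker}d_i-\dim_k\operatorname{Im}d_{i+1}\ =\ \mu_i,
\]
using exactness of $\mathbb{W}$ for the equality and $\operatorname{Ker}d_i\subseteq\operatorname{Ker}\delta_i$ for the second inequality. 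Summing then gives $\dim_k\mathbb{W}\ge\dim_k\mathbb{V}$. Your route instead sums the raw inequality $\dim W_i-\dim U_i\ge\dim\operatorname{Im}\delta_i-\dim\operatorname{Im}d_i$ over all $i$ and invokes the global identities $\sum_i\dim\operatorname{Im}\delta_i=\tfrac12\dim\mathbb{W}$ and $\sum_i\dim\operatorname{Im}d_i=\tfrac12(\dim\mathbb{U}-\sum_i\mu_i)$; the algebra does collapse cleanly to $\dim\mathbb{W}-\dim\mathbb{U}\ge\sum_i\mu_i$, and your hedging about boundary terms is unnecessary. The paper's version is slightly more economical and, more importantly, yields the degree-wise equalities recorded in Remark~\ref{R:exactclosure1} (that any exact closure satisfies $\dim_k V_{i+1}=\dim_k U_{i+1}+\mu_i$), which your global argument does not directly give.
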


\begin{proof} 
Since for any $0\leq i \leq p$ we have that $d_id_{i+1}=0$ and 
$\mbox{Im}\varphi_{i+1}  \subseteq \mbox{Ker}d_i$, it follows that 
\[
\partial_i\partial_{i+1}=
\begin{pmatrix} d_i &\varphi_i \\ 0 & 0 \end{pmatrix}
\begin{pmatrix} d_{i+1} & \varphi_{i+1} \\ 0 & 0 \end{pmatrix} 
=\begin{pmatrix} d_id_{i+1} & d_i\varphi_{i+1} \\ 0& 0 \end{pmatrix} =0,
\]
which implies that $(\mathbb{V}, \partial)$ is a complex. 

For any $0\leq i \leq p$, let $a\in U_i$ and $b\in G_i$ such that 
$\partial_i \begin{pmatrix} a\\ b \end{pmatrix} =0$, then 
\[
\begin{pmatrix} d_i & \varphi_i \\ 0 & 0 \end{pmatrix} 
\begin{pmatrix} a\\ b \end{pmatrix} =\begin{pmatrix} d_i(a)+\varphi_i(b) \\ 0 \end{pmatrix} =0, 
\ \mbox{i.e.}, \  d_i(a)=-\varphi_i(b).
\]
Since $d_i(a)\in \mbox{Im}d_i$, $-\varphi_i(b) \in \mbox{Im}\varphi_i$ 
and $\mbox{Im}d_i\cap \mbox{Im}\varphi_i=\{0\}$, it follows that 
$d_i(a)=\varphi_i(b)=0$, which implies that $a\in\mbox{Ker}d_i$, and 
by the injectivity of $\varphi_i$ we get that $b=0$. Hence, we have 
that $\mbox{Ker}d_i=\mbox{Ker}\partial_i$. 
Note that $\mbox{Im}\partial_{i+1}=\mbox{Im}d_{i+1}\oplus 
\mbox{Im}\varphi_{i+1}=\mbox{Ker}d_i$. 
Thus, we have that $\mbox{Im}\partial_{i+1}=\mbox{Ker}\partial_i$ for all 
$0\leq i \leq p$. Note that $\mbox{Ker}\partial_{i+1}=0$. So, 
$(\mathbb{V}, \partial)$ is an exact complex. 

Let $(\mathbb{W},\delta)$ be an exact complex of $k$-vector spaces 
such that $(\mathbb{U}, d)$ is a subcomplex of $(\mathbb{W}, \delta)$. 
Then for any $0\leq i \leq p$ we have that 
\begin{align*}
\dim_kW_{i+1}-\dim_kU_{i+1} 
&\geq \dim_k\mbox{Im}\delta_{i+1}-\dim_k\mbox{Im}d_{i+1}\\
&= \dim_k\mbox{Ker}\delta_i-\dim_k\mbox{Im}d_{i+1}\\
&\geq \dim_k\mbox{Ker}d_i-\dim_k\mbox{Im}d_{i+1}\\
&=\mu_i,
\end{align*}
which implies that
\begin{align*}
\dim_kW_0 & \geq \dim_kU_0=\dim_kV_0\\
\dim_kW_1 & \geq \dim_kU_1+\mu_0=\dim_kV_1 \\
         &\vdots \\
\dim_kW_p & \geq \dim_kU_p+\mu_{p-1}=\dim_kV_p\\
\dim_kW_{p+1} & \geq \dim_kU_{p+1}+\mu_p=0+\mu_p=\dim_kV_{p+1}.
\end{align*}
Thus, we have that $\dim_k\mathbb{W}=\sum\limits_{i}\dim_kW_i\geq 
\sum\limits_{i=0}^{p+1}\dim_kV_i=\dim_k\mathbb{V}$. 
So, $(\mathbb{V},\partial)$ is an exact closure of $(\mathbb{U},d)$. 
\end{proof}

\begin{remark}\label{R:exactclosure1}
 By the definition of an exact closure and the proof of Proposition 
\ref{P:exactclosure}, it is easy to see that if $(\mathbb{V},\partial)$ 
is an exact closure of $(\mathbb{U},d)$ then we have that 
\begin{align*}
\dim_kV_0 & =\dim_kU_0, \ V_i=0 \ \mbox{for any} i<0 \ \mbox{or} \ i>p+1, \\
\dim_kV_{i+1} & =\dim_kU_{i+1}+\mu_i, \ \mbox{for any} \  0\leq i \leq p;
\end{align*}
and conversely, if $(\mathbb{V}, \partial)$ is an exact complex of 
$k$-vector spaces such that $(\mathbb{U},d)$ is a subcomplex of 
$(\mathbb{V}, \partial)$ and 
$\dim_k\mathbb{V}=\dim_k\mathbb{U}+\sum\limits_{i=0}^p\mu_i$, 
then $(\mathbb{V},\partial)$ is an exact closure of $(\mathbb{U}, d)$. 
Hence, if $(\mathbb{U},d)$ is exact, then $(\mathbb{U},d)$ has a unique 
exact closure which is itself; if $(\mathbb{U},d)$ is not exact, then there 
are  many choices for $c_j^i$, so that $(\mathbb{U},d)$ has 
 many different exact closures. 
\end{remark}

\begin{remark}\label{R:exactclosure2}
Let $(\mathbb{W},\delta)$ be an exact complex of $k$-vector spaces 
such that $(\mathbb{U},d)$ is a subcomplex of $(\mathbb{W}, \delta)$. 
Then for any $0\leq i\leq p$ we have that 
$\mbox{Im}\delta_{i+1}=\mbox{Ker}\delta_i\supseteq \mbox{Ker}d_i$, 
so that in Construction \ref{C:exactclosure} we can choose 
$g_1^i, \ldots, g_{\mu_i}^i\in W_{i+1}$ such that 
$\varphi_{i+1}(g_j^i)=\delta_{i+1}(g_j^i)=c_j^i\in \mbox{Ker}d_i$, 
and then $G_{i+1}$ spanned by $g_1^i,\ldots, g_{\mu_i}^i$  is a 
subspace of $W_{i+1}$. We call the resulting exact closure 
$(\mathbb{V}, \partial)$ an \emph{exact closure} of $(\mathbb{U}, d)$ 
constructed in $(\mathbb{W}, \delta)$. 
\end{remark}

In the next proposition we give a criterion for exact closure and we use it 
in Remark \ref{R:exactclosure3} to show that every exact closure of 
$(\mathbb{U},d)$ can be obtained by Construction \ref{C:exactclosure}. 

\begin{proposition}\label{P:exactclosureker}
Let $(\mathbb{W}, \delta)$ be an exact complex of $k$-vector spaces such 
that $(\mathbb{U}, d)$ is a subcomplex of $(\mathbb{W},\delta)$. Then 
$(\mathbb{W}, \delta)$ is an exact closure of $(\mathbb{U}, d)$ if and only 
if $\mbox{Ker}\delta_i=\mbox{Ker}d_i$ for all $i$. 
\end{proposition}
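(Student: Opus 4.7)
The plan is to combine the dimension-theoretic characterization of exact closures spelled out in Remark \ref{R:exactclosure1} with the exactness of $(\mathbb{W},\delta)$, using a downward induction. The key observation throughout is that since $(\mathbb{U},d)$ is a subcomplex of $(\mathbb{W},\delta)$, we have $d_i=\delta_i|_{U_i}$, so $\operatorname{Ker} d_i=U_i\cap\operatorname{Ker}\delta_i\subseteq \operatorname{Ker}\delta_i$ automatically, and the whole problem reduces to matching dimensions.

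For the forward direction, suppose $(\mathbb{W},\delta)$ is an exact closure. From Remark \ref{R:exactclosure1} we get $\dim_k W_0=\dim_k U_0$, $\dim_k W_{i+1}=\dim_k U_{i+1}+\mu_i$ for $0\le i\le p$, and $W_j=0$ for $j<0$ or $j>p+1$. The equality $\dim_k W_0=\dim_k U_0$ together with $U_0\subseteq W_0$ forces $W_0=U_0$, so $\operatorname{Ker}\delta_0=\operatorname{Ker}d_0$ trivially. I would then induct downward starting from $i=p+1$, where exactness of $\mathbb{W}$ plus $W_{p+2}=0$ gives $\operatorname{Ker}\delta_{p+1}=\operatorname{Im}\delta_{p+2}=0=\operatorname{Ker}d_{p+1}$. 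For the inductive step at level $i\le p$, assuming $\operatorname{Ker}\delta_{i+1}=\operatorname{Ker}d_{i+1}$, compute using exactness and rank-nullity:
\[
\dim_k\operatorname{Ker}\delta_i=\dim_k\operatorname{Im}\delta_{i+1}=\dim_k W_{i+1}-\dim_k\operatorname{Ker}d_{i+1}=\dim_k U_{i+1}+\mu_i-\dim_k\operatorname{Ker}d_{i+1}.
\]
Substituting $\dim_k U_{i+1}=\dim_k\operatorname{Ker}d_{i+1}+\dim_k\operatorname{Im}d_{i+1}$ and $\mu_i=\dim_k\operatorname{Ker}d_i-\dim_k\operatorname{Im}d_{i+1}$ collapses the right-hand side to $\dim_k\operatorname{Ker}d_i$. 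Combined with the inclusion $\operatorname{Ker}d_i\subseteq\operatorname{Ker}\delta_i$, this forces equality.

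For the converse, suppose $\operatorname{Ker}\delta_i=\operatorname{Ker}d_i$ for all $i$. First, $\operatorname{Ker}\delta_0=W_0$ by the end-of-complex convention, and $\operatorname{Ker}d_0=U_0$, so $W_0=U_0$. Also $\operatorname{Ker}\delta_{p+1}=\operatorname{Ker}d_{p+1}=0$ forces $\delta_{p+1}$ to be injective, and by exactness and the finiteness of $\mathbb{W}$ a short downward induction gives $W_j=0$ for all $j\ge p+2$. For the middle degrees, using exactness of $\mathbb{W}$ and rank-nullity for $\delta_{i+1}$:
\[
\dim_k W_{i+1}=\dim_k\operatorname{Ker}\delta_{i+1}+\dim_k\operatorname{Im}\delta_{i+1}=\dim_k\operatorname{Ker}d_{i+1}+\dim_k\operatorname{Ker}d_i.
\]
Replacing $\dim_k\operatorname{Ker}d_{i+1}$ by $\dim_k U_{i+1}-\dim_k\operatorname{Im}d_{i+1}$ gives $\dim_k W_{i+1}=\dim_k U_{i+1}+\mu_i$. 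Hence $\dim_k\mathbb{W}=\dim_k\mathbb{U}+\sum_{i=0}^p\mu_i$, and the converse part of Remark \ref{R:exactclosure1} applies to conclude that $(\mathbb{W},\delta)$ is an exact closure.

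I do not expect a serious obstacle here: the entire argument is dimension bookkeeping organized by an induction on homological degree, and the nontrivial input (minimality of the total dimension of an exact closure, already proved in Proposition \ref{P:exactclosure}) has been packaged into Remark \ref{R:exactclosure1}. The only point to be careful about is handling the top and bottom of the complex consistently, which the paper's convention $W_j=0$ for $j<0$ or $j>p+1$ makes clean.
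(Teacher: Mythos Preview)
Your proof is correct and follows essentially the same strategy as the paper: both directions reduce to dimension counting, with Remark~\ref{R:exactclosure1} supplying the bridge between the minimality condition and the per-degree dimension formulas $\dim_k W_{i+1}=\dim_k U_{i+1}+\mu_i$. Your ``if'' direction is virtually identical to the paper's.

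The one organizational difference is in the ``only if'' direction. The paper avoids induction by squeezing: it writes
\[
\mu_i \;\leq\; \dim_k\operatorname{Ker}\delta_i-\dim_k\operatorname{Im}d_{i+1}
\;=\;\dim_k\operatorname{Im}\delta_{i+1}-\dim_k\operatorname{Im}d_{i+1}
\;\leq\; \dim_k W_{i+1}-\dim_k U_{i+1}\;=\;\mu_i,
\]
forcing $\dim_k\operatorname{Ker}\delta_i=\dim_k\operatorname{Ker}d_i$ directly at each $i$. Your downward induction achieves the same conclusion but trades the second inequality for the inductive hypothesis $\operatorname{Ker}\delta_{i+1}=\operatorname{Ker}d_{i+1}$. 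Both are clean; the paper's squeeze is slightly more self-contained, while your induction makes the role of exactness at each step more explicit.
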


\begin{proof}
For any $0\leq i \leq p$ let $\mu_i$ be as in Construction \ref{C:exactclosure}; 
For any $i<0$ or $i>p$ let $\mu_i=0$. Hence, we have that 
$\dim_k\mbox{Ker}d_i=\dim_k\mbox{Im}d_{i+1}+\mu_i$ for all $i$. 

\emph{if}: For any $i$, since $(\mathbb{W},\delta)$ is exact and 
$\mbox{Ker}\delta_i=\mbox{Ker}d_i$, it follows that 
\begin{align*}
\dim_kW_i&=\dim_k\mbox{Ker}\delta_i+\dim_k\mbox{Im}\delta_i \\
                &=\dim_k\mbox{Ker}d_i +\dim_k\mbox{Ker}\delta_{i-1} \\
                &=\dim_k\mbox{Ker}d_i+\dim_k\mbox{Ker}d_{i-1} \\
                &=\dim_k\mbox{Ker}d_i+\dim_k\mbox{Im}d_i+\mu_{i-1}\\
                &=\dim_kU_i+\mu_{i-1},
\end{align*}
so that $\dim_k\mathbb{W}=\sum\limits_i\dim_kW_i
=\sum\limits_i(\dim_kU_i+\mu_{i-1}) 
=\dim_k\mathbb{U}+\sum\limits_{i=0}^p\mu_i$. 
Thus, by Remark \ref{R:exactclosure1}, $(\mathbb{W}, \delta)$ is an 
exact closure of $(\mathbb{U},d)$. 

\emph{only if}: Assume that $(\mathbb{W}, \delta)$ is an exact 
closure of $(\mathbb{U},d)$, then by Remark \ref{R:exactclosure1} 
we have that for any $i<0$ or $i>p+1$, $W_i=0$; 
$\dim_kW_0=\dim_kU_0$; and for any $0\leq i\leq p$, 
$\dim_kW_{i+1}=\dim_kU_{i+1}+\mu_i$. 
Hence, for any $0\leq i\leq p$ we have that 
\begin{align*}
\mu_i&=\dim_k\mbox{Ker}d_i-\dim_k\mbox{Im}d_{i+1}\\
               &\leq \dim_k\mbox{Ker}\delta_i-\dim_k\mbox{Im}d_{i+1}  \\
               &=\dim_k\mbox{Im}\delta_{i+1}-\dim_k\mbox{Im}d_{i+1}  \\
               &\leq  \dim_kW_{i+1}-\dim_kU_{i+1}=\mu_i,
\end{align*}
which implies that $\dim_k\mbox{Ker}\delta_i=\dim_k\mbox{Ker}d_i$, 
and then $\mbox{Ker}\delta_i=\mbox{Ker}d_i$. 
Note that for any $i<0$ or $i>p$, $\mbox{Ker}\delta_i=\mbox{Ker}d_i=0$. 
So $\mbox{Ker}\delta_i=\mbox{Ker}d_i$ for all $i$.
\end{proof}

\begin{remark}\label{R:exactclosure3}
Let $(\mathbb{W},\delta)$ be an exact closure of $(\mathbb{U},d)$, then 
by Remark \ref{R:exactclosure1} it is easy to see that for any $0\leq i\leq p$ 
there are $\mu_i$ basis elements $g_1^i, \ldots, g_{\mu_i}^i \in W_{i+1}$ 
which are not contained in $U_{i+1}$. For any $1\leq j\leq \mu_i$,  
let $c_j^i=\delta_{i+1}(g_j^i)\in \mbox{Im}\delta_{i+1}=\mbox{Ker}\delta_i$.
Then by Proposition \ref{P:exactclosureker} we have that $c_j^i\in \mbox{Ker}d_i$. 
Note that $\mbox{Im}\delta_{i+1}/\mbox{Im}d_{i+1}=
\mbox{Ker}\delta_i/\mbox{Im}d_{i+1}=\mbox{Ker}d_i/\mbox{Im}d_{i+1}$ is a 
$k$-vector space of dimension $\mu_i$. 
Since every vector in $\mbox{Im}\delta_{i+1}/\mbox{Im}d_{i+1}$ can be 
expressed as a $k$-linear combination of $[c_1^i], \ldots, [c_{\mu_i}^i]$, 
it follows that $[c_1^i], \ldots, [c_{\mu_i}^i]$ is a basis of 
$\mbox{Ker}d_i/\mbox{Im}d_{i+1}$. Thus,  by letting $G_{i+1}$ be the $k$-vector 
space generated by $g_1^i, \ldots, g_{\mu_i}^i$, we can use 
Construction \ref{C:exactclosure} to get $(\mathbb{W},\delta)$. 
So every exact closure of $(\mathbb{U},d)$ can be obtained by 
Construction \ref{C:exactclosure}. 
\end{remark}

\subsection{The Atomic Lattice Resolution Theory}
Let $L$ be an atomic lattice. In the next construction, we will associate $L$ 
with an exact complex of $k$-vector spaces $\mathbb{V}(L)$. 

\begin{construction}\label{C:VL}
Let $L$ be an atomic lattice with atoms $\alpha_1, \ldots, \alpha_r$. 
Let $\Omega$ be the simplex with the vertex set $\{1,\ldots, r\}$ 
and let $d$ be the boundary map in $\Omega$. 
In this construction we will inductively associate each element $\alpha\in L$ 
with a set of chains in $\Omega$ such that the support of each chain is contained 
in $A_{\alpha}$, and these chains are called \emph{Taylor basis elements at $\alpha$}. 
We use $\Gamma_\alpha$ to denote the set of Taylor basis elements at $\alpha$. 
For any $\alpha \neq \hat{0} \in L$ we can apply the boundary map $d$ to the 
set $\bigcup\limits_{\beta\leq \alpha}\Gamma_\beta$ to obtain an exact complex of 
$k$-vector spaces. This complex with basis 
$\bigcup\limits_{\beta\leq \alpha}\Gamma_\beta$  is denoted by 
$(\mathbb{V}_\alpha,d)$ and is called the \emph{frame at $\alpha$}. 
The frame $(\mathbb{V}_{\hat{1}}, d)$ at the top element $\hat{1}$ of $L$ 
will be denoted by $\mathbb{V}(L)$. 

Base case: Let $\Gamma_{\hat{0}}=\{ \lambda \emptyset \}$ for some $\lambda \neq 0 \in k$. 
For any atom $\alpha_i$ in $L$, let $\Gamma_{\alpha_i}=\{\lambda_i\{i\}\}$ 
for some $\lambda_i \neq 0 \in k$. Then $(\mathbb{V}_{\alpha_i}, d)$ is the exact 
complex $0\to k\xrightarrow{\lambda_i/\lambda} k \to 0$. 
Without the loss of generality and for simplicity, in the rest of the paper we will 
assume that $\Gamma_{\hat{0}}=\{ \emptyset \}$ and for any $1\leq i\leq r$, 
$\Gamma_{\alpha_i}=\{\{i\}\}$. 

Inductive step: Let $\alpha \in L$ with $\mbox{rk}(\alpha)\geq 2$, then $\alpha$ 
is not the bottom element or an atom of $L$ and $|A_\alpha|\geq 2$. 
Assume that for any $\beta <\alpha$ in $L$, $\Gamma_\beta$ has been obtained 
and we have the frame $(\mathbb{V}_\beta, d)$ at $\beta$. 
Next we want to associate $\alpha$ with some $\Gamma_\alpha$. 
Let $\{e_1, \ldots, e_s\}=\bigcup\limits_{\beta<\alpha}\Gamma_\beta$. 
Then if $e_i \in \Gamma_\beta$ we have that 
$\mbox{supp}(e_i)\subseteq A_\beta \subset A_\alpha$. 
By the inductive assumption we can apply the boundary map $d$ to the 
chains in $\{e_1,\ldots, e_s\}$ to obtain a complex of $k$-vector spaces, 
which is denoted by $(\mathbb{U}_\alpha, d)$. 
Let $(\mathbb{V}_\alpha,\partial)$ be an exact closure of $(\mathbb{U}_\alpha, d)$.  
Let $f \notin\{e_1, \ldots, e_s\}$ be a basis element of $(\mathbb{V}_\alpha, \partial)$ 
in homological degree $i$, then there exist nonzero scalars 
$\lambda_{j_1}, \ldots, \lambda_{j_l}$ and some Taylor basis elements 
$e_{j_1}, \ldots, e_{j_l}$ of dimension $i-2$ such that 
\[
\partial(f)=\lambda_{j_1}e_{j_1}+\cdots+\lambda_{j_l}e_{j_l}\in \mbox{Ker}d_{i-1}.
\]
Since $\mbox{supp}(e_{i_1})\cup \cdots \cup \mbox{supp}(e_{i_l})\subseteq A_\alpha$ 
and $d(\lambda_{j_1}e_{j_1}+\cdots+\lambda_{j_l}e_{j_l})=\partial^2(f)=0$, 
it follows that $\lambda_{j_1}e_{j_1}+\cdots+\lambda_{j_l}e_{j_l}$ is a cycle 
in the simplex $\Omega_\alpha$ at $\alpha$. 
Hence, there exists a chain $g$ in $\Omega_\alpha$ of dimension $i-1$ such that 
\[
d(g)=\lambda_{j_1}e_{j_1}+\cdots+\lambda_{j_l}e_{j_l}.
\]
Replacing the basis element $f$ by $g$, we have that 
$\mbox{supp}(g)\subseteq A_\alpha$ and $g$ is called a Taylor basis element 
at $\alpha$. We do this replacement for all the basis elements in 
$(\mathbb{V}_\alpha, \partial)$ which are not contained in $\{e_1, \ldots, e_s\}$. 
Collecting the Taylor basis elements at $\alpha$, we get $\Gamma_\alpha$, and 
the exact complex $(\mathbb{V}_\alpha,\partial)$ with basis 
$\bigcup\limits_{\beta \leq \alpha}\Gamma_\beta$ is denoted by 
$(\mathbb{V}_\alpha, d)$. 
\end{construction}

In Theorem \ref{T:main1} we prove that $\mathbb{V}(L_M)$ is an $r$-frame and 
the $M$-homogenization of $\mathbb{V}(L_M)$ is a minimal free resolution of 
$S/M$ with a Taylor basis $\bigcup\limits_{m\in L_M} \Gamma_m$.

\begin{theorem}\label{T:main1}
Let $M$ be a monomial ideal in $S$ minimally generated by monomials 
$m_1, \ldots, m_r$. Let $\Gamma_m$, $(\mathbb{V}_m, d)$ and $(\mathbb{U}_m, d)$ 
be as in Construction \ref{C:VL} where the atomic lattice is the lcm-lattice $L_M$.  
Then we have the following results: 

for any $m\neq 1 \in L_M$,  any Taylor basis element in $\Gamma_m$ is 
a Taylor chain at $m$; and there exists a set $P_m$ of Taylor chains at 
$m$ such that if we apply the boundary map $d$ to to the chains in $P_m$ we 
obtain a direct sum of short trivial complexes, which is denoted by $(\mathcal{E}_m,d)$. 
Let $(\mathbb{W}_m,d)$ be the complex of $k$-vector spaces obtained by applying $d$ to the 
chains in $( \bigcup\limits_{\widetilde{m}\leq m}\Gamma_{\widetilde{m}} ) 
\bigcup ( \bigcup \limits_{1\neq \widetilde{m} \leq m}P_{\widetilde{m}} )$, 
then we have that $(\mathbb{W}_m, d)=(\mathbb{V}_m,d)\bigoplus 
(\bigoplus \limits_{1\neq \widetilde{m}\leq m}(\mathcal{E}_{\widetilde{m}},d))
\cong \widetilde{C}(\Omega_m; k)[-1]$. 
Let $(M_{\leq m})$ be the monomial ideal generated by all the $m_i$ such that 
$m_i$ divides $m$, then the $(M_{\leq m})$-homogenization of $(\mathbb{W}_m, d)$ 
is isomorphic to the Taylor resolution of $S/(M_{\leq m})$, and the 
$(M_{\leq m})$-homogenization of $(\mathbb{V}_m, d)$ is a minimal free 
resolution of $S/(M_{\leq m})$ with a Taylor basis 
$\bigcup\limits_{\widetilde{m}\leq m}\Gamma_{\widetilde{m}}$. 
In particular, the $M$-homogenization of $(\mathbb{V}(L_M), d)$ 
is a minimal free resolution of $S/M$ with a Taylor basis 
$\bigcup \limits_{m\in L_M} \Gamma_m$. 
Such a minimal free resolution of $S/M$ is called an \emph{atomic lattice resolution}
of $S/M$. 
\end{theorem}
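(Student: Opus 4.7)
The plan is to prove all assertions simultaneously by induction on $\mathrm{rk}(m)$ in $L_M$. For the base case, when $m = m_i$ is an atom, take $\Gamma_{m_i} = \{\{i\}\}$ and $P_{m_i} = \emptyset$; then $\mathbb{V}_{m_i} = \mathbb{W}_{m_i}$ is the two-term complex $0 \to k\{i\} \to k\emptyset \to 0$, which is literally $\widetilde{C}(\Omega_{m_i}; k)[-1]$, and its $(m_i)$-homogenization $0 \to S(-m_i) \xrightarrow{m_i} S$ is the minimal (in fact Taylor) resolution of $S/(m_i)$, with $\{\emptyset,\{i\}\}$ as Taylor basis. The element $\{i\}\in \Gamma_{m_i}$ is trivially a Taylor chain at $m_i$ since $\overline{\{i\}}=A_{m_i}$.

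For the inductive step, fix $m$ with $\mathrm{rk}(m) \geq 2$ and let $\beta_1,\ldots,\beta_t$ be the elements covered by $m$, so by Proposition \ref{P:cover}, $A_m = A_{\beta_1}\cup\cdots\cup A_{\beta_t}$ and $\Delta_m = \langle A_{\beta_1},\ldots,A_{\beta_t}\rangle$. Every $\widetilde{m}<m$ satisfies $\widetilde{m}\leq \beta_j$ for some $j$ (maximality of the $\beta_j$'s in $(\hat{0},m)$), so $\bigcup_{\widetilde{m}<m}\Gamma_{\widetilde{m}} = \bigcup_i\bigcup_{\widetilde{m}\leq \beta_i}\Gamma_{\widetilde{m}}$, and similarly for the $P_{\widetilde{m}}$'s. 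By induction each $\mathbb{W}_{\beta_i} \cong \widetilde{C}(\Omega_{\beta_i};k)[-1]$, and because the inductive data $\Gamma_{\widetilde{m}},P_{\widetilde{m}}$ is constructed once per lattice element, these identifications automatically agree on common overlaps $\mathbb{W}_{\beta_i\wedge\beta_j}$. A union-cover argument (the subcomplexes $\Omega_{\beta_i}$ cover $\Delta_m$ with intersections $\Omega_{\beta_i\wedge\beta_j}$) assembles them into an isomorphism between $\widetilde{C}(\Delta_m;k)[-1]$ and a complex with basis $\bigcup_{\widetilde{m}<m}\Gamma_{\widetilde{m}} \cup \bigcup_{1\neq \widetilde{m}<m}P_{\widetilde{m}}$ that decomposes as $\mathbb{U}_m \oplus \bigoplus_{1\neq \widetilde{m}<m}\mathcal{E}_{\widetilde{m}}$. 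In particular, $H(\mathbb{U}_m) \cong \widetilde{H}(\Delta_m;k)[-1]$.

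Next I verify the Taylor-chain property and construct $P_m$. Each $g\in \Gamma_m$ is obtained by choosing a chain in $\Omega_m$ whose boundary is a cycle $\sum \lambda_j e_j$ representing a \emph{nonzero} class in $H(\mathbb{U}_m)\cong \widetilde{H}(\Delta_m;k)$; were $\mathrm{supp}(g)\subseteq A_{\beta_i}$ for some $i$, then $g\in \widetilde{C}(\Delta_m;k)$ would exhibit the class as a boundary — contradiction. So $g$ contains a face $c\subseteq A_m$ with $c\not\subseteq A_{\beta_i}$ for all $i$. Writing $\overline{c}=A_\gamma$ with $\gamma\leq m$: if $\gamma<m$ then by the maximality of the $\beta_j$'s, $\gamma\leq \beta_j$ for some $j$, forcing $c\subseteq A_{\beta_j}$ — contradiction. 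Hence $\overline{c}=A_m$ and $g$ is a Taylor chain at $m$. For $P_m$, consider $0\to \widetilde{C}(\Delta_m;k)\to \widetilde{C}(\Omega_m;k)\to \mathbb{Q}_m \to 0$; contractibility of $\Omega_m$ gives $H_i(\mathbb{Q}_m)\cong \widetilde{H}_{i-1}(\Delta_m;k)$. The elements of $\Gamma_m$ project to cycles in $\mathbb{Q}_m$ whose classes form a basis of $H(\mathbb{Q}_m)$; the remainder of $\mathbb{Q}_m$ is therefore exact, so by Lemma \ref{L:trivial} it is isomorphic to a direct sum of trivial complexes, providing the paired chains making up $P_m$. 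Each such chain has a face in $Q_m$ (closure $A_m$), so it is a Taylor chain at $m$. By construction and a dimension count, $\mathbb{W}_m = \mathbb{V}_m \oplus \bigoplus_{1\neq \widetilde{m}\leq m}\mathcal{E}_{\widetilde{m}} \cong \widetilde{C}(\Omega_m;k)[-1]$.

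Finally, the $(M_{\leq m})$-homogenization of $\widetilde{C}(\Omega_m;k)[-1]$ is by definition the Taylor resolution of $S/(M_{\leq m})$. Homogenizing the decomposition splits this as the homogenization of $\mathbb{V}_m$ plus trivial free complexes, so the homogenization of $\mathbb{V}_m$ is a free resolution of $S/(M_{\leq m})$; its number of generators in homological degree $i$ and multidegree $\widetilde{m}$ equals the size of $\Gamma_{\widetilde{m}}$ in degree $i$, which is $\dim \widetilde{H}_{i-2}(\Delta_{\widetilde{m}};k) = b_{i,\widetilde{m}}(S/M)$, so the resolution is minimal; the Taylor-basis assertion then follows from Proposition \ref{P:CCTB}. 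The main obstacle is the inductive gluing in the second paragraph: verifying that the separately constructed isomorphisms $\mathbb{W}_{\beta_i}\cong \widetilde{C}(\Omega_{\beta_i};k)[-1]$ assemble coherently into a global identification over $\Delta_m$, which relies crucially on the fact that the inductive data $\Gamma_{\widetilde{m}}, P_{\widetilde{m}}$ are attached to lattice elements rather than chosen afresh inside each $\mathbb{W}_{\beta_i}$.
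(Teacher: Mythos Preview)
Your inductive structure and the identification $\mathbb U_m\oplus\bigoplus_{\widetilde m<m}\mathcal E_{\widetilde m}\cong\widetilde C(\Delta_m;k)[-1]$ match the paper's approach. Your Taylor-chain argument, once a small logical slip is fixed, is actually cleaner than the paper's: you should assume for contradiction that \emph{every face} of $g$ lies in some $A_{\beta_i}$ (i.e.\ that $g$ is a chain in $\Delta_m$), not merely that $\mathrm{supp}(g)\subseteq A_{\beta_i}$ for a single $i$---the negation of the latter does not yield a face with closure $A_m$. With the corrected hypothesis your boundary argument goes through and is more direct than the paper's, which instead expands $f$ in the inductively-constructed basis and uses the pairing structure of the $P_{\widetilde m}$'s. (For minimality the paper argues directly that differential entries lie in the maximal ideal, rather than invoking the Betti-number formula; both routes are valid.)

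The genuine gap is in your construction of $P_m$ via $\mathbb Q_m$. Splitting off the exact ``remainder'' of $\mathbb Q_m$ and invoking Lemma~\ref{L:trivial} yields pairs $(\bar h_1,\bar h_2)$ that form trivial complexes under the \emph{quotient} differential of $\mathbb Q_m$, not under the boundary $d$ of $\Omega_m$. If you lift $\bar h_1,\bar h_2$ naively to $Q_m$-supported chains $h_1,h_2$, then $d(h_1)$ will typically have a nonzero component in $\widetilde C(\Delta_m;k)$, so $d(h_1)\neq h_2$ and $(\mathcal E_m,d)$ is \emph{not} a direct summand of $(\mathbb W_m,d)$ as required. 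The fix is to lift asymmetrically: choose any lift $h_1$ of $\bar h_1$ and \emph{define} $h_2:=d(h_1)$; then $h_2$ still projects to $\bar h_2$ (hence is a Taylor chain at $m$), $d(h_2)=0$, and the decomposition holds. The paper avoids this issue altogether by producing $P_m$ through explicit consecutive cancellations (Proposition~\ref{P:CCbasis}) starting from a basis that extends $\bigcup_{\widetilde m\le m}\Gamma_{\widetilde m}\cup\bigcup_{\widetilde m<m}P_{\widetilde m}$ by a subset $\widetilde Q_m\subseteq Q_m$, and then verifies the Taylor-chain-at-$m$ property for the resulting $P_m$ by a dimension count.
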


\begin{proof}
We will prove this theorem by using the method of  strong induction in a poset.

Base case: By Construction \ref{C:VL} we have that 
$\Gamma_1=\{\emptyset\}$ and $\Gamma_{m_i}=\{\{i\}\}$ for any $1\leq i \leq r$. 
Hence, $(\mathbb{W}_{m_i},d)=(\mathbb{V}_{m_i},d)$ is the exact complex  
$0\to k \xrightarrow{1} k \to 0$ with a basis $\emptyset, \{i\}$; and then $P_{m_i}=\emptyset$. 
Note that $(M_{\leq m_i})=(m_i)$. The $(m_i)$-homogenization of 
$(\mathbb{W}_{m_i},d)$ is 
$0\to S(-m_i) \xrightarrow{m_i} S\to 0$, which is the Taylor resolution of 
$S/(m_i)$ and is a minimal free resolution of $S/(m_i)$ with a Taylor basis 
$\emptyset, \{i\}$. 

Inductive step: Let $m\in L_M$ with $\mbox{rk}(m)\geq 2$, then 
$|A_m| \geq 2$. Assume that the theorem holds for all 
$\widetilde{m} \neq 1 \in L_M$ with $\widetilde{m} <m$. 
By the inductive assumption, it is easy to see that if we apply the boundary 
map $d$ to the chains in 
$Q_m\bigcup (\bigcup\limits_{\widetilde{m}<m}\Gamma_{\widetilde{m}})
\bigcup (\bigcup \limits_{1\neq \widetilde{m}<m} P_{\widetilde{m}})$, 
we get a complex  isomorphic to $\widetilde{C}(\Omega_m;k)[-1]$. 
We denote this complex by $(\mathbb{H}_m, d)$.  
Then $(\mathbb{U}_m, d)\bigoplus 
(\bigoplus\limits_{1\neq \widetilde{m}<m}(\mathcal{E}_{\widetilde{m}},d))$ 
is a subcomplex of $(\mathbb{H}_m, d)$ with basis 
$ (\bigcup\limits_{\widetilde{m}<m}\Gamma_{\widetilde{m}})
\bigcup (\bigcup \limits_{1\neq \widetilde{m}<m} P_{\widetilde{m}})$,  
and is isomorphic to $\widetilde{C}(\Delta_m;k)[-1]$. 

Let $(\mathbb{V}_m,d)$ be the exact closure of $(\mathbb{U}_m, d)$ as 
in Construction \ref{C:VL}, then $(\mathbb{V}_m, d)$ can be viewed as an 
exact closure of $(\mathbb{U}_m, d)$ constructed in $(\mathbb{H}_m, d)$. 
Let $f\in \Gamma_m$, then $f$ is a basis element of $\mathbb{V}_m$ such that $f\notin \mathbb{U}_m$, 
so that there exist $\lambda_1, \ldots, \lambda_l \neq 0 \in k$ and 
$e_1, \ldots, e_l \in \bigcup \limits_{\widetilde{m}<m}\Gamma_{\widetilde{m}}$ 
such that 
\[
d(f)=\lambda_1e_1+\cdots +\lambda_le_l \in \mathbb{U}_m, 
\]
and there does not exist $g\in \mathbb{U}_m$ such that $d(g)=d(f)$. 
Let $f=\nu_1c_1+\cdots +\nu_sc_s$ where $\nu_1,\ldots, \nu_s$ are 
nonzero scalars in $k$ and $c_1, \ldots, c_s$ are some different faces 
of $\Omega_m$ of the same dimension. 
If there does not exist any $c_i$ such that $c_i\in Q_m$, then there exist 
$a_1, \ldots, a_p, b_1, \ldots, b_q \neq 0 \in k$, 
$f_1, \ldots, f_p \in \bigcup\limits_{\widetilde{m}<m}\Gamma_{\widetilde{m}}$, 
and $g_1, \ldots, g_q \in \bigcup\limits_{1\neq \widetilde{m}<m} P_{\widetilde{m}}$ 
such that 
\[
f=a_1f_1+\cdots+a_pf_p+b_1g_1+\cdots+b_qg_q.
\]
Since $f\notin \mathbb{U}_m$, it follows that $q\geq 1$. 
Note that by the inductive assumption the elements in  $P_{\widetilde{m}}$ 
appear in pairs. Namely, for any $h\in P_{\widetilde{m}}$ we have that 
either $d(h)=0$ or $d(h)\in P_{\widetilde{m}}$. 
If $d(g_1)=\cdots=d(g_q)=0$ then we let 
$g=a_1f_1+\cdots+a_pf_p \in \mathbb{U}_m$ which satisfies that 
$d(g)=d(f)$. This is a contradiction, so that $d(g_1), \ldots, d(g_q)$ are 
not all zero, and then 
\[
d(f)=a_1d(f_1)+\cdots +a_pd(f_p)+b_1d(g_1)+\cdots +b_qd(g_q) \notin \mathbb{U}_m,
\]
which is also a contradiction. 
Thus, there exists a $c_i$ such that $c_i \in Q_m$. So, $f$ is a Taylor chain 
at $m$. 

Let $\widehat{m}=\mbox{lcm}(\mbox{mdeg}(e_1), \ldots, \mbox{mdeg}(e_l))$, 
then we have that $\widehat{m} \leq m \in L_M$. Assume that 
$\widehat{m}<m$, then $\lambda_1e_1+\cdots+\lambda_le_l$ is a cycle 
in the exact complex $(\mathbb{V}_{\widehat{m}},d)$. Hence, there exists 
$g\in \mathbb{V}_{\widehat{m}} \subseteq \mathbb{U}_m$ such that 
$d(g)=\lambda_1e_1+\cdots+\lambda_le_l=d(f)$, which is a contradiction. 
Thus, we have that $\widehat{m}=m$. 
So the $(M_{\leq m})$-homogenization of $(\mathbb{V}_m,d)$ is a multigraded 
complex of free $S$-modules, which is denoted by $(\mathbf{F}_m, \partial)$. 
Note that $(\mathbf{F}_m,\partial)$ has a multigraded basis 
$\{\hbar(f)|f\in \bigcup\limits_{\widetilde{m}\leq m}\Gamma_m \}$, where 
$\hbar(f)$ is the homogenization of $f$. 
Since for any $1\neq \widetilde{m} \leq m$ the frame of the complex 
$\mathbf{F}_m(\leq \widetilde{m})$ is $(\mathbb{V}_{\widetilde{m}},d)$, 
which is exact, and obviously, $H_0(\mathbf{F}_m)=S/(M_{\leq m})$, 
it follows from Theorem 3.8 (2) in \cite{B:PV} that $\mathbf{F}_m$ is a 
free resolution of $S/(M_{\leq m})$. 
For any $f \in \Gamma_m$ with $d(f)=\lambda_1e_1+\cdots+\lambda_le_l$, 
we have that 
\[
\partial(\hbar(f))=\lambda_1\frac{\mbox{mdeg}(f)}{\mbox{mdeg}(e_1)}\hbar(e_1)
                             +\cdots +\lambda_l\frac{\mbox{mdeg}(f)}{\mbox{mdeg}(e_l)}\hbar(e_l).
\]
Since $\mbox{mdeg}(e_1),\ldots, \mbox{mdeg}(e_l)$ are 
strictly smaller than $\mbox{mdeg}(f)=m$ in $L_M$, 
it follows that the entries in the differential maps of $\mathbf{F}$ are in 
the maximal ideal $(x_1,\ldots, x_n)$ of $S$. Therefore, $(\mathbf{F}_m, \partial)$ 
is a minimal free resolution of $S/(M_{\leq m})$. 

Note that any face of $\Delta_m$ can be 
obtained as a linear combination of some chains in 
$ (\bigcup\limits_{\widetilde{m}<m}\Gamma_{\widetilde{m}})
\bigcup (\bigcup \limits_{1\neq \widetilde{m}<m} P_{\widetilde{m}})$. 
Since $(\mathbb{V}_m, d)\bigoplus 
(\bigoplus\limits_{1\neq \widetilde{m}<m}(\mathcal{E}_{\widetilde{m}},d))$ 
is a complex with basis 
$ (\bigcup\limits_{\widetilde{m} \leq m}\Gamma_{\widetilde{m}})
\bigcup (\bigcup \limits_{1\neq \widetilde{m}<m} P_{\widetilde{m}})$ 
and the Taylor basis elements in $\Gamma_m$ are Taylor chains at $m$, 
it follows that the set of chains $\{\mbox{in}(f)|f\in \Gamma_m\}$ 
is linearly independent over $k$. Hence, there exists a subset 
$\widetilde{Q}_m$ of $Q_m$ such that 
$ (\bigcup\limits_{\widetilde{m} \leq m}\Gamma_{\widetilde{m}})
\bigcup (\bigcup \limits_{1\neq \widetilde{m}<m} P_{\widetilde{m}})\bigcup \widetilde{Q}_m$ 
is a new basis of $(\mathbb{H}_m, d)$. 
We denote $(\mathbb{H}_m,d)$ with this new basis by $(\widetilde{\mathbb{H}}_m, d)$. 
It is easy to see that the $(M_{\leq m})$-homogenization of 
$(\widetilde{\mathbb{H}}_m, d)$ is isomorphic to the Taylor resolution 
of $S/(M_{\leq m})$. 
So by Proposition \ref{P:CCTB} we have that  
$\bigcup\limits_{\widetilde{m} \leq m}\Gamma_{\widetilde{m}}$ 
is a Taylor basis of $(\mathbf{F}_m, \partial)$. 

Similar to the proof of Proposition \ref{P:CCTB} we can use a series 
of consecutive cancellations in $(\widetilde{\mathbb{H}}_m, d)$ to 
the pairs in $\bigcup\limits_{1\neq \widetilde{m} \leq m} P_{\widetilde{m}}$, and 
by Proposition \ref{P:CCbasis} we see that the basis elements in 
$\bigcup\limits_{\widetilde{m} \leq m}\Gamma_{\widetilde{m}}$ remain unchanged 
and $\widetilde{Q}_m$ will be changed to $\widehat{Q}_m$ whose 
elements are some Taylor chains at $m$. 
Let $(\mathbb{G}_m, d)$ be the resulting exact complex with basis 
$(\bigcup\limits_{\widetilde{m} \leq m}\Gamma_{\widetilde{m}})\bigcup \widehat{Q}_m$. 
Then we can similarly use a series of consecutive cancellations in 
$(\mathbb{G}_m, d)$ to obtain $(\mathbb{V}_m, d)$ with basis 
$\bigcup\limits_{\widetilde{m} \leq m}\Gamma_{\widetilde{m}}$. 
In the process of these consecutive cancellations we will obtain 
some short trivial complexes with pairs of basis elements. 
Let $P_m$ be the set of these pair of basis elements, and then 
$ (\bigcup\limits_{\widetilde{m} \leq m}\Gamma_{\widetilde{m}})
\bigcup (\bigcup \limits_{1\neq \widetilde{m}<m} P_{\widetilde{m}})\bigcup P_m$ 
is a new basis of $(\widetilde{\mathbb{H}}_m, d)$ and consequently, 
a new basis of $\widetilde{C}(\Omega_m; k)[-1]$. 
Hence, we have that $|\Gamma_m|+|P_m|=|Q_m|$. 
Let $\widetilde{P}_m$ be the set of elements in $P_m$ which are 
Taylor chains at $m$. Then it is easy to see that 
$|Q_m|\leq |\Gamma_m|+|\widetilde{P}_m|$, which implies that 
$|\widetilde{P_m}|\geq |P_m|$, and then $\widetilde{P}_m=P_m$. 
Thus, every element in $P_m$ is a Taylor chain at $m$. 
Finally, let $(\mathcal{E}_m, d)$ be the complex obtained by applying  
$d$ to the chains in $P_m$, then $(\mathcal{E}_m, d)$ is a 
direct sum of some short trivial complexes; 
let $(\mathbb{W}_m,d)$ be the complex obatined by applying $d$ to the 
chains in $( \bigcup\limits_{\widetilde{m}\leq m}\Gamma_{\widetilde{m}} ) 
\bigcup ( \bigcup \limits_{1\neq \widetilde{m} \leq m}P_{\widetilde{m}} )$, 
then it is easy to see that $(\mathbb{W}_m, d)=(\mathbb{V}_m,d)\bigoplus 
(\bigoplus \limits_{1\neq \widetilde{m}\leq m}(\mathcal{E}_{\widetilde{m}},d))
\cong \widetilde{C}(\Omega_m; k)[-1]$. 
\end{proof}

\begin{remark}\label{R:main1remark1}
The notations of $\Gamma_m$, $P_m$, $(\mathbb{U}_m; d)$, 
$(\mathbb{V}_m; d)$ and $(\mathcal{E}_m; d)$ as in Theorem \ref{T:main1} 
will be used in the rest of the paper. 
Although there may be many different choices for $\Gamma_m$, 
by Theorem \ref{T:main1} we see that 
$|\Gamma_m|=\sum\limits_ib_{i,m}(S/M)$, which is unique. 
By \cite{B:Ph} we know that every atomic lattice can be realized as the 
lcm-lattice of some monomial ideal. Hence, it is easy to see that in 
Construction \ref{C:VL} every Taylor basis element in $\Gamma_\alpha$ 
is a Taylor chain at $\alpha$ and $|\Gamma_\alpha|$ is unique. 

In the proof of Theorem \ref{T:main1} we have that 
\[
\mbox{mdeg}(f)=\mbox{lcm}(\mbox{mdeg}(e_1), \ldots, \mbox{mdeg}(e_l))=m. 
\]
Thus, in Construction \ref{C:VL} we have that 
\[
\overline{\mbox{supp}(g)}=\overline{\overline{\mbox{supp}(e_{j_1})}\cup 
                                             \cdots \cup \overline{\mbox{supp}(e_{j_l})}}=A_\alpha.
\]
So by Corollary \ref{C:unionclosure} we have that 
\[
\overline{\mbox{supp}(g)}=\overline{\mbox{supp}(e_{j_1})\cup 
                                             \cdots \cup \mbox{supp}(e_{j_l})}=A_\alpha.
\]
Let $e_{j_1}, \ldots, e_{j_l}$ be the Taylor basis elements at 
$\beta_1, \ldots, \beta_l \in L$, respectively, then by Proposition \ref{P:meetjoin} 
we have that 
\[
\overline{A_{\beta_1} \cup \cdots \cup A_{\beta_l}}=A_{\beta_1 \vee \cdots \vee \beta_l}=A_\alpha,
\]
so that $\alpha=\beta_1 \vee \cdots \vee \beta_l$. 

Note that in general 
$\mbox{supp}(g)\neq \mbox{supp}(e_{j_1}) \cup \cdots \cup \mbox{supp}(e_{j_l})$. 
Let $M$ be as in Example \ref{E:TBmany}, then we can use Construction \ref{C:VL} 
to get $\Gamma_1={\emptyset}$, $\Gamma_{xy}=\{\{1\}\}$, $\Gamma_{xz}=\{\{2\}\}$, 
$\Gamma_{yz}=\{\{3\}\}$ and $\Gamma_{xyz}=\{\{1, 2\}, \{1,2\}+\{2,3\}\}$. 
Here we have that $d(\{1,2\}+\{2,3\})=-\{1\}+\{3\}$ and then
\[
\mbox{supp}(\{1,2\}+\{2,3\})=\{1,2, 3\} \supsetneq 
\mbox{supp}(\{1\})\cup \mbox{supp}(\{3\})=\{1, 3\}. 
\]
An example of $\mbox{supp}(g)\subsetneq \mbox{supp}(e_{j_1}) \cup \cdots \cup \mbox{supp}(e_{j_l})$ 
is shown in Example \ref{E:big}. 

Note that for any $\alpha, \beta \neq \hat{0} \in L$, $(\mathbb{V}_\alpha, d)$ is 
a subcomplex of $(\mathbb{V}_\beta, d)$ if and only if $\alpha \leq \beta$, which 
implies that the set of all the exact frames $(\mathbb{V}_\alpha, d)$ ordered by 
subcomplex is a poset isomorphic to $L-\{\hat{0}\}$. 
\end{remark}

\begin{remark}\label{R:main1remark2}
In general, finding a minimal free resolution of a graded ideal in $S$ is equivalent to 
repeatedly solving systems of linear equations over $S$, which is often  
difficult and involves the Gr\"ober basis theory. 
However, by Theorem \ref{T:main1} we see that finding a minimal free 
resolution of a monomial ideal is equivalent to obtaining exact closures of 
some complexes of $k$-vector spaces, which only requires solving some 
systems of linear equations over $k$. 
\end{remark}

By the proof of Theorem \ref{T:main1} we have the following corollary, 
which is useful for us to find the exact closure of $(\mathbb{U}_m, d)$ 
in the examples. 

\begin{corollary}\label{C:main1c1}
Let $m\neq 1 \in L_M$ such that $m$ is not an atom of $L_M$, then 
for any $i\geq 1$, 
\[
H_i(\mathbb{U}_m, d) \cong \widetilde{H}_{i-1}(\Delta_m; k),
\]
and then $|\Gamma_m|=\dim_k\widetilde{H}(\Delta_m; k)$. 
In particular, if $\widetilde{H}(\Delta_m; k)=0$ then $(\mathbb{U}_m, d)$ 
is exact and $(\mathbb{V}_m, d)=(\mathbb{U}_m, d)$. 
\end{corollary}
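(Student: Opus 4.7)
The plan is to extract the isomorphism $(\mathbb{U}_m, d) \oplus (\bigoplus_{1\neq \widetilde{m}<m}(\mathcal{E}_{\widetilde{m}},d)) \cong \widetilde{C}(\Delta_m;k)[-1]$ that was established in the proof of Theorem \ref{T:main1}, take homology, and then use the dimension formula for exact closures from Remark \ref{R:exactclosure1}.

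First, I would recall that in the inductive step of the proof of Theorem \ref{T:main1}, it was shown that the subcomplex of $(\mathbb{H}_m,d)$ spanned by $\bigcup_{\widetilde{m}<m}\Gamma_{\widetilde{m}}$ together with $\bigcup_{1\neq \widetilde{m}<m}P_{\widetilde{m}}$ is isomorphic to $\widetilde{C}(\Delta_m;k)[-1]$, and this subcomplex splits as $(\mathbb{U}_m,d)\oplus \bigoplus_{1\neq \widetilde{m}<m}(\mathcal{E}_{\widetilde{m}},d)$. Since each $(\mathcal{E}_{\widetilde{m}},d)$ is a direct sum of short trivial complexes, it is exact and contributes nothing to homology. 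Taking the $i$-th homology of both sides, and taking into account the shift $[-1]$ in homological degree, gives
\[
H_i(\mathbb{U}_m,d) \;\cong\; H_i\!\left(\widetilde{C}(\Delta_m;k)[-1]\right) \;=\; \widetilde{H}_{i-1}(\Delta_m;k).
\]

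Next, for the cardinality statement, I would invoke Remark \ref{R:exactclosure1}: since $(\mathbb{V}_m,d)$ is obtained as an exact closure of $(\mathbb{U}_m,d)$ via Construction \ref{C:exactclosure}, the elements of $\Gamma_m$ are exactly the new basis vectors $g_j^i$ adjoined in that construction, and their total number equals $\sum_i \mu_i = \sum_i \dim_k H_i(\mathbb{U}_m,d)$. Combining with the isomorphism above yields
\[
|\Gamma_m| \;=\; \sum_i \dim_k \widetilde{H}_{i-1}(\Delta_m;k) \;=\; \dim_k \widetilde{H}(\Delta_m;k).
\]

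Finally, for the last assertion, if $\widetilde{H}(\Delta_m;k)=0$, then by the first part all homologies of $(\mathbb{U}_m,d)$ vanish, so $(\mathbb{U}_m,d)$ is already exact. By Remark \ref{R:exactclosure1}, an exact complex is its own unique exact closure, hence $(\mathbb{V}_m,d)=(\mathbb{U}_m,d)$.

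There is no real obstacle here; the whole argument is just bookkeeping on top of the inductive step of Theorem \ref{T:main1}. The only point requiring a hint of care is the degree shift $[-1]$ (to convert between the homological indexing of $\mathbb{U}_m$ and the simplicial indexing of $\Delta_m$) and the observation that the trivial complexes $\mathcal{E}_{\widetilde{m}}$ can be discarded when computing homology.
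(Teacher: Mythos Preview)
Your proof is correct and follows exactly the same approach as the paper: extract the splitting $(\mathbb{U}_m,d)\oplus\bigoplus_{1\neq\widetilde m<m}(\mathcal{E}_{\widetilde m},d)\cong\widetilde C(\Delta_m;k)[-1]$ from the proof of Theorem~\ref{T:main1}, discard the exact summands when taking homology, and read off the shift. In fact you are more explicit than the paper, which only writes out the homology isomorphism and leaves the $|\Gamma_m|$ count and the exact-case conclusion implicit.
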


\begin{proof}
By the proof of Theorem \ref{T:main1} we have that 
\[
(\mathbb{U}_m, d) \bigoplus (\bigoplus \limits_{1\neq \widetilde{m} <m} 
(\mathcal{E}_{\widetilde{m}}, d))\cong \widetilde{C}(\Delta_m; k)[-1].
\]
Since $(\mathcal{E}_{\widetilde{m}}, d)$ is exact, it follows that 
for any $i\geq 1$, $H_i(\mathbb{U}_m, d) \cong \widetilde{H}_{i-1}(\Delta_m; k)$. 
\end{proof}

The next corollary will be used in Section 4. 

\begin{corollary}\label{C:main1c2}
Let $\beta_1, \ldots, \beta_t\in L_M$ such that  there do not 
exist $1\leq i \neq j \leq t$ with $\beta_i < \beta_j$. 
Let $\Delta=\langle A_{\beta_1}, \ldots, A_{\beta_t} \rangle$ be the simplicial complex 
with facets $A_{\beta_1}, \ldots, A_{\beta_t}$. 
Let $\Pi=\{m \in L_M| m \leq \beta_i \ \mbox{ for some} \ \beta_i\}$. 
Let $T=\bigcup\limits_{m \in \Pi} \Gamma_m$ and 
$P=\bigcup \limits_{1\neq m \in \Pi} P_m$. 
Let $(\mathbb{U}, d)$ be the complex obtained by applying the boundary 
map $d$ to the Taylor basis elements in $T$, and $(\mathcal{E}, d)$ be 
the exact complex obtained by applying $d$ to the elements in $P$. 
Then we have that $\widetilde{C}(\Delta;k)[-1]\cong (\mathbb{U}, d) \bigoplus (\mathcal{E}, d)$. 
\end{corollary}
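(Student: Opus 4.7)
The plan is to exhibit $T\cup P$ as a basis for the graded $k$-vector space underlying $\widetilde{C}(\Delta;k)[-1]$; once this is done, since $(\mathbb{U},d)\oplus(\mathcal{E},d)$ is obtained by applying $d$ to $T\cup P$, the desired isomorphism is immediate as a change-of-basis statement. To get linear independence, I would apply Theorem \ref{T:main1} to the top element $\hat{1}=\mathrm{lcm}(m_1,\ldots,m_r)$ of $L_M$, yielding that $\bigcup_{m\in L_M}(\Gamma_m\cup P_m)$ is a basis of $\widetilde{C}(\Omega;k)[-1]$; then $T\cup P$ is a subset of this basis and therefore linearly independent.

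Next I would verify containment $T\cup P\subseteq\widetilde{C}(\Delta;k)[-1]$. For any $m\in\Pi$, by definition $m\leq\beta_i$ for some $i$, so $A_m\subseteq A_{\beta_i}$ is a face of $\Delta$; since the chains in $\Gamma_m\cup P_m$ are supported on subsets of $A_m$, they are chains of $\Delta$. For the dimension count, I would use that every face $A$ of $\Delta$ has $\overline{A}=A_{m_A}$ for a unique $m_A\in L_M$ (Proposition \ref{P:closure}), and that $A\subseteq A_{\beta_i}$ forces $m_A=\bigvee_{j\in A}\alpha_j\leq\beta_i$ by Proposition \ref{P:meetjoin}, so $m_A\in\Pi$. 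Thus the faces of $\Delta$ partition (in each dimension) as $\bigsqcup_{m\in\Pi}Q_m$, and by Theorem \ref{T:main1} we have $|Q_m|=|\Gamma_m|+|P_m|$, so $|T\cup P|$ matches the dimension of $\widetilde{C}(\Delta;k)[-1]$ degree-by-degree. A linearly independent set of the correct cardinality is a basis.

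Finally, to conclude, I would observe that the differential $d$ preserves the span of $T\cup P$: for $f\in\Gamma_m$ with $m\in\Pi$, the proof of Theorem \ref{T:main1} shows $d(f)$ is a $k$-linear combination of elements of $\bigcup_{\widetilde{m}<m}\Gamma_{\widetilde{m}}$, and $\widetilde{m}<m\in\Pi$ gives $\widetilde{m}\in\Pi$ since $\Pi$ is an order ideal; similarly $d$ preserves each pair inside $P_m$. Therefore $\widetilde{C}(\Delta;k)[-1]$, rewritten in the basis $T\cup P$, decomposes precisely as the complex $(\mathbb{U},d)$ (from applying $d$ to $T$) plus the trivial complex $(\mathcal{E},d)$ (from the pairs in $P$), which is the claimed isomorphism. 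The main bookkeeping hurdle is the disjointness of the partition $\bigsqcup_{m\in\Pi}Q_m$ and the verification that $m_A\in\Pi$, both of which follow cleanly from the lattice formulas already established in Subsection 2.4; I expect no substantial difficulty beyond this.
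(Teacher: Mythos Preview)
Your proposal is correct and follows essentially the same approach as the paper: both arguments show that $T\cup P$ is a basis of $\widetilde{C}(\Delta;k)[-1]$ and read off the direct-sum decomposition from that. The only cosmetic difference is that the paper asserts spanning (every face of $\Delta$ is a $k$-linear combination of chains in $T\cup P$) directly from the proof of Theorem~\ref{T:main1}, whereas you reach the same conclusion via the dimension count $|\Gamma_m|+|P_m|=|Q_m|$ over $m\in\Pi$; both routes are valid and equivalent.
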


\begin{proof}
By the proof of Theorem  \ref{T:main1}, we see that $(\mathbb{U}, d)$ 
is a complex, $(\mathcal{E}, d)$ is an exact complex, the chains in $T\cup P$
are  linearly independent over $k$, and every face of $\Delta$ can be 
written as a $k$-linear combination of some chains in $T\cup P$. 
Thus, $T\cup P$ is a new basis of $\widetilde{C}(\Delta; k)[-1]$, which implies 
that $\widetilde{C}(\Delta;k)[-1]\cong (\mathbb{U}, d) \bigoplus (\mathcal{E}, d)$.
\end{proof}

\begin{example}\label{E:big}
Let $M=(x_1x_2, x_2x_3, x_3x_4, x_4x_5, x_5x_6, x_1x_6)$ be the edge 
ideal of the hexagon as in Example 2.14 of \cite{B:CM2}. 
We will use Construction \ref{C:VL} to obtain a Taylor basis for some minimal 
free resolution of $S/M$. 
As mentioned in Subsection 2.4, in the examples of this paper we will use 
$A_m$ to label the element $m$ in $L_M$, and for simplicity we will write a 
face or a set $\{i_1, \ldots, i_t\}$  in abbreviation as $i_1\cdots i_t$. 
For example, we write $123+124$ to represent the chain $\{1,2,3\}+\{2,3,4\}$ 
and write $k(123+124)$ to represent the $k$-vector space with a basis element 
$\{1,2,3\}+\{2,3,4\}$. Similarly, we will use $\Delta_{A_m}, \mathbb{U}_{A_m}, 
\mathbb{V}_{A_m}$ to denote $\Delta_m, \mathbb{U}_m, \mathbb{V}_m$, 
respectively. 

\begin{figure}
\begin{tikzpicture}[node distance=1.4cm]
\title{The lcm-lattice}
\node(123456)                                                                  {$123456$};
\node(1256)    [below left =1cm and -0.3cm of 123456]   {$1256$};
\node(1456)    [below right=1cm and -0.3cm of 123456]  {$1456$};
\node(1236)    [left of =1256]                                          {$1236$};
\node(1234)    [left of =1236]                                          {$1234$};
\node(2345)    [right of =1456]                                        {$2345$};
\node(3456)    [right of =2345]                                        {$3456$};
\node(123)      [below of =1234]	                                     {$123$};
\node(126)      [below of =1236]	                                     {$126$};
\node(156)      [below of =1256]	                                     {$156$};
\node(234)      [below of =1456]	                                     {$234$};
\node(345)      [below of =2345]	                                     {$345$};
\node(456)      [below of =3456]	                                     {$456$};
\node(25)        [below=4cm of 123456]                            {$25$};
\node(23)        [left of =25]                                              {$23$};
\node(16)        [left of =23]                                              {$16$};
\node(14)        [left of =16]                                              {$14$};
\node(12)        [left of =14]                                              {$12$};
\node(34)        [right of =25]                                            {$34$};
\node(36)        [right of =34]                                            {$36$};
\node(45)        [right of =36]                                            {$45$};
\node(56)        [right of =45]                                            {$56$};
\node(1)          [below=2.5cm of 123]                                  {$1$};
\node(2)          [below=2.5cm of 126]                                  {$2$};
\node(3)          [below=2.5cm of 156]                                  {$3$};
\node(4)          [below=2.5cm of 234]                                  {$4$};
\node(5)          [below=2.5cm of 345]                                  {$5$};
\node(6)          [below=2.5cm of 456]                                  {$6$};
\node(0)          [below=6.5cm of 123456]                            {$\emptyset$};

\draw(123456) --(1234);
\draw(123456) --(1236);
\draw(123456) --(1256);
\draw(123456) --(1456);
\draw(123456) --(2345);
\draw(123456) --(3456);
\draw(1234) --(123);
\draw(1234) --(234);
\draw(1234) --(14);
\draw(1236) --(123);
\draw(1236) --(126);
\draw(1236) --(36);
\draw(1256) --(126);
\draw(1256) --(156);
\draw(1256) --(25);
\draw(1456) --(156);
\draw(1456) --(456);
\draw(1456) --(14);
\draw(2345) --(234);
\draw(2345) --(345);
\draw(2345) --(25);
\draw(3456) --(345);
\draw(3456) --(456);
\draw(3456) --(36);
\draw(123) --(12);
\draw(123) --(23);
\draw(126) --(12);
\draw(126) --(16);
\draw(156) --(16);
\draw(156) --(56);
\draw(234) --(23);
\draw(234) --(34);
\draw(345) --(34);
\draw(345) --(45);
\draw(456) --(45);
\draw(456) --(56);
\draw(12) --(1);
\draw(12) --(2);
\draw(14) --(1);
\draw(14) --(4);
\draw(16) --(1);
\draw(16) --(6);
\draw(23) --(2);
\draw(23) --(3);
\draw(25) --(2);
\draw(25) --(5);
\draw(34) --(3);
\draw(34) --(4);
\draw(36) --(3);
\draw(36) --(6);
\draw(45) --(4);
\draw(45) --(5);
\draw(56) --(5);
\draw(56) --(6);
\draw(1) --(0);
\draw(2) --(0);
\draw(3) --(0);
\draw(4) --(0);
\draw(5) --(0);
\draw(6) --(0);

\end{tikzpicture}
\caption{The lcm-lattice $L_M$ of Example \ref{E:big}}
\end{figure}
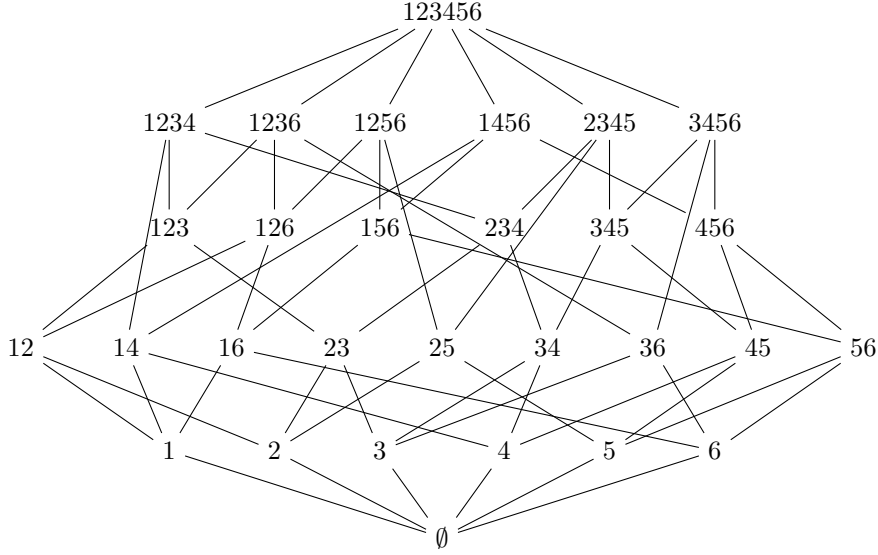

The lcm-lattice $L_M$ of $M$ is shown in Figure 1.
It is easy to see that we can take 
$\Gamma_{\emptyset}=\{\emptyset\}$, 
$\Gamma_1=\{1\}, \ldots, \Gamma_6=\{6\}$. 
Since $\Delta_{12}=\langle 1,2 \rangle$ and $\widetilde{H}_0(\Delta_{12};k)\cong k$ 
with a basis element $[-1+2]$, by Corollary \ref{C:main1c1} we see that 
$\mathbb{U}_{12}$ has a cycle  $-1+2$, and then by Construction \ref{C:VL} 
we can take $\Gamma_{12}=\{12\}$. 
Similarly, we have that $\Gamma_{14}=\{14\},\ldots, \Gamma_{56}=\{56\}$. 
Since $\Delta_{123}=\langle 12, 23 \rangle$ and $\widetilde{H}(\Delta_{123};k)=0$, 
by Corollary \ref{C:main1c1} we have that $\Gamma_{123}=\emptyset$. 
Similarly, we have that $\Gamma_{126}=\cdots=\Gamma_{456}=\emptyset$. 
Since $\Delta_{1234}=\langle 123, 14, 234 \rangle$, it follows that 
$\widetilde{H}_1(\Delta_{1234}; k)\cong k$ with a basis element 
$[12-14+24]$. Since $(\mathbb{U}_{1234}, d)$ has basis elements 
$12, 14, 23, 34$ in homological degree $2$, by Corollary \ref{C:main1c1} 
we have that $H_2(\mathbb{U}_{1234}, d)\cong k$ and a corresponding 
cycle in $\mathbb{U}_{1234}$ is $12-14+23+34$.  
Note that $d(123+134)=12-14+23+34$. Hence, by Construction \ref{C:VL} 
we let $\Gamma_{1234}=\{123+134\}$.  
Observe that here we actually have many choices for $\Gamma_{1234}$. 
For example, we can also let $\Gamma_{1234}=\{124+234\}$ or let 
$\Gamma_{1234}=\{\frac{1}{2}(123+124+134+234)\}$. 
Similarly, we get $\Gamma_{1236}=\{123+136\}$, 
$\Gamma_{1256}=\{125+156\}$, $\Gamma_{1456}=\{146+456\}$, 
$\Gamma_{2345}=\{234+245\}$, $\Gamma_{3456}=\{346+456\}$. 
Finally, from $\Delta_{123456}=\langle 1234, 1236, 1256, 1456, 2345, 3456 \rangle$ 
we have that $\widetilde{H}_2(\Delta_{123456}; k) \cong k^2$ with a 
basis consisting of $[346-146+136-134]$ and $[245-145+125-124]$. 
Corresponding to these two basis elements, in $\mathbb{U}_{123456}$ 
we have two cycles $(346+456)-(146+456)+(123+136)-(123+134)$ 
and $(234+245)-(146+456)+(125+156)-(123+134)$, from which we 
get $\Gamma_{123456}=\{1346, 1245-1456+1234\}$. 
So, $\emptyset, 1,2,3, 4, 5,6, 12, 14, 16, 23, 25, 34, 36, 45, 56, 
123+134, 123+136, 125+156, 146+456, 234+245, 346+456, 
1346, 1245-1456+1234$ is a Taylor basis of some minimal free resolution 
of $S/M$. 

Note that in this example we have that 
\[
d(1346)=(346+456)-(146+456)+(123+136)-(123+134),
\]
but as mentioned in Remark \ref{R:main1remark1} we have that 
\[
\mbox{supp}(1346)\subsetneq \mbox{supp}(346+456)\cup \mbox{supp}(146+456) 
 \cup \mbox{supp}(123+136) \cup \mbox{supp}(123+134).
\]
\end{example}

In the next theorem we prove that every minimal free resolution of $S/M$ 
is an atomic lattice resolution. 

\begin{theorem}\label{T:main2}
Let $M$ be a monomial ideal in $S$ minimally generated by monomials 
$m_1, \ldots, m_r$. Let $\mathbf{F}$ be a minimal free resolution of 
$S/M$ with a Taylor basis $B$. For any $m\in L_M$, 
let $\widetilde{\Gamma}_m=\{f\in B| \mbox{mdeg}(f)=m\}$. 
For any $m\neq 1 \in L_M$, let $(\widetilde{\mathbb{V}}_m, d)$ be 
the frame of $\mathbf{F}(\leq m)$ with basis 
$\bigcup\limits_{\widetilde{m}\leq m}\widetilde{\Gamma}_{\widetilde{m}}$ 
and $(\widetilde{\mathbb{U}}_m, d)$ the frame of $\mathbf{F}(<m)$ 
with basis $\bigcup\limits_{\widetilde{m}< m}\widetilde{\Gamma}_{\widetilde{m}}$. 
Then in Construction \ref{C:VL} we can take $\Gamma_m=\widetilde{\Gamma}_m$  
for any $m\in L_M$ and then we have that $(\mathbb{V}_m, d)=(\widetilde{\mathbb{V}}_m, d)$ 
for any $m \neq 1 \in L_M$. In particular, $\mathbf{F}$ is an atomic 
lattice resolution of $S/M$. 
\end{theorem}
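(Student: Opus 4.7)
The plan is to proceed by strong induction on $m\in L_M$ (with respect to the partial order of $L_M$), showing at each stage that the Taylor data $\widetilde{\Gamma}_m$ can be chosen as the set $\Gamma_m$ built in Construction~\ref{C:VL}. For the base case, any Taylor basis of $\mathbf{F}$ contains a nonzero scalar multiple of $\emptyset$ in multidegree $1$ and, for each atom $m_i$, a nonzero scalar multiple of $\{i\}$; after the normalization permitted in Construction~\ref{C:VL} we may assume $\widetilde{\Gamma}_1=\{\emptyset\}=\Gamma_1$ and $\widetilde{\Gamma}_{m_i}=\{\{i\}\}=\Gamma_{m_i}$.

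For the inductive step, fix $m\in L_M$ with $\mbox{rk}(m)\geq 2$ and assume $\Gamma_{\widetilde{m}}=\widetilde{\Gamma}_{\widetilde{m}}$ for every $\widetilde{m}<m$. The induction hypothesis immediately gives $(\mathbb{U}_m,d)=(\widetilde{\mathbb{U}}_m,d)$. The heart of the argument is to verify that $(\widetilde{\mathbb{V}}_m,d)$ is itself an exact closure of $(\mathbb{U}_m,d)$; once this is done, Construction~\ref{C:VL} is free to pick this particular exact closure, and because each element of $\widetilde{\Gamma}_m$ is already a Taylor chain at $m$ (by Remark~\ref{R:TBin} together with $\mbox{mdeg}(f)=m$) whose boundary lies in $\widetilde{\mathbb{U}}_m$ (by minimality of $\mathbf{F}$), no further replacement step is needed. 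This delivers $\Gamma_m=\widetilde{\Gamma}_m$ and $(\mathbb{V}_m,d)=(\widetilde{\mathbb{V}}_m,d)$.

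Three things need checking for the exact closure claim. First, subcomplex: minimality of $\mathbf{F}$ forces the frame differential of any Taylor basis element of multidegree $\widetilde{m}<m$ to be a $k$-linear combination of Taylor basis elements of strictly smaller multidegree, so $(\widetilde{\mathbb{U}}_m,d)$ really is a subcomplex of $(\widetilde{\mathbb{V}}_m,d)$. Second, dimension count: Theorem~\ref{T:basis} gives that the number of elements of $\widetilde{\Gamma}_m$ in homological degree $i$ equals $b_{i,m}(S/M)$, while the induction hypothesis together with Corollary~\ref{C:main1c1} yields $\mu_{i-1}=\dim_k H_{i-1}(\widetilde{\mathbb{U}}_m,d)=\dim_k\widetilde{H}_{i-2}(\Delta_m;k)=b_{i,m}(S/M)$, which is exactly the dimension increase required by Remark~\ref{R:exactclosure1}. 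Third, exactness of $(\widetilde{\mathbb{V}}_m,d)$: compare $\mathbf{F}$ with an atomic lattice resolution $\mathbf{F}'$ produced by Construction~\ref{C:VL}; both are minimal free resolutions of $S/M$ and are hence multigraded-isomorphic, and restricting a multigraded isomorphism to multidegrees $\leq m$ yields $\mathbf{F}(\leq m)\cong\mathbf{F}'(\leq m)$. Passing to frames identifies $(\widetilde{\mathbb{V}}_m,d)$ with the exact complex $(\mathbb{V}_m',d)$ of Theorem~\ref{T:main1}, so $(\widetilde{\mathbb{V}}_m,d)$ is exact.

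The main obstacle I anticipate is the last of these three verifications: establishing exactness of $(\widetilde{\mathbb{V}}_m,d)$ requires temporarily stepping outside the hypothesized Taylor basis of $\mathbf{F}$ and invoking Theorem~\ref{T:main1} for a separately constructed atomic lattice resolution. Once exactness is in hand, the remaining checks are bookkeeping combining the Betti number identity of Theorem~\ref{T:basis} with the homology computation of Corollary~\ref{C:main1c1}; the final conclusion that $\mathbf{F}$ is an atomic lattice resolution then follows because the $M$-homogenization of $(\mathbb{V}(L_M),d)=(\widetilde{\mathbb{V}}_{\hat{1}},d)$ reproduces $\mathbf{F}$ itself.
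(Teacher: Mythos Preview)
Your argument is correct and follows the same inductive skeleton as the paper, but you diverge from the paper at the key step of showing that $(\widetilde{\mathbb{V}}_m,d)$ is an exact closure of $(\mathbb{U}_m,d)$. The paper does not invoke Theorem~\ref{T:basis} or an auxiliary atomic lattice resolution; instead it builds an exact closure $(\mathbb{G}_m,d)$ of $(\mathbb{U}_m,d)$ \emph{inside} $(\widetilde{\mathbb{V}}_m,d)$ via Remark~\ref{R:exactclosure2}, proves that each new basis element has boundary of multidegree exactly $m$ (using exactness of the lower $\mathbb{V}_{\widehat{m}}$), homogenizes $\mathbb{G}_m$ to a free resolution of $S/(M_{\leq m})$ by Theorem~3.8(2) of \cite{B:PV}, and then uses minimality of $\mathbf{F}(\leq m)$ to force $\dim_k(\widetilde{\mathbb{V}}_m)_i\leq\dim_k(\mathbb{G}_m)_i$, hence equality and the exact closure property. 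Your route is shorter once Theorem~\ref{T:basis} and Theorem~\ref{T:main1} are in hand, and it avoids the detour through \cite{B:PV}; the paper's route is more self-contained in that it does not appeal to the Betti number identity or to a second minimal resolution, and it yields the multidegree-$m$ property of the boundaries as a by-product rather than needing it as an input.
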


\begin{proof}
We will prove this theorem by using strong induction in $L_M$. 

Base case: Without the loss of generality we can assume that 
$\widetilde{\Gamma}_1=\{\emptyset\}$ and 
$\widetilde{\Gamma}_{m_i}=\{\{i\}\}$ for any $1\leq i \leq r$. 
Then in Construction \ref{C:VL} we can take 
$\Gamma_1=\widetilde{\Gamma}_1$ and 
$\Gamma_{m_i}=\widetilde{\Gamma}_{m_i}=\{\{i\}\}$ for any $1\leq i \leq r$, 
which implies that $(\mathbb{V}_{m_i}, d)=(\widetilde{\mathbb{V}}_{m_i},d)$. 

Inductive step: Let $m\in L_M$ with $\mbox{rk}(m)\geq 2$. Assume that 
in Construction \ref{C:VL} we have obtained $\Gamma_{\widetilde{m}}$ and 
$(\mathbb{V}_{\widetilde{m}}, d)$ such that 
$\Gamma_{\widetilde{m}}=\widetilde{\Gamma}_{\widetilde{m}}$ for any 
$\widetilde{m}<m \in L_M$ and 
$(\mathbb{V}_{\widetilde{m}}, d)=(\widetilde{\mathbb{V}}_{\widetilde{m}}, d)$ 
for any $1\neq \widetilde{m} < m \in L_M$. 
Hence, in Construction \ref{C:VL} we have that 
$(\mathbb{U}_m, d)=(\widetilde{\mathbb{U}}_m, d)$. 
Next we want to show that $(\widetilde{\mathbb{V}}_m, d)$ is an 
exact closure of $(\mathbb{U}_m, d)$. 

Indeed, $(\mathbb{U}_m, d)$ is a subcomplex of the exact complex 
$(\widetilde{\mathbb{V}}_m, d)$. Hence, by Remark \ref{R:exactclosure2} 
there exists an exact closure of $(\mathbb{U}_m, d)$ constructed in 
$(\widetilde{\mathbb{V}}_m, d)$, which we denote by $(\mathbb{G}_m, d)$. 
Let $f\notin \mathbb{U}_m$ be a basis element of $\mathbb{G}_m$ such that 
$d(f)=\lambda_1e_1+\cdots+\lambda_le_l$, where $\lambda_1, \ldots, \lambda_l$ 
are some nonzero scalars in $k$ and $e_l, \ldots, e_l$ are some different 
Taylor basis elements in $\bigcup\limits_{\widetilde{m}<m}\Gamma_{\widetilde{m}}$. 
Let $\widehat{m}=\mbox{lcm}(\mbox{mdeg}(e_1), \ldots, \mbox{mdeg}(e_l))$, 
then it is easy to see that $\widehat{m}|m$. 
Assume that $\widehat{m}<m$ in $L_M$. Then $\lambda_1e_1+\cdots+\lambda_le_l$ 
is a cycle in the exact complex $(\mathbb{V}_{\widehat{m}}, d)$, so that 
there exists $g\in \mathbb{V}_{\widehat{m}} \subseteq \mathbb{U}_m$ such 
that $d(g)=\lambda_1e_1+\cdots+\lambda_le_l$.
Hence, $\lambda_1e_1+\cdots+\lambda_le_l$ is a trivial cycle in $\mathbb{U}_m$, 
which is a contradiction. Thus, we have that $\widehat{m}=m$. 
Note that $(\mathbb{U}_m)_1=(\mathbb{G}_m)_1=(\widetilde{\mathbb{V}}_m)_1$. 
Let $\mathbf{G}_m$ be the $(M_{\leq m})$-homogenization of $\mathbb{G}_m$. 
Then for any $1\neq \widetilde{m} <m \in L_M$ the frame of 
$\mathbf{G}_m(\leq \widetilde{m})$ is the exact complex $\mathbb{V}_{\widetilde{m}}$, 
and the frame of $\mathbf{G}_m(\leq m)$ is the exact complex $\mathbb{G}_m$. 
Thus, by Theorem 3.8 (2) in \cite{B:PV} we have that $\mathbf{G}_m$ is a 
free resolution of $S/(M_{\leq m})$. 
Since $\mathbf{F}(\leq m)$ is a minimal free resolution of $S/(M_{\leq m})$, 
it follows that 
\[\dim_k(\widetilde{\mathbb{V}}_m)_i=\mbox{rank}(\mathbf{F}(\leq m)_i
\leq \mbox{rank}(\mathbf{G}_m)_i=\dim_k(\mathbb{G}_m)_i, \ \mbox{for any} \  i\geq 0, 
\]
so that $\dim_k(\widetilde{\mathbb{V}}_m)_i=\dim_k(\mathbb{G}_m)_i$ 
for all $i$ and then by Remark \ref{R:exactclosure1} we see that 
$(\widetilde{\mathbb{V}}_m, d)$ is an exact closure of $(\mathbb{U}_m, d)$. 

Note that for any $f\in \widetilde{\Gamma}_m$, $f$ is a Taylor chain at $m$, 
which implies that $\mbox{supp}(f) \subseteq A_m$, so that in Construction 
\ref{C:VL} we can take $\Gamma_m=\widetilde{\Gamma}_m$ and then 
we have that $(\mathbb{V}_m, d)=(\widetilde{\mathbb{V}}_m, d)$.
\end{proof}

For convenience, we call the results in this subsection, including 
Construction \ref{C:VL}, Theorem \ref{T:main1}, Theorem \ref{T:main2} 
and their proofs, \emph{the atomic lattice resolution theory}.

\subsection{Some Applications of the Atomic Lattice Resolution Theory}
Note that Theorem \ref{T:main2} and Corollary \ref{C:main1c1} imply 
Theorem \ref{T:basis} in section 2.4. To some extend, the next theorem 
can be viewed as a converse of Theorem \ref{T:basis}. 

\begin{theorem}\label{T:cbasis}
Let $M$ be a monomial ideal minimally generated by monomials $m_1,\ldots, m_r$. 
Let $e_1^1=\emptyset$ and $e_1^{m_i}=\{i\}$ for any $1\leq i \leq r$. 
And for any $m\in L_M$ with $\mbox{rk}(m)\geq 2$, 
let $e_1^m, \ldots, e_{t_m}^m$ be a set of chains in $\Omega_m$ 
such that $[d(e_1^m)], \ldots, [d(e_{t_m}^m)]$ is a basis of 
$\widetilde{H}(\Delta_m;k)$. 
Let $B=\{e_i^m|m\in L_M, i\geq 1\}$. 
Assume that for any $m\neq 1 \in L_M$, $d(e_i^m)$ can be written as 
a $k$-linear combination of some $e_j^{\widetilde{m}}$ with $\widetilde{m}<m$, 
and let $(\mathbb{V}, d)$ be the complex of $k$-vector spaces obtained by 
applying $d$ to the elements in $B$. 
Then the $M$-homogenization of $(\mathbb{V}, d)$ is a minimal free 
resolution of $S/M$ with a Taylor basis $B$. 
\end{theorem}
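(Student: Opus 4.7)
The plan is to show by strong induction on $m \in L_M$ that the chains $e_1^m, \ldots, e_{t_m}^m$ may be taken as the Taylor basis elements $\Gamma_m$ produced by Construction \ref{C:VL}; once this matching is in place, Theorem \ref{T:main1} applied at $\hat{1} \in L_M$ immediately gives the desired minimal free resolution with Taylor basis $B$. The base case (for $m = 1$ and for each atom $m_i$) is built into the hypothesis, which specifies $e_1^1 = \emptyset$ and $e_1^{m_i} = \{i\}$, matching the base case of Construction \ref{C:VL}.

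For the inductive step, fix $m$ with $\mbox{rk}(m) \geq 2$ and assume $\Gamma_{\widetilde{m}} = \{e_j^{\widetilde{m}}\}_j$ has been chosen for every $\widetilde{m} < m$. Then the complex $(\mathbb{U}_m, d)$ of Construction \ref{C:VL} has basis $\bigcup_{\widetilde{m} < m} \Gamma_{\widetilde{m}}$, and Corollary \ref{C:main1c1} (together with the splitting $(\mathbb{U}_m, d) \oplus (\bigoplus_{1\neq \widetilde{m} < m} \mathcal{E}_{\widetilde{m}}, d) \cong \widetilde{C}(\Delta_m; k)[-1]$ from the proof of Theorem \ref{T:main1}) yields $H_i(\mathbb{U}_m, d) \cong \widetilde{H}_{i-1}(\Delta_m; k)$. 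The two hypotheses on $e_i^m$ guarantee first that $d(e_i^m)$ lies in $\mathbb{U}_m$ and hence is a cycle there, and second that the classes $[d(e_i^m)]$ form a basis of $\widetilde{H}(\Delta_m; k)$; transporting through the above isomorphism shows that $[d(e_1^m)], \ldots, [d(e_{t_m}^m)]$ is a basis of $H(\mathbb{U}_m, d)$.

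Adjoining the $e_i^m$ as new basis elements with the prescribed boundaries then realizes exactly the procedure of Remark \ref{R:exactclosure3}: the $e_i^m$ play the role of the generators $g_j^{i-1}$ in Construction \ref{C:exactclosure}, so the resulting complex $(\mathbb{V}_m, d)$ is an exact closure of $(\mathbb{U}_m, d)$. Since each $e_i^m$ is a chain in $\Omega_m$, its support lies in $A_m$, making it a legitimate Taylor basis element at $m$ in Construction \ref{C:VL}, and the induction closes. The main technical obstacle will be to verify that the identification of homology via Corollary \ref{C:main1c1} really sends $[d(e_i^m)]$ to $[d(e_i^m)]$, so that the hypothesis on $\widetilde{H}(\Delta_m; k)$ transports to a basis of $H(\mathbb{U}_m, d)$; I expect this to follow by inspecting the splitting, which is the identity on chains belonging to $\mathbb{U}_m$ and differs from $\widetilde{C}(\Delta_m;k)[-1]$ only by the exact direct summand built from the pairs $P_{\widetilde{m}}$.
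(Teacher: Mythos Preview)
Your proposal is correct and follows essentially the same route as the paper's proof: strong induction in $L_M$, with the inductive step using the splitting from the proof of Theorem \ref{T:main1} (equivalently Corollary \ref{C:main1c1}) to identify $H(\mathbb{U}_m,d)$ with $\widetilde{H}(\Delta_m;k)$, then observing that since each $d(e_i^m)$ already lies in $\mathbb{U}_m$ the classes $[d(e_i^m)]$ form a basis of $H(\mathbb{U}_m,d)$, so one may take $\Gamma_m=\{e_i^m\}$ in Construction \ref{C:VL}. The paper phrases the inductive hypothesis via Theorem \ref{T:main2} rather than directly, and your anticipated resolution of the ``technical obstacle'' is exactly what the paper does in one line.
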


\begin{proof}
For any $m\in L_M$ let $\widetilde{\Gamma}_m=\{e_i^m \in B| i\geq 1\}$. 
For any $m\neq 1 \in L_M$ let $(\widetilde{\mathbb{U}}_m, d)$ be the 
complex obtained by applying $d$ to the elements in 
$\bigcup\limits_{\widetilde{m}<m}\widetilde{\Gamma}_{\widetilde{m}}$, 
and let $(\widetilde{\mathbb{V}}_m, d)$ be the complex obtained by applying 
$d$ to the elements in 
$\bigcup \limits_{\widetilde{m} \leq m}\widetilde{\Gamma}_{\widetilde{m}}$. 
It suffices to show that for any $m\neq 1 \in L_M$ the 
$(M_{\leq m})$-homogenization of $\widetilde{\mathbb{V}}_m$ is a minimal 
free resolution of $S/(M_{\leq m})$ with a Taylor basis 
$\bigcup \limits_{\widetilde{m} \leq m}\widetilde{\Gamma}_{\widetilde{m}}$.
We will prove this result by using strong induction in $L_M$. 

Base case: if $m=m_i$ is an atom then $(\widetilde{\mathbb{V}}_{m_i}, d)$ 
is $0\to k\xrightarrow{1} k \to 0$, and the $(M_{\leq m_i})$-homogenization 
of $\widetilde{\mathbb{V}}_{m_i}$ is $0 \to S(-m_i) \xrightarrow{m_i} S \to 0$, 
which is a minimal free resolution of $S/(M_{\leq m_i})$ with a Taylor basis 
$\emptyset, \{i\}$. 

Inductive step: let $m\in L_M$ with $\mbox{rk}(m)\geq 2$. Assume that 
the result holds for all $\widetilde{m}\neq 1 \in L_M$ with $\widetilde{m}<m$.  
Then by Theorem \ref{T:main2} it is easy to see that in Construction \ref{C:VL} 
we can take $\Gamma_{\widetilde{m}}=\widetilde{\Gamma}_{\widetilde{m}}$ 
for all $\widetilde{m}<m$, and then $(\mathbb{U}_m, d)=(\widetilde{\mathbb{U}}_m, d)$. 
Similar to the proof of Corollary \ref{C:main1c1}, we have that 
\[
\widetilde{C}(\Delta_m; k)[-1]=(\mathbb{U}_m, d) \bigoplus (\bigoplus \limits_{1\leq \widetilde{m} <m} 
(\mathcal{E}_{\widetilde{m}}, d)),
\]
where $(\mathcal{E}_{\widetilde{m}}, d)$ is a trivial complex. 
Since  $[d(e_1^m)], \ldots, [d(e_{t_m}^m)]$ is a basis of $\widetilde{H}(\Delta_m;k)$ 
and  $d(e_i^m)\in \mathbb{U}_m$ for any $1\leq i \leq t_m$, 
it follows that $[d(e_1^m)], \ldots, [d(e_{t_m}^m)]$ is a basis of 
$H(\mathbb{U}_m,d)$. Note that $\mbox{supp}(e_i^m) \subseteq A_m$. 
Hence, in Construction \ref{C:VL} we can take $\Gamma_m=\widetilde{\Gamma}_m$ 
and then $(\mathbb{V}_m, d)=(\widetilde{\mathbb{V}}_m, d)$. So by 
Theorem \ref{T:main1} the $(M_{\leq m})$-homogenization of 
$\widetilde{\mathbb{V}}_m$ is a minimal free resolution of 
$S/(M_{\leq m})$ with a Taylor basis 
$\bigcup \limits_{\widetilde{m} \leq m}\widetilde{\Gamma}_{\widetilde{m}}$.
\end{proof}

\begin{remark}\label{R:cbasis}
By Theorem \ref{T:cbasis} and its proof, it is easy to see that in Construction 
\ref{C:VL} we can take $\Gamma_m=\{A_m\}$ for all Scarf multidegrees $m\in L_M$. 
In Example \ref{E:big} we have seen how Corollary \ref{C:main1c1} can help 
us to find $\Gamma_m$. Theorem \ref{T:cbasis} goes further and gives a 
geometric method to find $\Gamma_m$ step by step. As is shown by the 
next example, this method is very handy for us to calculate examples. 
\end{remark}

\begin{example}\label{E:cbasis}
Let $M$ be a monomial ideal in $S=k[a,b, c,d]$ generated by monomials 
$m_1=a^2b, m_2=ac, m_3=ad, m_4=bcd$. 

First we use this example to illustrate how one can obtain the lcm-lattice of 
a given monomial ideal. Set 
\[
\mathcal{L}=\{12, 13, 14, 23, 24, 34, 123, 124, 134,234, 1234\}, 
\]
where $12$ means the set $\{1, 2\}$ as in Example \ref{E:big}. 
Since $\mbox{lcm}(m_1, m_2)=a^2bc$ and neither of $m_3$ and $m_4$  
divides $a^2bc$, it follows that $A_{a^2bc}=12$. 
Similarly, we have that $A_{a^2bd}=13$. 
Since $\mbox{lcm}(m_1, m_4)=a^2bcd$ and both of $m_2$ and $m_3$ 
divide $a^2bcd$, it follows that $A_{a^2bcd}=1234$, which 
means that whenever $1$ and $4$ appear in some $A_m$, $2$ and $3$ must also 
be in $A_m$. Hence, we delete $14, 124, 134$ from $\mathcal{L}$ and set 
\[
\mathcal{L}=\{12, 13, 1234, 23, 24, 34,  123, 234 \}. 
\]
Then we have that $A_{acd}=23$. 
Since $\mbox{lcm}(m_2,m_4)=abcd$ and $m_3$ divides $abcd$, 
we have that $A_{abcd}=234$ and $24$ is deleted. Then we set
\[ 
\mathcal{L}=\{12, 13, 1234, 23, 234, 34, 123\}. 
\]
Similarly, $34$ and $123$ will be 
deleted and in the end we will get 
\[
\mathcal{L}=\{12, 13, 23, 234, 1234\}, 
\]
from 
which we can draw the lcm-lattice $L_M$ as shown in Figure 2.

\begin{figure}
\begin{tikzpicture}[node distance=1.4cm]
\title{The lcm-lattice2}
\node(1234)                                                                    {$1234$};
\node(234)    [below right=0.8cm and 0.6cm of 1234]      {$234$};
\node(23)      [below left =0.8cm and -0.1cm of 234]         {$23$};
\node(13)      [left of =23]                                               {$13$};
\node(12)      [left of =13]                                               {$12$};
\node(1)        [below of =12]	                                    {$1$};
\node(2)        [below of =13]	                                    {$2$};
\node(3)        [below of =23]	                                    {$3$};
\node(4)        [right of =3]	                                    {$4$};
\node(0)        [below=5cm of 1234]                                 {$\emptyset$};

\draw(1234) --(234);
\draw(1234) --(12);
\draw(1234) --(13);
\draw(234) --(23);
\draw(234) --(4);
\draw(12) --(1);
\draw(12) --(2);
\draw(13) --(1);
\draw(13) --(3);
\draw(23) --(2);
\draw(23) --(3);
\draw(1) --(0);
\draw(2) --(0);
\draw(3) --(0);
\draw(4) --(0);

\end{tikzpicture}
\caption{The lcm-lattice $L_M$ of Example \ref{E:cbasis}}
\end{figure}
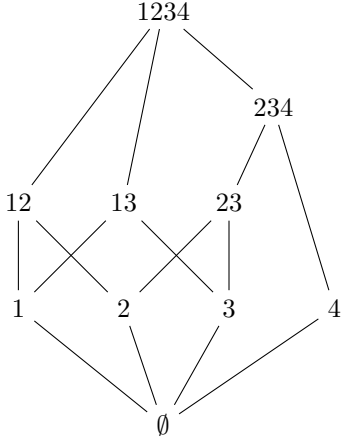

Now we have that $\Delta_{234}=\langle 23, 4 \rangle$ 
and $\widetilde{H}_0(\Delta_{234}; k)\cong k$ with a basis $[-2+4]$; 
$\Delta_{1234}=\langle 12, 13, 234 \rangle $ and 
$\widetilde{H}_1(\Delta_{1234};k)\cong k$ with a basis 
$[12-13+23]$. Hence, by Theorem \ref{T:cbasis} we can take 
$\Gamma_{234}=\{24\}$ and $\Gamma_{1234}=\{123\}$. 
Thus, we have that $B=\{\emptyset, 1,2,3,4, 12, 13, 23, 24, 123\}$. 
By applying $d$ to the faces in $B$ we get the complex $(\mathbb{V},d)$
with basis $B$:
\[
\begin{tikzcd}[ampersand replacement=\&]
0 \ar[r] \&k  \ar[r, "{\begin{pmatrix} 1 \\ -1\\ 1\\ 0  \end{pmatrix}}"]
\&[1.5em]k^4 
\arrow[r, "{\begin{pmatrix} -1& -1&0 & 0 \\ 1& 0& -1& -1 \\ 0 & 1& 1& 0\\ 0& 0& 0& 1 \end{pmatrix}}"]
\& [6.5em]k^4 \arrow[r, "{\begin{pmatrix} 1& 1& 1& 1\end{pmatrix}}"]
\& [4em]k \ar[r]
\& 0.
\end{tikzcd}
\]
The $M$-homogenization of $\mathbb{V}$ is a minimal free resolution 
of $S/M$ with a Taylor basis $B$:
\[
\begin{tikzcd}[ampersand replacement=\&]
0\to  S  \ar[r, "{\begin{pmatrix} d \\ -c\\ ab \\0  \end{pmatrix}}"] 
\& [1.1em] S^4 \arrow[r, "{\begin{pmatrix} -c& -d& 0& 0 \\ ab&  0& -d& -bd\\
0& ab& c & 0 \\ 0& 0& 0&a \end{pmatrix} }"]
\& [8em] S^4 \arrow[r, "{\begin{pmatrix} a^2b & ac& ad &bcd \end{pmatrix} }"]
\& [7em] S.
\end{tikzcd}
\]
This example will be used in Section 4.2.
\end{example}

We can always use the lcm-lattice $L_M$ and Construction \ref{C:VL} to obtain a Taylor 
basis, which induces a minimal free resolution of $S/M$. 
However, if a minimal free resolution $\mathbf{F}$ of $S/M$ is already given, we can use 
the next construction to get a Taylor basis of $\mathbf{F}$, which does not 
involve the lcm-lattice $L_M$. 

\begin{construction}\label{C:basis3}
Let $M$ be a monomial ideal minimally generated by monomials $m_1, \ldots, m_r$. 
Let $\mathbf{F}$ be a minimal free resolution of $S/M$. Let $(\mathbb{V}, \partial)$ 
be the frame of $\mathbf{F}$ with $A_i$ the matrix of  $\partial_i$. 
Let $\Omega$ be the simplex with the vertex set $\{1, \ldots, r\}$ and $d$ the 
boundary map of $\Omega$. 
In this construction we will use induction on the homological degree $i$ to obtain 
a set $B_i$  such that every element in $B_i$ is an $(i-1)$-dimensional chain in 
$\Omega$; 
let $B$ be the union of all $B_i$, let $(\mathbb{W}, d)$ be the complex of 
$k$-vector spaces obtained by applying $d$ to the chains in $B$, 
let $C_i$ be the matrix of $d_i$, then we have that $C_i=A_i$ for all $i$, 
which implies that $(\mathbb{W}, d)=(\mathbb{V}, \partial)$.  
In the next proposition we will prove that $B$ is a Taylor basis of $\mathbf{F}$. 

Base case: without the loss of generality we can assume that the map $F_1 \to F_0$ 
is given by $S^r \xrightarrow{(\begin{smallmatrix} m_1& \cdots & m_r  \end{smallmatrix})} S$. 
Then we set $B_0=\{\emptyset\}$  and $B_1=\{\{1\}, \ldots, \{r\}\}$ and then 
$C_1=A_1=\begin{pmatrix} 1& \cdots & 1 \end{pmatrix}$. 

Inductive step: Let $i\geq 2$. Assume that for all $0\leq j <i$ we have obtained $B_j$ 
and we have that $C_j=A_j$. Then we want to construct $B_i$ and show that 
$C_i=A_i$. 

Let $A_i=\begin{pmatrix} \alpha_1 & \cdots &\alpha_q \end{pmatrix}$ be a 
$p\times q$ matrix. Since $C_{i-1}=A_{i-1}$, we have that $|B_{i-1}|=\dim_kV_{i-1}=p$. 
Let $B_{i-1}=\{g_1, \ldots, g_p\}$ and $B_{i-2}=\{h_1, \ldots, h_t\}$. Let the column vector 
$\alpha_1=\begin{pmatrix} a_1 & \cdots & a_p \end{pmatrix}^T$. 
Let $a_{i_1}, \ldots, a_{i_l}$ be the nonzero scalars among 
$a_1, \ldots, a_p$. Since $A_{i-1}A_i=0$, it follows that 
$A_{i-1}\alpha_1=0$ and then we have that
\begin{align*}
d_{i-1}(a_{i_1}g_{i_1}+\cdots+a_{i_l}g_{i_l})
               &=d_{i-1}(a_1g_1+\cdots+a_pg_p) \\
               &=d_{i-1}\left( \begin{pmatrix} g_1& \cdots &g_p \end{pmatrix} 
                    \begin{pmatrix} a_1 \\ \vdots \\ a_p \end{pmatrix} \right)  \\
               &=\begin{pmatrix} h_1& \cdots &h_t \end{pmatrix} C_{i-1}
                     \begin{pmatrix} a_1 \\ \vdots \\ a_p \end{pmatrix}   \\
               &=\begin{pmatrix}h_1& \cdots &h_t \end{pmatrix} A_{i-1}\alpha_1 \\
              &=0. 
\end{align*}
Hence, $a_{i_1}g_{i_1}+\cdots+a_{i_l}g_{i_l}$ is a cycle in $\Omega$. 
Let $\widetilde{\Omega}$ be the simplex with the vertex set 
$\overline{\mbox{supp}(g_{i_1})\cup \cdots \cup \mbox{supp}(g_{i_l})}$. 
Then there is a chain $f_1$ of dimension $i-1$ in $\widetilde{\Omega}$ such that 
$d(f_1)=a_{i_1}g_{i_1}+\cdots+a_{i_l}g_{i_l}=a_1g_1+\cdots+a_pg_p$. 
Similarly, we can obtain $(i-1)$-dimensional chains $f_2, \ldots, f_q$. 
Let $B_i=\{f_1, \ldots, f_q\}$, then it is easy to see that $C_i=A_i$. 
\end{construction}

\begin{theorem}\label{T:basis3}
In Construction \ref{C:basis3}, $B$ is a Taylor basis of $\mathbf{F}$. 
And conversely, every Taylor basis of $\mathbf{F}$ can be obtained by 
Construction \ref{C:basis3}. 
\end{theorem}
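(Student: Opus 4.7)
The plan is to construct a multigraded chain map $\varphi:\mathbf{F}\to\mathbf{T}$ sending the basis of $\mathbf{F}$ to the homogenizations of $B$, and then to use minimality of $\mathbf{F}$ to show $\varphi$ is split injective with image a Taylor submodule. First I would identify the multidegree of each $f_j\in B_i$ with the multidegree $m_j$ of the corresponding basis element $h_j$ of $F_i$. By Theorem \ref{T:main2} together with Remark \ref{R:main1remark1}, $\mathbf{F}$ is an atomic lattice resolution and $m_j=\mbox{lcm}(\mbox{mdeg}(g_k):a_k\neq 0)=:m^*$, where $a_k$ are the nonzero entries of column $\alpha_j$ of $A_i$ and the $g_k\in B_{i-1}$ are the corresponding chains. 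Using Proposition \ref{P:closure}, Corollary \ref{C:unionclosure}, and the inductive identification $\overline{\mbox{supp}(g_{i_k})}=A_{m_k}$, the vertex set of $\widetilde{\Omega}$ equals $A_{m^*}$, so $\widetilde{\Omega}=\Omega_{m^*}$ and $\mbox{mdeg}(f_j)\mid m^*$. Conversely $\mbox{mdeg}(d(f_j))=m^*$ and $\mbox{mdeg}(d(f_j))\mid\mbox{mdeg}(f_j)$, forcing $\mbox{mdeg}(f_j)=m^*=m_j$.

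Define $\varphi:\mathbf{F}\to\mathbf{T}$ by $\varphi(h_j)=\hbar(f_j)$ and extend $S$-linearly. This is multigraded by the matching just established, and it is a chain map because $\partial_{\mathbf{T}}(\hbar(f_j))=\sum_k a_k(m_j/\mbox{mdeg}(g_k))\hbar(g_k)$ equals $\varphi(\partial_{\mathbf{F}}(h_j))=\sum_k u_{kj}\hbar(g_k)$ with $u_{kj}=a_k(m_j/\mbox{mdeg}(g_k))$ by the atomic lattice description of $\partial_{\mathbf{F}}$; also $\varphi(h_0)=\hbar(\emptyset)=\emptyset$, so $\varphi$ lifts $\mbox{id}_{S/M}$. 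By the multigraded comparison theorem, lift $\mbox{id}_{S/M}$ to a multigraded chain map $\psi:\mathbf{T}\to\mathbf{F}$; then $\psi\varphi:\mathbf{F}\to\mathbf{F}$ is a self-lift of the identity, hence homotopic to $\mbox{id}_{\mathbf{F}}$, so $(\psi\varphi)\otimes k=\mbox{id}$ on $\mathbf{F}\otimes k$ (which has zero differential by minimality), and each $(\psi\varphi)_i$ is a multigraded automorphism of $F_i$ by Nakayama's lemma. Thus $(\psi\varphi)^{-1}\psi$ is a multigraded left inverse of $\varphi$, producing a splitting $\mathbf{T}=\varphi(\mathbf{F})\oplus\mathcal{E}$ of complexes of multigraded free $S$-modules; since $\varphi$ is a quasi-isomorphism, $\mathcal{E}$ is acyclic and therefore isomorphic to a trivial complex by Lemma \ref{L:trivial}. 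Consequently $\varphi(\mathbf{F})$ is a Taylor submodule of $\mathbf{T}$ with multigraded basis $\{\hbar(f_j):f_j\in B\}$, whose dehomogenization is $B$, so $B$ is a Taylor basis of $\mathbf{F}$.

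For the converse, given any Taylor basis $B$ of $\mathbf{F}$, the set $\{\hbar(f):f\in B\}$ is by definition a multigraded basis of a Taylor submodule $N\subseteq\mathbf{T}$. Dehomogenizing the relation $\partial_{\mathbf{T}}(\hbar(f_j))=\sum_k u_{kj}\hbar(g_k)$ in $N\cong\mathbf{F}$ yields $d(f_j)=\sum_k a_k g_k$, matching column $\alpha_j$ of $A_i$. The support of $f_j$ lies in $A_{m_j}$, and by the identification $m_j=m^*$ from the first paragraph, $f_j\in\Omega_{m^*}=\widetilde{\Omega}$. Hence at each step Construction \ref{C:basis3} may choose $f_j$ to be the given Taylor basis element, recovering $B$. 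The main subtle point throughout is the multidegree identification $\mbox{mdeg}(f_j)=m_j$: without Theorem \ref{T:main2} the vertex set of $\widetilde{\Omega}$ need not coincide with $A_{m_j}$, and the chain map $\varphi$ would fail to be multigraded, breaking the entire argument.
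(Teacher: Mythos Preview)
Your chain-map/splitting strategy is genuinely different from the paper's proof and is essentially sound, but there is a gap at the step ``$\mbox{mdeg}(d(f_j))=m^*$''. You correctly obtain $m_j=m^*$ from Theorem~\ref{T:main2} and Remark~\ref{R:main1remark1}, and $\mbox{mdeg}(f_j)\mid m^*$ from $f_j\in\widetilde{\Omega}=\Omega_{m^*}$. But writing $d(f_j)=\sum_k a_k g_k$ with the $g_k$ linearly independent \emph{as chains} does not prevent cancellation at the level of \emph{faces}; your inductive hypothesis $\overline{\mbox{supp}(g_k)}=A_{m_k'}$ (equivalently $\mbox{mdeg}(g_k)=m_k'$) does not by itself force $\mbox{mdeg}(\sum a_k g_k)=\mbox{lcm}(m_k')$. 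What is really needed is that each $g_k$ is a \emph{Taylor chain} at $m_k'$ and that, for each fixed $m_0$, the initial parts $\{\mbox{in}(g_k):m_k'=m_0\}$ are linearly independent; then the faces of maximal multidegree survive in $\sum a_k g_k$. This stronger statement is true (it is a consequence of Theorem~\ref{T:main1}), but it is not part of your induction, so the implication is circular as written. Without it, $\varphi(h_j)=\hbar(f_j)$ need not be multigraded.

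The paper sidesteps this issue entirely. Rather than building $\varphi$ and splitting, it argues by strong induction in $L_M$ that the sets $\widehat{\Gamma}_m=\{f_j\in B:\text{the corresponding }h_j\text{ has multidegree }m\}$ are admissible choices in Construction~\ref{C:VL}: the only thing to check is $\mbox{supp}(f)\subseteq A_m$, which follows immediately from the inductive hypothesis and the definition of $\widetilde{\Omega}$. Theorem~\ref{T:main1} is then invoked wholesale, and \emph{its} proof (via the basis decomposition into $\bigcup\Gamma_{\widetilde m}$, $\bigcup P_{\widetilde m}$, and $Q_m$) is what guarantees that each $f_j$ is a Taylor chain at $m_j$. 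Your approach can be repaired in two ways: either carry the stronger hypothesis ``$g_k$ is a Taylor chain at $m_k'$ with linearly independent initial parts per multidegree'' through the induction, or define $\varphi(h_j)=\sum_c\nu_c\,\frac{m_j}{\mbox{mdeg}(c)}\,c$ (the multidegree-$m_j$ lift, well-defined since $\mbox{supp}(f_j)\subseteq A_{m_j}$) instead of $\hbar(f_j)$, and deduce $\mbox{in}(\varphi(h_j))\neq 0$ \emph{a posteriori} from the splitting via Nakayama's lemma.
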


\begin{proof}
Since we have that $(\mathbb{W}, d)=(\mathbb{V}, \partial)$, by Theorem 4.14 in \cite{B:PV} 
it follows that the $M$-homogenization of $\mathbb{W}$ is $\mathbf{F}$. 
For any $m\neq 1 \in L_M$, let $(\mathbb{W}_{\leq m}, d)$ be the frame of 
$\mathbf{F}(\leq m)$ and $(\mathbb{W}_{<m}, d)$ the frame of $\mathbf{F}(<m)$. 
By Theorem \ref{T:main2} we see that $(\mathbb{W}_{\leq m}, d)$ is an 
exact closure of $(\mathbb{W}_{<m}, d)$. 
Let $\widehat{\Gamma}_1=\{\emptyset\}$ and for any $m\neq 1 \in L_M$, 
let $\widehat{\Gamma}_m=\{f\in B| f\in \mathbb{W}_{\leq m}, f\notin \mathbb{W}_{<m} \}$. 
Next we prove by using strong induction in $L_M$ that in Construction \ref{C:VL} 
we can take $\Gamma_m=\widehat{\Gamma}_m$ for all $m\in L_M$. 

Base case: obviously, we have that $\Gamma_1=\{\emptyset\}=\widehat{\Gamma}_1$ 
and $\Gamma_{m_i}=\{\{i\}\}=\widehat{\Gamma}_{m_i}$ for any $1\leq i \leq r$.   

Inductive step: Let $m\in L_M$ with $\mbox{rk}(m)\geq 2$. Assume that for any 
$\widetilde{m}<m \in L_M$ we have taken $\Gamma_{\widetilde{m}}=\widehat{\Gamma}_{\widetilde{m}}$ 
in Construction \ref{C:VL}. Then $(\mathbb{W}_{< m}, d)$ is equal to 
$(\mathbb{U}_m, d)$ in Construction \ref{C:VL} and we can set 
$(\mathbb{V}_m, \partial)=(\mathbb{W}_{\leq m}, d)$ in Construction \ref{C:VL}. 
For any $f\in \widehat{\Gamma}_m$, let $d(f)=\lambda_1e_1+\cdots+\lambda_le_l$, 
where $\lambda_1, \ldots, \lambda_l$ are some nonzero scalars in $k$ and 
$e_1, \ldots, e_l$ are some basis elements in $\mathbb{W}_{<m}$. 
By the induction hypothesis we have that $\mbox{supp}(e_i)\subset A_m$ 
for any $1\leq i \leq l$. Note that in Construction \ref{C:basis3} we have that 
$\mbox{supp}(f)\subseteq \overline{\mbox{supp}(e_1)\cup \cdots \cup \mbox{supp}(e_l)}$. 
Hence, we have that $\mbox{supp}(f)\subseteq A_m$. 
Thus, in Construction \ref{C:VL} we can take $\Gamma_m=\widehat{\Gamma}_m$. 

So, by Theorem \ref{T:main1}, 
$B=\bigcup\limits_{m\in L_M}\widehat{\Gamma}_m=\bigcup\limits_{m\in L_M} \Gamma_m$ 
is a Taylor basis of $\mathbf{F}$. 

Conversely, let $\widetilde{B}$ be a Taylor basis of $\mathbf{F}$. 
By Theorem \ref{T:main2} we know that $\widetilde{B}$ can be obtained 
by Construction \ref{C:VL}. By Theorem \ref{T:main1} and Remark \ref{R:main1remark1} 
it is easy to see that every Taylor basis obtained by Construction \ref{C:VL} 
satisfies the conditions in Construction \ref{C:basis3} and then can be obtained 
by Construction \ref{C:basis3}. So, $\widetilde{B}$ can be obtained by 
Construction \ref{C:basis3}. 
\end{proof}

Note that in Construction \ref{C:basis3}, if we use 
$\mbox{supp}(g_{i_1})\cup \cdots \cup \mbox{supp}(g_{i_l})$ 
instead of $\overline{\mbox{supp}(g_{i_1})\cup \cdots \cup \mbox{supp}(g_{i_l})}$, 
then by Remark \ref{R:main1remark1} it is easy to see that 
the second part of Theorem \ref{T:basis3} may not hold. 

Next we use the atomic lattice resolution theory to prove some results about 
Scarf multidegrees. 

\begin{proposition}\label{P:pm}
For any $m\neq 1 \in L_M$ let $P_m$ be as defined in Theorem \ref{T:main1}. 
Then $m$ is a Scarf multidegree if and only if $P_m=\emptyset$. 
\end{proposition}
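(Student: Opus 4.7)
The plan is to leverage the identity $|\Gamma_m|+|P_m|=|Q_m|$ that appears in the proof of Theorem \ref{T:main1}, combined with the fact (also from that proof) that the elements of $P_m$ come in pairs from cancelled short trivial complexes, so $|P_m|$ is always an even nonnegative integer. I would first note that $m$ is a Scarf multidegree if and only if $|Q_m|=1$: by Proposition \ref{P:closure}, $Q_m=\{A\subseteq A_m : \overline{A}=A_m\}$ is precisely the collection of subsets of $\{1,\ldots,r\}$ of multidegree $m$, and $A_m$ itself is always a member.

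For the forward direction, assuming $m$ is Scarf and hence $|Q_m|=1$, the identity gives $|\Gamma_m|+|P_m|=1$; since $|P_m|$ is even, this forces $|P_m|=0$, and hence $P_m=\emptyset$.

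For the converse I would argue contrapositively and assume $|Q_m|\geq 2$. It will suffice to show $|\Gamma_m|<|Q_m|$, since this immediately yields $|P_m|\geq 1$ and hence $P_m\neq\emptyset$. By Corollary \ref{C:main1c1} together with the long exact sequence of the pair $(\Omega_m,\Delta_m)$ (with $\widetilde{H}_*(\Omega_m;k)=0$ because $\Omega_m$ is a simplex), one has $|\Gamma_m|=\dim_k\widetilde{H}(\Delta_m;k)=\dim_kH(\mathbb{Q}_m;k)\leq \dim_k\mathbb{Q}_m=|Q_m|$, with equality if and only if the differential on $\mathbb{Q}_m$ vanishes. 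Thus the task is to exhibit a nonzero differential in $\mathbb{Q}_m$. Since $|Q_m|\geq 2$, there is some $A\subsetneq A_m$ with $\overline{A}=A_m$; picking any $i_0\in A_m\setminus A$, we have $A\subseteq A_m\setminus\{i_0\}\subsetneq A_m$, so $\overline{A_m\setminus\{i_0\}}=A_m$, meaning $A_m\setminus\{i_0\}\in Q_m$. Therefore the boundary of $A_m$ in $\mathbb{Q}_m$ has a nonzero coefficient on $[A_m\setminus\{i_0\}]$, so the differential is nontrivial.

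The main obstacle is the converse: converting the combinatorial hypothesis $|Q_m|\geq 2$ into an actual nonzero boundary in $\mathbb{Q}_m$. The resolution comes from the observation that a single witness $A\subsetneq A_m$ of non-Scarfness propagates, via the closure-inclusion monotonicity in Proposition \ref{P:closure}, to force every subset between $A$ and $A_m$ into $Q_m$, in particular $A_m\setminus\{i_0\}$, which makes $\partial(A_m)\neq 0$ in $\mathbb{Q}_m$ transparent.
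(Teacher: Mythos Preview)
Your proof is correct, and both directions are handled cleanly. The forward direction is essentially the same as the paper's: both rely on $|Q_m|=1$ for Scarf $m$ and the identity $|\Gamma_m|+|P_m|=|Q_m|$; you invoke the evenness of $|P_m|$ while the paper cites Remark~\ref{R:cbasis} to get $|\Gamma_m|=1$ directly, but these amount to the same computation.

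The converse direction is where the approaches genuinely differ. The paper argues constructively: having found $A\subsetneq A_m$ with $|A|=|A_m|-1$ and $\overline{A}=A_m$, it performs a consecutive cancellation on the pair $(A_m,A)$ in the Taylor resolution, then continues cancelling to a minimal resolution; since two basis elements of multidegree $m$ were removed at the first step, $|\Gamma_m|\leq |Q_m|-2$. Your route is homological: you identify $|\Gamma_m|=\dim_k H(\mathbb{Q}_m;k)$ via Corollary~\ref{C:main1c1} and the long exact sequence of the pair $(\Omega_m,\Delta_m)$, then exhibit a nonzero differential in $\mathbb{Q}_m$ (namely $\partial(A_m)$ hits $A_m\setminus\{i_0\}\in Q_m$) to force strict inequality $\dim_k H(\mathbb{Q}_m;k)<|Q_m|$. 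Your argument is arguably tidier and more self-contained---it avoids re-running the cancellation machinery and gives the codimension-one face $A_m\setminus\{i_0\}\in Q_m$ explicitly via closure monotonicity, a step the paper simply asserts. The paper's version, on the other hand, yields $|P_m|\geq 2$ in one stroke without appealing to parity, and fits the consecutive-cancellation narrative that pervades the rest of Section~3.
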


\begin{proof}
\emph{if}: Assume that $m$ is not a Scarf multidegree then there exists 
$A\subset A_m$ such that $|A|=|A_m|-1$ and $\overline{A}=A_m$. 
Hence, from the Taylor resolution of $S/M$, we can first do a consecutive 
cancellation with respect to basis elements $A_m$ and $A$, and then after 
a series of consecutive cancellations we obtain a minimal free resolution 
of $S/M$ with a Taylor basis. By Remark \ref{R:main1remark1} and the 
proof of Theorem \ref{T:main1} we see that 
$|\Gamma_m| \leq |Q_m|-2$, which implies that $|P_m|=|Q_m|-|\Gamma_m|\geq 2$, 
and then $P_m\neq \emptyset$. 

\emph{only if}: Assume that $m$ is a Scarf multidegree then by remark 
\ref{R:cbasis} we can take $\Gamma_m=\{A_m\}$ in Construction \ref{C:VL}. 
Note that $Q_m=\{A_m\}$, and then $|P_m|=|Q_m|-|\Gamma_m|=1-1=0$, 
so that $P_m=\emptyset$. 
\end{proof}

In Theorem 5.6 and Theorem 7.1 of \cite{B:Me},  Mermin proves that 
the intersection of all the minimal free resolutions of $S/M$ embedded 
in the Taylor resolution is the Scarf complex of $M$. His proof uses 
the Lyubeznik resolutions of $M$. Here we will give this result a 
different proof. Our proof is slightly longer, but the idea is simple. 

\begin{theorem}[\cite{B:Me}]\label{T:intersection}
Let $M$ be a monomial ideal minimally generated by $r$ monomials. 
Let $\Omega$ be the simplex with the vertex set $\{1, \ldots, r\}$. 
Let $\mathbf{T}$ be the Taylor resolution of $S/M$ with basis the 
faces of $\Omega$. 
Let 
\[
\Omega_M=\{A_m|m\in L_M \ \mbox{is \ a\  Scarf\ multidegree} \}
\]
be the  Scarf complex of $M$. 
Then $\mathbf{T}|_{\Omega_M}$ is a subcomplex of $\mathbf{T}$. 
As in Remark \ref{R:TBmany} let $\Sigma_M$ be the set of Taylor 
submodules for $M$. Then we have that 
\[
\bigcap\limits_{N\in \Sigma_M}N=\mathbf{T}|_{\Omega_M}. 
\]
\end{theorem}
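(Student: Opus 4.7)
My plan is to prove the two stated facts in three stages: the subcomplex claim, the easy inclusion $\mathbf{T}|_{\Omega_M} \subseteq \bigcap_N N$, and finally the harder reverse inclusion, for which I would analyze the intersection multidegree by multidegree via the atomic lattice resolution theory.

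For the subcomplex claim, I would verify that $\Omega_M$ is closed under taking subsets. Given a Scarf face $A_m$ and $i \in A_m$, set $m' = \mbox{lcm}(m_j \mid j \in A_m\setminus\{i\})$. The Scarfness of $m$ forces $i \notin A_{m'}$ (otherwise $A_m$ and $A_m\setminus\{i\}$ would be two distinct subsets with lcm $m$), so $A_m\setminus\{i\} = A_{m'}$; a parallel argument shows $m'$ is itself Scarf, since any competing subset $B$ with lcm $m'$ would give $B\cup\{i\}$ as a rival to $A_m$ with lcm $m$. Hence every face appearing in $d(A_m)$ lies in $\Omega_M$, and $\mathbf{T}|_{\Omega_M}$ is a subcomplex. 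For the easy inclusion, fix any Scarf $m$ and any $N \in \Sigma_M$. By Theorem \ref{T:main2} the Taylor basis of $N$ comes from a choice in Construction \ref{C:VL}, and by Proposition \ref{P:pm} we have $|\Gamma_m| = |Q_m| = 1$. The unique chain $g \in \Gamma_m$ satisfies $\mbox{supp}(g) \subseteq A_m$ and lives in dimension $|A_m|-1$, forcing $g = \lambda A_m$ for some nonzero $\lambda \in k$; hence $\hbar(g) = \lambda A_m \in N$, so $A_m \in N$ and $\mathbf{T}|_{\Omega_M}\subseteq\bigcap_N N$.

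For the reverse inclusion, since both sides are multigraded submodules of $\mathbf{T}$, it suffices to prove $\bigcap_N (N_i)_\mu \subseteq (\mathbf{T}|_{\Omega_M})_{i,\mu}$ for each homological degree $i$ and each multidegree $\mu$. Under the $k$-linear identification $(\mathbf{T}_i)_\mu \cong \bigoplus_{A \subseteq A_\mu,\ |A|=i} k\cdot A$ given by $x^{\mu/\mbox{mdeg}(A)}A \leftrightarrow A$, the contribution $x^{\mu/m'}\hbar(g)$ corresponds to the chain $g$ itself, so $(N_i)_\mu$ identifies with the internal direct sum $\bigoplus_{m'\mid\mu} W_{m'}^{(N)}$, where $W_{m'}^{(N)}$ is the $k$-span of the $(i-1)$-dimensional chains in $\Gamma_{m'}^{(N)}$. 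For a Scarf $m'$ with $|A_{m'}| = i$ this is the fixed line $kA_{m'}$, and these Scarf contributions together form a fixed subspace $U$ equal to $(\mathbf{T}|_{\Omega_M})_{i,\mu}$. For a non-Scarf $m'\mid\mu$, fix all other $\Gamma_{m''}$ and vary $\Gamma_{m'}$: Theorems \ref{T:main1} and \ref{T:main2} together with Construction \ref{C:VL} imply that the admissible $W_{m'}^{(N)}$ are precisely the sections of the surjection from $\{g \in C_{i-1}(\Omega_{m'};k) : \partial g \in C_{i-2}(\Delta_{m'};k)\}$ onto $\widetilde{H}_{i-2}(\Delta_{m'};k)$ induced by $\partial$, the Taylor-chain condition being automatic since such a section meets $C_{i-1}(\Delta_{m'};k)$ trivially. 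Because the intersection of all complements of a fixed subspace in a finite-dimensional $k$-vector space is zero, $\bigcap_N W_{m'}^{(N)} = 0$; since the non-Scarf summands vary independently in the direct sum decomposition, one obtains $\bigcap_N (N_i)_\mu = U = (\mathbf{T}|_{\Omega_M})_{i,\mu}$.

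The main obstacle is the final step: justifying that varying $\Gamma_{m'}$ across non-Scarf $m'$ genuinely traces out all combinations of the $W_{m'}^{(N)}$'s as $N$ ranges over $\Sigma_M$, and that the admissible $W_{m'}$'s really exhaust the sections of the surjection onto $\widetilde{H}_{i-2}(\Delta_{m'};k)$, so that the classical fact ``the intersection of all complements of a subspace is zero'' applies. This requires a careful reading of Construction \ref{C:VL} in conjunction with Theorem \ref{T:main2}.
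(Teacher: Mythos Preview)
Your subcomplex argument and the easy inclusion $\mathbf{T}|_{\Omega_M} \subseteq \bigcap_N N$ are correct and essentially match the paper; in fact your observation that $\mbox{supp}(g)\subseteq A_m$ together with $\dim g = |A_m|-1$ forces $g=\lambda A_m$ is slightly cleaner than the paper's argument for this step.

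For the reverse inclusion you take a genuinely different route, and there is a real gap. The paper argues face by face: for each face $A$ with $\overline{A}=A_m$ and $m$ non-Scarf it explicitly constructs, via consecutive cancellations starting from $\mathbf{T}$ (treating the cases $A\neq A_m$ and $A=A_m$ separately, the latter requiring an additional surgery using Construction~\ref{C:basis3}), a Taylor submodule $N$ none of whose basis elements involve $A$; this immediately gives $\bigcap_N N \subseteq \mathbf{T}|_{\Omega_M}$. Your structural approach via $(N_i)_\mu=\bigoplus_{m'\mid\mu} W_{m'}^{(N)}$ runs into two linked problems. First, even granting $\bigcap_N W_{m'}^{(N)}=0$ for each non-Scarf $m'$, one cannot conclude $\bigcap_N\bigl(\bigoplus_{m'}W_{m'}^{(N)}\bigr)=U$: the summands are not the components along a \emph{fixed} direct-sum decomposition of the ambient space, and when the choices of the $W_{m'}^{(N)}$ are correlated across different $m'$ the intersection of the sums can strictly exceed the sum of the intersections. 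Second, the choices \emph{are} correlated: in Construction~\ref{C:VL} the admissible $\Gamma_{m'}$ depend on the previously fixed $\Gamma_{\widetilde m}$ for $\widetilde m<m'$ through the requirement $d(g)\in\mathbb{U}_{m'}$ (which is strictly stronger than your condition $\partial g\in C_{i-2}(\Delta_{m'};k)$, so your description of the admissible $W_{m'}^{(N)}$ as \emph{all} sections is also not quite right), and changing $\Gamma_{m'}$ forces changes in $\Gamma_{m''}$ for $m''>m'$. So the independence you invoke in the last step is not available, and the obstacle you yourself flag is genuine; I do not see how to close it without something close to the paper's explicit face-avoidance construction.
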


\begin{proof}
Pick any $A_m\in \mathbf{T}|_{\Omega_M}$. Since $m$ is a Scarf 
multidegree, by the proof of Proposition \ref{P:pm} we see that 
$Q_m=\{A_m\}$ and $|\Gamma_m|=1$. Let $|A_m|=i$ and 
$\Gamma_m=\{f\}$. Since $f$ is a Taylor chain at $m$, it follows 
that there exists $\lambda\neq 0 \in k$ such that $\mbox{in}(f)=\lambda A_m$. 
Assume that $f\neq \mbox{in}(f)$ then there exist nonzero scalars 
$\lambda_1, \ldots, \lambda_p \in k$ and $(i-1)$-dimensional faces 
$c_1, \ldots, c_p$ of $\Omega$ such that $c_j\neq A_m$ for any 
$1\leq j \leq p$, $c_j\neq c_l$ for any $1\leq j< l \leq p$, and 
\[
f=\lambda A_m+\lambda_1c_1+\cdots+\lambda_pc_p. 
\]
Let $\widetilde{m}=\mbox{mdeg}(c_1)$ then we have that 
$\widetilde{m}<m \in L_M$, so that $\widetilde{m}$ is also a Scarf 
multidegree and $c_1=A_{\widetilde{m}} \subsetneq A_m$, which 
contradicts to the assumption that $|c_1|=|A_m|=i$. 
Thus, we have that $f=\mbox{in}(f)=\lambda A_m$ and then 
$\Gamma_m=\{\lambda A_m\}$. 
By Theorem \ref{T:main2} it is easy to see that for any Taylor 
submodule $N$ for $M$ there exists a nonzero scalar $\lambda  \in k$ 
such that $\lambda A_m \in N$ and then $A_m \in N$. 
So we have that $\mathbf{T}|_{\Omega_M} \subseteq \bigcap\limits_{N\in \Sigma_M}N$. 

On the other hand, let $A\subseteq \{1, \ldots, r\}$ such that  
$\overline{A}=A_m$ where $m$ is not a Scarf multidegree. 
We want to show that there exists $N\in \Sigma_M$ such that 
every element in $N$ does not contain a term involving $A$, or 
equivalently, there exists a minimal free resolution of $S/M$ with 
a Taylor basis such that $A$ does not appear in any Taylor basis element. 

Case 1: If $\overline{A}=A_m$ and $A\neq A_m$ then there exists 
$B\in Q_m$ such that $|B|=|A|+1$. From the Taylor resolution, we can 
first do a consecutive cancellation with respect to $B$ and $A$, and then 
use a series of consecutive cancellations to obtain a minimal free resolution 
of $S/M$ with a Taylor basis. 
By Proposition \ref{P:CCbasis} it is easy to see that no element in the Taylor 
basis contains a term involving $A$.  

Case 2: If $A=\overline{A}=A_m$ then there exists $B\in Q_m$ such that 
$|B|=|A|-1$. From the Taylor resolution, we can first do a consecutive 
cancellation with respect to $A$ and $B$, and then use a series of 
consecutive cancellations to obtain a minimal free resolution $\mathbf{F}$ of $S/M$ with 
a Taylor basis $\Gamma$, in which every chain does not have a term involving $B$. 
Let $|A|=i$. Since $m$ is not a Scarf multidegree, we have that $i\geq 2$. 
Suppose that $A$ appears in the Taylor basis elements $f_1, \ldots, f_t \in \Gamma$ 
in homological degree $i$. Let 
\[
f_1=\lambda_1A+\lambda_2c_2+\cdots+\lambda_lc_l,
\]
where $\lambda_1, \ldots, \lambda_l \neq 0 \in k$ and $c_2, \ldots, c_l$ are 
some $(i-1)$-dimensional faces in $\Omega$ different from $A$.  
Since $d(A)$ has a term involving $B$ and $d(f_1)$, which is a linear combination 
of some elements in $\Gamma$, does not have a term involving $B$, it follows that 
there exists $c_i$ such that $A\cap c_i=B$. Without the loss of generality we 
assume that $A\cap c_2=B$ and then $|A\cup c_2|=i+1\geq 3$. 
Suppose that $d(A\cup c_2)=\nu(A-\widetilde{c})$, where $\nu$ is $\pm 1$ and 
$\widetilde{c}$ is an $(i-1)$-dimensional chain not containing $A$. Then 
we have that $d(A)=d(\widetilde{c})$. Let 
\[
\widetilde{f}_1=\lambda_1\widetilde{c}+\lambda_2c_2+\cdots+\lambda_lc_l,
\]
then $\widetilde{f}_1$ does not have a term involving $A$, 
$d(\widetilde{f}_1)=d(f_1)$ and $\mbox{supp}(\widetilde{f}_1) \subseteq 
\mbox{supp}(\widetilde{c})\cup c_2 \cup \cdots \cup c_l
=A\cup c_2 \cup \cdots \cup c_l =\mbox{supp}(f_1)$. 
Similarly, we can get $\widetilde{f}_2, \ldots, \widetilde{f}_t$. 
By Theorem \ref{T:basis3} the Taylor basis $\Gamma$ can be obtained by 
Construction \ref{C:basis3}. Thus, by using Construction \ref{C:basis3} again,  we can 
keep the Taylor basis elements in $\Gamma$ whose homological degrees are 
less than $i$; and we replace the Taylor basis 
elements $f_1, \ldots, f_t$ by $\widetilde{f}_1, \ldots, \widetilde{f}_t$, respectively, 
and keep the other Taylor basis elements in $\Gamma$ of homological degree $i$; 
and then in higher homological degrees  we use Construction \ref{C:basis3} to get 
Taylor basis elements step by step. 
Thus, we obtain a Taylor basis of $\mathbf{F}$ in which every Taylor basis 
element does not have a term involving $A$. 
So we have that $\bigcap\limits_{N\in \Sigma_M}N \subseteq \mathbf{T}|_{\Omega_M}$, 
which implies that $\bigcap\limits_{N\in \Sigma_M}N=\mathbf{T}|_{\Omega_M}$. 
\end{proof}

The concept of exact closure in Section 3.1 and the construction of 
atomic lattice resolutions in Section 3.2 originate from the concept 
of nearly Scarf monomial ideals introduced by Peeva and Velasco in \cite{B:PV}. 
A monomial ideal $M$ is called a \emph{nearly Scarf monomial ideal} if for any 
$m\in L_M$, either $m$ is the top element of $L_M$, or $m$ is a Scarf 
multidegree. We show that Theorem 6.1 in \cite{B:PV} about nearly 
Scarf monomial ideals is a special case of Theorem \ref{T:main1}. 

\begin{theorem}[\cite{B:PV}]\label{T:nearly}
Let $M$ be a nearly Scarf monomial ideal in $S$. Let $m$ be the top element in $L_M$. 
Let $\mathbb{V}$ be an exact closure of $\widetilde{C}(\Delta_m; k)[-1]$. 
Then the $M$-homogenization of $\mathbb{V}$ is a minimal free resolution 
of $S/M$. 
\end{theorem}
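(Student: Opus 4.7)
The plan is to derive Theorem~\ref{T:nearly} as a direct corollary of Theorem~\ref{T:main1} applied to the top element of $L_M$, by identifying the simplicial chain complex $\widetilde{C}(\Delta_m;k)[-1]$ with the auxiliary complex $\mathbb{U}_m$ built in Construction~\ref{C:VL} at $m$. Since $M$ is nearly Scarf, every $\widetilde{m}\in L_M\setminus\{1,m\}$ is a Scarf multidegree, so by Proposition~\ref{P:pm} the cancellation set $P_{\widetilde{m}}$ is empty, and by Remark~\ref{R:cbasis} we may take $\Gamma_{\widetilde{m}}=\{A_{\widetilde{m}}\}$ in Construction~\ref{C:VL}.

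Next I would invoke Corollary~\ref{C:main1c2} with $\beta_1,\ldots,\beta_t$ the elements of $L_M$ covered by $m$, so that $\Delta=\Delta_m$. By Proposition~\ref{P:cover}, $\Pi$ consists of all $\widetilde{m}\in L_M$ with $\widetilde{m}<m$, whence $T=\bigcup_{\widetilde{m}\in\Pi}\Gamma_{\widetilde{m}}$ is precisely the multigraded basis of $\mathbb{U}_m$ and $P=\emptyset$. The corollary then collapses to a multigraded isomorphism $\widetilde{C}(\Delta_m;k)[-1]\cong\mathbb{U}_m$; the intervening basis change is a sequence of consecutive cancellations, which respect multidegrees by Proposition~\ref{P:CCbasis}.

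From this identification I would conclude as follows. Theorem~\ref{T:main1} applied to the top $m$ says that the exact closure $\mathbb{V}_m$ of $\mathbb{U}_m$ produced in Construction~\ref{C:VL} has the property that its $M$-homogenization is a minimal free resolution of $S/M$. Any exact closure $\mathbb{V}$ of $\widetilde{C}(\Delta_m;k)[-1]$ transports through the isomorphism above to an exact closure of $\mathbb{U}_m$; by Remark~\ref{R:exactclosure1} all exact closures share the same dimensions in each homological degree, and by the explicit recipe of Construction~\ref{C:exactclosure} they may be matched up via a multigraded isomorphism sending each added basis element $g_j^i$ in $\mathbb{V}$ to an element representing the same class in $\widetilde{H}_{i-1}(\Delta_m;k)$. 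After $M$-homogenization the two become isomorphic multigraded complexes of free $S$-modules, so the $M$-homogenization of $\mathbb{V}$ inherits the property of being a minimal free resolution of $S/M$.

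The main obstacle I anticipate is bookkeeping the multigraded structure throughout: one must verify that the basis change supplied by Corollary~\ref{C:main1c2} respects the multigraded decomposition, and that the cycles $c_j^i$ representing a basis of $\widetilde{H}_{\bullet}(\Delta_m;k)$ can be chosen multigraded of multidegree exactly $m$. The latter holds because all nontrivial homology of $\mathbb{U}_m$ is concentrated at multidegree $m$: for any $\widetilde{m}<m$ the subcomplex $\mathbb{V}_{\widetilde{m}}\subseteq \mathbb{U}_m$ is already exact (being an exact closure at $\widetilde{m}$), so every cycle of lower multidegree is already a boundary, and Theorem~\ref{T:basis} confirms that the remaining homology represents precisely the top-multidegree Betti numbers of $S/M$.
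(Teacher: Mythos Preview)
Your approach is essentially the same as the paper's, but you make it considerably harder than necessary by treating as an isomorphism what is in fact an equality. Once you choose $\Gamma_{\widetilde m}=\{A_{\widetilde m}\}$ for every $\widetilde m<m$ (Remark~\ref{R:cbasis}), the basis of $\mathbb{U}_m$ is literally the set $\{A_{\widetilde m}:\widetilde m<m\}$. Since every $\widetilde m<m$ is Scarf, each $Q_{\widetilde m}$ is the singleton $\{A_{\widetilde m}\}$, so the faces of $\Delta_m$ are exactly these $A_{\widetilde m}$. Thus $(\mathbb{U}_m,d)=\widetilde{C}(\Delta_m;k)[-1]$ on the nose: same basis, same boundary map. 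The paper simply observes this and then says that any exact closure $\mathbb{V}$ of $\widetilde{C}(\Delta_m;k)[-1]$ can be taken as $\mathbb{V}_m$ in Construction~\ref{C:VL}, whence Theorem~\ref{T:main1} finishes.

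Your detour through Corollary~\ref{C:main1c2} is harmless but redundant: with $P=\emptyset$ the corollary collapses to this same equality, not an isomorphism requiring transport. In particular your phrase ``the intervening basis change is a sequence of consecutive cancellations'' is off---there are no cancellations to perform. Consequently all the multigraded bookkeeping you anticipate in your final paragraph evaporates: there is no nontrivial basis change to track, and the question of whether the homology representatives can be chosen of multidegree $m$ is already handled inside the proof of Theorem~\ref{T:main1} (the argument that $\widehat m=m$ there). Your reasoning is sound, just over-engineered.
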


\begin{proof}
Since $M$ is a nearly Scarf monomial ideal, it follows that for any $\widetilde{m}<m \in L_M$ 
we can take $\Gamma_{\widetilde{m}}=\{A_{\widetilde{m}}\}$ in Construction 
\ref{C:VL}, and then we have that $(\mathbb{U}_m, d)=\widetilde{C}(\Delta_m; k)[-1]$. 
Since $\mathbb{V}$ is an exact closure of $\widetilde{C}(\Delta_m; k)[-1]$, 
in Construction \ref{C:VL} we can take $\mathbb{V}(L_M)=(\mathbb{V}_m, d)=\mathbb{V}$. 
So by Theorem \ref{T:main1}, the $M$-homogenization of $\mathbb{V}$ is a 
minimal free resolution of $S/M$. 
\end{proof}

Theorem \ref{T:main1} and Theorem \ref{T:main2} imply that the lcm-lattice 
$L_M$ determines the minimal free resolutions of $S/M$, which is also proved 
in Theorem 3.3 in \cite{B:GPW}.  Similarly, Theorem 2.1 in \cite{B:CM2} and 
Theorem 5.3 in \cite{B:TV} show that the Betti poset of $M$ determines the 
minimal free resolutions of $S/M$. Next we will give this result a new proof. 

\begin{definition}\label{D:BettiPoset}
Let $M$ be a monomial ideal. Let $B_M$ be the poset obtained from the 
lcm-lattice $L_M$ by deleting all $m\neq 1 \in L_M$ with $\widetilde{H}(\Delta_m; k)=0$. 
$B_M$ is called the \emph{Betti poset} of $M$. 
Note that the bottom element 1 and the atoms $m_1, \ldots, m_r$ of 
$L_M$ are in $B_M$. 
\end{definition}

\begin{theorem}[\cite{B:CM2}, \cite{B:TV}]\label{T:BettiPoset}
Let $M$ be a monomial ideal in $S=k[x_1, \ldots, x_n]$ and $N$ 
a monomial ideal in $R=k[y_1, \ldots, y_l]$ 
such that there is an isomorphism $f$ between 
$B_M$ and $B_N$. Let $\mathbf{F}$ be a minimal free resolution of 
$S/M$. Then the $N$-homogenization of the frame of $\mathbf{F}$ is 
a minimal free resolution of $R/N$. 
\end{theorem}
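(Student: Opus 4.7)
The plan is to apply the atomic lattice resolution theory of Subsection 3.2 to $L_N$, using the Taylor basis of $\mathbf{F}$ (transported via $f$) as the output of Construction \ref{C:VL}. Since the atoms of an lcm-lattice always lie in its Betti poset, $M$ and $N$ are minimally generated by the same number $r$ of monomials, and after reindexing the generators of $N$ I may assume $f(m_i)=n_i$, so the atoms of $L_M$ and $L_N$ are identified with the vertices $\{1,\ldots,r\}$ of a common simplex $\Omega$. Under this identification, $A_m^{L_M}=A_{f(m)}^{L_N}$ for every $m\in B_M$. By Theorem \ref{T:main2}, $\mathbf{F}$ arises from Construction \ref{C:VL} applied to $L_M$ with Taylor basis $B=\bigcup_{m\in L_M}\Gamma_m^{L_M}$; and by Corollary \ref{C:main1c1}, $\Gamma_m^{L_M}=\emptyset$ whenever $m\in L_M\setminus B_M$ is neither $\hat 0$ nor an atom. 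Thus $B$ is indexed by $B_M$ together with $\hat 0$ and the atoms.

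I would then run Construction \ref{C:VL} on $L_N$ with the consistent choice
\[
\Gamma_n^{L_N}=\begin{cases}\Gamma_{f^{-1}(n)}^{L_M},& n\in B_N,\\ \emptyset,& n\in L_N\setminus B_N,\ n\neq\hat 0,\ n\ \text{non-atom},\end{cases}
\]
and verify its correctness by strong induction on $L_N$. Assuming the choice has been made consistently below $n$, the basis of $\mathbb{U}_n^{L_N}$ is indexed by $\{\hat 0\}\cup\{\text{atoms}\leq n\}\cup\{\tilde n<n:\tilde n\in B_N\}$; by $f$ this coincides, when $n\in B_N$, with the indexing set of $\mathbb{U}_{f^{-1}(n)}^{L_M}$. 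Since the boundary $d$ on $\Omega$ is intrinsic, the two complexes of $k$-vector spaces agree: $\mathbb{U}_n^{L_N}=\mathbb{U}_{f^{-1}(n)}^{L_M}$. For $n\notin B_N$, Corollary \ref{C:main1c2} applied to the facets of $\Delta_n^{L_N}$ gives $H_i(\mathbb{U}_n^{L_N})\cong\widetilde H_{i-1}(\Delta_n^{L_N};k)=0$, so $\Gamma_n^{L_N}=\emptyset$ correctly produces $\mathbb{V}_n^{L_N}=\mathbb{U}_n^{L_N}$ exact. For $n\in B_N$, the identification $\mathbb{U}_n^{L_N}=\mathbb{U}_{f^{-1}(n)}^{L_M}$, combined with the fact that $\Gamma_{f^{-1}(n)}^{L_M}$ already closes $\mathbb{U}_{f^{-1}(n)}^{L_M}$ into the exact complex $\mathbb{V}_{f^{-1}(n)}^{L_M}$ in $L_M$, shows that the same chains close $\mathbb{U}_n^{L_N}$ exactly; the only requirement for Construction \ref{C:VL}, namely $\mathrm{supp}(c)\subseteq A_{f^{-1}(n)}^{L_M}=A_n^{L_N}$, is automatic. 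Theorem \ref{T:main1} applied to $L_N$ then yields that the $N$-homogenization of the resulting frame---which equals the frame of $\mathbf{F}$---is a minimal free resolution of $R/N$.

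The main technical obstacle is the inductive identification $\mathbb{U}_n^{L_N}=\mathbb{U}_{f^{-1}(n)}^{L_M}$, which requires that the build-up of these complexes truly agrees despite the cover relations in $L_M$ and $L_N$ possibly differing at every level. The saving observation is that only Betti-poset elements inject Taylor chains into Construction \ref{C:VL}, so only the order relations inside $B_M\cong B_N$ actually enter, while the irrelevant covers are absorbed into exact summands by Corollary \ref{C:main1c2}. Once this identification is in place, the rest of the argument is a direct invocation of the atomic lattice resolution theory: exactness at $n\notin B_N$ is free from Corollary \ref{C:main1c1}, the matching homology at $n\in B_N$ transfers the cycle classes $[d(e)]$ to a basis of $\widetilde H(\Delta_n^{L_N};k)$, and Theorem \ref{T:main1} closes the loop.
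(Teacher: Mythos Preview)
Your proposal is correct and follows essentially the same route as the paper's proof: both arguments use Theorem~\ref{T:main2} to realize $\mathbf{F}$ via Construction~\ref{C:VL} on $L_M$, observe via Corollary~\ref{C:main1c1} that $\Gamma_m=\emptyset$ for $m\notin B_M$, transport the nonempty $\Gamma_m$ along $f$ to $L_N$, and then invoke Theorem~\ref{T:main1}. Your write-up is more explicit than the paper's about the inductive identification $\mathbb{U}_n^{L_N}=\mathbb{U}_{f^{-1}(n)}^{L_M}$ (which the paper dispatches with ``it is easy to see''), but the underlying idea is the same; one small note is that your appeal to Corollary~\ref{C:main1c2} for the case $n\notin B_N$ would be more cleanly replaced by Corollary~\ref{C:main1c1}, which directly gives $H_i(\mathbb{U}_n^{L_N})\cong\widetilde{H}_{i-1}(\Delta_n^{L_N};k)=0$ and hence forces $\Gamma_n^{L_N}=\emptyset$.
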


\begin{proof}
Let $M$ and $N$ be minimally generated by $r$ monomials. 
By Theorem \ref{T:main2} we can use Construction \ref{C:VL} to obtain 
an $r$-frame $\mathbb{V}(L_M)$ associated with $L_M$ 
such that the $M$-homogenization of $\mathbb{V}(L_M)$ equals $\mathbf{F}$. 
By Corollary \ref{C:main1c1} we have that $\Gamma_m=\emptyset$ for 
any $m\in L_M-B_M$, so that in Construction \ref{C:VL}, for any $m\in B_M$, 
$\Gamma_m$ is obtained by using 
$\bigcup \limits_{\widetilde{m}<m \in B_M}\Gamma_{\widetilde{m}}$. 
Since $f$ is an isomorphism between $B_M$ and $B_N$, it follows that 
$A_{f(m)}=A_m$ for any $m\in B_M$. 
Hence, it is easy to see that by using Construction \ref{C:VL} for $L_N$, 
we have that $\Gamma_{\widehat{m}}=\emptyset$ for any 
$\widehat{m} \in L_N-B_N$, and we can choose $\Gamma_{f(m)}=\Gamma_m$ 
for  any $m \in B_M$. Thus, $\mathbb{V}(L_M)$ is also an $r$-frame associated 
with $L_N$. So by Theorem \ref{T:main1} the $N$-homogenization of 
$\mathbb{V}(L_M)$ is a minimal free resolution of $R/N$. 
\end{proof}

Next we obtain a bound for the projective dimension of $S/M$. 

\begin{proposition}\label{P:projdim}
Let $M$ be a monomial ideal. Then for any $m\in L_M$ we have that 
$\mbox{projdim}(S/(M_{\leq m}))\leq \mbox{rk}(m)$. In particular, we have 
that $\mbox{projdim}(S/M) \leq \mbox{rk}(L_M)$. 
\end{proposition}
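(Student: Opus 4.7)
The plan is to bound the homological degree of every Taylor basis element produced by Construction \ref{C:VL} in terms of the rank. Specifically, I would prove by strong induction on $\mbox{rk}(m)$ the auxiliary claim that every chain in $\Gamma_m$ sits in homological degree at most $\mbox{rk}(m)$. Once that claim is established, Theorem \ref{T:main1} tells us that the $(M_{\leq m})$-homogenization of $(\mathbb{V}_m, d)$ is a minimal free resolution of $S/(M_{\leq m})$ with Taylor basis $\bigcup_{\widetilde{m}\leq m}\Gamma_{\widetilde{m}}$; applying the claim to every $\widetilde{m}\leq m$ (which has $\mbox{rk}(\widetilde{m})\leq \mbox{rk}(m)$) yields $\mbox{projdim}(S/(M_{\leq m}))\leq \mbox{rk}(m)$. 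The particular case $\mbox{projdim}(S/M)\leq \mbox{rk}(L_M)$ then follows by taking $m$ to be the top element $\hat{1}$ of $L_M$.

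For the base cases of the induction, $\Gamma_{\hat 0}=\{\emptyset\}$ sits in homological degree $0=\mbox{rk}(\hat 0)$, and for an atom $m_i$, $\Gamma_{m_i}=\{\{i\}\}$ sits in homological degree $1=\mbox{rk}(m_i)$. For the inductive step, let $m\in L_M$ with $\mbox{rk}(m)\geq 2$. If $\widetilde{m}<m$ in $L_M$, then any maximal chain from $\hat 0$ to $\widetilde{m}$ can be extended by $m$, giving $\mbox{rk}(\widetilde{m})\leq \mbox{rk}(m)-1$. Hence the inductive hypothesis forces $(\mathbb{U}_m)_i=0$ for $i>\mbox{rk}(m)-1$. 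Now recall from Construction \ref{C:exactclosure} that the exact closure of a complex whose top nonzero degree is $p$ introduces new basis elements only in homological degrees $1,\ldots,p+1$. So the new basis elements of $(\mathbb{V}_m,d)$ not already lying in $(\mathbb{U}_m,d)$ appear in homological degrees at most $(\mbox{rk}(m)-1)+1=\mbox{rk}(m)$. Since $\Gamma_m$ is exactly the set of these new basis elements (relabeled as chains in $\Omega_m$), the claim follows.

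The only subtle point is to identify $\Gamma_m$ with the \emph{new} basis of the exact closure: this is built into Construction \ref{C:VL}, where the basis elements of $(\mathbb{V}_m,d)$ that already lie in $(\mathbb{U}_m,d)$ are precisely $\bigcup_{\widetilde{m}<m}\Gamma_{\widetilde{m}}$, so the added basis elements, after the chain-replacement step in the inductive part of the construction, constitute $\Gamma_m$. I do not expect any real obstacle here; the main care needed is simply in tracking which homological degrees get populated when the exact closure enlarges $(\mathbb{U}_m,d)$ to $(\mathbb{V}_m,d)$, and in the rank inequality $\mbox{rk}(\widetilde m)\le \mbox{rk}(m)-1$ for strict inequalities in $L_M$.
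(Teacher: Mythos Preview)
Your proposal is correct and follows essentially the same approach as the paper: both argue by strong induction in $L_M$, use the rank inequality $\mbox{rk}(\widetilde{m})\leq \mbox{rk}(m)-1$ for $\widetilde{m}<m$ to bound the length of $(\mathbb{U}_m,d)$, and then invoke the exact-closure step to bound the length of $(\mathbb{V}_m,d)$ by $\mbox{rk}(m)$. The only cosmetic difference is that the paper inducts directly on the statement $\mbox{projdim}(S/(M_{\leq m}))\leq \mbox{rk}(m)$, whereas you induct on the equivalent auxiliary claim that elements of $\Gamma_m$ lie in homological degree at most $\mbox{rk}(m)$.
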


\begin{proof}
We will prove by using strong induction in $L_M$. 

Base case: If $m=1$ then $\mbox{projdim}(S)=0=\mbox{rk}(1)$; 
if $m$ is an atom in $L_M$ then $\mbox{projdim}(S/(m))=1=\mbox{rk}(m)$. 

Inductive step: Let $m\in L_M$ with $\mbox{rk}(m)\geq 2$, and we assume 
that for any $\widetilde{m}<m \in L_M$ we have that 
$\mbox{projdim}(S/(M_{\leq \widetilde{m}})) \leq \mbox{rk}(\widetilde{m})$. 
Then we have that 
\[
\max\limits_{\widetilde{m}<m}(\mbox{projdim}(S/(M_{\leq \widetilde{m}}))
\leq \max\limits_{\widetilde{m}<m}(\mbox{rk}(\widetilde{m})) \leq \mbox{rk}(m)-1.
\]
Hence, the length of the complex $(\mathbb{U}_m, d)$ in Construction \ref{C:VL} 
is less than or equal to $\mbox{rk}(m)-1$, which implies that the length of the 
complex $(\mathbb{V}_m, d)$ in Construction \ref{C:VL} is less than or equal to 
$\mbox{rk}(m)$. So by Theorem \ref{T:main1} we have that 
$\mbox{projdim}(S/(M_{\leq m}))\leq \mbox{rk}(m)$. 
\end{proof}

Note that in Proposition \ref{P:projdim} we can replace $L_M$ by $B_M$ to 
get a sharper bound for $\mbox{projdim}(S/M)$. The next proposition about 
the reduced homology of $\Delta_m$ will be use in Section 4. It is interesting 
to see that this geometric result is proved by an algebraic method. 

\begin{proposition}\label{P:homdelta}
Let $m\neq 1 \in L_M$;
\begin{itemize}
 \item [(1)] for any $i\geq \mbox{rk}(m)$ we have that $\widetilde{H}_{i-1}(\Delta_m;k)=0$; 
 \item [(2)] if for any $1\neq \widetilde{m}<m \in L_M$ we have that 
                  $\widetilde{H}_{i_0}(\Delta_{\widetilde{m}}; k)=0$, then we have that 
                  $\widetilde{H}_j(\Delta_m; k)=0$ for any $j\geq i_0+1$. 
\end{itemize}
\end{proposition}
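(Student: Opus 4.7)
The plan is to handle (1) as a direct consequence of Proposition \ref{P:projdim} and Theorem \ref{T:basis}, and then bootstrap it via induction on $L_M$ to obtain (2).

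For (1), I would apply Proposition \ref{P:projdim} to the truncated ideal $(M_{\leq m})$ to obtain $\mbox{projdim}(S/(M_{\leq m})) \leq \mbox{rk}(m)$. The key compatibility is that the lcm-lattice $L_{(M_{\leq m})}$ coincides with the interval $[1, m]$ of $L_M$, since the generators of $(M_{\leq m})$ are exactly the atoms of $L_M$ lying below $m$. In particular, $m$ remains the top element, and the cover relations at $m$ are unchanged, so the simplicial complex $\Delta_m$ is identical whether computed in $L_M$ or in $L_{(M_{\leq m})}$. Theorem \ref{T:basis} then gives $\dim_k \widetilde{H}_{j-2}(\Delta_m; k) = b_{j, m}(S/(M_{\leq m})) = 0$ for every $j > \mbox{rk}(m)$, and setting $j = i+1$ yields the statement.

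For (2), I would proceed by strong induction on $m$ in $L_M$, measured by $|A_m|$. The base case is when $m$ is an atom: there is no $\widetilde{m}$ with $1 \neq \widetilde{m} < m$, the hypothesis is vacuous, and $\Delta_m = \{\emptyset\}$ has trivial reduced homology in all nonnegative degrees. For the inductive step, the central observation is that the hypothesis is hereditary: if it holds for $m$, then for every $\widetilde{m} < m$ with $\widetilde{m} \neq 1$, the condition $\widetilde{H}_{i_0}(\Delta_{m'}; k) = 0$ holds for every $1 \neq m' < \widetilde{m}$, so the inductive hypothesis applies and gives $\widetilde{H}_j(\Delta_{\widetilde{m}}; k) = 0$ for all $j \geq i_0 + 1$. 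Combining with the original hypothesis at $j = i_0$ yields $\widetilde{H}_j(\Delta_{\widetilde{m}}; k) = 0$ for every $j \geq i_0$ and every $1 \neq \widetilde{m} < m$.

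Applying Theorem \ref{T:basis} to $S/(M_{\leq \widetilde{m}})$, the cardinality of $\Gamma_{\widetilde{m}}$ in homological degree $i$ equals $\dim_k \widetilde{H}_{i-2}(\Delta_{\widetilde{m}}; k)$, which then vanishes for all $i \geq i_0 + 2$. Because the basis of $\mathbb{U}_m$ is $\bigcup_{\widetilde{m} < m} \Gamma_{\widetilde{m}}$, this forces $(\mathbb{U}_m)_i = 0$ for every $i \geq i_0 + 2$. Corollary \ref{C:main1c1} then delivers $\widetilde{H}_{j-1}(\Delta_m; k) \cong H_j(\mathbb{U}_m) = 0$ for all $j \geq i_0 + 2$, i.e., $\widetilde{H}_j(\Delta_m; k) = 0$ for all $j \geq i_0 + 1$. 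The only delicate point will be keeping straight the shift between simplicial dimension ($j = i - 2$) and homological degree $i$ in $\Gamma_{\widetilde{m}}$; once that bookkeeping is aligned, the induction closes formally, so I expect no serious obstacle beyond this indexing.
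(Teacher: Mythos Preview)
Your proposal is correct and follows essentially the same approach as the paper. For (1) the paper invokes the proof of Proposition~\ref{P:projdim} (that $(\mathbb{U}_m,d)$ has length $\leq \mathrm{rk}(m)-1$) together with Corollary~\ref{C:main1c1}, whereas you use the conclusion of Proposition~\ref{P:projdim} together with the Betti-number formula from Theorem~\ref{T:basis}; for (2) the paper leaves implicit (under ``by Construction~\ref{C:VL} it is easy to see'') exactly the hereditary induction you spell out, so the two arguments coincide in substance.
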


\begin{proof}
(1) For any $i\geq \mbox{rk}(m)$ we have that $i> \mbox{rk}(m)-1$, so that by 
the proof of Proposition \ref{P:projdim} we have that 
$H_i(\mathbb{U}_m, d)=0$. So by Corollary \ref{C:main1c1} we have 
that $\widetilde{H}_{i-1}(\Delta_m; k)=0$. 

(2) Since $\widetilde{H}_{i_0}(\Delta_{\widetilde{m}}; k)=0$ for all 
$1\neq \widetilde{m}<m \in L_M$, by Theorem \ref{T:basis} and Theorem \ref{T:main1} 
we see that in Construction \ref{C:VL} $\Gamma_{\widetilde{m}}$ has no elements 
of dimension $i_0+1$. Hence, there are no basis elements in $(\mathbb{U}_m, d)$ 
of dimension $i_0+1$. Then by Construction \ref{C:VL} it is easy to see that there 
are no basis elements in $\mathbb{U}_m$ of dimension greater than   
$i_0+1$, so that the length of $\mathbb{U}_m$ is less than or equal to $i_0+1$. 
Thus, we have that $H_j(\mathbb{U}_m, d)=0$ for any $j\geq i_0+2$. 
So by Corollary \ref{C:main1c1} we have that $\widetilde{H}_j(\Delta_m; k)=0$ 
for any $j\geq i_0+1$. 
\end{proof}

\section{Differential Maps Induced by Mayer-Vietoris Sequences}

In Construction \ref{C:VL} we use step-by-step calculations to obtain a 
minimal free resolution of $S/M$. Although a general formula for  
minimal free resolutions of \emph{all} monomial ideals seems impossible, we are 
still interested in finding explicit descriptions for minimal free resolutions 
of some classes of monomial ideals. 

We know that a Taylor basis determines 
a minimal free resolution of $S/M$, and by Theorem \ref{T:basis} and 
Theorem \ref{T:cbasis} we see that the Taylor basis and the reduced 
homology groups $\widetilde{H}(\Delta_m; k)$ are closely related. 
Hence, one may ask if it is possible to construct a minimal free resolution 
of $S/M$ directly from the reduced homology groups 
$\widetilde{H}(\Delta_m;k)$. 

This idea was first studied by Clark and Tchernev in \cite{B:Cl}. They 
introduce a new concept called the poset resolution, the frame of 
which is constructed from the lcm-lattice $L_M$, and has 
differential maps induced by the connecting homomorphisms 
in some Mayer-Vietoris sequences. Later, Clark and Mapes in \cite{B:CM1} 
and \cite{B:CM2}, and Wood in \cite{B:Wo}, generalize the 
construction to Betti posets, and they prove that rigid monomial 
ideals and Betti-linear monomial ideals have poset resolutions. 

This section can be viewed as an application of the atomic lattice 
resolution theory developed in Section 3, and it has two subsections. 
In Subsection 4.1 we rewrite the theory of poset resolutions and 
give a new proof of Betti-linear monomial ideals having poset resolutions. 
In Subsection 4.2 we develop a theory similar to poset resolutions 
and obtain an approximation formula for minimal free resolutions of 
\emph{all} monomial ideals. 
The theory in Subsection 4.2 turns out to be a generalization of the theory 
in Subsection 4.1.

\subsection{Poset Resolutions and Betti-linear Monomial Ideals}
Let $M$ be a monomial ideal minimally generated by monomials 
$m_1, \ldots, m_r$. Let $L_M$ be the lcm-lattice of $M$ and 
$B_M$ the Betti poset of $M$. For any $m\in L_M$ with 
$\mbox{rk}(m)\geq 2$, let 
\[
\mathcal{B}(m)=\{\widetilde{m} \in B_M|\widetilde{m}<m\}, 
\] 
then $\mathcal{B}(m)\neq \emptyset$. 
Let $\beta_1, \ldots, \beta_l$ be the maximal elements in 
$\mathcal{B}(m)$; and let 
\[
\widetilde{\Delta}_m=\langle A_{\beta_1}, \ldots, A_{\beta_l} \rangle,  
\]
then $\widetilde{\Delta}_m$ is a subcomplex of $\Delta_m$.  
These notations will be used throughout this subsection. 

\begin{lemma}\label{L:multidegree}
Let $m\in L_M$ with $\mbox{rk}(m)\geq 2$. Let 
$[f]\neq 0 \in \widetilde{H}(\Delta_m; k)$. Then we have that 
$\mbox{mdeg}(f)=m$. 
\end{lemma}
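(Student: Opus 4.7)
The plan is to prove this by contradiction, using the structure of $\Delta_m$ as a union of simplices on the sets $A_\beta$ for $\beta$ covered by $m$. First I would observe the easy inequality: since $f$ is a chain in $\Delta_m$, we have $\mbox{supp}(f) \subseteq A_m$, and therefore $\mbox{mdeg}(f)$ divides $m$ in $L_M$. So it suffices to rule out $\mbox{mdeg}(f) < m$.

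Set $\widetilde{m} = \mbox{mdeg}(f) \in L_M$ and suppose for contradiction that $\widetilde{m} < m$. First I would rule out $\widetilde{m} = \hat{0} = 1$. Since $\mbox{rk}(m) \geq 2$, $m$ is neither the bottom nor an atom, so (as noted after Proposition \ref{P:cover}) at least two atoms lie below $m$, which means $\Delta_m$ has nonempty vertex set $A_m$ and hence $\widetilde{H}_{-1}(\Delta_m;k) = 0$. Therefore $[f] \neq 0$ forces $f$ to involve faces of dimension $\geq 0$, so at least one $m_i$ divides $\mbox{mdeg}(f)$, and $\widetilde{m} \neq 1$.

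Next I would use the key structural fact from Proposition \ref{P:cover}: the elements $\beta_1, \ldots, \beta_t$ of $L_M$ covered by $m$ are precisely the maximal elements of the open interval $(\hat{0}, m)$. Since $\widetilde{m} \in (\hat{0}, m)$, there exists some $\beta_i$ covered by $m$ with $\widetilde{m} \leq \beta_i$, and hence $A_{\widetilde{m}} \subseteq A_{\beta_i}$ by the order-theoretic description of the labeling. Because $\mbox{supp}(f) \subseteq A_{\widetilde{m}}$, we obtain $\mbox{supp}(f) \subseteq A_{\beta_i}$.

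Finally, $A_{\beta_i}$ is a facet of $\Delta_m$, so the full simplex $\Omega_{\beta_i}$ on vertex set $A_{\beta_i}$ is a subcomplex of $\Delta_m$. Then $f$ is a positive-dimensional cycle in $\Omega_{\beta_i}$, which is contractible, so $f$ is a boundary in $\Omega_{\beta_i}$ and therefore also in $\Delta_m$. This contradicts $[f] \neq 0$, so $\mbox{mdeg}(f) = m$. The main thing to be careful about is simply checking the two boundary cases (the dimension $-1$ case, and that $\widetilde{m}$ really has an upper bound among the $\beta_i$); once those are dispatched, the argument reduces to the observation that a cycle supported in a single facet is nullhomologous.
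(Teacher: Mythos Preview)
Your proof is correct and follows essentially the same route as the paper's: both argue by contradiction that if $\mbox{mdeg}(f) < m$, then $f$ is a cycle supported in a full simplex contained in $\Delta_m$ and hence a boundary. Your version is more explicit about why that simplex sits inside $\Delta_m$ (via the covering element $\beta_i$), whereas the paper simply asserts $\Omega_{\widehat{m}} \subseteq \Delta_m$; one small wording slip---``positive-dimensional'' should be ``of dimension $\geq 0$''---but the contractibility argument covers that case regardless.
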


\begin{proof}
Since $f$ is a chain in $\Delta_m$, it follows that $\mbox{mdeg}(f)|m$. 
Assume that $\mbox{mdeg}(f)=\widehat{m}<m\in L_M$. Then $f$ is a 
cycle in the simplex $\Omega_{\widehat{m}} \subseteq \Delta_m$, so that $f$ is a trivial 
cycle in $\Delta_m$, which contradicts to the assumption that 
$[f]\neq 0$ in $\widetilde{H}(\Delta_m; k)$. So we have that 
$\mbox{mdeg}(f)=m$. 
\end{proof}

\begin{proposition}\label{P:iso}
Let $m\in L_M$ with $\mbox{rk}(m)\geq 2$.
Let $\sigma_m: \widetilde{H}(\widetilde{\Delta}_m; k) \to 
\widetilde{H}(\Delta_m; k)$  be the $k$-linear map induced by
the inclusion $\tau: \widetilde{\Delta}_m \to \Delta_m$. 
Then $\sigma_m$ is an isomorphism. 
\end{proposition}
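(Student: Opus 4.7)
The plan is to realize both $\widetilde{H}(\Delta_m; k)$ and $\widetilde{H}(\widetilde{\Delta}_m; k)$ as the homology of one and the same Taylor-basis complex, via Corollaries \ref{C:main1c1} and \ref{C:main1c2}, and then to check that $\sigma_m$ coincides with the resulting identification.

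First I would invoke Corollary \ref{C:main1c1} at $m$, giving $\widetilde{H}_{i-1}(\Delta_m; k) \cong H_i(\mathbb{U}_m, d)$, where $(\mathbb{U}_m, d)$ is the complex from Construction \ref{C:VL} with basis $T := \bigcup_{\widetilde{m}<m}\Gamma_{\widetilde{m}}$. Next, applying Corollary \ref{C:main1c2} to the antichain $\beta_1, \ldots, \beta_l$ whose facets define $\widetilde{\Delta}_m$, one obtains $\widetilde{C}(\widetilde{\Delta}_m; k)[-1] \cong (\widetilde{\mathbb{U}}, d) \oplus (\widetilde{\mathcal{E}}, d)$ with $\widetilde{\mathcal{E}}$ exact and $\widetilde{\mathbb{U}}$ carrying the basis $\widetilde{T} := \bigcup_{m' \in \widetilde{\Pi}}\Gamma_{m'}$, where $\widetilde{\Pi} = \{m' \in L_M : m' \leq \beta_j \text{ for some } j\}$. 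The key observation is that $\widetilde{T} = T$: on one hand, $\Gamma_{m'}$ is nonempty only when $m' \in B_M$ (by Corollary \ref{C:main1c1} together with the base case of Construction \ref{C:VL}, noting that $1$ and the atoms automatically lie in $B_M$), and any such $m' < m$ lies in $\mathcal{B}(m)$ and is therefore bounded above by some maximal $\beta_j$; conversely every $m' \in \widetilde{\Pi}$ satisfies $m' \leq \beta_j < m$. Hence $\widetilde{\mathbb{U}} = \mathbb{U}_m$ as chain complexes, and in particular $\widetilde{H}(\widetilde{\Delta}_m; k) \cong \widetilde{H}(\Delta_m; k)$ as $k$-vector spaces.

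What remains is to identify $\sigma_m$ with this isomorphism. Every class in $\widetilde{H}(\widetilde{\Delta}_m; k)$ can, after killing the exact summand $\widetilde{\mathcal{E}}$, be represented by a cycle $c$ which is a $k$-linear combination of elements of $T$. The inclusion $\tau_*$ sends $c$ to the same linear combination now viewed inside $\widetilde{C}(\Delta_m; k)[-1]$; since the elements of $T$ reappear as part of the new basis that Corollary \ref{C:main1c2} furnishes for $\Delta_m$, the image $\tau_*(c)$ is exactly $c$ regarded as a cycle of $\mathbb{U}_m$. Under the common identification $H(\widetilde{\mathbb{U}}) = H(\mathbb{U}_m)$, the map $\sigma_m$ is therefore the identity, hence an isomorphism. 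The main obstacle is the bookkeeping: the exact summands $\widetilde{\mathcal{E}}$ and $\mathcal{E}$ generally have different bases $\widetilde{P}$ and $P$, so the change-of-basis isomorphisms from Corollary \ref{C:main1c2} only agree on the common Taylor-basis piece $T$; the argument rests on the fact that each $t \in T$ is already a basis element on both sides and so is carried to itself, after which the compatibility check is essentially tautological.
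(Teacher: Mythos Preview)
Your proof is correct and uses essentially the same idea as the paper: the key observation that $\Gamma_{\widetilde{m}}=\emptyset$ for every $\widetilde{m}<m$ not dominated by some $\beta_j$, which forces the Taylor-basis piece of $\widetilde{C}(\widetilde{\Delta}_m;k)$ and of $\widetilde{C}(\Delta_m;k)$ to coincide. The paper packages this slightly more directly: rather than passing through $\mathbb{U}_m$ on both sides and then checking compatibility, it observes (via Theorem~\ref{T:main1} and Corollary~\ref{C:main1c2}) that
\[
\widetilde{C}(\Delta_m;k)[-1]\cong \widetilde{C}(\widetilde{\Delta}_m;k)[-1]\ \oplus\ \bigoplus_{\widetilde{m}\in\mathcal{J}(m)}(\mathcal{E}_{\widetilde{m}},d),
\]
where $\mathcal{J}(m)$ is exactly the set of $\widetilde{m}<m$ not below any $\beta_j$; since each $\mathcal{E}_{\widetilde{m}}$ is trivial and $\widetilde{C}(\widetilde{\Delta}_m;k)$ sits inside as the subcomplex induced by $\tau$, the isomorphism on homology is immediate without the extra bookkeeping you flag in your last paragraph. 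Your route via $\widetilde{\mathbb{U}}=\mathbb{U}_m$ is equivalent but requires the additional (correct) verification that the two change-of-basis identifications agree on the common summand $T$.
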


\begin{proof}
Let $\mathcal{J}(m)$ be the set of all $\widetilde{m}<m \in L_M$ 
such that  there does not exist $\beta_j$ with $\widetilde{m}<\beta_j$. 
Then we have that $\Gamma_{\widetilde{m}}=\emptyset$ for any 
$\widetilde{m} \in \mathcal{J}(m)$. 
Hence, by Theorem \ref{T:main1} and Corollary \ref{C:main1c2}, 
it is easy to see that 
\[
\widetilde{C}(\Delta_m; k)[-1]\cong \widetilde{C}(\widetilde{\Delta}_m; k)[-1] 
\bigoplus ( \bigoplus \limits_{\widetilde{m}\in \mathcal{J}(m)}
(\mathcal{E}_{\widetilde{m}}; d) ).
\]
Since $(\mathcal{E}_{\widetilde{m}}; d)$ is a trivial complex for any 
$\widetilde{m} \in \mathcal{J}(m)$ and 
$\widetilde{C}(\widetilde{\Delta}_m; k)[-1]$ is a subcomplex of 
$\widetilde{C}(\Delta_m; k)[-1]$ induced by the inclusion map 
$\tau: \widetilde{\Delta}_m \to \Delta_m$, it follows that 
$\sigma_m: \widetilde{H}(\widetilde{\Delta}_m; k) \to \widetilde{H}(\Delta_m; k)$ 
induced by $\tau$ is an isomorphism. 
\end{proof}

Proposition \ref{P:iso} can also be proved by using homology theory 
as in Lemma 4.2 and Proposition 4.3 in \cite{B:Wo}. But the proof by 
using the atomic lattice resolution theory is much easier. 

Next we define a sequence of $k$-vector spaces and $k$-linear maps 
from the lcm-lattice $L_M$: 
\[
\mathcal{D}(L_M): \cdots \to \mathcal{D}_i 
\xrightarrow{\varphi_i} \mathcal{D}_{i-1} 
\to \cdots \to \mathcal{D}_1 \xrightarrow{\varphi_1} \mathcal{D}_0. 
\]

\begin{definition}[\cite{B:Cl}]\label{D:posetres}
Let $\mathcal{D}_0=\widetilde{H}_{-1}(\{\emptyset\}; k)\cong k$. For any $i\geq 1$,  let 
\[
\mathcal{D}_i=\bigoplus\limits_{1\neq m \in L_M}\widetilde{H}_{i-2}(\Delta_m; k). 
\]
Since $\widetilde{H}_{-1}(\Delta_m; k)\neq 0$ if and only if 
$\Delta_m=\{\emptyset\}$, it follows that 
\[
\mathcal{D}_1=\bigoplus\limits_{j=1}^r\widetilde{H}_{-1}(\Delta_{m_j}; k)\cong k^r. 
\]
We define $\varphi_1: \mathcal{D}_1 \to \mathcal{D}_0$ componentwise by 
$\mbox{id}_{\widetilde{H}_{-1}(\{\emptyset\}; k)}$, and then 
$\mathcal{D}_1 \xrightarrow{\varphi_1} \mathcal{D}_0$ can be written as 
$k^r \xrightarrow{(\begin{smallmatrix} 1 & \cdots & 1 \end{smallmatrix})} k$. 

For any $i\geq 2$ we define 
\[
\varphi_i: \mathcal{D}_i=\bigoplus\limits_{1\neq m \in L_M}\widetilde{H}_{i-2}(\Delta_m; k) 
\to \mathcal{D}_{i-1}=\bigoplus\limits_{1\neq m \in L_M}\widetilde{H}_{i-3}(\Delta_m; k)
\]
componentwise as follows. 
Suppose that $\widetilde{H}_{i-2}(\Delta_m; k) \neq 0$ with $i\geq 2$, then we have that 
$\mbox{rk}(m)\geq 2$. Next we componentwise define
\[
\varphi_{i,m}: \widetilde{H}_{i-2}(\Delta_m; k) 
\to \mathcal{D}_{i-1}=\bigoplus\limits_{1\neq m \in L_M}\widetilde{H}_{i-3}(\Delta_m; k).
\] 
Let $\widetilde{\Delta}_m=\langle A_{\beta_1}, \ldots, A_{\beta_l} \rangle$.
By Proposition \ref{P:iso} we have that 
$\sigma_m: \widetilde{H}_{i-2}(\widetilde{\Delta}_m; k) \to \widetilde{H}_{i-2}(\Delta_m; k)$ 
is an isomorphism, so that $\widetilde{H}(\widetilde{\Delta}_m; k) \neq 0$, 
which implies that $l\geq 2$. 

First we define $\varphi_{i,m}^{\beta_1}: \widetilde{H}_{i-2}(\Delta_m; k) 
\to \widetilde{H}_{i-3}(\Delta_{\beta_1}; k)$. 
Let $\Delta_1=\langle A_{\beta_1} \rangle$ and 
$\Delta_2=\langle A_{\beta_2}, \ldots, A_{\beta_l} \rangle$, then we have that 
$\Delta_1 \cup \Delta_2=\widetilde{\Delta}_m$ and by Proposition \ref{P:meetjoin} 
\[
\Delta_1 \cap \Delta_2=\langle A_{\beta_1}\cap A_{\beta_2}, \ldots, A_{\beta_1}\cap A_{\beta_l} \rangle 
=\langle A_{\beta_1\wedge \beta_2}, \ldots, A_{\beta_1\wedge \beta_l} \rangle.
\]
Since $\beta_1, \ldots, \beta_l$ are maximal elements in $\mathcal{B}(m)$, 
it follows that $\beta_1\wedge \beta_2<\beta_1, \ldots, \beta_1\wedge \beta_l<\beta_1$  
in $L_M$, which implies that $\Delta_1 \cap \Delta_2$ is a subcomplex of 
$\Delta_{\beta_1}$. 
Let $[f]\in \widetilde{H}_{i-2}(\widetilde{\Delta}_m; k)$ be such that 
$f=c_1-c_2$,  where $c_1$ is a chain in $\Delta_1$ and $c_2$ is a chain 
in $\Delta_2$, then from the Mayer-Vietoris sequence we have the connecting 
homomorphism 
\[
\delta: \widetilde{H}_{i-2}(\widetilde{\Delta}_m; k) \to  
\widetilde{H}_{i-3}(\Delta_1 \cap \Delta_2; k) 
\]
such that $\delta([f])=[d(c_1)]=[d(c_2)]$, where $d$ is the boundary map 
in $\widetilde{\Delta}_m$. Let 
\[
\iota: \widetilde{H}_{i-3}(\Delta_1 \cap \Delta_2; k) \to 
\widetilde{H}_{i-3}(\Delta_{\beta_1}; k)
\]
be the $k$-linear map induced by the inclusion map
$\Delta_1 \cap \Delta_2 \to \Delta_{\beta_1}$. 
Then we define the $k$-linear map 
$\varphi_{i,m}^{\beta_1}: \widetilde{H}_{i-2}(\Delta_m; k) 
\to \widetilde{H}_{i-3}(\Delta_{\beta_1}; k)$ by 
\[
\varphi_{i,m}^{\beta_1}=\iota \circ \delta \circ \sigma_m^{-1}. 
\]

Similarly, for any $2\leq j\leq l$, by letting 
\[
\Delta_1=\langle A_{\beta_j} \rangle, \ \ 
\Delta_2=\langle A_{\beta_1}, \ldots, A_{\beta_{j-1}}, A_{\beta_{j+1}}, \ldots, A_{\beta_l}\rangle, 
\]
and using the corresponding Mayer-Vietoris sequence, we can define 
the $k$-linear map $\varphi_{i,m}^{\beta_j}: \widetilde{H}_{i-2}(\Delta_m; k) 
\to \widetilde{H}_{i-3}(\Delta_{\beta_j}; k)$. 
And for any $1\neq \widetilde{m} \in L_M$ with $\widetilde{m} \notin 
\{\beta_1, \ldots, \beta_l\}$, we define 
$\varphi_{i,m}^{\widetilde{m}}: \widetilde{H}_{i-2}(\Delta_m; k) 
\to \widetilde{H}_{i-3}(\Delta_{\widetilde{m}}; k)$ to be the zero map. 

Thus, we can define
\[
\varphi_{i,m}=\sum_{1\neq \widetilde{m} \in L_M} \varphi_{i, m}^{\widetilde{m}} 
=\varphi_{i,m}^{\beta_1} +\cdots +\varphi_{i,m}^{\beta_l}, 
\]
and then $\varphi_i$ is defined componentwise. 
We call 
\[
\mathcal{D}(L_M): \cdots \to \mathcal{D}_i 
\xrightarrow{\varphi_i} \mathcal{D}_{i-1} 
\to \cdots \to \mathcal{D}_1 \xrightarrow{\varphi_1} \mathcal{D}_0. 
\]
a \emph{poset construction}. 
If we fix a basis for every $\widetilde{H}(\Delta_m;k)$ then we have
a basis for $\mathcal{D}(L_M)$. 

For any $i\geq 0$ let $\mathcal{F}_i=\mathcal{D}_i\otimes S$. 
Let $f_0$ be the basis element of $\mathcal{D}_0$. For any $1\leq j \leq r$, 
let $f_j$ be the basis element of $\widetilde{H}_{-1}(\Delta_{m_j}; k)$. 
We still use $f_0$ as the basis of $\mathcal{F}_0$ and use $f_1, \ldots, f_r$ 
as the basis of $\mathcal{F}_1$. To make $\mathcal{F}_0$ and 
$\mathcal{F}_1$ multigraded free $S$-modules, we define 
$\mbox{mdeg}(f_0)=1$ and $\mbox{mdeg}(f_j)=m_j$ for $1\leq j\leq r$. 
For any $i\geq 2$, let $[f]$ be a basis element of $\widetilde{H}_{i-2}(\Delta_m; k)$. 
We still use $[f]$ as a basis element of $\widetilde{H}_{i-2}(\Delta_m; k)\otimes S$, 
and we define $\mbox{mdeg}([f])=\mbox{mdeg}(f)=m$.  
Thus, $\mathcal{F}_i$ is a multidgraded free $S$-module with a multigraded basis. 

We define $\partial_1: \mathcal{F}_1 \to \mathcal{F}_0$ by 
$S^r \xrightarrow{(\begin{smallmatrix} m_1 & \cdots & m_r \end{smallmatrix})} S$. 
For any $i\geq 2$ and $1\leq j \leq l$, we can homogenize the map $\varphi_{i,m}^{\beta_j}$ 
to get 
\[
\partial_{i,m}^{\beta_j}: \widetilde{H}_{i-2}(\Delta_m; k)\otimes S \to 
\widetilde{H}_{i-3}(\Delta_{\beta_j}; k) \otimes S 
\]
with $\partial_{i, m}^{\beta_j}=\frac{m}{\beta_j}\otimes \varphi_{i,m}^{\beta_j}$. 
And then  we define $\partial_i: \mathcal{F}_i \to \mathcal{F}_{i-1}$ 
componentwise by $\partial_{i, m}$, where 
$\partial_{i, m}=\partial_{i, m}^{\beta_1} +\cdots +\partial_{i,m}^{\beta_l}$. 
Thus, for any $i\geq 0$, $\partial_i$ is multigraded. 

So we obtain a sequence of multigraded free $S$-modules and multigraded homomorphisms:
\[
\mathcal{F}(L_M):\  \cdots \to \mathcal{F}_i \xrightarrow{\partial_i} \mathcal{F}_{i-1} 
\to \cdots \to \mathcal{F}_1 \xrightarrow{\partial_1} \mathcal{F}_0.
\]
If $\mathcal{F}(L_M)$ is a minimal free resolution of $S/M$ then we say that 
$\mathcal{F}(L_M)$ is a \emph{poset resolution} of $S/M$. 
\end{definition}

\begin{remark}\label{R:posetres}
In \cite{B:Cl}, \cite{B:CM2} and \cite{B:Wo} the poset construction is defined differently by 
using $\widetilde{H}(\Delta(1, m); k)$, where $\Delta(1, m)$ is the order complex 
of the open interval $(1, m)$ in $L_M$.  By the crosscut theorem it is easy to see 
that $\Delta(1, m)$ and $\Delta_m$ are homotopy equivalent, so that 
$\widetilde{H}(\Delta(1, m); k)\cong \widetilde{H}(\Delta_m; k)$. In Section 5 of 
\cite{B:Cl} and Section 4 of \cite{B:Wo} it is proved that the poset constructions 
defined by using $\widetilde{H}(\Delta(1, m); k)$ and $\widetilde{H}(\Delta_m; k)$, 
respectively, are equivalent. Since $\Delta_m$ is much simpler than $\Delta(1, m)$ 
and closely related to the atomic lattice resolution theory, here we choose to define 
the poset construction by using $\widetilde{H}(\Delta_m; k)$. 

Note that the poset construction $\mathcal{D}(L_M)$ may not be a complex of 
$k$-vector spaces,  so that $\mathcal{D}(L_M)$ may not be an $r$-frame, and 
then the $M$-homogenization of $\mathcal{D}(L_M)$ may not make sense. 
This is why in Definition \ref{D:posetres} we define $\mathcal{F}(L_M)$ 
componentwise in each homological degree. 
This can be illustrated by Example \ref{E:notBetti}. 
\end{remark}

Note that if we fix a basis for every $\widetilde{H}_{i-2}(\Delta_m; k)\neq 0$ then 
$\mathcal{F}(L_M)$ is uniquely determined. A natural question is to ask when 
$\mathcal{F}(L_M)$ is a minimal free resolution of $S/M$. This question is closely 
related to the following classes of monomial ideals. 

\begin{definition}\label{D:ideals}
Let $M$ be a monomial ideal. 
\begin{itemize}
\item[(1)](\cite{B:FMS}) $M$ is called a \emph{homologically monotonic monomial 
ideal} if the following condition is satisfied: for any $1<\widetilde{m}<m \in L_M$, 
if $\widetilde{H}_i(\Delta_{\widetilde{m}}; k)\neq 0$ and 
$\widetilde{H}_j(\Delta_m; k) \neq 0$ then $i<j$. 
\item[(2)](\cite{B:CM1}) Let $M$ be a homologically monotonic monomial ideal and 
for any $m\in B_M$ we have that $\widetilde{H}(\Delta_m; k)\cong k$, then $M$ 
is called a \emph{rigid monomial ideal}. 
\item[(3)](\cite{B:Cl}) $M$ is called a \emph{lattice-linear monomial ideal} if $S/M$ 
has a minimal free resolution $(\mathbf{F}, \partial)$ 
with a multigraded basis $T$ such that the following condition is satisfied:
\[
\forall e\in T, \ \mbox{let} \ \partial(e)=\sum_{\widetilde{e}\in T} \lambda_{e, \widetilde{e}}\widetilde{e},
\]
then whenever $\lambda_{e, \widetilde{e}}\neq 0 \in S$ we have that 
$\mbox{mdeg}(\widetilde{e}) \lessdot \mbox{mdeg}(e)$ in $L_M$.
\item[(4)] (\cite{B:Wo}) $M$ is called a \emph{Betti-linear monomial ideal} if $S/M$ 
has a minimal free resolution $(\mathbf{F}, \partial)$ 
with a multigraded basis $T$ such that the following condition is satisfied:
\[
\forall e\in T, \ \mbox{let} \ \partial(e)=\sum_{\widetilde{e}\in T} \lambda_{e, \widetilde{e}}\widetilde{e},
\]
then whenever $\lambda_{e, \widetilde{e}}\neq 0 \in S$ we have that 
$\mbox{mdeg}(\widetilde{e}) \lessdot \mbox{mdeg}(e)$ in $B_M$.
\end{itemize}
\end{definition}

Note that in the definition of lattice-linearity and Betti-linearity it 
does not matter which multigraded basis we choose for $\mathbf{F}$. 
Hence, we can always use the homogenization of a Taylor basis 
of $\mathbf{F}$ as a multigraded basis. Indeed, it is easy to see that 
Betti-linearity can also be defined by a Taylor basis as follows: 
$M$ is called a \emph{Betti-linear monomial ideal} if $S/M$ 
has a minimal free resolution $(\mathbf{F}, \partial)$ 
with a Taylor basis $B$ such that the following condition is satisfied:
\[
\forall e\in B, \ \mbox{let} \  d(e)=\sum_{\widetilde{e}\in B} \lambda_{e, \widetilde{e}}\widetilde{e},
\]
then whenever $\lambda_{e, \widetilde{e}}\neq 0 \in k$ we have that 
$\mbox{mdeg}(\widetilde{e}) \lessdot \mbox{mdeg}(e)$ in $B_M$. 

The first part of the next proposition shows that the definition of a rigid 
monomial ideal is the same as the definition in \cite{B:CM1}. It is also proved in 
Proposition 4.3 of \cite{B:FMS}. Here we give it a different proof. 

\begin{proposition}\label{P:HM}
\begin{itemize}
\item[(1)] $M$ is homologically monotonic if and only if 
whenever $\widetilde{H}_i(\Delta_m; k)\neq 0$,  
$\widetilde{H}_i(\Delta_{\widetilde{m}}; k)\neq 0$ and $m\neq \widetilde{m}$, 
we have that $m$ and $\widetilde{m}$ are not comparable in $L_M$. 
\item[(2)] If $M$ is homologically monotonic, then for any $1\neq m \in B_M$ 
there exists $i$ such that $\widetilde{H}(\Delta_m; k)=\widetilde{H}_i(\Delta_m; k)\neq 0$.
\item[(2)] If $M$ is homologically monotonic,  then $M$ is Betti-linear. 
\end{itemize}
\end{proposition}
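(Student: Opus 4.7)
The plan is to tackle the three statements in order, leaning on Proposition \ref{P:homdelta}(2), Theorem \ref{T:basis}, and Construction \ref{C:VL}.

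For (1), the forward direction is immediate from the definition: if $\widetilde{m}<m$ were comparable and both contributed to $\widetilde{H}_i$, then homological monotonicity would force $i<i$. For the converse, I would assume the incomparability hypothesis, take $\widetilde{m}<m$ with $\widetilde{H}_i(\Delta_{\widetilde{m}};k)\neq 0$ and $\widetilde{H}_j(\Delta_m;k)\neq 0$, and aim to show $i<j$. The hypothesis itself already rules out $i=j$. Supposing $i>j$, I would apply the contrapositive of Proposition \ref{P:homdelta}(2) (``$\widetilde{H}_{i_0+1}(\Delta_{m'};k)\neq 0$ produces some $\widetilde{m}'<m'$ with $\widetilde{H}_{i_0}(\Delta_{\widetilde{m}'};k)\neq 0$'') repeatedly, starting at $\widetilde{m}$ in degree $i$, to construct a chain $\widetilde{m}=\widetilde{m}_0>\widetilde{m}_1>\cdots>\widetilde{m}_{i-j}$ with $\widetilde{H}_{i-k}(\Delta_{\widetilde{m}_k};k)\neq 0$. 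Then $\widetilde{m}_{i-j}<m$ and both carry nonzero $\widetilde{H}_j$, contradicting incomparability.

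Part (2) uses exactly the same descent inside a single $m\in B_M$: classes in two distinct degrees $i<j$ of $\widetilde{H}(\Delta_m;k)$ would, after $j-i$ iterations of Proposition \ref{P:homdelta}(2), yield $\widetilde{m}<m$ with $\widetilde{H}_i(\Delta_{\widetilde{m}};k)\neq 0$, which together with $\widetilde{H}_i(\Delta_m;k)\neq 0$ contradicts (1).

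For (3) I would build a Taylor basis $B=\bigcup_{m\in L_M}\Gamma_m$ of a minimal free resolution $\mathbf{F}$ of $S/M$ using Construction \ref{C:VL}, and check Betti-linearity directly on $B$. By (2), each $m\in B_M$ admits a unique integer $i_m$ with $\widetilde{H}(\Delta_m;k)=\widetilde{H}_{i_m}(\Delta_m;k)\neq 0$; Theorem \ref{T:basis} then places every element of $\Gamma_m$ in homological degree $i_m+2$. For $g\in\Gamma_m$, the boundary $d(g)$ is a linear combination of Taylor basis elements at multidegrees $\widetilde{m}<m$ sitting in homological degree $i_m+1$, and Corollary \ref{C:main1c1} forces every such $\widetilde{m}$ to lie in $B_M$ with $i_{\widetilde{m}}=i_m-1$ (the atom/bottom cases handling themselves via $\Gamma_1=\{\emptyset\}$, $d\{i\}=\emptyset$, and $1\lessdot m_i$ in $B_M$). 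The remaining thing to check is that $\widetilde{m}\lessdot m$ in $B_M$: if some $m'\in B_M$ satisfied $\widetilde{m}<m'<m$, then homological monotonicity applied to both pairs $\widetilde{m}<m'$ and $m'<m$ would give $i_m-1=i_{\widetilde{m}}<i_{m'}<i_m$, impossible for an integer $i_{m'}$.

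The hard part will be the descent argument underlying (1) and (2); the only subtlety is ensuring the descent does not overshoot below degree $-1$, which is automatic because $\widetilde{m}\neq 1$ already forces $m$ not to be an atom, so $j\geq 0$ and the descent terminates cleanly at degree $j$.
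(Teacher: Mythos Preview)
Your proposal is correct and follows essentially the same route as the paper. Two small remarks: (i) in parts (1) and (2) you iterate Proposition~\ref{P:homdelta}(2) one degree at a time, but its contrapositive already lets you jump directly from $\widetilde{H}_i(\Delta_{\widetilde{m}};k)\neq 0$ with $i\geq j+1$ to some $\widehat{m}<\widetilde{m}$ with $\widetilde{H}_j(\Delta_{\widehat{m}};k)\neq 0$, so a single application suffices; (ii) for part (3) the paper takes an arbitrary minimal free resolution with a Taylor basis (rather than building one via Construction~\ref{C:VL}) and derives the same integer-sandwich contradiction $i-3<j<i-2$, which incidentally shows that \emph{every} minimal free resolution satisfies Betti-linearity (cf.\ Remark~\ref{R:pure}).
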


\begin{proof}
(1) \emph{if}: Assume that $M$ is not homologically monotonic, then there 
exist $\widetilde{m}<m \in L_M$ such that $\widetilde{H}_i(\Delta_{\widetilde{m}}; k)\neq 0$, 
$\widetilde{H}_j(\Delta_m; k)\neq 0$ and $i\geq j$.  If $i=j$ then we get 
a contradiction to the assumption that $\widetilde{m}$ and $m$ are not 
comparable. If $i>j$, then by Proposition \ref{P:homdelta} (2) there exists 
$\widehat{m}<\widetilde{m}<m \in L_M$ such that $\widetilde{H}_j(\Delta_{\widehat{m}}; k)\neq 0$, 
which contradicts to the assumption that $\widehat{m}$ and $m$ are not comparable. 

\emph{only if}: Assume that there exist $\widetilde{m}<m\in L_M$ such that 
$\widetilde{H}_i(\Delta_{\widetilde{m}}; k)\neq 0$ and 
$\widetilde{H}_i(\Delta_m; k)\neq 0$. This contradicts to the assumption 
that $M$ is homologically monotonic. 

(2) Assume that there exists $m\neq 1 \in B_M$ and $i>j$ such that 
$\widetilde{H}_i(\Delta_m; k)\neq 0$ and $\widetilde{H}_j(\Delta_m; k)\neq 0$. 
Then by Proposition \ref{P:homdelta} (2) there exists $\widetilde{m}<m\in L_M$ 
such that $\widetilde{H}_j(\Delta_{\widetilde{m}}; k)\neq 0$. By part (1) this 
is a contradiction to the assumption that $M$ is homologically monotonic. 

(3) Let $\mathbf{F}$ be a minimal free resolution of $S/M$ with a Taylor 
basis $B$. For any $e\in B$, let 
$d(e)=\sum\limits_{\widetilde{e}\in B}\lambda_{e, \widetilde{e}}\widetilde{e}$. 
Assume that there exist $e, \widetilde{e}\in B$ and $m\in B_M$ such that 
$\lambda_{e, \widetilde{e}}\neq 0 \in k$ and 
$\mbox{mdeg}(\widetilde{e}) < m< \mbox{mdeg}(e)$ in $B_M$. 
Let $e$ and $\widetilde{e}$ be in homological degrees $i$ and $i-1$, respectively. 
Then it is easy to see that $i\geq 2$, and by Theorem \ref{T:basis} we have that 
$\widetilde{H}_{i-2}(\Delta_{\mbox{mdeg}(e)}; k)\neq 0$ and 
$\widetilde{H}_{i-3}(\Delta_{\mbox{mdeg}(\widetilde{e})}; k)\neq 0$. 
Assume that $\widetilde{H}_j(\Delta_m; k)\neq 0$. Since $M$ is homologically 
monotonic, it follows that $i-3<j<i-2$, which is impossible. Thus, whenever 
$\lambda_{e, \widetilde{e}} \neq 0\in k$, we have that 
$\mbox{mdeg}(\widetilde{e}) \lessdot \mbox{mdeg}(e)$ in $B_M$. 
So $M$ is Betti-linear. 
\end{proof}

\begin{remark}\label{R:rigid}
\begin{itemize}
  \item[(1)] By part (1) of Proposition \ref{P:HM} we see that given a minimal 
free resolution $\mathbf{F}$ of a rigid monomial ideal with a multigraded basis, 
the only change of basis map of $\mathbf{F}$ is given by multiplying the 
basis elements by nonzero scalars. So, as in \cite{B:CM1} we may say that 
a rigid monomial ideal has a unique minimal free resolution up to rescaling. 
Note that the Taylor basis of $\mathbf{F}$ is not unique up to rescaling as is 
shown in Example \ref{E:strict1}. 
  \item[(2)] By Theorem 3.8 in \cite{B:FMS} we see that $M$ is homologically 
monotonic if and only if there exists a monomial ideal $N$ such that 
$L_M \cong L_N$ and $N$ has a pure resolution. So part (3) of proposition 
\ref{P:HM} is equivalent to Proposition 2.4 in \cite{B:Wo}. 
\end{itemize}
\end{remark}

\begin{remark}\label{R:diagram1}
We have the following diagram indicating the relations among some 
different classes of monomial ideals:
\[
\begin{array}{ccccccc}
\mbox{Scarf} & \subset & \mbox{rigid} & \subset & \mbox{homologically monotonic} 
& \subset & \mbox{Betti-linear} \\
 & & & & \cup &  &\cup \\
 & & & & \mbox{linear} & \subset & \mbox{lattice-linear} \\
 & & & & & & \cup\\
 & & & & & & \mbox{Scarf}
\end{array}
\]
Here, for example, Scarf means the calss of Scarf monomial ideals and linear 
means the class of monomial ideals having linear minimal free resolutions. 

If $M$ has a linear minimal free resolution $\mathbf{F}$, then $\mathbf{F}$ is 
pure, which implies that $M$ is homologically monotonic. Also, as in the definition of 
lattice-linearity, let $e$ be a basis element in homological degree greater than 
$1$. If $\lambda_{e, \widetilde{e}}\neq 0$ then we have that 
$\mbox{deg}(\mbox{mdeg}(e))=\mbox{deg}(\mbox{mdeg}(\widetilde{e}))+1$, 
which implies that $\mbox{mdeg}(\widetilde{e}) \lessdot \mbox{mdeg}(e)$ 
in $L_M$, and then $M$ is lattice-linear. This is also proved in Proposition 4.1 
of \cite{B:Cl}. 

 Let $M$ be the monomial ideal as in Example \ref{E:TBmany}, then 
$M$ is homologically monotonic but not rigid. Note that $M$ has a simplicial 
resolution, which is also linear. 

We will see from the next examples that all the inclusions in the above diagram 
are strict.
\end{remark}

\begin{example}\label{E:strict1}
Let $M$ be a monomial ideal in $S=k[a,b,c]$ minimally generated by monomials 
$m_1=a^2, m_2=ab, m_3=bc, m_4=c^2$. Then the lcm-lattice of $M$ is shown 
in Figure 3. 

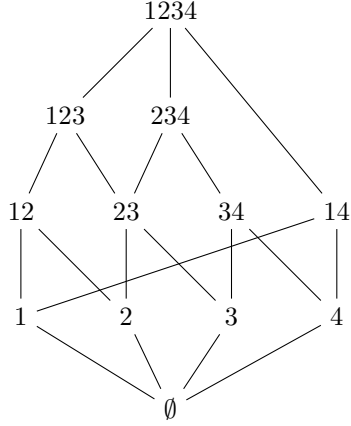
\begin{figure}
\begin{tikzpicture}[node distance=1.4cm]
\title{The strict1}
\node(1234)                                                                    {$1234$};
\node(234)    [below  of =1234]                                     {$234$};
\node(123)    [left of =234]                                            {$123$};
\node(23)      [below left =0.8cm and -0.1cm of 234]    {$23$};
\node(34)      [right of =23]                                            {$34$};
\node(14)      [right of =34]                                            {$14$};
\node(12)      [left of =23]                                               {$12$};
\node(1)        [below of =12]	                                    {$1$};
\node(2)        [below of =23]	                                    {$2$};
\node(3)        [below of =34]	                                    {$3$};
\node(4)        [below of =14]	                                    {$4$};
\node(0)        [below=4.8cm of 1234]                                 {$\emptyset$};

\draw(1234) --(234);
\draw(1234) --(123);
\draw(1234) --(14);
\draw(234) --(23);
\draw(234) --(34);
\draw(123) --(12);
\draw(123) --(23);
\draw(12) --(1);
\draw(12) --(2);
\draw(23) --(2);
\draw(23) --(3);
\draw(34) --(3);
\draw(34) --(4);
\draw(14) --(1);
\draw(14) --(4);
\draw(1) --(0);
\draw(2) --(0);
\draw(3) --(0);
\draw(4) --(0);

\end{tikzpicture}
\caption{The lcm-lattice $L_M$ of Example \ref{E:strict1}}
\end{figure}

Note that $\widetilde{H}(\Delta_{123}; k)=0$, $\widetilde{H}(\Delta_{234}; k)=0$ 
and $\widetilde{H}_1(\Delta_{1234}; k)\cong k$ with a basis $[12+23+34-14]$. 
Hence, by Theorem \ref{T:cbasis} we can take 
either $\emptyset, 1, 2, 3, 4, 12, 23$, $34$, $14$, $124+234$ 
or $\emptyset, 1, 2, 3, 4, 12, 23, 34, 14, 123+134$ as a Taylor basis. 
Note that $\mbox{mdeg}(12)=a^2b$ and $\mbox{mdeg}(14)=a^2c^2$. 
Then it is easy to see that $M$ is rigid and Betti-linear, but $M$ is not Scarf, 
or linear, or lattice-linear. Note that although $[d(124)]$ is a basis of 
$\widetilde{H}_1(\Delta_{1234}; k)$, $M$ does not have a simplicial resolution. 
However, $M$ do have a cellular resolution supported on a square 
with vertices $1, 2, 3, 4$. 
\end{example}

\begin{example}\label{E:strict2}
Let $\Delta=\langle 123, 34, 35, 45 \rangle$ be a simplicial complex. 
Let $M$ be the nearly Scarf monomial ideal defined by $\Delta$ as in \cite{B:PV}. 
Then the lcm-lattice of $M$ is shown in Figure 4. 

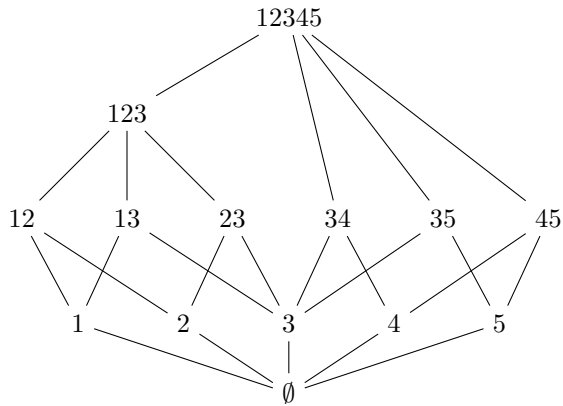
\begin{figure}
\begin{tikzpicture}[node distance=1.4cm]
\title{The strict2}
\node(12345)                                                                    {$12345$};
\node(123)    [below left =0.8cm and 1.2cm of 12345]  {$123$};
\node(13)      [below of =123]                                          {$13$};
\node(12)      [left of =13]                                                {$12$};
\node(23)      [right of =13]                                              {$23$};
\node(34)      [right of =23]                                              {$34$};
\node(35)      [right of =34]                                              {$35$};
\node(45)      [right of =35]                                              {$45$};
\node(3)        [below=3.6cm of 12345]                                 {$3$};
\node(2)        [left of =3]	                                    {$2$};
\node(1)        [left of =2]	                                    {$1$};
\node(4)        [right of =3]	                                    {$4$};
\node(5)        [right of =4]	                                    {$5$};
\node(0)        [below=4.5cm of 12345]                                 {$\emptyset$};

\draw(12345) --(123);
\draw(12345) --(35);
\draw(12345) --(34);
\draw(12345) --(45);
\draw(123) --(12);
\draw(123) --(13);
\draw(123) --(23);
\draw(12) --(1);
\draw(12) --(2);
\draw(13) --(1);
\draw(13) --(3);
\draw(23) --(2);
\draw(23) --(3);
\draw(34) --(3);
\draw(34) --(4);
\draw(35) --(3);
\draw(35) --(5);
\draw(45) --(4);
\draw(45) --(5);
\draw(1) --(0);
\draw(2) --(0);
\draw(3) --(0);
\draw(4) --(0);
\draw(5) --(0);

\end{tikzpicture}
\caption{The lcm-lattice $L_M$ of Example \ref{E:strict2}}
\end{figure}

Note that $\widetilde{H}_1(\Delta_{12345}; k)\cong k$ with a basis 
$[34-35+45]$. Hence, by Theorem \ref{T:cbasis} we can take 
$\emptyset$, $1, 2, 3, 4, 5$, $12$, $13$, $23$, $34$, $35$, $45$, 
$123$, $345$ as a Taylor basis. This Taylor basis satisfies the 
definition of lattice-linearity and then  $M$ 
is lattice-linear. Note that $\widetilde{H}_1(\Delta_{123}; k)\cong k$, 
so that $M$ is not homologically monotonic. 

Note that if we take $\emptyset$, $1, 2, 3, 4, 5$, $12$, $13$, $23$, 
$34$, $35$, $45$, $123$, $345+123$ as a Taylor basis, then the 
minimal free resolution of $S/M$ with this Taylor basis does not satisfy 
the definition of lattice-linearity. 

Similarly, let $\widetilde{\Delta}=\langle 123, 124, 134, 234, 45, 46, 56 \rangle$, 
and $\widetilde{M}$ the nearly Scarf monomial ideal defined by $\widetilde{\Delta}$ 
as in \cite{B:PV}. Then it is easy to see that 
$\widetilde{H}_2(\Delta_{123456}; k)\cong k$, 
$\widetilde{H}_1(\Delta_{123456}; k) \cong k$ 
and $M$ is lattice-linear. So Proposition \ref{P:HM} part (2) does not 
hold for lattice-linear monomial ideals. 
\end{example}

\begin{example}\label{E:notBetti}
Let $\Delta=\langle 12, 3 \rangle$ be a simplicial complex. Let $M$ be 
the nearly Scarf monomial ideal defined by $\Delta$ as in \cite{B:PV}. 
Then the lcm-lattice of $M$ is 
\[
\begin{tikzpicture}[node distance=1.4cm]
\title{not Betti-linear}
\node(123)                                                                    {$123$};
\node(12)      [below left =0.5cm and 0.1cm of 123]   {$12$};
\node(2)        [below=1.4cm of 123]                            {$2$};
\node(1)        [left of =2]	                                           {$1$};
\node(3)        [right of =2]	                                           {$3$};
\node(0)        [below=2.2cm of 123]                            {$\emptyset$};

\draw(123) --(12);
\draw(123) --(3);
\draw(12) --(1);
\draw(12) --(2);
\draw(1) --(0);
\draw(2) --(0);
\draw(3) --(0);
\end{tikzpicture}
\]
We can also use the labeling technique introduced in \cite{B:Ma} to get, for 
example, $M=(ab, ac, bcd)$. Indeed, many examples in this paper are 
obtained by first having the atomic lattice and then using the labeling 
technique in \cite{B:Ma} to get a monomial ideal. 

 Now we have that 
$\widetilde{H}_0(\Delta_{12}; k)\cong k$ with a basis $[-1+2]$ and 
$\widetilde{H}_0(\Delta_{123}; k) \cong k$ with a basis $[-1+3]=[-2+3]$. 
By Theorem \ref{T:basis} we see that $\Gamma_{123}=\{\lambda13+\mu23\}$, 
where $\lambda, \mu\in k$ and $\lambda+\mu\neq 0$. 
Since $d(\lambda13+\mu23)=-\lambda1-\mu2+(\lambda+\mu)3$ and both $1$ 
and $2$ are not covered by $123$ in $L_M=B_M$, it follows that $M$ is not 
Betti-linear. 
From this simple example we see that the class of  Betti-linear monomial ideals 
is not very large. 

With the above fixed basis of $\widetilde{H}_0(\Delta_{12}; k)$ and 
$\widetilde{H}_0(\Delta_{123}; k)$, by Definition \ref{D:posetres} we obtain
\[
\mathcal{D}(L_M): 0\to k^2 
\xrightarrow{\begin{pmatrix} -1& 0 \\ 1 &0 \\ 0 &1 \end{pmatrix}} 
k^3 \xrightarrow{\begin{pmatrix} 1 & 1 & 1 \end{pmatrix}} k, 
\]
which is not a complex. And then we obtain
\[
\mathcal{F}(L_M): 0\to S(-abc)\oplus S(-abcd) 
\xrightarrow{\begin{pmatrix} -c &0 \\ b & 0 \\ 0 & a \end{pmatrix} } 
S(-ab)\oplus S(-ac) \oplus S(-bcd) \ \ \ \ \ \ \ \ 
\]
\[
\ \ \ \ \ \ \ \ \ \ \ \ \ \ \ \ \ \ \ \ \ \ \ \ \ \ \ \ \ \ \ \ \ \ \ \ \ \ \ \ \ \ \ \ \ \ \ \ \ 
\ \ \ \ \ \ \ \ \ \ \ \ \ 
\xrightarrow{\begin{pmatrix} ab & ac & bcd \end{pmatrix} } S,
\]
which is not a minimal free resolution of $S/M$. 
\end{example}

Next we will use results about Taylor bases to prove that if $M$ is 
Betti-linear then $\mathcal{F}(L_M)$ as constructed in Definition \ref{D:posetres}, 
is a minimal free resolution of $S/M$. To make the proof easier to understand, 
we first prove the result for rigid monomial ideals. 

\begin{theorem}[\cite{B:CM2}]\label{T:rigid}
Let $M$ be a rigid monomial ideal minimally generated by $r$ monomials. 
Then the poset construction $\mathcal{D}(L_M)$ is an $r$-frame,  
$\mathcal{F}(L_M)$ equals the $M$-homogenization of $\mathcal{D}(L_M)$, 
and $\mathcal{F}(L_M)$ is a minimal free resolution of $S/M$. 
\end{theorem}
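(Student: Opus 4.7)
The strategy is to build a concrete minimal free resolution $\mathbf{F}$ of $S/M$ whose Taylor basis is naturally indexed by $B_M$, and then to verify, differential by differential, that the frame of $\mathbf{F}$ literally coincides with $\mathcal{D}(L_M)$ under an obvious bijection of bases. Since rigidity gives $\widetilde{H}(\Delta_m;k)\cong k$ for each $m\in B_M$, once the bijection is set up the comparison reduces to a single Mayer--Vietoris calculation.

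First, for every $m\in B_M$ with $\mbox{rk}(m)\geq 2$, Proposition \ref{P:HM}(2) tells us there is a unique $i_m$ with $\widetilde{H}(\Delta_m;k)=\widetilde{H}_{i_m-2}(\Delta_m;k)\cong k$. Fix, for each such $m$, a chain $e_m$ in $\Omega_m$ of dimension $i_m-1$ so that $[d(e_m)]$ is a basis of $\widetilde{H}_{i_m-2}(\Delta_m;k)$; for $m=1$ take $e_1=\emptyset$, and for $m=m_j$ take $e_{m_j}=\{j\}$. By Theorem \ref{T:cbasis}, $B=\{e_m\mid m\in B_M\}$ is a Taylor basis of some minimal free resolution $\mathbf{F}$ of $S/M$. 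Because $M$ is rigid, hence homologically monotonic, Proposition \ref{P:HM}(3) shows $M$ is Betti-linear, so for each $m\in B_M$ we may write
\[
d(e_m)\;=\;\sum_{\widetilde m\lessdot m\ \mathrm{in}\ B_M}\lambda_{m,\widetilde m}\,e_{\widetilde m}
\;=\;\sum_{j=1}^{l}\lambda_{m,\beta_j}\,e_{\beta_j},
\]
where $\beta_1,\dots,\beta_l$ are the maximal elements of $\mathcal{B}(m)$, i.e.\ the elements of $B_M$ covered by $m$.

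The heart of the argument is the comparison of $\lambda_{m,\beta_j}$ with $\varphi_{i_m,m}^{\beta_j}$. Let $\Delta_1=\langle A_{\beta_j}\rangle$ and $\Delta_2=\langle A_{\beta_i}\mid i\ne j\rangle$, so $\widetilde\Delta_m=\Delta_1\cup\Delta_2$. Set $c_1=\lambda_{m,\beta_j}e_{\beta_j}\in\widetilde C_{i_m-2}(\Delta_1;k)$ and $c_2=-\sum_{i\ne j}\lambda_{m,\beta_i}e_{\beta_i}\in\widetilde C_{i_m-2}(\Delta_2;k)$; the support conditions $\mbox{supp}(e_{\beta_i})\subseteq A_{\beta_i}$ from Construction \ref{C:VL} ensure that $c_1$ and $c_2$ really live where claimed. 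Then $d(e_m)=c_1-c_2$, so $d(e_m)$ is a cycle in $\widetilde\Delta_m$ and its class in $\widetilde H_{i_m-2}(\widetilde\Delta_m;k)$ maps to $[d(e_m)]$ under $\sigma_m$. By the very definition of the connecting homomorphism, $\delta\bigl(\sigma_m^{-1}[d(e_m)]\bigr)=[d(c_1)]=\lambda_{m,\beta_j}[d(e_{\beta_j})]$ in $\widetilde H_{i_m-3}(\Delta_1\cap\Delta_2;k)$, and the inclusion map $\iota$ sends this to $\lambda_{m,\beta_j}[d(e_{\beta_j})]$ in $\widetilde H_{i_m-3}(\Delta_{\beta_j};k)$. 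Thus
\[
\varphi_{i_m,m}^{\beta_j}\bigl([d(e_m)]\bigr)\;=\;\lambda_{m,\beta_j}\,[d(e_{\beta_j})],
\]
and $\varphi_{i_m,m}^{\widetilde m}([d(e_m)])=0$ for $\widetilde m\notin\{\beta_1,\dots,\beta_l\}$ by definition.

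Now assemble: the bijection $e_m\longleftrightarrow [d(e_m)]$ identifies the basis of the frame $(\mathbb V,d)$ of $\mathbf F$ with the chosen basis of $\mathcal D(L_M)$, and the previous paragraph shows that under this identification the two differentials agree componentwise. Hence $(\mathbb V,d)=\mathcal D(L_M)$, so in particular $\mathcal D(L_M)$ really is a complex of $k$-vector spaces, i.e.\ an $r$-frame. A direct check of the definition in Definition \ref{D:posetres} shows that $\mathcal F(L_M)$ is exactly the $M$-homogenization of this $r$-frame (the factor $m/\beta_j$ in $\partial_{i,m}^{\beta_j}$ matches the homogenization rule in Definition \ref{D:multidegree}), and since the $M$-homogenization of the frame of $\mathbf F$ is $\mathbf F$, we conclude $\mathcal F(L_M)=\mathbf F$, a minimal free resolution of $S/M$.

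The principal obstacle is purely bookkeeping in the Mayer--Vietoris step: one must confirm that the chains $c_1,c_2$ really lie in $\Delta_1,\Delta_2$, that $d(c_1)$ lands in $\Delta_1\cap\Delta_2$ (this is automatic from $d(e_m)$ being a cycle in $\widetilde\Delta_m$), and that $\lambda_{m,\widetilde m}=0$ for $\widetilde m$ not a cover of $m$ in $B_M$ (this is Betti-linearity). Rigidity, by forcing one-dimensional homology groups at every $m\in B_M$, is what lets a single scalar $\lambda_{m,\beta_j}$ carry the entire content of the differential and makes the identification of bases unambiguous up to rescaling.
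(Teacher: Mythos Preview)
Your argument is correct and follows essentially the same route as the paper: both set up a bijection $e_m \leftrightarrow [d(e_m)]$ between a Taylor basis of a minimal free resolution $\mathbf{F}$ and the chosen bases of the $\widetilde{H}(\Delta_m;k)$, then verify via the Mayer--Vietoris connecting homomorphism that $\varphi_{i,m}^{\beta_j}([d(e_m)])=\lambda_{m,\beta_j}[d(e_{\beta_j})]$, exactly as you do. The only organizational difference is that the paper starts from an existing minimal free resolution and rescales its Taylor basis to match prechosen generators $[c_m]$, whereas you build $\mathbf{F}$ via Theorem~\ref{T:cbasis}; just be aware that invoking Theorem~\ref{T:cbasis} requires the hypothesis that each $d(e_m)$ already lies in the $k$-span of the $e_{\widetilde m}$ with $\widetilde m<m$, which is not automatic from your condition on $[d(e_m)]$ alone---you should say the $e_m$ are produced inductively by Construction~\ref{C:VL} (which you implicitly rely on anyway for the support condition $\mathrm{supp}(e_{\beta_i})\subseteq A_{\beta_i}$), or simply start from an existing $\mathbf{F}$ as the paper does.
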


\begin{proof}
Since $M$ is rigid, it follows that $M$ is Betti-linear and every minimal free 
resolution of $S/M$ satisfies the definition of Betti-linearity. 
For any $m\neq 1 \in B_M$, let $[c_m]$ be a basis of 
$\widetilde{H}(\Delta_m; k)=\widetilde{H}_i(\Delta_m; k)\cong k$ for some 
$i\geq -1$; let $\mathbf{F}$ be a minimal free resolution of $S/M$ with a 
Taylor basis $B$ such that in homological degree $i+2$ at $S(-m)$, the 
Taylor basis element is $e_m$. By Theorem \ref{T:basis}, without the loss 
of generality, after a rescaling of the Taylor basis elements in $B$, we can 
assume that $[d(e_m)]=[c_m]$ in $\widetilde{H}(\Delta_m; k)$ for all 
$m\neq 1 \in B_M$, and that the frame of $\mathbf{F}$ 
in homological degrees $0$ and $1$ is given by 
$k^r \xrightarrow{(\begin{smallmatrix} 1& \cdots & 1 \end{smallmatrix} )} k$. 
Hence, there is a one-to-one correspondence between the basis elements 
$e_m$ of the frame of $\mathbf{F}$ and the basis elements $[c_m]$ of 
$\mathcal{D}(L_M)$. 

For any $m\neq 1 \in B_M$ such that $m$ is not an atom, we have that 
$\widetilde{H}_i(\Delta_m; k)\cong k$ for some $i\geq 0$. 
Let 
\[
d(e_m)=\lambda_1e_{\beta_1}+\cdots+\lambda_te_{\beta_t}, 
\]
where $\lambda_1, \ldots, \lambda_t$ are some nonzero scalars in $k$ 
and $e_{\beta_1}, \ldots, e_{\beta_t}\in B$ are some Taylor basis elements 
in homological degree $i+1$ with $\mbox{mdeg}(e_{\beta_j})=\beta_j$ for 
any $1\leq j \leq t$. By the definition of Betti-linearity we see that 
$\beta_1, \ldots, \beta_t$ are covered by $m$ in $B_M$, so that 
$\beta_1, \ldots, \beta_t$ are some maximal elements in $\mathcal{B}(m)$, 
which implies that the cycle $\lambda_1e_{\beta_1}+\cdots+\lambda_te_{\beta_t}$ 
is in $\widetilde{\Delta}_m$. Since for any $1\leq j\leq t$ we have that 
$\mbox{supp}(e_{\beta_j})\subseteq A_{\beta_j}$, it follows that 
\[
\varphi_{i+2, m}^{\beta_j}([\lambda_1e_{\beta_1}+\cdots+\lambda_te_{\beta_t}])
=[d(\lambda_je_{\beta_j})]=\lambda_j[d(e_{\beta_j})]=\lambda_j[c_{\beta_j}].
\]
Thus, we have that 
\begin{align*}
\varphi_{i+2}([c_m])&= \varphi_{i+2}([d(e_m)])\\
                                 &=\varphi_{i+2, m}([\lambda_1e_{\beta_1}+\cdots+\lambda_te_{\beta_t}])\\ 
                                 &=\sum_{j=1}^t\varphi_{i+2, m}^{\beta_j}([\lambda_1e_{\beta_1}+\cdots+\lambda_te_{\beta_t}])\\
                                 &=\lambda_1[c_{\beta_1}]+\cdots +\lambda_t[c_{\beta_t}].
\end{align*}
So $\mathcal{D}(L_M)$ is equal to the frame of $\mathbf{F}$, which is an $r$-frame. 
Note that the multigraded free $S$-module $\mathcal{F}_i$ in $\mathcal{F}(L_M)$ 
coincides with the multigraded free $S$-module $F_i$ in $\mathbf{F}$, and the 
differential maps in $\mathbf{F}$ coincide with the multigraded homomorphisms 
defined in $\mathcal{F}(L_M)$. Therefore, $\mathcal{F}(L_M)$ equals $\mathbf{F}$, 
and then $\mathcal{F}(L_M)$ is a minimal free  resolution of $S/M$. 
Since $\mathbf{F}$ equals the $M$-homogenization of the frame of $\mathbf{F}$, 
it follows that $\mathcal{F}(L_M)$ equals the $M$-homogenization of $\mathcal{D}(L_M)$. 
\end{proof}

Theorem \ref{T:rigid} is also proved in \cite{B:CM2}, where the proof takes over 
six pages. Here, by using results about Taylor bases our proof is much shorter. 

\begin{theorem}[\cite{B:Cl},\cite{B:Wo}]\label{T:Betti}
Let $M$ be a Betti-linear monomial ideal minimally generated by $r$ monomials. 
Then the poset construction $\mathcal{D}(L_M)$ is an $r$-frame,  
$\mathcal{F}(L_M)$ equals the $M$-homogenization of $\mathcal{D}(L_M)$, 
and $\mathcal{F}(L_M)$ is a minimal free resolution of $S/M$. 
\end{theorem}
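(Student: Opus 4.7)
The plan is to extend the strategy from the proof of Theorem \ref{T:rigid}, where every $\widetilde{H}(\Delta_m;k)$ is one-dimensional, to the general Betti-linear setting where the homology groups may have higher rank and even be nonzero in several degrees for the same $m$. First I would invoke Betti-linearity (in its Taylor-basis formulation given just after Definition \ref{D:ideals}) to pick a minimal free resolution $\mathbf{F}$ of $S/M$ with a Taylor basis $B$ in which, for every $e\in B$, the expansion $d(e)=\sum_{\widetilde{e}\in B}\lambda_{e,\widetilde{e}}\widetilde{e}$ satisfies: whenever $\lambda_{e,\widetilde{e}}\neq 0$, $\mbox{mdeg}(\widetilde{e})\lessdot\mbox{mdeg}(e)$ in $B_M$. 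For each $m\in L_M$ with $\widetilde{H}_{i-2}(\Delta_m;k)\neq 0$, let $e_m^1,\ldots,e_m^{t_m}$ be the elements of $B$ of multidegree $m$ in homological degree $i$. By Theorem \ref{T:basis}, $[d(e_m^1)],\ldots,[d(e_m^{t_m})]$ is a basis of $\widetilde{H}_{i-2}(\Delta_m;k)$, and I would fix exactly this as the basis of $\mathcal{D}(L_M)$ at $m$ in that degree, rescaling so that the degree-$1$-to-$0$ map is $(1\ \cdots\ 1)$. This yields a one-to-one correspondence between the basis of $\mathcal{D}(L_M)$ and the Taylor basis $B$.

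The main task is to verify that under this correspondence, $\varphi_i$ agrees with the dehomogenized differential of $\mathbf{F}$. Fix such an $e_m^j$ (with $m$ not an atom) and write $d(e_m^j)=\sum_{k}\lambda_k e_{\beta_k}^{i_k}$. By Betti-linearity each $\beta_k$ is covered by $m$ in $B_M$, hence lies among the maximal elements $\beta_1,\ldots,\beta_l$ of $\mathcal{B}(m)$. Grouping terms by $\beta$, write $d(e_m^j)=\sum_{p=1}^{l}c_{\beta_p}$ where $c_{\beta_p}=\sum_{k:\beta_k=\beta_p}\lambda_k e_{\beta_k}^{i_k}$ is a chain supported in $A_{\beta_p}$. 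For each fixed $p$, applying the Mayer--Vietoris decomposition $\widetilde{\Delta}_m=\langle A_{\beta_p}\rangle\cup\langle A_{\beta_q}:q\neq p\rangle$, the connecting homomorphism sends $[d(e_m^j)]$ (viewed in $\widetilde{H}_{i-2}(\widetilde{\Delta}_m;k)$ via Proposition \ref{P:iso}) to $[d(c_{\beta_p})]$; the inclusion into $\Delta_{\beta_p}$ then gives $\varphi_{i,m}^{\beta_p}([d(e_m^j)])=[d(c_{\beta_p})]=\sum_{k:\beta_k=\beta_p}\lambda_k[d(e_{\beta_p}^{i_k})]$, which is exactly the $\beta_p$-component of $d(e_m^j)$ in the chosen basis of $\mathcal{D}(L_M)$.

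Summing over $p$ yields $\varphi_i([d(e_m^j)])=\sum_k\lambda_k[d(e_{\beta_k}^{i_k})]$, so $\mathcal{D}(L_M)$ equals the frame of $\mathbf{F}$, in particular is an $r$-frame. Componentwise, the multigraded free modules and maps of $\mathcal{F}(L_M)$ from Definition \ref{D:posetres} coincide with those of $\mathbf{F}$, the $M$-homogenization of the frame of $\mathbf{F}$; hence $\mathcal{F}(L_M)=\mathbf{F}$ is a minimal free resolution of $S/M$, and equals the $M$-homogenization of $\mathcal{D}(L_M)$.

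The hard part will be the Mayer--Vietoris bookkeeping in the second paragraph: verifying that the connecting homomorphism evaluated on the cycle $d(e_m^j)$, with its decomposition $\sum_p c_{\beta_p}$ dictated by the Betti-linear Taylor basis, produces precisely the coefficients $\lambda_k$ appearing in $d(e_m^j)$. The crucial point enabling this is that Betti-linearity forces every summand of $d(e_m^j)$ to be a Taylor basis element whose multidegree is a facet label $\beta_p$ of $\widetilde{\Delta}_m$, so the natural grouping by support $A_{\beta_p}$ aligns exactly with the two halves of the MV decomposition used to define $\varphi_{i,m}^{\beta_p}$; without this alignment the identification of $\varphi$ with the differential would fail, as already illustrated by Example \ref{E:notBetti}.
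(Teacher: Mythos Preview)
Your proposal is correct and follows essentially the same approach as the paper's proof. Both arguments pick a Betti-linear Taylor basis $B$, identify its elements with homology classes via Theorem~\ref{T:basis}, and then verify that the Mayer--Vietoris connecting maps $\varphi_{i,m}^{\beta_p}$ reproduce the coefficients of $d(e_m)$; the concluding identification $\mathcal{D}(L_M)=\text{frame}(\mathbf{F})$ and $\mathcal{F}(L_M)=\mathbf{F}$ is identical. Two small presentational differences: the paper fixes arbitrary bases $[c_{m,i}^j]$ of $\widetilde{H}(\Delta_m;k)$ first and then applies an invertible linear map to the Taylor basis so that $[d(e_{m,i}^j)]=[c_{m,i}^j]$, whereas you take $[d(e_m^j)]$ as the basis from the outset (a harmless simplification); and the paper inserts an explicit uniqueness lemma showing that each $e_s$ has support in exactly one $A_{\beta_j}$, which you sidestep by grouping by multidegree rather than by support---your splitting $d(e_m^j)=c_{\beta_p}+\sum_{q\neq p}c_{\beta_q}$ is already a valid Mayer--Vietoris decomposition without that uniqueness.
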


\begin{proof}
For any $m\neq 1\in B_M$, suppose that $\widetilde{H}_i(\Delta_m; k)\cong k^{t_{m, i}}$ 
for some $t_{m, i}>0$. Let $[c_{m, i}^1], \ldots, [c_{m, i}^{t_{m,i}}]$ be a basis of 
$\widetilde{H}_i(\Delta_m; k)$. Let $\mathbf{F}$ be a minimal free resolution of $S/M$ 
with a Taylor basis $B$ such that $B$ satisfies the definition of Betti-linearity. 
Let $e_{m, i}^1, \ldots, e_{m, i}^{t_{m, i}} \in B$ be the Taylor basis elements in 
homological degree $i+2$ at $S(-m)^{t_{m, i}}$. By Theorem \ref{T:basis} we see that 
$[d(e_{m, i}^1)], \ldots, [d(e_{m, i}^{t_{m, i}})]$ is also a basis of 
$\widetilde{H}_i(\Delta_m; k)$.  Without the loss of  generality, after using an invertible 
$k$-linear map acting on the Taylor basis elements $e_{m, i}^1, \ldots, e_{m, i}^{t_{m, i}} \in B$, 
we can assume that $[d(e_{m, i}^j)]=[c_{m, i}^j]$ for any $1\leq j \leq t_{m, i}$. 
We can also assume that the frame of $\mathbf{F}$ in homological degrees $0$ and 
$1$ is given by $k^r \xrightarrow{(\begin{smallmatrix} 1 & \cdots & 1 \end{smallmatrix})} k$. 
Hence, there is a one-to-one correspondence between the basis elements $e_{m, i}^j$ 
of the frame of $\mathbf{F}$ and the basis elements $[c_{m, i}^j]$ of $\mathcal{D}(L_M)$. 

For any $m\neq 1\in B_M$ such that $m$ is not an atom, we have that 
$\widetilde{H}_i(\Delta_m; k)\neq 0$ for some $i\geq 0$. Let $e_m\in B$ be a Taylor 
basis element in homological degree $i+2$ with $\mbox{mdeg}(e_m)=m$. Let 
\[
d(e_m)=\lambda_1e_1+\cdots+\lambda_te_t, 
\]
where $\lambda_1, \ldots, \lambda_t$ are some nonzero scalars in $k$ 
and $e_1, \ldots, e_t \in B$ are some Taylor basis elements 
in homological degree $i+1$. Let $e_m, e_1, \ldots, e_t$ correspond to 
basis elements $[c_m], [c_1], \ldots, [c_t]$, respectively, in $\mathcal{D}(L_M)$. 
Next we want to show that 
\[
\varphi_{i+2}([c_m])=\lambda_1[c_1]+\cdots+\lambda_t[c_t].
\]

Indeed, since the Taylor basis $B$ satisfies the definition of Betti-linearity, it follows 
that $\mbox{mdeg}(e_1), \ldots, \mbox{mdeg}(e_t)$ are covered by $m$ in $B_M$, 
i.e., $\mbox{mdeg}(e_1), \ldots, \mbox{mdeg}(e_t)$ are some maximal elements 
in $\mathcal{B}(m)$. Let $\beta_1, \ldots, \beta_l$ be all the maximal elements in 
$\mathcal{B}(m)$, then for any $1\leq s \leq t$ there exists $1\leq j\leq l$ such that 
$\mbox{mdeg}(e_s)=\beta_j$, which implies that 
$\overline{\mbox{supp}(e_s)} = A_{\beta_j}$. Assume that there exist 
$1\leq j_1 \neq j_2 \leq l$ such that  $\mbox{supp}(e_s)\subseteq A_{\beta_{j_1}}$ 
and $\mbox{supp}(e_s) \subseteq A_{\beta_{j_2}}$. Then we have that 
$\mbox{supp}(e_s)\subseteq A_{\beta_{j_1}}\cap A_{\beta_{j_2}}
=A_{\beta_{j_1}\wedge \beta_{j_2}}$, so that 
$A_{\beta_j} =\overline{\mbox{supp}(e_s)}\subseteq A_{\beta_{j_1}\wedge \beta_{j_2}}$, 
and then $\beta_j\leq \beta_{j_1}\wedge \beta_{j_2}$, which contradicts to the 
assumption that $\beta_1, \ldots, \beta_l$ are maximal elements in $\mathcal{B}(m)$. 
Hence, for any $1\leq s \leq t$ there exists a unique $1\leq j\leq l$ such that 
$\mbox{supp}(e_s)\subseteq A_{\beta_j}$, $\mbox{mdeg}(e_s)=\beta_j$ and 
$[d(e_s)]=[c_s]$ is a basis element of $\widetilde{H}_{i-1}(\Delta_{\beta_j}; k)$. 
Thus, we have that 
\begin{align*}
\varphi_{i+2}([c_m])&= \varphi_{i+2}([d(e_m)])\\
                                 &=\varphi_{i+2, m}([\lambda_1e_1+\cdots+\lambda_te_t])\\ 
                                 &=\sum_{j=1}^l\varphi_{i+2, m}^{\beta_j}([\lambda_1e_1+\cdots+\lambda_te_t])\\
                                 &=[d(\lambda_1e_1)]+\cdots+[d(\lambda_te_t)]\\
                                 &=\lambda_1[d(e_1)]+\cdots+\lambda_t[d(e_t)]\\
                                 &=\lambda_1[c_1]+\cdots +\lambda_t[c_t].
\end{align*}

So $\mathcal{D}(L_M)$ is equal to the frame of $\mathbf{F}$, which is an $r$-frame. 
Similar to the argument in Theorem \ref{T:rigid}, we have that $\mathcal{F}(L_M)$ 
equals the $M$-homogenization of $\mathcal{D}(L_M)$ and is a minimal free resolution 
of $S/M$. 
\end{proof}

There is a proof of Theorem \ref{T:Betti} for lattice-linear monomial ideals in 
\cite{B:Cl}.  And in \cite{B:Wo} a similar proof is given for Betti-linear monomial 
ideals. Both proofs are very technical and not very easy to understand. 
Our proof here is different from theirs. We hope that this new proof can 
give us a better understanding about Betti-linear monomial ideals. 

\begin{remark}\label{R:Betti}
It is easy to see that if $\mathcal{F}(L_M)$ is a poset resolution of $S/M$, 
then $M$ is Betti-linear. So the class of Betti-linear monomial ideals is 
exactly the set of monomial ideals for which the poset construction induces 
a minimal free resolution. 
\end{remark}

\begin{remark}\label{R:pure}
By the proof of Proposition \ref{P:HM} (3), it is easy to see that if $M$ is 
homologically monotonic then every minimal free resolution of $S/M$ 
satisfies the definition of Betti-linearity. So every minimal free resolution 
of a homologically monotonic monomial ideal can be obtained as a poset 
resolution, which is not the case for Betti-linear monomial ideals, as is shown by 
Example \ref{E:strict2}. 
From Example \ref{E:notBetti} we see that the class of Betti-linear monomial ideals 
is not very large. However, it contains the class of homologically monotonic 
monomial ideals. In particular, for monomial ideals with pure minimal free 
resolutions, which play an important role in Boij-S\"{o}derberg theory, 
poset construction gives a formula for their minimal free resolutions. 
\end{remark}

\subsection{An Approximation Formula for Minimal Free Resolutions of a Monomial Ideal and Homology-linear Monomial Ideals}
In this subsection we will give a construction similar to the poset construction. 
With this new construction we obtain an approximation formula for minimal 
free resolutions of a monomial ideal. After that, we introduce a new class of 
monomial ideals, called homology-linear monomial ideals, which is similar to 
Betti-linear monomial ideals. 

Let $M$ be a monomial ideal minimally generated by monomials $m_1, \ldots, m_r$. 
For any $1\neq m \in B_M$ such that $m$ is not an atom, we have that 
$\widetilde{H}_i(\Delta_m; k)\neq 0$ for some $i\geq 0$. 
Let 
\[
\mathcal{B}_i(m)=\{\widetilde{m}\in B_M| 1\neq \widetilde{m}<m \ \mbox{and} \ 
\widetilde{H}_{i-1}(\Delta_{\widetilde{m}}; k) \neq 0\}, 
\]
then we have that 
$\mathcal{B}_i(m)\subseteq \mathcal{B}(m)$, and by Proposition \ref{P:homdelta} (2),  
it is easy to see that $\mathcal{B}_i(m)\neq \emptyset$. 
Let $\gamma_1, \ldots, \gamma_t$ be the maximal elements in $\mathcal{B}_i(m)$. 
Let 
\[
\Delta_m^{(i)}=\langle A_{\gamma_1}, \ldots, A_{\gamma_t} \rangle, 
\]
then $\Delta_m^{(i)}$ is a subcomplex of $\Delta_m$. These notations will be 
used throughout this subsection.

\begin{proposition}\label{P:surjective}
Let $1\neq m \in B_M$ such that $m$ is not an atom. Let 
$\sigma_m^{(i)}: \widetilde{H}_i(\Delta_m^{(i)}; k) \to \widetilde{H}_i(\Delta_m; k)$ 
be the $k$-linear map induced by the inclusion $\tau: \Delta_m^{(i)} \to \Delta_m$. 
Then $\sigma_m^{(i)}$ is surjective. Moreover, let $\mathcal{J}_i(m)$ be the set 
of all $\widetilde{m}< m \in L_M$ such that there does not exist $\gamma_j$ 
with $\widetilde{m}<\gamma_j$; if there does not exist $\widetilde{m}\in \mathcal{J}_i(m)$ 
such that $\widetilde{H}_i(\Delta_{\widetilde{m}}; k)\neq 0$, then $\sigma_m^{(i)}$ 
is an isomorphism. 
\end{proposition}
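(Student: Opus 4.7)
The plan is to follow the strategy of Proposition~\ref{P:iso}, combining the atomic lattice resolution theory (Construction~\ref{C:VL}, Theorem~\ref{T:basis}, Corollary~\ref{C:main1c1}, Corollary~\ref{C:main1c2}) with the long exact homology sequence of a subcomplex pair. Throughout, I use the dictionary: a Taylor basis element of $\Gamma_{\widetilde{m}}$ in homological degree $j$ is a chain of dimension $j-1$, and such an element exists if and only if $\widetilde{H}_{j-2}(\Delta_{\widetilde{m}};k)\neq 0$.

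For surjectivity, I would invoke Theorem~\ref{T:basis} to represent every class in $\widetilde{H}_i(\Delta_m;k)$ as $[d(g)]$ for some $g\in\Gamma_m$ in homological degree $i+2$ (dimension $i+1$). Writing $d(g)=\sum_j \lambda_j e_j$, each $e_j$ is a dimension-$i$ basis element of $\mathbb{U}_m$, say $e_j\in\Gamma_{\widetilde{m}_j}$ with $\widetilde{m}_j<m$. The existence of a dimension-$i$ element in $\Gamma_{\widetilde{m}_j}$ forces $\widetilde{H}_{i-1}(\Delta_{\widetilde{m}_j};k)\neq 0$, so $\widetilde{m}_j\in\mathcal{B}_i(m)$, and by maximality of the $\gamma_l$'s there is some $l$ with $\widetilde{m}_j\leq\gamma_l$. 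Thus $\mbox{supp}(e_j)\subseteq A_{\widetilde{m}_j}\subseteq A_{\gamma_l}$, so $d(g)$ is a cycle supported in $\Delta_m^{(i)}$ and $\sigma_m^{(i)}([d(g)])=[d(g)]$. Since the classes $[d(g)]$ span $\widetilde{H}_i(\Delta_m;k)$, surjectivity follows.

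For the injectivity under the extra hypothesis, set $\Pi=\{\widetilde{m}\in L_M:\widetilde{m}\leq\gamma_j\text{ for some }j\}$ and apply Corollary~\ref{C:main1c2} to $\Delta_m^{(i)}$: the complex $\mathbb{U}^{(i)}$ obtained from $\bigcup_{\widetilde{m}\in\Pi}\Gamma_{\widetilde{m}}$ satisfies $\widetilde{H}_i(\Delta_m^{(i)};k)\cong H_{i+1}(\mathbb{U}^{(i)},d)$. Corollary~\ref{C:main1c1} gives $\widetilde{H}_i(\Delta_m;k)\cong H_{i+1}(\mathbb{U}_m,d)$, and since $\Pi\subseteq\{\widetilde{m}<m\}$, $\mathbb{U}^{(i)}$ is a subcomplex of $\mathbb{U}_m$ through which $\sigma_m^{(i)}$ is induced by inclusion. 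The long exact sequence of the pair $(\mathbb{U}_m,\mathbb{U}^{(i)})$ reduces injectivity of $\sigma_m^{(i)}$ to the vanishing $(\mathbb{U}_m/\mathbb{U}^{(i)})_{i+2}=0$. A dimension-$(i+1)$ basis element of $\mathbb{U}_m$ lies in some $\Gamma_{\widetilde{m}}$ with $\widetilde{m}<m$ and $\widetilde{H}_i(\Delta_{\widetilde{m}};k)\neq 0$; the hypothesis forces $\widetilde{m}\notin\mathcal{J}_i(m)$, so $\widetilde{m}<\gamma_j$ for some $j$, hence $\widetilde{m}\in\Pi$, and the element already lies in $\mathbb{U}^{(i)}$.

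The main obstacle will be the index bookkeeping: correctly translating the homological conditions defining $\mathcal{B}_i(m)$ (which controls the dimension-$i$ layer of $\mathbb{U}_m$) and the hypothesis about $\mathcal{J}_i(m)$ (which controls the dimension-$(i+1)$ layer) into statements about containment in $\Pi$. The subtle point is that $\mathcal{J}_i(m)\cap\Pi=\{\gamma_1,\ldots,\gamma_t\}$ because of the strict inequality in the definition of $\mathcal{J}_i(m)$, so the hypothesis is slightly stronger than strictly required; once the dictionary is fixed, both implications reduce to the maximality of the $\gamma_l$'s in $\mathcal{B}_i(m)$.
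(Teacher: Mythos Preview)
Your proposal is correct and essentially matches the paper's approach. The paper in fact gives two proofs: the main proof reduces $\sigma_m^{(i)}$ to the map $g_*:H_{i+1}(\mathbb{U}_m^{(i)})\to H_{i+1}(\mathbb{U}_m)$ via the decompositions of $\widetilde{C}(\Delta_m;k)$ and $\widetilde{C}(\Delta_m^{(i)};k)$, then observes $(\mathbb{U}_m^{(i)})_{i+1}=(\mathbb{U}_m)_{i+1}$ (giving surjectivity by direct $\mathrm{Ker}/\mathrm{Im}$ comparison) and, under the hypothesis, $(\mathbb{U}_m^{(i)})_{i+2}=(\mathbb{U}_m)_{i+2}$ (giving the identity map); a second proof in Remark~\ref{R:surjective} handles surjectivity exactly as you do via Theorem~\ref{T:basis}. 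Your surjectivity argument is identical to that remark, and your injectivity argument isolates the same key fact $(\mathbb{U}_m/\mathbb{U}^{(i)})_{i+2}=0$ but wraps it in the long exact sequence of the pair rather than comparing kernels and images directly---a cosmetic difference. Your closing observation that the hypothesis is slightly stronger than needed (since the $\gamma_j$ themselves lie in $\mathcal{J}_i(m)\cap\Pi$) is correct and not noted in the paper.
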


\begin{proof}
By the proof of Theorem \ref{T:main1} we have that 
\[
\widetilde{C}(\Delta_m; k)[-1]\cong (\mathbb{U}_m, d) \bigoplus (\bigoplus \limits_{1\neq \widetilde{m} <m} 
(\mathcal{E}_{\widetilde{m}}, d)).
\]
Let $\Pi=\{\widetilde{m} \in L_M| \widetilde{m} \leq \gamma_j  \mbox{ for some} \ \gamma_j\}$. 
Let $(\mathbb{U}_m^{(i)}; d)$ be the complex obtained by applying the boundary 
map $d$ to the Taylor basis elements in $\bigcup\limits_{\widetilde{m} \in \Pi} \Gamma_{\widetilde{m}}$.  
Then by Corollary \ref{C:main1c2} we have that 
\[
\widetilde{C}(\Delta_m^{(i)}; k)[-1]\cong (\mathbb{U}_m^{(i)}, d) \bigoplus (\bigoplus \limits_{1\neq \widetilde{m} \in \Pi} 
(\mathcal{E}_{\widetilde{m}}, d)).
\]
It is easy to see that $\widetilde{C}(\Delta_m^{(i)}; k)[-1]$, $(\mathbb{U}_m^{(i)}, d)$ , 
$\bigoplus\limits_{1\neq \widetilde{m} \in \Pi}(\mathcal{E}_{\widetilde{m}}, d)$ 
are subcomplexes of $\widetilde{C}(\Delta_m; k)[-1]$, $(\mathbb{U}_m, d)$ , 
$\bigoplus\limits_{1\neq \widetilde{m} <m}(\mathcal{E}_{\widetilde{m}}, d)$, 
respectively. Let $f: \widetilde{C}(\Delta_m^{(i)}; k)[-1] \to \widetilde{C}(\Delta_m; k)[-1]$, 
$g: (\mathbb{U}_m^{(i)}, d) \to (\mathbb{U}_m, d)$, 
$h: \bigoplus\limits_{1\neq \widetilde{m} \in \Pi}(\mathcal{E}_{\widetilde{m}}, d) 
\to \bigoplus\limits_{1\neq \widetilde{m} <m}(\mathcal{E}_{\widetilde{m}}, d)$ 
be the natural inclusion maps of complexes, then we have the following 
commutative diagram:
\[
\begin{tikzcd}
\widetilde{C}(\Delta_m^{(i)}; k)[-1] 
\arrow[r,"\cong "] \arrow[d,"f"]
& (\mathbb{U}_m^{(i)}, d) \bigoplus (\bigoplus \limits_{1\neq \widetilde{m} \in \Pi} (\mathcal{E}_{\widetilde{m}}, d)) 
\arrow[d,"(g\  h)"] \\
\widetilde{C}(\Delta_m; k)[-1] 
\arrow[r,"\cong"]
&(\mathbb{U}_m, d) \bigoplus (\bigoplus \limits_{1\neq \widetilde{m} <m} (\mathcal{E}_{\widetilde{m}}, d)).
\end{tikzcd}
\]
Note that $\bigoplus \limits_{1\neq \widetilde{m} <m} (\mathcal{E}_{\widetilde{m}}, d)$, 
$\bigoplus \limits_{1\neq \widetilde{m} \in \Pi} (\mathcal{E}_{\widetilde{m}}, d)$ 
are trivial complexes , and $\sigma_m^{(i)}$ is the map of homology induced by $f$. 
Hence, to show that $\sigma_m^{(i)}$ is surjective, it 
suffices to show that $g_{\ast}: \widetilde{H}_{i+1}(\mathbb{U}_m^{(i)}, d) 
\to \widetilde{H}_{i+1}(\mathbb{U}_m, d)$ is surjective. 

Next, for convenience, we use $\widetilde{d}$ to denote the boundary 
map $d$ in $(\mathbb{U}_m^{(i)}, d)$. By the definition of $\Pi$ it is easy 
to see that $\bigcup\limits_{\widetilde{m} \in \Pi}\Gamma_{\widetilde{m}}$ 
and $\bigcup\limits_{\widetilde{m} <m} \Gamma_{\widetilde{m}}$ have the 
same Taylor basis elements of dimension $i$, so that 
$(\mathbb{U}_m^{(i)})_{i+1}=(\mathbb{U}_m)_{i+1}$, which implies that 
$\mbox{Ker}\widetilde{d}_{i+1}=\mbox{Ker}d_{i+1}$. 
Since $(\mathbb{U}_m^{(i)})_{i+2}\subseteq (\mathbb{U}_m)_{i+2}$, it 
follows that $\mbox{Im}\widetilde{d}_{i+2} \subseteq \mbox{Im}d_{i+2}$. 
Thus, we have that 
\[
g_{\ast}: \widetilde{H}_{i+1}(\mathbb{U}_m^{(i)}, d) 
=\frac{\mbox{Ker}\widetilde{d}_{i+1}}{\mbox{Im}\widetilde{d}_{i+2}} 
\to \widetilde{H}_{i+1}(\mathbb{U}_m, d)
=\frac{\mbox{Ker} d_{i+1}}{\mbox{Im}d_{i+2}}
\]
is surjective, and then $\sigma_m^{(i)}$ is surjective. 

If there does not exist $\widetilde{m}\in \mathcal{J}_i(m)$ with 
$\widetilde{H}_i(\Delta_{\widetilde{m}}; k)\neq 0$, then it is easy to see that 
$(\mathbb{U}_m^{(i)})_{i+2}=(\mathbb{U}_m)_{i+2}$, which implies that 
$\mbox{Im}\widetilde{d}_{i+2}=\mbox{Im}d_{i+2}$, so that $g_{\ast}$ is 
the identity map, and then $\sigma_m^{(i)}$ is an isomorphism. 
\end{proof}

\begin{remark}\label{R:surjective}
Proposition \ref{P:surjective} can also be proved by using algebraic topology, 
but it is much easier to prove it by using the atomic lattice resolution theory. 
In this remark we give another proof of Proposition \ref{P:surjective}. 

Let $B$ be a Taylor basis of a minimal free resolution of $S/M$. 
Let $e_1, \ldots, e_p\in B$ be the Taylor basis elements of dimension $i+1$ 
and multidegree $m$, then by Theorem \ref{T:basis} we have that 
$[d(e_1)], \ldots, [d(e_p)]$ is a basis of $\widetilde{H}_i(\Delta_m; k)$. 
Let $f_1, \ldots, f_q \in B$ be the Taylor basis elements of dimension $i$ 
and multidegree less than $m$. 
Then for any $1\leq l \leq q$ there exists $1\leq s \leq t$ such that 
$\mbox{supp}(f_l)\subseteq A_{\gamma_s}$. 
Note that for any $1\leq j \leq p$ there exist $\lambda_{j1}, \ldots, \lambda_{jq} \in k$ 
such that 
\[
d(e_j)=\lambda_{j1}f_1+\cdots+\lambda_{jq}f_q.
\]
Hence, $d(e_1), \ldots, d(e_p)$ are cycles in $\Delta_m^{(i)}$, which implies that 
$[d(e_1)], \ldots, [d(e_p)]$ are in $\widetilde{H}_i(\Delta_m^{(i)}; k)$. 
So $\sigma_m^{(i)}$ is surjective and $\widetilde{H}_i(\Delta_m; k)$ can be 
naturally embedded as a subspace of $\widetilde{H}_i(\Delta_m^{(i)}; k)$.  

Let $g_1, \ldots, g_s\in (\bigcup\limits_{\widetilde{m}\in \mathcal{J}_i(m)}\Gamma_{\widetilde{m}})
\bigcup \Gamma_m \subseteq B$ be the Taylor basis elements of dimension $i+1$. 
Let $V$ be the $k$-vector space with basis $g_1, \ldots, g_s$. 
Then by Theorem \ref{T:main2} and the proof of Theorem \ref{T:main1}, 
we have that $(\mathbb{V}_m)_{i+2}=(\mathbb{U}_m^{(i)})_{i+2}\oplus V$. 
Since $(\mathbb{V}_m; d)$ is exact, it follows that $[d(g_1)], \ldots, [d(g_s)]$ 
is a basis of $\widetilde{H}_{i+1}(\mathbb{U}_m^{(i)}, d)\cong \widetilde{H}_i(\Delta_m^{(i)}; k)$. 
Note that $d(g_1), \ldots, d(g_s)$ are cycles in $\Delta_m^{(i)}$, so that 
for any $\widetilde{m}\in \mathcal{J}_i(m)$, $\widetilde{H}_i(\Delta_{\widetilde{m}}; k)$ 
can also be embedded as a subspace of $\widetilde{H}_i(\Delta_m^{(i)}; k)$. 
With this embedding, we have the following formula:
\[
\widetilde{H}_i(\Delta_m^{(i)}; k) \cong (\bigoplus\limits_{\widetilde{m}\in \mathcal{J}_i(m)}
\widetilde{H}_i(\Delta_{\widetilde{m}}; k))\bigoplus \widetilde{H}_i(\Delta_m; k).
\]
So if there does not exist $\widetilde{m}\in \mathcal{J}_i(m)$ with 
$\widetilde{H}_i(\Delta_{\widetilde{m}}; k)\neq 0$, then $\sigma_m^{(i)}$ is an isomorphism. 
\end{remark}

\begin{example}\label{E:surjective}
Let $M=(ab, ac, b^2c)$ be a monomial ideal in $S=k[a, b, c]$. 
Then the lcm-lattice of $M$ is shown in Example \ref{E:notBetti}. 
We have that $\widetilde{H}_0(\Delta_{12}; k)\cong k$ with a basis $[-1+2]$ and 
$\widetilde{H}_0(\Delta_{123}; k) \cong k$ with a basis $[-1+3]=[-2+3]$. 
Since $\Delta_{123}^{(0)}=\langle 1, 2, 3 \rangle$, it follows that 
$\widetilde{H}_0(\Delta_{123}^{(0)}; k)\cong k^2$ with a basis $[-1+2], [-1+3]$. 
Then $\sigma_{123}^{(0)}: \widetilde{H}_0(\Delta_{123}^{(0)}; k) 
\to \widetilde{H}_0(\Delta_{123}; k)$ is surjective and we have that
$\widetilde{H}_0(\Delta_{123}^{(0)}; k) \cong 
\widetilde{H}_0(\Delta_{12}; k) \oplus \widetilde{H}_0(\Delta_{123}; k)$.
\end{example}

Next, similar to Definition \ref{D:posetres} we define a sequence of $k$-vector 
spaces and $k$-linear maps from the lcm-lattice $L_M$: 
\[
\mathcal{R}(L_M): \cdots \to \mathcal{R}_i 
\xrightarrow{\psi_i} \mathcal{R}_{i-1} 
\to \cdots \to \mathcal{R}_1 \xrightarrow{\psi_1} \mathcal{R}_0. 
\]

\begin{definition}\label{D:RLM}
For any $i\geq 0$ let $\mathcal{R}_i=\mathcal{D}_i$ as in Definition \ref{D:posetres}. 
Also, let $\psi_1=\varphi_1$. 

For any $i\geq 2$ we define 
\[
\psi_i: \mathcal{R}_i=\bigoplus\limits_{1\neq m \in L_M}\widetilde{H}_{i-2}(\Delta_m; k) 
\to \mathcal{R}_{i-1}=\bigoplus\limits_{1\neq m \in L_M}\widetilde{H}_{i-3}(\Delta_m; k)
\]
componentwise as follows. 
First for any $m\neq 1 \in B_M$ we fix a basis for $\widetilde{H}(\Delta_m; k)$. 
Suppose that $\widetilde{H}_{i-2}(\Delta_m; k) \neq 0$ with $i\geq 2$, then we have that 
$\mbox{rk}(m)\geq 2$.  Next we componentwise define
\[
\psi_{i,m}: \widetilde{H}_{i-2}(\Delta_m; k) 
\to \mathcal{R}_{i-1}=\bigoplus\limits_{1\neq m \in L_M}\widetilde{H}_{i-3}(\Delta_m; k).
\] 
Let $\Delta_m^{(i-2)}=\langle A_{\gamma_1}, \ldots, A_{\gamma_t} \rangle$.
By Proposition \ref{P:surjective} we have that 
\[
\sigma_m^{(i-2)}: \widetilde{H}_{i-2}(\Delta_m^{(i-2)}; k) \to \widetilde{H}_{i-2}(\Delta_m; k)
\]
is surjective, so that $\widetilde{H}_{i-2}(\Delta_m^{(i-2)}; k) \neq 0$, 
which implies that $t\geq 2$. 

First we define $\psi_{i,m}^{\gamma_1}: \widetilde{H}_{i-2}(\Delta_m; k) 
\to \widetilde{H}_{i-3}(\Delta_{\gamma_1}; k)$. Similar to Definition \ref{D:posetres}, 
let $\Delta_1=\langle A_{\gamma_1} \rangle$ and 
$\Delta_2=\langle A_{\gamma_2}, \ldots, A_{\gamma_t} \rangle$, then we have that 
$\Delta_1 \cup \Delta_2=\Delta_m^{(i-2)}$ and 
$\Delta_1 \cap \Delta_2=\langle A_{\gamma_1\wedge \gamma_2}, 
\ldots, A_{\gamma_1\wedge \gamma_t} \rangle$, which is a subcomplex of 
$\Delta_{\gamma_1}$. 
From the Mayer-Vietoris sequence we have the connecting 
homomorphism 
\[
\delta: \widetilde{H}_{i-2}(\Delta_m^{(i-2)}; k) \to  
\widetilde{H}_{i-3}(\Delta_1 \cap \Delta_2; k). 
\]
 Let $\iota: \widetilde{H}_{i-3}(\Delta_1 \cap \Delta_2; k) \to 
\widetilde{H}_{i-3}(\Delta_{\gamma_1}; k)$ 
be the $k$-linear map induced by the inclusion map
$\Delta_1 \cap \Delta_2 \to \Delta_{\gamma_1}$. 
Let $[e_1], \ldots, [e_p]$ be the fixed basis of $\widetilde{H}_{i-2}(\Delta_m; k)$. 
Let $[f_1], \ldots, [f_q]$ be the fixed basis of $\widetilde{H}_{i-3}(\Delta_{\gamma_1}; k)$. 
Since $\sigma_m^{(i-2)}$ is surjective, there exist $[g_1], \ldots, [g_p] 
\in \widetilde{H}_{i-2}(\Delta_m^{(i-2)}; k)$ such that 
$\sigma_m^{(i-2)}([g_j])=[e_j]$ for any $1\leq j \leq p$, i.e., 
$[g_j]=[e_j] \in \widetilde{H}_{i-2}(\Delta_m; k)$ and $g_j$ is 
a chain in $\Delta_m^{(i-2)}$. 
For any $1\leq j \leq p$ there exist scalars $\lambda_{j1}, \ldots, \lambda_{jq} \in k$ 
such that 
\[
(\iota \circ\delta)([g_j])=\lambda_{j1}[f_1]+\cdots+\lambda_{jq}[f_q].
\]
Then for any $1\leq j\leq p$ we define
\[
\psi_{i,m}^{\gamma_1}([e_j])=\lambda_{j1}[f_1]+\cdots+\lambda_{jq}[f_q],
\]
which determines a $k$-linear map 
$\psi_{i,m}^{\gamma_1}: \widetilde{H}_{i-2}(\Delta_m; k) 
\to \widetilde{H}_{i-3}(\Delta_{\gamma_1}; k)$.  

Similarly, for any $2\leq s\leq t$,  we can define 
a $k$-linear map $\psi_{i,m}^{\gamma_s}: \widetilde{H}_{i-2}(\Delta_m; k) 
\to \widetilde{H}_{i-3}(\Delta_{\gamma_s}; k)$. 
And for any $1\neq \widetilde{m} \in L_M$ with $\widetilde{m} \notin 
\{\gamma_1, \ldots, \gamma_t\}$, we define 
$\psi_{i,m}^{\widetilde{m}}: \widetilde{H}_{i-2}(\Delta_m; k) 
\to \widetilde{H}_{i-3}(\Delta_{\widetilde{m}}; k)$ to be the zero map. 

Thus, we can define
\[
\psi_{i,m}=\sum_{1\neq \widetilde{m} \in L_M} \psi_{i, m}^{\widetilde{m}} 
=\psi_{i,m}^{\gamma_1} +\cdots +\psi_{i,m}^{\gamma_t}, 
\]
and then $\psi_i$ is defined componentwise. 
So we obtain a sequence of $k$-vector spaces and $k$-linear maps: 
\[
\mathcal{R}(L_M): \cdots \to \mathcal{R}_i 
\xrightarrow{\psi_i} \mathcal{R}_{i-1} 
\to \cdots \to \mathcal{R}_1 \xrightarrow{\psi_1} \mathcal{R}_0. 
\]

For any $i\geq 0$ let $\mathcal{G}_i=\mathcal{R}_i\otimes S$. 
Let $f_0$ be the basis element of $\mathcal{R}_0$. For any $1\leq j \leq r$, 
let $f_j$ be the basis element of $\widetilde{H}_{-1}(\Delta_{m_j}; k)$. 
We still use $f_0$ as the basis of $\mathcal{G}_0$ and use $f_1, \ldots, f_r$ 
as the basis of $\mathcal{G}_1$. To make $\mathcal{G}_0$ and 
$\mathcal{G}_1$ multigraded free $S$-modules, we define 
$\mbox{mdeg}(f_0)=1$ and $\mbox{mdeg}(f_j)=m_j$ for $1\leq j\leq r$. 
For any $i\geq 2$, let $[e_1], \ldots, [e_p]$ be a basis of $\widetilde{H}_{i-2}(\Delta_m; k)$. 
We still use $[e_1], \ldots, [e_p]$ as a basis  of $\widetilde{H}_{i-2}(\Delta_m; k)\otimes S$, 
and we define $\mbox{mdeg}([e_j])=\mbox{mdeg}(e_j)=m$ for any $1\leq j\leq p$.  
Thus, $\mathcal{G}_i$ is a multidgraded free $S$-module with a multigraded basis. 

We define $\eth_1: \mathcal{G}_1 \to \mathcal{G}_0$ by 
$S^r \xrightarrow{(\begin{smallmatrix} m_1 & \cdots & m_r \end{smallmatrix})} S$. 
For any $i\geq 2$ and $1\leq s \leq t$, we can homogenize the map $\psi_{i,m}^{\gamma_s}$ 
to get 
\[
\eth_{i,m}^{\gamma_s}: \widetilde{H}_{i-2}(\Delta_m; k)\otimes S \to 
\widetilde{H}_{i-3}(\Delta_{\gamma_s}; k) \otimes S 
\]
with $\eth_{i, m}^{\gamma_s}=\frac{m}{\gamma_s}\otimes \psi_{i,m}^{\gamma_s}$. 
And then  we define $\eth_i: \mathcal{G}_i \to \mathcal{G}_{i-1}$ 
componentwise by $\eth_{i, m}$, where 
$\eth_{i, m}=\eth_{i, m}^{\gamma_1} +\cdots +\eth_{i,m}^{\gamma_t}$. 
Thus, for any $i\geq 1$, $\eth_i$ is multigraded. 

So we obtain a sequence of multigraded free $S$-modules and multigraded homomorphisms:
\[
\mathcal{G}(L_M):\  \cdots \to \mathcal{G}_i \xrightarrow{\eth_i} \mathcal{G}_{i-1} 
\to \cdots \to \mathcal{G}_1 \xrightarrow{\eth_1} \mathcal{G}_0.
\]
\end{definition}

\begin{remark}\label{R:RLM}
If we fix a basis for all $\widetilde{H}(\Delta_m; k)$, then in Definition \ref{D:posetres} 
the maps $\varphi_i:\mathcal{D}_i \to \mathcal{D}_{i-1}$ are uniquely defined, but 
in Definition \ref{D:RLM} the maps $\psi_i: \mathcal{R}_i \to \mathcal{R}_{i-1}$ may 
not be uniquely defined. Indeed, if $\sigma_m^{(i-2)}$ is an isomorphism, 
then $\psi_{i,m}$ is uniquely defined; otherwise, 
if $\sigma_m^{(i-2)}$ is not an isomorphism, then $\psi_{i,m}$ may 
depend on the preimages $[g_1], \ldots, [g_p]\in \widetilde{H}_{i-2}(\Delta_m^{(i-2)}; k)$ 
we choose for $[e_1], \ldots, [e_p]\in \widetilde{H}_{i-2}(\Delta_m; k)$,  and then 
$\psi_{i,m}$ may not be uniquely defined. This is illustrated in  example \ref{E:RLM}. 
Note that if we also fix the preimages $[g_1], \ldots, [g_p]$, 
then $\psi_{i,m}$ is uniquely defined.
\end{remark}

\begin{example}\label{E:RLM}
Let $M=(ab, ac, b^2c)$ be as in Example \ref{E:surjective} and the lcm-lattice 
of $M$ is shown in Example \ref{E:notBetti}. We know from Example \ref{E:notBetti} 
that $M$ is not Betti-linear. From Example \ref{E:surjective} we see that 
$\sigma_{123}^{(0)}: \widetilde{H}_0(\Delta_{123}^{(0)}; k) 
\to \widetilde{H}_0(\Delta_{123}; k)$ is not an isomorphism. 
Let $e=[-1+3]=[-2+3]$ be a basis of $\widetilde{H}_0(\Delta_{123}; k)$. 
Note that $[-1+3]\neq [-2+3]$ in $\widetilde{H}_0(\Delta_{123}^{(0)}; k)$. 
If we choose $[-1+3]$ as the preimage of $e$ in $\widetilde{H}_0(\Delta_{123}^{(0)}; k)$, 
then by Definition \ref{D:RLM} we get
\[
\mathcal{R}(L_M): 0\to k^2 
\xrightarrow{\begin{pmatrix} -1& -1\\ 1 &0 \\ 0 &1 \end{pmatrix}} 
k^3 \xrightarrow{\begin{pmatrix} 1 & 1 & 1 \end{pmatrix}} k, 
\]
and then we have
\[
\mathcal{G}(L_M): 0\to S(-abc)\oplus S(-ab^2c) 
\xrightarrow{\begin{pmatrix} -c &-bc\\ b & 0 \\ 0 & a \end{pmatrix} } 
S(-ab)\oplus S(-ac) \oplus S(-b^2c) \ \ \ \ \ \ \ \ 
\]
\[
\ \ \ \ \ \ \ \ \ \ \ \ \ \ \ \ \ \ \ \ \ \ \ \ \ \ \ \ \ \ \ \ \ \ \ \ \ \ \ \ \ \ \ \ \ \ \ \ \ 
\ \ \ \ \ \ \ \ \ \ \ \ \ 
\xrightarrow{\begin{pmatrix} ab & ac & b^2c \end{pmatrix} } S,
\]
which is a minimal free resolution of $S/M$. 
If we choose $[-2+3]$ as the preimage of $e$ in $\widetilde{H}_0(\Delta_{123}^{(0)}; k)$, 
then by Definition \ref{D:RLM} we get
\[
\mathcal{R}(L_M): 0\to k^2 
\xrightarrow{\begin{pmatrix} -1& 0\\ 1 &-1\\ 0 &1 \end{pmatrix}} 
k^3 \xrightarrow{\begin{pmatrix} 1 & 1 & 1 \end{pmatrix}} k, 
\]
and then we have
\[
\mathcal{G}(L_M): 0\to S(-abc)\oplus S(-ab^2c) 
\xrightarrow{\begin{pmatrix} -c &0\\ b & -b^2 \\ 0 & a \end{pmatrix} } 
S(-ab)\oplus S(-ac) \oplus S(-b^2c) \ \ \ \ \ \ \ \ 
\]
\[
\ \ \ \ \ \ \ \ \ \ \ \ \ \ \ \ \ \ \ \ \ \ \ \ \ \ \ \ \ \ \ \ \ \ \ \ \ \ \ \ \ \ \ \ \ \ \ \ \ 
\ \ \ \ \ \ \ \ \ \ \ \ \ 
\xrightarrow{\begin{pmatrix} ab & ac & b^2c \end{pmatrix} } S,
\]
which is also a minimal free resolution of $S/M$. 
\end{example}

From Example \ref{E:RLM} one might wonder if $\mathcal{G}(L_M)$ in 
Definition \ref{D:RLM} is always a minimal free resolution of $S/M$. 
As illustrated by Example \ref{E:notRLM} this is not true. 

\begin{example}\label{E:notRLM}
Let $M=(a^2b, ac, ad, bcd)$ and its lcm-lattice $L_M$ be as in Example 
\ref{E:cbasis}. By looking at the sublattice of $L_M$ with top element 
$234$, we see that $M$ is not Betti-linear. As in Example \ref{E:cbasis} 
we have that $\widetilde{H}_0(\Delta_{12}; k)\cong k$ with a basis $[-1+2]$, 
$\widetilde{H}_0(\Delta_{13}; k)\cong k$ with a basis $[-1+3]$, 
$\widetilde{H}_0(\Delta_{23}; k)\cong k$ with a basis $[-2+3]$, 
$\widetilde{H}_0(\Delta_{234}; k)\cong k$ with a basis 
$[-2+4]=[-3+4]$, and $\widetilde{H}_1(\Delta_{1234}; k)\cong k$ with 
a basis $[12-13+23]$. Note that $\Delta_{1234}^{(1)}=\Delta_{1234}
=\langle 12, 13, 234 \rangle$, so that $\sigma_{1234}^{(1)}$ is the 
identity map, and then under the above bases the $k$-linear map
\[
\psi_3: \widetilde{H}_1(\Delta_{1234}; k)\to \widetilde{H}_0(\Delta_{12}; k) 
\oplus \widetilde{H}_0(\Delta_{13}; k) \oplus \widetilde{H}_0(\Delta_{23}; k) 
\oplus \widetilde{H}_0(\Delta_{234}; k)
\]
is uniquely defined. By Definition \ref{D:RLM} we get 
$\psi_3: k\xrightarrow{\left(\begin{smallmatrix} 1\\ -1\\ 0 \\ 0 \end{smallmatrix}\right)} k^4$. 
Since in the matrix of $\psi_2$ under the given basis, the first two columns 
are $\left( \begin{smallmatrix} -1 \\ 1\\ 0 \\0 \end{smallmatrix} \right)$ and 
 $\left( \begin{smallmatrix} -1 \\ 0\\ 1 \\0 \end{smallmatrix} \right)$, and we have 
that $\left(\begin{smallmatrix} -1 & -1 \\ 1 & 0\\  0& 1 \\0 &0 \end{smallmatrix} \right)
\left(\begin{smallmatrix} 1  \\-1 \end{smallmatrix} \right) \neq 0$, it follows that 
$\psi_2 \circ \psi_3\neq 0$, so that $\mathcal{R}(L_M)$ is not a complex. 
So  $\mathcal{G}(L_M)$ is not a minimal free resolution of $S/M$. 

From the minimal free resolution of $S/M$ given in Example \ref{E:cbasis} it is easy to 
see that if $\mathbf{F}$ is a minimal free resolution of $S/M$, then in the frame of 
$\mathbf{F}$, the corresponding component map from $\widetilde{H}_1(\Delta_{1234}; k)$ 
to $\widetilde{H}_0(\Delta_{234}; k)$ must be zero, while the component map 
from $\widetilde{H}_1(\Delta_{1234}; k)$ to $\widetilde{H}_0(\Delta_{23}; k)$ 
can not be zero. This is illustrated by the fact that $[d(23)]=0$ in $\widetilde{H}_0(\Delta_{234}; k)$, 
while $[d(23)]\neq 0$ in $\widetilde{H}_0(\Delta_{23}; k)$. 
However, by Definition \ref{D:RLM}, since $23$ is not a maximal element in 
$\mathcal{B}_1(1234)=\{12, 13, 23, 234 \}$, the component map 
$\psi_{3,1234}^{23}: \widetilde{H}_1(\Delta_{1234}; k) \to \widetilde{H}_0(\Delta_{23}; k)$ 
is always defined to be the zero map. 
So $\mathcal{G}(L_M)$ can never be a minimal free resolution 
of $S/M$. In situations like this, we can never use Definition \ref{D:RLM} to 
obtain a minimal free resolution of $S/M$. 
\end{example}

Later we will prove that $\mathcal{G}(L_M)$ is a minimal free resolution for 
some special classes of monomial ideals. But before that we will show that 
given any monomial ideal $M$, $\mathcal{G}(L_M)$ provides an approximation 
formula for minimal free resolutions of $S/M$. To make the word 
``approximation" precise, we have the following definition. 

\begin{definition}\label{D:approximation}
Let $(\mathbf{F}, \partial)$ be a minimal free resolution of $S/M$ with a 
multigraded basis $B$. Let $e\in B$ be a basis element in $F_i$. Let 
$e_1, \ldots, e_p\in B$ be all the basis elements in $F_{i-1}$. Let 
$\partial(e)=\lambda_1e_1+\cdots+\lambda_pe_p$ with 
$\lambda_1, \ldots, \lambda_p \in S$. 

We define $\partial^{\approx}(e)=\mu_1e_1+\cdots+\mu_pe_p$ as follows:
for any $1\leq j \leq p$, if there exists $1\leq l \leq p$ such that 
$\mbox{mdeg}(e_j)<\mbox{mdeg}(e_l) \in L_M$, then we set $\mu_j=0$; 
otherwise, we set $\mu_j=\lambda_j$. 

After defining $\partial^{\approx}(e)$ for all $e\in B$, we obtain 
$(\mathbf{F}, \partial^{\approx})$, which is a sequence of multigraded 
$S$-modules and multigraded homomorphisms. We call
$(\mathbf{F}, \partial^{\approx})$ the \emph{maximal approximation} of 
$(\mathbf{F}, \partial)$. $(\mathbf{F}, \partial^{\approx})$ can also be 
denoted by $\mathbf{F}^{\approx}$. Note that although 
$\mathbf{F}^{\approx}$ is often not a complex, we can still 
dehomogenize $\mathbf{F}^{\approx}$ to get a sequence of $k$-vector 
spaces and $k$-linear maps, which is also called the \emph{frame} of 
$\mathbf{F}^{\approx}$. 
\end{definition}

\begin{theorem}\label{T:approximation}
Let $M$ be a monomial ideal in $S$ minimally generated by monomials $m_1, \ldots, m_r$. 
\begin{itemize}
\item[(1)] Let $(\mathbf{F}, \partial)$ be a minimal free resolution of $S/M$ such 
that $F_1 \xrightarrow{\partial_1} F_0$ is given by 
$S(-m_1)\oplus \cdots \oplus S(-m_r) \xrightarrow
{(\begin{smallmatrix} m_1 & \cdots & m_r \end{smallmatrix})} S$, 
then $\mathbf{F}^{\approx}$ can be obtained by Definition \ref{D:RLM} 
as $\mathcal{G}(L_M)$. 
\item[(2)] Let $\mathcal{G}(L_M)$ be a sequence of multigraded $S$-modules 
and multigraded homomorphisms obtained by Definition \ref{D:RLM}, 
then there exists a minimal free resolution $(\mathbf{F}, \partial)$ of 
$S/M$ such that $\mathcal{G}(L_M)=\mathbf{F}^{\approx}$. 
\end{itemize}
\end{theorem}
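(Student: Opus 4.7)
The strategy for both directions is to translate between the frame of a minimal free resolution, encoded by a Taylor basis via Theorem \ref{T:main2}, and the construction $\mathcal{G}(L_M)$, built from Mayer--Vietoris connecting maps as in Definition \ref{D:RLM}. The bridge is provided by Theorem \ref{T:basis} together with Remark \ref{R:surjective}: for $g\in\Gamma_m$ of dimension $i-1$, the cycle $d(g)$ is automatically supported in $\Delta_m^{(i-2)}$, since every Taylor basis element appearing in $d(g)$ has multidegree in $\mathcal{B}_{i-2}(m)$ and hence support inside some $A_{\gamma_s}$. Thus $[d(g)]$ lifts canonically to $\widetilde H_{i-2}(\Delta_m^{(i-2)};k)$, and the classes $[d(g)]$ form a basis of $\widetilde H_{i-2}(\Delta_m;k)$.

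For part (1), I would fix a Taylor basis $B=\bigcup_m\Gamma_m$ of $\mathbf{F}$ and use $\{[d(g)]:g\in\Gamma_m\}$ as the fixed basis of $\widetilde H_{i-2}(\Delta_m;k)$ in Definition \ref{D:RLM}, with the canonical lifts just described as the chosen preimages. Given such a $g$, I would compute $\psi_{i,m}^{\gamma_s}([d(g)])$ by splitting $d(g)=c_1-c_2$, where $c_1$ is supported in the simplex on $A_{\gamma_s}$; the Mayer--Vietoris output $[d(c_1)]\in\widetilde H_{i-3}(\Delta_{\gamma_s};k)$ is, after $M$-homogenization, exactly the part of $\partial(\hbar(g))$ landing in basis elements of multidegree $\gamma_s$. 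On the other hand, by Theorem \ref{T:basis} the set of multidegrees occurring in $F_{i-1}$ is exactly $\{\widetilde m\in L_M:\widetilde H_{i-3}(\Delta_{\widetilde m};k)\neq 0\}$, so the pruning rule of Definition \ref{D:approximation} retains in $\partial^\approx(\hbar(g))$ precisely those summands whose multidegree is maximal in this set and lies below $m$, i.e.\ precisely $\gamma_1,\dots,\gamma_t$. Hence $\mathbf{F}^\approx=\mathcal{G}(L_M)$ for these choices.

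For part (2), I would reverse this by strong induction in $L_M$, using the atomic lattice resolution theory. Suppose $\Gamma_{\widetilde m}$ has been chosen for all $\widetilde m<m$ so that the resulting minimal resolution of $S/(M_{\leq\widetilde m})$ has maximal approximation matching the restriction of $\mathcal{G}(L_M)$. Take the chosen basis $[e_1],\ldots,[e_p]$ of $\widetilde H_{i-2}(\Delta_m;k)$ and the chosen preimages $[c_j]\in\widetilde H_{i-2}(\Delta_m^{(i-2)};k)$ representing them, with the $c_j$ concrete cycles supported in $\bigcup_s A_{\gamma_s}\subseteq A_m$; since $\Omega_m$ is acyclic, lift each $c_j$ to an $(i-1)$-chain $\widetilde g_j$ with $d(\widetilde g_j)=c_j$ and $\mbox{supp}(\widetilde g_j)\subseteq A_m$. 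Setting $\Gamma_m=\{\widetilde g_1,\ldots,\widetilde g_p\}$, Theorem \ref{T:cbasis} produces a minimal free resolution $\mathbf{F}$ with Taylor basis $\bigcup\Gamma_m$. By construction the $\gamma_s$-component of $d(\widetilde g_j)$ computes $\psi_{i,m}^{\gamma_s}([e_j])$, so after homogenization the surviving terms of $\partial^\approx(\hbar(\widetilde g_j))$ reproduce $\eth_{i,m}([e_j])$, while any additional terms at non-maximal multidegrees in $\mathcal{B}_{i-2}(m)$ are exactly what $\partial^\approx$ discards, giving $\mathbf{F}^\approx=\mathcal{G}(L_M)$.

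The main obstacle will be the careful bookkeeping in part (1) aligning the globally phrased pruning rule of $\partial^\approx$ with the locally phrased Mayer--Vietoris recipe: one must verify that every Mayer--Vietoris contribution at $\gamma_s$ appears with the correct coefficient after homogenization, and that the set of multidegrees in $F_{i-1}$ forces the surviving indices of $\partial^\approx(\hbar(g))$ to be exactly $\gamma_1,\ldots,\gamma_t$. Both issues reduce to Theorem \ref{T:basis} (identifying multidegrees in $F_{i-1}$) and the canonical choice of preimage from Remark \ref{R:surjective} (making the Mayer--Vietoris computation yield precisely the $\gamma_s$-parts of $d(g)$); in part (2) the corresponding subtlety is that the freedom in the lift $\widetilde g_j\mapsto c_j$ introduces exactly the terms at non-maximal multidegrees that $\partial^\approx$ is designed to erase, so no compatibility condition on $\mathcal{G}(L_M)$ beyond Definition \ref{D:RLM} is needed.
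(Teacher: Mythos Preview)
Your strategy for both parts matches the paper's proof: for (1), use the Taylor basis to supply both the basis of each $\widetilde{H}_{i-2}(\Delta_m;k)$ and its preimage in $\widetilde{H}_{i-2}(\Delta_m^{(i-2)};k)$, then check that the Mayer--Vietoris recipe reproduces $d^{\approx}$; for (2), build a Taylor basis by strong induction in $L_M$, lifting the prescribed cycles to chains in $\Omega_m$. The paper also makes explicit the uniqueness step you flag as bookkeeping in (1): each Taylor basis element $f$ with $\mbox{mdeg}(f)=\gamma_s$ has $\mbox{supp}(f)$ contained in \emph{exactly one} $A_{\gamma_s}$ (otherwise $\gamma_s\leq\gamma_{s_1}\wedge\gamma_{s_2}$, contradicting maximality), and the non-maximal terms $g_a$ satisfy $[d(g_a)]=0$ in $\widetilde{H}_{i-3}(\Delta_{\gamma_s};k)$ since $\mbox{supp}(g_a)$ lies in a single face of $\Delta_{\gamma_s}$.

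There is, however, a genuine gap in your part (2). You take $c_j$ to be an arbitrary cycle in $\Delta_m^{(i-2)}$ representing the chosen preimage, lift it to $\widetilde g_j$ in $\Omega_m$ with $d(\widetilde g_j)=c_j$, and invoke Theorem \ref{T:cbasis}. But that theorem requires $d(\widetilde g_j)$ to be a $k$-linear combination of the previously constructed Taylor basis elements, i.e.\ that $c_j$ lie in the subcomplex $(\mathbb{U}_m^{(i-2)},d)$ spanned by $\bigcup_{\widetilde m\in\Pi}\Gamma_{\widetilde m}$, not merely in $\widetilde C(\Delta_m^{(i-2)};k)$. A general cycle in $\Delta_m^{(i-2)}$ need not be such a combination (it may involve the trivial-complex summands $P_{\widetilde m}$). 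The paper closes this by invoking Corollary \ref{C:main1c2}: since $\widetilde C(\Delta_m^{(i-2)};k)[-1]\cong(\mathbb{U}_m^{(i-2)},d)\oplus(\text{trivial})$, the given homology class has a representative $z_{m,i-2}^j$ lying in $\mathbb{U}_m^{(i-2)}$, and it is this $z$ that one lifts. With this adjustment your argument goes through and coincides with the paper's.
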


\begin{proof}
(1) Let $B$ be a Taylor basis of $\mathbf{F}$. For any $m\neq 1\in B_M$ with 
$\widetilde{H}_{i-2}(\Delta_m; k)\neq 0$, let 
$e_{m,i-2}^1, \ldots, e_{m, i-2}^{t_{m, i-2}}\in B$ be all the Taylor basis 
elements in homological degree $i$ with multidegree $m$.  
Then by Theorem \ref{T:basis}, $[d(e_{m, i-2}^1)], \ldots, [d(e_{m,i-2}^{t_{m, i-2}})]$ 
is a basis of $\widetilde{H}_{i-2}(\Delta_m; k)$. We use this basis for 
$\widetilde{H}_{i-2}(\Delta_m; k)$ in Definition \ref{D:RLM}. If $i\geq 2$, then 
it is easy to see that $d(e_{m, i-2}^j)$ is a cycle in $\Delta_m^{(i-2)}$, so that 
in Definition \ref{D:RLM} we can always choose $[d(e_{m, i-2}^j)]$ as the 
preimage of $[d(e_{m, i-2}^j)]$ in $\widetilde{H}_{i-2}(\Delta_m^{(i-2)}; k)$. 

Let $m\neq 1 \in B_M$ with $\widetilde{H}_{i-2}(\Delta_m; k)\neq 0$ and $i\geq 2$. 
Let $\Delta_m^{(i-2)}=\langle A_{\gamma_1}, \ldots, A_{\gamma_t} \rangle$. 
Let $e_m \in B$ be a Taylor basis element in homological degree $i$ with 
multidegree $m$, then $[d(e_m)]$ is a basis element of $\widetilde{H}_{i-2}(\Delta_m; k)$. 
Let
\[
d(e_m)=\lambda_1f_1+\cdots+\lambda_pf_p+\mu_1g_1+\cdots+\mu_qg_q,
\]
where $\lambda_1, \ldots, \lambda_p, \mu_1, \ldots, \mu_q$ are some nonzero 
scalar in $k$ and $f_1, \ldots, f_p, g_1, \ldots, g_q\in B$ are some Taylor basis 
elements in homological degree $i-1$ such that for any $1\leq j \leq p$ there 
exists $1\leq l \leq t$ with $\mbox{mdeg}(f_j)=\gamma_l$ and for any 
$1\leq a \leq q$ there exists $1 \leq b \leq t$ with $\mbox{mdeg}(g_a)<\gamma_b$. 
Assume that there exist $1\leq l_1 \neq l_2 \leq t$ such that  
$\mbox{supp}(f_j)\subseteq A_{\gamma_{l_1}}$ 
and $\mbox{supp}(f_j) \subseteq A_{\gamma_{l_2}}$. Then we have that 
$\mbox{supp}(f_j)\subseteq A_{\gamma_{l_1}}\cap A_{\gamma_{l_2}}
=A_{\gamma_{l_1}\wedge \gamma_{l_2}}$, so that 
$A_{\gamma_l} =\overline{\mbox{supp}(f_j)}\subseteq A_{\gamma_{l_1}\wedge \gamma_{l_2}}$, 
and then $\gamma_l\leq \gamma_{l_1}\wedge \gamma_{l_2}$, which contradicts to the 
assumption that $\gamma_1, \ldots, \gamma_t$ are maximal elements in $\mathcal{B}_{i-2}(m)$. 
Hence, for any $1\leq j \leq p$ there exists a unique $1\leq l\leq t$ such that 
$\mbox{supp}(f_j)\subseteq A_{\gamma_l}$, $\mbox{mdeg}(f_j)=\gamma_l$ and 
$[d(f_j)]$ is a basis element of $\widetilde{H}_{i-3}(\Delta_{\gamma_l}; k)$. 
Also, let $\widetilde{m}=\mbox{mdeg}(g_a)<\gamma_b$, then we have that 
$\mbox{supp}(g_a)\subseteq A_{\widetilde{m}}$ and $A_{\widetilde{m}}$ is 
a face of $\Delta_{\gamma_b}$, so that $[d(g_a)]=0$ in $\widetilde{H}_{i-3}(\Delta_{\gamma_b}; k)$. 
Thus, in Definition \ref{D:RLM} we have that 
\begin{align*}
\psi_i([d(e_m)])&=\psi_i([\lambda_1f_1+\cdots+\lambda_pf_p+\mu_1g_1+\cdots+\mu_qg_q])\\
                         &=\sum_{l=1}^t\psi_{i, m}^{\gamma_l}([\lambda_1f_1+\cdots+\lambda_pf_p+\mu_1g_1+\cdots+\mu_qg_q])\\
                         &=[d(\lambda_1f_1)]+\cdots+[d(\lambda_pf_p)]\\
                         &=\lambda_1[d(f_1)]+\cdots+\lambda_p[d(f_p)].
\end{align*}
Let $d^{\approx}$ be the map in the frame of $\mathbf{F}^{\approx}$, then 
it is easy to see that 
\[
d^{\approx}(e_m)=\lambda_1f_1+\cdots+\lambda_pf_p.
\]

So $\mathcal{R}(L_M)$ obtained in Definition \ref{D:RLM} is equal to the 
frame of $\mathbf{F}^{\approx}$, and then it is easy to see that 
$\mathcal{G}(L_M)$ is equal to $\mathbf{F}^{\approx}$. 

(2) By the proof of part (1) we see that it suffices to prove the following claim: 
if for any $m\neq 1 \in B_M$ with $\widetilde{H}_{i-2}(\Delta_m; k)\neq 0$ and 
$i\geq 2$, we have fixed a basis $[c_{m, i-2}^1], \ldots, [c_{m, i-2}^{t_{m, i-2}}]$ 
for $\widetilde{H}_{i-2}(\Delta_m; k)$, and for any $1\leq j \leq t_{m, i-2}$ we have 
fixed $[\widetilde{c}_{m, i-2}^{t_{m, i-2}}]\in \widetilde{H}_{i-2}(\Delta_m^{(i-2)}; k)$ 
with $\sigma_m^{(i-2)}([\widetilde{c}_{m, i-2}^j])=[c_{m, i-2}^j] \in \widetilde{H}_{i-2}(\Delta_m; k)$, 
then there exists a Taylor basis $B$ for a minimal free resolution of $S/M$ such 
that $B\supseteq \{\emptyset, \{1\}, \ldots, \{r\} \}$, and let 
$e_{m,i-2}^1, \ldots, e_{m, i-2}^{t_{m, i-2}}\in B$ be the Taylor basis elements 
in homological degree $i$ with multidegree $m$, then for any $1\leq j \leq t_{m, i-2}$ 
we have that $[d(e_{m, i-2}^j)]=[\widetilde{c}_{m, i-2}^j] \in \widetilde{H}_{i-2}(\Delta_m^{(i-2)}; k)$. 

Next, following Construction \ref{C:VL}, we will use strong induction in $L_M$ 
to obtain a Taylor basis $B$ satisfying the claim. 

Base case: Let $\Gamma_1=\{\emptyset \}$, and for any $1\leq j\leq r$, 
let $\Gamma_{m_j}=\{\{j\}\}$. 

Inductive step: Let $m\in L_M$ with $\mbox{rk}(m)\geq 2$. Assume that for any 
$\widetilde{m}<m \in L_M$ we have found $\Gamma_{\widetilde{m}}$ and the 
claim holds for the Taylor basis elements in $\Gamma_{\widetilde{m}}$ whenever 
$\widetilde{m}\in B_M$ and $\mbox{rk}(\widetilde{m})\geq 2$. If $m\notin B_M$ 
then $\Gamma_m=\emptyset$ and there is nothing to prove. Next we assume 
that $\widetilde{H}_{i-2}(\Delta_m; k)\neq 0$ for some $i\geq 2$. 
Let $\Delta_m^{(i-2)}=\langle A_{\gamma_1}, \ldots, A_{\gamma_t} \rangle$ and 
$\Pi=\{\widetilde{m} \in L_M| \widetilde{m} \leq \gamma_j  \mbox{ for some} \ \gamma_j\}$. 
Let $(\mathbb{U}_m^{(i-2)}; d)$ be the complex obtained by applying the boundary 
map $d$ to the Taylor basis elements in $\bigcup\limits_{\widetilde{m} \in \Pi} \Gamma_{\widetilde{m}}$.  
Then by Corollary \ref{C:main1c2} we have that 
\[
\widetilde{C}(\Delta_m^{(i-2)}; k)[-1]\cong (\mathbb{U}_m^{(i-2)}, d) 
\bigoplus (\bigoplus \limits_{1\neq \widetilde{m} \in \Pi} 
(\mathcal{E}_{\widetilde{m}}, d)),
\]
where $(\mathcal{E}_{\widetilde{m}}, d)$ are trivial complexes. Hence, 
given any $[\widetilde{c}_{m, i-2}^j]\in \widetilde{H}_{i-2}(\Delta_m^{(i-2)}; k)$, 
there exists a cycle $z_{m, i-2}^j$ in $\mathbb{U}_m^{(i-2)}$ such that 
$[z_{m, i-2}^j]=[\widetilde{c}_{m,i-2}^j]\in \widetilde{H}_{i-2}(\Delta_m^{(i-2)}; k)$. 
Note that $\mathbb{U}_m^{(i-2)}$ is a subcomplex of $\mathbb{U}_m$, so 
that $z_{m, i-2}^j$ is a cycle in $\mathbb{U}_m$. Since
\[
\sigma_m^{(i-2)}([z_{m, i-2}^j])=\sigma_{m}^{(i-2)}([\widetilde{c}_{m, i-2}^j])
=[c_{m, i-2}^j] \in \widetilde{H}_{i-2}(\Delta_m; k), 
\]
it follows that $[z_{m, i-2}^1], \ldots, [z_{m, i-2}^{t_{m, i-2}}]$ is a basis 
of $ \widetilde{H}_{i-2}(\Delta_m; k)$. Thus, by Corollary \ref{C:main1c1}, 
we have that $[z_{m, i-2}^1], \ldots, [z_{m, i-2}^{t_{m, i-2}}]$ is a basis 
of $ H_{i-1}(\mathbb{U}_m, d)$. So by Construction \ref{C:VL} we can find 
Taylor basis elements $e_{m, i-2}^1, \ldots, e_{m, i-2}^{t_{m, i-2}}$ at $m$ 
in homological degree $i$ such that for any $1\leq j\leq t_{m, i-2}$, 
$d(e_{m, i-2}^j)=z_{m, i-2}^j$, and then 
\[
[d(e_{m, i-2}^j)]=[z_{m, i-2}^j]=[\widetilde{c}_{m, i-2}^j] \in \widetilde{H}_{i-2}(\Delta_m^{(i-2)}; k).
\]
Therefore, we can find $\Gamma_m$ and the Taylor basis elements in $\Gamma_m$ 
satisfy the claim. 
\end{proof}

\begin{remark}\label{R:approximation}
In the proof of Theorem \ref{T:approximation} (2) we actually didn't use the 
inductive assumption that the Taylor basis elements in $\Gamma_{\widetilde{m}}$ 
satisfy the claim. Indeed, if $\Gamma_{\widetilde{m}}$ is given for all 
$\widetilde{m}<m \in L_M$, then we can obtain $e_{m, i-2}^j$ such that 
$d(e_{m, i-2}^j)$ equals the given $[\widetilde{c}_{m, i-2}^j]$ in 
$\widetilde{H}_{i-2}(\Delta_m^{(i-2)}; k)$. 

By Example \ref{E:approximation}, we see that given two different minimal free resolutions 
$\mathbf{F}$ and $\mathbf{G}$ of $S/M$, it may happen that 
$\mathbf{F}^{\approx}=\mathbf{G}^{\approx}$. Hence, in 
Theorem \ref{T:approximation} (2), the minimal free resolution 
$\mathbf{F}$ may not be unique. 

Although it seems impossible to get a formula for minimal free resolutions 
of \emph{all} monomial ideals, by Theorem \ref{T:approximation} we do 
have an approximation formula, namely, $\mathcal{G}(L_M)$,  for minimal 
free resolutions of \emph{all} monomial ideals, which is somewhat unexpected. 

By Construction \ref{C:VL} or by Theorem \ref{T:cbasis}, we can inductively 
construct a minimal free resolution of $S/M$ step by step. In contrast, 
Theorem \ref{T:approximation} tells us that if we want to understand the differential 
map at homological degree $i$  in  minimal free resolutions of $S/M$, 
we can use Definition \ref{D:RLM} to get an approximation of the 
differential map and we do not need to care about the differential maps at 
other homological degrees. 
\end{remark}

\begin{example}\label{E:approximation}
Let $M$ be a monomial ideal whose lcm-lattice is shown in Figure 5. 
Then we have that $\Delta_{12345}=\langle 12, 13, 2345 \rangle$ and 
$\Delta_{2345}=\langle 23, 24, 34, 5 \rangle$, which implies that 
$\widetilde{H}_1(\Delta_{12345}; k)\cong k$ with a basis $[12-13+23]$, 
$\widetilde{H}_0(\Delta_{2345}; k)\cong k$ with a basis $[-2+5]$, 
$\widetilde{H}_1(\Delta_{2345}; k)\cong k$ with a basis $[23-24+34]$. 
Note that $\widetilde{H}_0(\Delta_{12}; k)\cong k$ with a basis $[-1+2]$, 
and we have similar basis  for $\widetilde{H}_0(\Delta_{13}; k)$, 
$\widetilde{H}_0(\Delta_{23}; k)$, $\widetilde{H}_0(\Delta_{24}; k)$ 
and $\widetilde{H}_0(\Delta_{34}; k)$. 
Hence, we have that 
$\Delta_{12345}^{(1)}=\Delta_{12345}$ and $\sigma_{12345}^{(1)}$ 
is the identity map; 
$\Delta_{2345}^{(0)}=\langle 2, 3, 4, 5 \rangle \subsetneq \Delta_{2345}$ 
and $\sigma_{2345}^{(0)}$ is not an isomorphism; 
$\Delta_{2345}^{(1)}=\langle 23, 24, 34 \rangle \subsetneq \Delta_{2345}$ 
and $\sigma_{2345}^{(1)}$ is an isomorphism. 
It is interesting to notice that the vertex set of $\Delta_{2345}^{(1)}$ is 
$\{2, 3, 4\}$ instead of $\{2, 3, 4, 5\}$. Thus, in general, if 
$\Delta_m^{(i)}=\langle A_{\gamma_1}, \ldots, A_{\gamma_t} \rangle$, 
then $\gamma_1, \ldots, \gamma_t$ may not form a crosscut of the 
atomic lattice $L_M(\leq m)=\{\widetilde{m}\in L_M| \widetilde{m}<m\}$. 
Let 
\begin{align*}
\mathcal{R}_3&=\widetilde{H}_1(\Delta_{2345}; k)\oplus \widetilde{H}_1(\Delta_{12345}; k) \cong k^2\\
\mathcal{R}_2&=\widetilde{H}_0(\Delta_{12}; k)\oplus \cdots \oplus 
\widetilde{H}_0(\Delta_{34}; k) \oplus \widetilde{H}_0(\Delta_{2345}; k) \cong k^6\\
\mathcal{R}_1&=\widetilde{H}_{-1}(\Delta_{1}; k)\oplus \cdots \oplus \widetilde{H}_{-1}(\Delta_{5}; k)\cong k^5
\end{align*}
If we choose $[-2+5]$ to be the preimage of $[-2+5]$ in 
$\widetilde{H}_0(\Delta_{2345}^{(0)}; k)$, then by Definition \ref{D:RLM} we get
\[
\mathcal{R}(L_M): 0\to k^2 \xrightarrow{\left( \begin{smallmatrix} 0 & 1 \\ 0 & -1 \\ 1& 0 \\
-1 & 0\\ 1 & 0 \\ 0 & 0 \end{smallmatrix} \right)} k^6 \xrightarrow{\left( \begin{smallmatrix} 
-1 &-1 & 0 & 0 & 0 & 0 \\ 1 & 0 & -1& -1 & 0 & -1\\ 0 &1 & 1 & 0 & -1 & 0 \\ 0 & 0 & 0 & 1 & 1 & 0 \\ 0 & 0 & 0 & 0 & 0 & 1
\end{smallmatrix} \right)} k^5 \xrightarrow{(\begin{smallmatrix} 1& 1 & 1 & 1 & 1 \end{smallmatrix})} k. 
\]
Similar to Example \ref{E:cbasis} we take $\Gamma_{2345}=\{ 25, 234\}$. 
If we take $\Gamma_{12345}=\{123\}$ then we get a minimal free resolution 
$\mathbf{F}$ of $S/M$ whose frame between homological degree $2$ and 
homological degree $3$ is given by 
\[
k234\oplus k123 \xrightarrow{\left( \begin{smallmatrix} 0 & 1 \\0 & -1 \\ 1 & 1 \\ -1 & 0 \\ 1 & 0 \\ 0 & 0 
\end{smallmatrix}\right) } k12\oplus k13 \oplus k23 \oplus k24 \oplus k34 \oplus k25. 
\]
If we take $\Gamma_{12345}=\{123-234\}$ then we get a minimal free resolution 
$\mathbf{G}$ of $S/M$ whose frame between homological degree $2$ and 
homological degree $3$ is given by 
\[
k234\oplus k(123-234) \xrightarrow{\left( \begin{smallmatrix} 0 & 1 \\0 & -1 \\ 1 & 0 \\ -1 & 1 \\ 1 & -1 \\ 0 & 0 
\end{smallmatrix}\right) } k12\oplus k13 \oplus k23 \oplus k24 \oplus k34 \oplus k25. 
\]
It is easy to see that $\mathbf{F}^{\approx}=\mathbf{G}^{\approx}$, 
and the frame of $\mathbf{F}^{\approx}$ equals $\mathcal{R}(L_M)$,  but $\mathbf{F}\neq \mathbf{G}$.

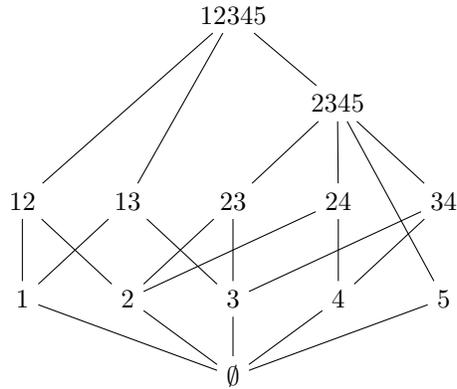
\begin{figure}
\begin{tikzpicture}[node distance=1.4cm]
\title{The approximation}
\node(12345)                                                                    {$12345$};
\node(2345)    [below right=0.7cm and 0.35cm of 12345]  {$2345$};
\node(23)      [below=2.0cm of 12345]                                          {$23$};
\node(24)      [right of =23]                                              {$24$};
\node(34)      [right of =24]                                              {$34$};
\node(13)      [left of =23]                                              {$13$};
\node(12)      [left of =13]                                              {$12$};
\node(3)        [below=3.3cm of 12345]                                 {$3$};
\node(2)        [left of =3]	                                    {$2$};
\node(1)        [left of =2]	                                    {$1$};
\node(4)        [right of =3]	                                    {$4$};
\node(5)        [right of =4]	                                    {$5$};
\node(0)        [below=4.3cm of 12345]                                 {$\emptyset$};

\draw(12345) --(2345);
\draw(12345) --(12);
\draw(12345) --(13);
\draw(2345) --(23);
\draw(2345) --(24);
\draw(2345) --(34);
\draw(2345) --(5);
\draw(12) --(1);
\draw(12) --(2);
\draw(13) --(1);
\draw(13) --(3);
\draw(23) --(2);
\draw(23) --(3);
\draw(24) --(2);
\draw(24) --(4);
\draw(34) --(3);
\draw(34) --(4);
\draw(1) --(0);
\draw(2) --(0);
\draw(3) --(0);
\draw(4) --(0);
\draw(5) --(0);

\end{tikzpicture}
\caption{The lcm-lattice $L_M$ of Example \ref{E:approximation}}
\end{figure}

\end{example}

The next example shows that even if $\mathcal{G}(L_M)$ is a 
complex, it may not be a minimal free resolution of $S/M$. 

\begin{example}\label{E:approximation2}
Let $M$ be a monomial ideal whose lcm-lattice is shown in Figure 6. 
Similar to Example \ref{E:cbasis} we can take 
$\emptyset, 1, 2, 3, 4, 5, 6, 12, 13, 23, 14, 15, 26, 123$ as a 
Taylor basis of a minimal free resolution $\mathbf{F}$ of $S/M$. 
It is easy to see that the frame of $\mathbf{F}^{\approx}$ is 
given by
\[
0\to k \xrightarrow{0} k^6 \xrightarrow{\left( \begin{smallmatrix} 
-1 &-1 & 0 & -1 & -1 & 0 \\ 1 & 0 & -1& 0 & 0 & -1\\ 0 &1 & 1 & 0 & 0 & 0 \\ 0 & 0 & 0 & 1 & 0 & 0 \\ 0 & 0 & 0 & 0 & 1 & 0
\\ 0 & 0 & 0 & 0 & 0 &1 
\end{smallmatrix} \right)} k^6 \xrightarrow{(\begin{smallmatrix} 1& 1 & 1 & 1 & 1 & 1\end{smallmatrix})} k, 
\]
which is a complex but not exact, so that $\mathbf{F}^{\approx}$ is a complex, 
but $\mathbf{F}^{\approx}\neq \mathbf{F}$. 
So by Theorem \ref{T:approximation} we see that even if $\mathcal{G}(L_M)=\mathbf{F}^{\approx}$ 
is a complex, it may not be a minimal free resolution of $S/M$.

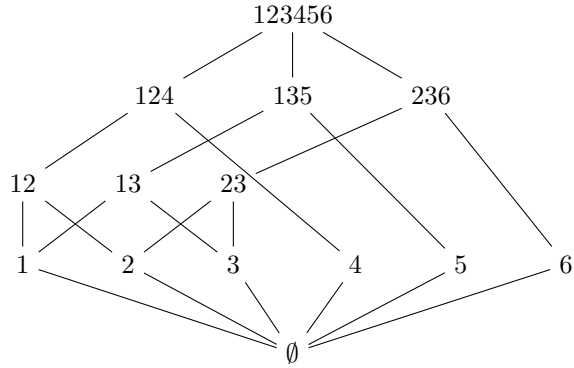
\begin{figure}
\begin{tikzpicture}[node distance=1.4cm]
\title{The approximation2}
\node(123456)                                                                    {$123456$};
\node(135)    [below=0.6cm   of 123456]  {$135$};
\node(124)       [below left=0.6cm and 0.8cm of 123456]                          {$124$};
\node(236)      [below right=0.6cm and 0.8cm of 123456]                        {$236$};
\node(13)      [below left=0.7cm and 1.5cm of 135]                                              {$13$};
\node(12)      [left of =13]                                              {$12$};
\node(23)      [right of =13]                                              {$23$};
\node(2)        [below=0.6cm of 13]	                                    {$2$};
\node(3)        [right of =2]                                 {$3$};
\node(4)        [right=1.2cm of 3]	                                    {$4$};
\node(5)        [right of =4]	                                    {$5$};
\node(6)        [right of =5]                                     {$6$};
\node(1)        [left of =2]	                                    {$1$};
\node(0)        [below=4.0cm of 12345]                                 {$\emptyset$};

\draw(123456) --(124);
\draw(123456) --(135);
\draw(123456) --(236);
\draw(124) --(12);
\draw(124) --(4);
\draw(135) --(13);
\draw(135) --(5);
\draw(236) --(23);
\draw(236) --(6);
\draw(12) --(1);
\draw(12) --(2);
\draw(13) --(1);
\draw(13) --(3);
\draw(23) --(2);
\draw(23) --(3);
\draw(1) --(0);
\draw(2) --(0);
\draw(3) --(0);
\draw(4) --(0);
\draw(5) --(0);
\draw(6) --(0);

\end{tikzpicture}
\caption{The lcm-lattice $L_M$ of Example \ref{E:approximation2}}
\end{figure}
\end{example}

Similar to the definition of  Betti-linear monomial ideals,  we have the following definition of 
homology-linear monomial ideals.

\begin{definition}\label{D:HL}
Let $M$ be a monomial ideal in $S$. If there exists a minimal free resolution 
$(\mathbf{F}, \partial)$ of $S/M$ such that $\mathbf{F}^{\approx}=\mathbf{F}$, 
then $M$ is called a \emph{homology-linear monomial ideal}. 
\end{definition}

\begin{remark}\label{R:HL}
By Theorem \ref{T:approximation} it is easy to see that $M$ is homology-linear 
if and only if there exists a $\mathcal{G}(L_M)$ such that $\mathcal{G}(L_M)$ 
is a minimal free resolution of $S/M$. 
Let $M$ be the monomial ideal as in Example \ref{E:cbasis}, then by 
Example \ref{E:notRLM} we see that although $M$ has a simplicial resolution, 
$M$ is not homology-linear. 
When $M$ is homology-linear, we will see in Example \ref{E:big2} that not all 
$\mathcal{G}(L_M)$ obtained by Definition \ref{D:RLM} are minimal free 
resolutions of $S/M$. In contrast, if $M$ is a Betti-linear monomial ideal, by 
Remark \ref{R:Betti} we see that every poset construction induces a minimal 
free resolution $\mathcal{F}(L_M)$ of $S/M$. 

In Proposition \ref{P:BLHL}, we will show that Betti-linear monomial ideals are homology-linear, 
so that the concept of homology-linearity is a generalization of Betti-linearity, 
and Definition \ref{D:RLM} can be viewed as a genealization of Definition \ref{D:posetres}. 
So the theory in this subsection is not just parallel to the theory in Subsection 4.1; it is 
also a generalization. 
\end{remark}

\begin{proposition}\label{P:BLHL}
Let $M$ be a monomial ideal. If $M$ is Betti-linear then $M$ is homology-linear. 
\end{proposition}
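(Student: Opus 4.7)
The plan is to show that the very resolution $\mathbf{F}$ witnessing Betti-linearity already satisfies $\mathbf{F}^{\approx}=\mathbf{F}$, so it witnesses homology-linearity as well. Concretely, I take a minimal free resolution $(\mathbf{F},\partial)$ of $S/M$ with a Taylor basis $B$ realizing the Betti-linearity condition: for $e\in B$ in homological degree $i$ and $\widetilde{e}\in B$ in homological degree $i-1$ with $\lambda_{e,\widetilde{e}}\neq 0$, we have $\mbox{mdeg}(\widetilde{e})\lessdot \mbox{mdeg}(e)$ in $B_M$. I then verify that $\partial^{\approx}(e)=\partial(e)$ for every $e\in B$, so by Definition \ref{D:HL} the ideal $M$ is homology-linear.

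The heart of the argument is to show that any such $\widetilde{e}$ already has the property that keeps its coefficient alive in the maximal approximation. Setting $m=\mbox{mdeg}(e)$, I need to verify that $\mbox{mdeg}(\widetilde{e})$ is a \emph{maximal} element of $\mathcal{B}_{i-2}(m)$; equivalently, that no basis element $e_l$ of $F_{i-1}$ has $\mbox{mdeg}(\widetilde{e})<\mbox{mdeg}(e_l)$ inside the range relevant to Definition \ref{D:RLM}. Since $\widetilde{e}$ is a Taylor basis element in homological degree $i-1$ at multidegree $\mbox{mdeg}(\widetilde{e})$, Theorem \ref{T:basis} gives $\widetilde{H}_{i-3}(\Delta_{\mbox{mdeg}(\widetilde{e})};k)\neq 0$, so $\mbox{mdeg}(\widetilde{e})\in B_M$. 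Minimality of $\mathbf{F}$ forces $\mbox{mdeg}(\widetilde{e})\neq m$ (otherwise the entry would be a nonzero scalar), hence $\mbox{mdeg}(\widetilde{e})<m$, and therefore $\mbox{mdeg}(\widetilde{e})\in \mathcal{B}_{i-2}(m)$.

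Maximality now follows directly from the covering relation. If there were some $\widetilde{m}\in \mathcal{B}_{i-2}(m)$ with $\mbox{mdeg}(\widetilde{e})<\widetilde{m}$, then $\widetilde{m}\in B_M$ would satisfy $\mbox{mdeg}(\widetilde{e})<\widetilde{m}<m$ in $B_M$, contradicting $\mbox{mdeg}(\widetilde{e})\lessdot m$ in $B_M$. So $\mbox{mdeg}(\widetilde{e})$ is one of the $\gamma_l$'s selected in Definition \ref{D:RLM}, and the approximation preserves the coefficient: $\mu_{e,\widetilde{e}}=\lambda_{e,\widetilde{e}}$. Terms with $\lambda_{e,\widetilde{e}}=0$ are trivially preserved as $0$, and the case $i=1$ is handled by the fact that the formula $F_1\to F_0$ is unchanged by the approximation. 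Assembling these, $\partial^{\approx}=\partial$ on every basis element, so $\mathbf{F}^{\approx}=\mathbf{F}$.

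There is no serious obstacle in this plan: the whole proof amounts to unpacking definitions and applying Theorem \ref{T:basis}. The only point needing care is the translation between the two sides: Betti-linearity places $\mbox{mdeg}(\widetilde{e})$ immediately below $\mbox{mdeg}(e)$ inside $B_M$, while the approximation kills precisely those terms whose multidegree is strictly below a maximal element of $\mathcal{B}_{i-2}(\mbox{mdeg}(e))$. These two conditions match up once one observes that the covering relation in $B_M$ rules out any intermediate element of $B_M$, which is exactly what is needed for maximality in $\mathcal{B}_{i-2}(\mbox{mdeg}(e))\subseteq B_M$.
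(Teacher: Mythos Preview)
Your proof is correct and follows the same approach as the paper: take the minimal free resolution $(\mathbf{F},\partial)$ witnessing Betti-linearity and verify $\mathbf{F}^{\approx}=\mathbf{F}$. The paper's proof is a single sentence (``By Definition~\ref{D:ideals}~(4), it is easy to see that $\mathbf{F}^{\approx}=\mathbf{F}$''), and you have simply unpacked the ``easy to see'' by showing that the covering relation $\mbox{mdeg}(\widetilde{e})\lessdot m$ in $B_M$ forces $\mbox{mdeg}(\widetilde{e})$ to be maximal in $\mathcal{B}_{i-2}(m)$, which is exactly the condition under which the maximal approximation retains the coefficient (as established in the proof of Theorem~\ref{T:approximation}~(1)).
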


\begin{proof}
Since $M$ is Betti-linear, it follows that there exists a minimal free resolution 
$(\mathbf{F}, \partial)$ of $S/M$ such that $\mathbf{F}$ satisfies the 
definition of Betti-linearity. By Definition \ref{D:ideals} (4), 
it is easy to see that $\mathbf{F}^{\approx}=\mathbf{F}$, so that $M$ 
is homology-linear. 
\end{proof}

By Theorem \ref{T:approximation} it is easy to see that every $\mathcal{G}(L_M)$ 
is a minimal free resolution of $S/M$ if and only if for every minimal free resolution 
$(\mathbf{F}, \partial)$ of $S/M$, we have that $\mathbf{F}^{\approx}=\mathbf{F}$. 
Thus, we have the following definition. 

\begin{definition}\label{D:NHM}
Let $M$ be a monomial ideal. 
\begin{itemize}
 \item[(1)] $M$ is called a \emph{strongly homology-linear monomial ideal} if 
 for every minimal free resolution $(\mathbf{F}, \partial)$ of $S/M$, we have 
 that $\mathbf{F}^{\approx}=\mathbf{F}$.
 \item[(2)] $M$ is called a \emph{nearly homologically monotonic monomial 
 ideal} if the following condition is satisfied: for any $1< \widetilde{m} <m \in L_M$, 
 if $\widetilde{H}_i(\Delta_{\widetilde{m}}; k)\neq 0$ and 
 $\widetilde{H}_i(\Delta_m; k) \neq 0$, then there does not exist $\widehat{m} \in L_M$ 
 such that $\widetilde{m}<m<\widehat{m}$ and $\widetilde{H}_{i+1}(\Delta_{\widehat{m}}; k)\neq 0$. 
\end{itemize}
\end{definition}

\begin{remark}\label{R:NHM}
The class  of strongly homology-linear monomial ideals is exactly the set of 
monomial ideals for which every $\mathcal{G}(L_M)$ obtained by Definition 
\ref{D:RLM} is a minimal free resolution. 
Every strongly homology-linear monomial ideal is homology-linear. 
In Example \ref{E:big2} we will give a monomial ideal which is 
homology-linear but not strongly homology-linear. 

Note that to check if a monomial ideal $M$ is Betti-linear or homology-linear, we need 
to look at minimal free resolutions of $S/M$; in contrast, to check if $M$ is rigid or (nearly)
homologically monotonic, we only need to calculate $\widetilde{H}(\Delta_m; k)$ for all $m\in L_M$. 

Obviously, every homologically monotonic monomial ideal is nearly homologically monotonic. 
Let $M$ be the monomial ideal as in Example \ref{E:notBetti}, where we have proved that 
$M$ is not Betti-linear and then not homologically monotonic, but it is easy to see that 
$M$ is nearly homologically monotonic. 

By Definition \ref{D:approximation} it is easy to see that every nearly homologically 
monotonic monomial ideal is strongly homology-linear. In Example \ref{E:SHL} we give 
a monomial ideal which is strongly homology-linear but not nearly homologically monotonic 
or Betti-linear. 
\end{remark}

\begin{example}\label{E:big2}
This example is related to Example \ref{E:strict2} where the monomial ideal is both 
Betti-linear and nearly homologically monotonic. 

Let $M$ be a monomial ideal whose lcm-lattice is shown in Figure 7. 
Then we have that $\Delta_{123456}=\langle 1234, 346, 356, 456 \rangle$ and 
$\Delta_{12345}=\langle 123, 34, 35, 45 \rangle$, which implies that 
$\widetilde{H}_2(\Delta_{123456}; k)\cong k$ with a basis $[345-346+356-456]$, 
and $\widetilde{H}_1(\Delta_{12345}; k)\cong k$ with a basis $[34-35+45]$. 
Note that $\widetilde{H}_1(\Delta_{123}; k)\cong k$ with a basis $[12-13+23]$, 
so that $M$ is not nearly homologically monotonic.  

Similar to Example \ref{E:cbasis} and  by Theorem \ref{T:cbasis} we take 
$\Gamma_{12345}=\{ 345\}$,  $\Gamma_{123456}=\{ 3456 \}$ and 
$\Gamma_{A_m}=\{A_m\}$ for every Scarf multidegree $m\in L_M$. 
Hence, $\emptyset$, $1, \ldots, 6$, $12, 13, 23$, $34, \ldots, 56$, $123, 
345, 346, 356, 456, 3456$ is a Taylor basis of a minimal free resolution 
$\mathbf{F}$ of $S/M$. Obviously, $\mathbf{F}$ satisfies the definition 
of Betti-linearity, so that $M$ is Betti-linear and then homology-linear. 

However, if we take $\Gamma_{12345}=\{345+123\}$, then by Theorem \ref{T:cbasis},
$\emptyset$, $1, \ldots, 6$, $12, 13, 23$, $34, \ldots, 56$, $123, 
345+123, 346, 356, 456, 3456$ is a also Taylor basis of a minimal free resolution 
$\mathbf{G}$ of $S/M$. Note that 
\[
d(3456)=(345+123)-346+356-456-123,
\]
from which we see that $\mathbf{G}$ does not satisfy the definition of Betti-linearity. 
Let $d^{\approx}$ be the differential map in the frame of $\mathbf{G}^{\approx}$, 
then we have that 
\[
d^{\approx}(3456)=(345+123)-346+356-456, 
\]
which implies that $\mathbf{G}^{\approx}\neq \mathbf{G}$, so that 
$M$ is not strongly homology-linear.

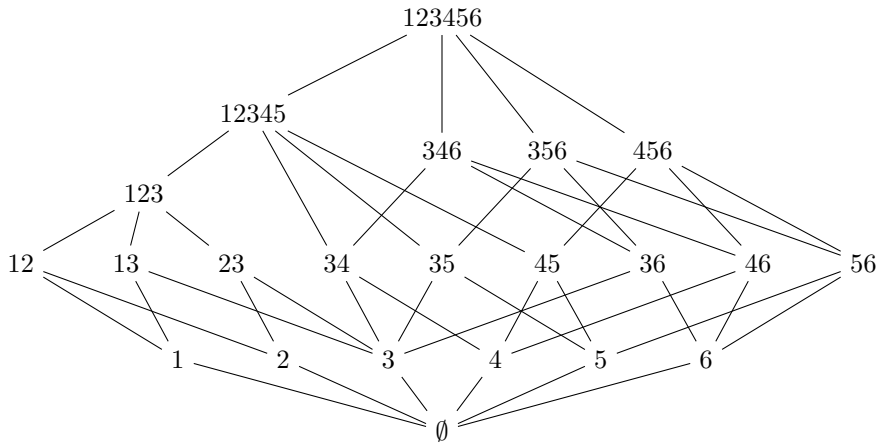
\begin{figure}
\begin{tikzpicture}[node distance=1.4cm]
\title{The lcm-lattice}
\node(123456)                                                                  {$123456$};
\node(346)      [below=1.3cm of 123456]                        {$346$};
\node(356)      [right of =346]                                          {$356$};
\node(456)      [right of =356]                                          {$456$};
\node(12345)  [below left =0.8cm and 1.3cm of 123456]   {$12345$};
\node(123)      [below left=0.6cm and 0.5cm of 12345]   {$123$};
\node(35)        [below=2.8cm of 123456]                            {$35$};
\node(34)        [left of =35]                                              {$34$};
\node(23)        [left of =34]                                              {$23$};
\node(13)        [left of =23]                                              {$13$};
\node(12)        [left of =13]                                              {$12$};
\node(45)        [right of =35]                                            {$45$};
\node(36)        [right of =45]                                            {$36$};
\node(46)        [right of =36]                                            {$46$};
\node(56)        [right of =46]                                            {$56$};
\node(3)          [below left=0.8cm and 0.2cm of 35]            {$3$};
\node(2)          [left of =3]                                                {$2$};
\node(1)          [left of =2]                                                {$1$};
\node(4)          [below right=0.8cm and 0.2cm of 35]          {$4$};
\node(5)          [right of =4]                                              {$5$};
\node(6)          [right of =5]                                              {$6$};
\node(0)          [below=5cm of 123456]                           {$\emptyset$};

\draw(123456) --(12345);
\draw(123456) --(346);
\draw(123456) --(356);
\draw(123456) --(456);
\draw(12345) --(123);
\draw(12345) --(34);
\draw(12345) --(35);
\draw(12345) --(45);
\draw(123) --(12);
\draw(123) --(13);
\draw(123) --(23);
\draw(346) --(34);
\draw(346) --(36);
\draw(346) --(46);
\draw(356) --(35);
\draw(356) --(36);
\draw(356) --(56);
\draw(456) --(45);
\draw(456) --(46);
\draw(456) --(56);
\draw(12) --(1);
\draw(12) --(2);
\draw(13) --(1);
\draw(13) --(3);
\draw(23) --(2);
\draw(23) --(3);
\draw(35) --(3);
\draw(35) --(5);
\draw(34) --(3);
\draw(34) --(4);
\draw(36) --(3);
\draw(36) --(6);
\draw(45) --(4);
\draw(45) --(5);
\draw(56) --(5);
\draw(56) --(6);
\draw(46) --(4);
\draw(46) --(6);
\draw(1) --(0);
\draw(2) --(0);
\draw(3) --(0);
\draw(4) --(0);
\draw(5) --(0);
\draw(6) --(0);

\end{tikzpicture}
\caption{The lcm-lattice $L_M$ of Example \ref{E:big2}}
\end{figure}

\end{example}

\begin{example}\label{E:SHL}
Let $M$ be a monomial ideal whose lcm-lattice is shown in Figure 8. 
Then similar to Example \ref{E:notBetti},  it is easy to see that $M$ is not Betti-linear. Since 
$\Delta_{123456}=\langle 123, 45, 46, 56 \rangle$, it follows that 
$\widetilde{H}_1(\Delta_{123456}; k)\cong k$ with a basis $[45-46+56]$, 
and $\widetilde{H}_0(\Delta_{123456}; k)\cong k$ with a basis $[-1+4]$. 
Note that $\widetilde{H}_0(\Delta_{123}; k)\cong k$ with a basis $[-1+3]$ 
and $\widetilde{H}_0(\Delta_{12}; k)\cong k$ with a basis $[-1+2]$, 
so that $M$ is not nearly homologically monotonic.  Next we use Definition 
\ref{D:RLM} to calculate $\mathcal{G}(L_M)$ and prove that 
$M$ is strongly homology-linear. 

Note that any basis element of $\widetilde{H}_1(\Delta_{123456}; k)$ 
can be written as $[\lambda(12-13+23)+\mu(45-46+56)]$, where 
$\lambda, \mu\in k$ and $\mu \neq 0$. Without the loss of generality 
we assume that $\lambda=\mu=1$. Note that 
$\Delta_{123456}^{(1)}=\Delta_{123456}$ and $\sigma_{123456}^{(1)}$ 
is the identity map. Similar to Example \ref{E:RLM} we choose $[-1+3]$, $[-1+4]$  
as the preimages of $[-1+3]$, $[-1+4]$ in $\widetilde{H}_0(\Delta_{123}^{(0)}; k)$,  
$\widetilde{H}_0(\Delta_{123456}^{(0)}; k)$, respectively. And we fix bases 
$[-1+2]$, $[-4+5]$, $[-4+6]$, $[-5+6]$ for $\widetilde{H}_0(\Delta_{12}; k)$, 
$\widetilde{H}_0(\Delta_{45}; k)$, $\widetilde{H}_0(\Delta_{46}; k)$, 
$\widetilde{H}_0(\Delta_{56}; k)$, respectively. Then by Definition \ref{D:RLM} 
we have that 
\[
\psi_{3, 123456}^{123}: \widetilde{H}_1(\Delta_{123456}; k) \to \widetilde{H}_0(\Delta_{123}; k)
\]
is a zero map. Indeed, from $\Delta_1=\langle 123 \rangle$ and 
$\Delta_2=\langle 45, 46, 56 \rangle$, we get $\Delta_1 \cap \Delta_2=\{\emptyset\}$, 
so that $\widetilde{H}_0(\Delta_1\cap \Delta_2; k)=0$, which implies that 
$\psi_{3, 123456}^{123}=0$; or, from $d(12-13+23)=0$ we see that 
$\psi_{3, 123456}^{123}=0$. Then by Definition \ref{D:RLM} we get 
\[
\mathcal{R}(L_M): 0\to k \xrightarrow{\left( \begin{smallmatrix} 0  \\ 0  \\ 1 \\
-1 \\ 1  \\ 0  \end{smallmatrix} \right)} k^6 \xrightarrow{\left( \begin{smallmatrix} 
-1 &-1 & 0 & 0 & 0 & -1 \\ 1 & 0 & 0& 0 & 0 & 0\\ 0 &1 & 0 & 0 & 0 & 0 \\ 
0 & 0 & -1 & -1 & 0& 1 \\ 0 & 0 & 1 & 0 & -1& 0 \\ 0 & 0 & 0 & 1 & 1& 0
\end{smallmatrix} \right)} k^6 \xrightarrow{(\begin{smallmatrix} 1& 1 & 1 & 1 & 1 &1 \end{smallmatrix})} k. 
\]
Hence, $\mathcal{G}(L_M)$ is a minimal free resolution of 
$S/M$ with a Taylor basis $\emptyset, 1, \ldots, 6$, $12$, $13$, $45$, $46$, $56, 14, 456$. 
And it is easy to see that every $\mathcal{G}(L_M)$ obtained by Definition \ref{D:RLM} 
is a minimal free resolution of $S/M$. So $M$ is strongly homology-linear. 

Next we use Taylor basis to give another proof of $M$ being strongly homology-linear. 
Let $\mathbf{F}$ be a minimal free resolution of $S/M$ with a Taylor basis 
$\emptyset, 1, \ldots, 6$, $12$, $13$, $45$, $46$, $56, 14, 456$. Then we 
have that $\mathbf{F}^{\approx}=\mathbf{F}$. Let $\mathbf{G}$ be another 
minimal free resolution of $S/M$, then $\mathbf{G}$ has a Taylor basis which can 
be obtained from the Taylor basis of $\mathbf{F}$ by a change of basis map 
as in Subsection 2.1. Note that $456$ is the only $2$-dimensional Taylor basis 
element, so that $456$ can only be changed to $\lambda456$ for some 
$\lambda \neq 0 \in k$. And for $1$-dimensional Taylor basis elements, one can, 
for example, change $13$ to $13+12$, or change $14$ to $-14+45$. Anyway, 
we still have $\mathbf{G}^{\approx}=\mathbf{G}$. So $M$ is strongly homology-linear.

\begin{figure}
\begin{tikzpicture}[node distance=1.4cm]
\title{The lcm-lattice}
\node(123456)                                                                      {$123456$};
\node(45)        [below right=1.6cm and -0.1cm of 123456]  {$45$};
\node(46)        [right of =45]                                                {$46$};
\node(56)        [right of =46]                                                {$56$};
\node(123)      [below left =0.6cm and 0.5cm of 123456]   {$123$};
\node(12)        [left=2.8cm of 45]          {$12$};

\node(4)          [below =0.7cm  of 45]                              {$4$};
\node(5)          [below =0.7cm  of 46]                              {$5$};
\node(6)          [below =0.7cm  of 56]                              {$6$};
\node(3)          [left=1.2cm of 4]                                              {$3$};
\node(2)          [left of =3]                            {$2$};
\node(1)          [left of =2]                                                {$1$};
\node(0)          [below=3.8cm of 123456]                           {$\emptyset$};

\draw(123456) --(123);
\draw(123456) --(45);
\draw(123456) --(46);
\draw(123456) --(56);
\draw(123) --(12);
\draw(123) --(3);
\draw(12) --(1);
\draw(12) --(2);
\draw(45) --(4);
\draw(45) --(5);
\draw(56) --(5);
\draw(56) --(6);
\draw(46) --(4);
\draw(46) --(6);
\draw(1) --(0);
\draw(2) --(0);
\draw(3) --(0);
\draw(4) --(0);
\draw(5) --(0);
\draw(6) --(0);

\end{tikzpicture}
\caption{The lcm-lattice $L_M$ of Example \ref{E:SHL}}
\end{figure}
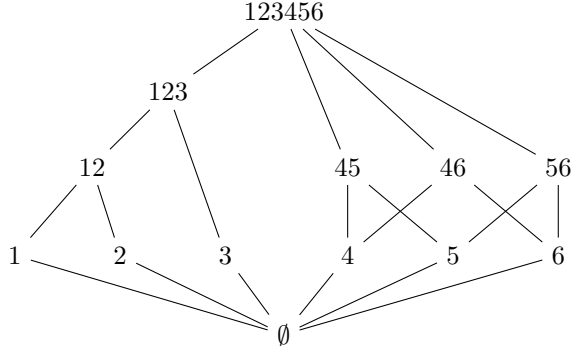

\end{example}

In many examples of Subsection 4.1 we have considered nearly Scarf monomial ideals. 
Next we prove that a nearly Scarf monomial ideal is nearly homologically monotonic. 

\begin{proposition}\label{P:NS}
Let $M$ be a nearly Scarf monomial ideal, then $M$ is nearly homologically monotonic.
\end{proposition}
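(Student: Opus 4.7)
The plan is to argue by contradiction, exploiting the key structural fact that in a nearly Scarf ideal every non-top element of $L_M$ is Scarf, and for a Scarf multidegree the reduced homology of its $\Delta$-complex is concentrated in a single degree determined by $|A_\mu|$.

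First I would suppose, for contradiction, that there exist $1<\widetilde m<m\in L_M$ with $\widetilde H_i(\Delta_{\widetilde m};k)\ne 0$ and $\widetilde H_i(\Delta_m;k)\ne 0$ for the same $i$, together with some $\widehat m\in L_M$ satisfying $m<\widehat m$ and $\widetilde H_{i+1}(\Delta_{\widehat m};k)\ne 0$. Since $\widehat m>m$, $m$ cannot be the top $\hat 1$ of $L_M$, and since $m>\widetilde m$, the same holds for $\widetilde m$. By the definition of a nearly Scarf monomial ideal, both $\widetilde m$ and $m$ are therefore Scarf multidegrees.

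Next I would pin down the precise homological degree in which $\widetilde H(\Delta_\mu;k)$ is nonzero for a Scarf multidegree $\mu$. By Remark \ref{R:cbasis} one may take $\Gamma_\mu=\{A_\mu\}$ in Construction \ref{C:VL}, so the unique Taylor basis element at $\mu$ is the single face $A_\mu$ of dimension $|A_\mu|-1$; it lives in homological degree $|A_\mu|$ of the frame. Applying Theorem \ref{T:basis}, $[d(A_\mu)]$ gives a basis of $\widetilde H_{|A_\mu|-2}(\Delta_\mu;k)$, and moreover $|\Gamma_\mu|=\sum_j b_{j,\mu}(S/M)=1$ forces $\widetilde H_j(\Delta_\mu;k)=0$ for all $j\ne |A_\mu|-2$.

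Now combining the two nonvanishing homology assumptions with this concentration result yields $i=|A_{\widetilde m}|-2=|A_m|-2$, so $|A_{\widetilde m}|=|A_m|$. However, $\widetilde m<m$ in $L_M$ implies $A_{\widetilde m}\subsetneq A_m$, hence $|A_{\widetilde m}|<|A_m|$, a contradiction. Thus no such $\widehat m$ can exist, proving that $M$ is nearly homologically monotonic. The only step requiring real work is the homology concentration statement in the previous paragraph, and that is already supplied by Remark \ref{R:cbasis} together with Theorem \ref{T:basis}; after that the argument is essentially just counting $|A_\mu|$.
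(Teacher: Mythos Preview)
Your proof is correct and follows essentially the same idea as the paper's: the crux is that if $\widetilde m<m$ both have nonzero $\widetilde H_i$, then $m$ must be the top element of $L_M$ (equivalently, as you frame it, the existence of $\widehat m>m$ forces both $\widetilde m$ and $m$ to be Scarf, yielding the cardinality contradiction $|A_{\widetilde m}|=|A_m|$). The paper's proof is terser---it simply asserts ``$\widetilde m$ is a Scarf multidegree and $m$ is the top element'' and concludes no $\widehat m$ exists---leaving implicit the homology-concentration step for Scarf multidegrees that you spell out carefully via Remark~\ref{R:cbasis} and Theorem~\ref{T:basis}. Your version is more self-contained; note also that you never actually use the hypothesis $\widetilde H_{i+1}(\Delta_{\widehat m};k)\neq 0$, only that some $\widehat m>m$ exists, which is exactly what the paper's argument shows is impossible.
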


\begin{proof}
Let $1<\widetilde{m}<m \in L_M$ such that $\widetilde{H}_i(\Delta_{\widetilde{m}}; k)\neq 0$ 
and $\widetilde{H}_i(\Delta_m; k)\neq 0$. Since $M$ is nearly Scarf, it follows that 
$\widetilde{m}$ is a Scarf multidegree and $m$ is the top element of $L_M$, so that 
there does not exist $\widehat{m}\in L_M$ such that $m<\widehat{m}$. So $M$ is 
nearly homologically monotonic. 
\end{proof}

As is shown by the next example, a nearly homologically monotonic monomial ideal 
may not be nearly Scarf. 

\begin{example}\label{E:NNS}
Let $M$ be a monomial ideal whose lcm-lattice is shown in Figure 9. 
Then $\widetilde{H}_0(\Delta_{1234}, k)\cong k$ and $\widetilde{H}_0(\Delta_{123}; k)\cong k^2$. 
So it is easy to see that $M$ is nearly homologically monotonic, but $M$ is not nearly Scarf or Betti-linear.

\begin{figure}
\begin{tikzpicture}[node distance=1.4cm]
\title{The lcm-lattice}
\node(1234)                                                                      {$1234$};
\node(123)      [below left =0.6cm and 0.1cm of 1234]   {$123$};
\node(2)          [below =0.6cm  of 123]                              {$2$};
\node(1)          [left of =2]                                                {$1$};
\node(3)          [right of =2]                              {$3$};
\node(4)          [right of =3]                              {$4$};
\node(0)          [below=2.5cm of 1234]                           {$\emptyset$};

\draw(1234) --(123);
\draw(1234) --(4);
\draw(123) --(1);
\draw(123) --(2);
\draw(123) --(3);
\draw(1) --(0);
\draw(2) --(0);
\draw(3) --(0);
\draw(4) --(0);

\end{tikzpicture}
\caption{The lcm-lattice $L_M$ of Example \ref{E:NNS}}
\end{figure}
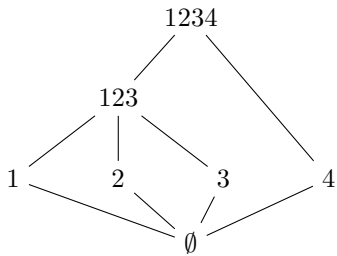

\end{example}

\begin{remark}\label{R:diagram2}
By the above results, similar to Remark \ref{R:diagram1}, 
we have the following diagram indicating the relations among some 
different classes of monomial ideals:
\[
\begin{array}{ccccccc}
\begin{matrix} \mbox{nearly}\\ \mbox{Scarf}\end{matrix} & \subset
& \begin{matrix} \mbox{nearly}\\ \mbox{HM}\end{matrix} & \subset 
& \begin{matrix} \mbox{strongly}\\ \mbox{homology-linear}\end{matrix}& \subset
& \mbox{homology-linear} \\
 & &\cup & &  &  &\cup \\
 & &\mbox{HM} & &\subset  &  & \mbox{Betti-linear} 
\end{array}
\]
where HM is the abbreviation for homologically monotonic, and 
all the inclusions in the above diagram are strict. This diagram can 
be viewed as a continuation of the diagram in Remark \ref{R:diagram1}. 

If $M$ is homologically monotonic, then Definition \ref{D:posetres} and 
Definition \ref{D:RLM} coincide, and then we have that 
$\mathcal{F}(L_M)=\mathcal{G}(L_M)$, which is a minimal free 
resolution of $S/M$. 
\end{remark}

Given a monomial ideal $M$ we can always use Contruction \ref{C:VL} 
to obtain a minimal free resolution of $S/M$. 
However, given a class of monomial ideals, it may not be easy to 
characterize the lcm-lattices of these monomial ideals. 
For example, although stable ideals have nice combinatorial structures and we have 
the Eliahou-Kervaire resolutions for stable ideals,  it seems hard to 
describe the lcm-lattices of stable ideals. 
Next we give an example of a stable ideal which is not homology linear.

\begin{example}\label{E:stable}
Let $M$ be a monomial ideal in $S=k[x, y, z]$ minimally generated by 
$m_1=x^2$, $m_2=xy$, $m_3=xz$,  $m_4=y^3$,  $m_5=y^2z$, 
$m_6=yz^2$,  $m_7=z^3$. Then $M$ is a stable ideal. 
Let $\mathbf{F}$ be the Eliahou-Kervaire resolution of $S/M$. 
Then by Construction \ref{C:basis3} and Theorem \ref{T:basis3} we 
get the following Taylor basis of $\mathbf{F}$: $\emptyset$, $1$, $\ldots$, $7$, 
$12$, $13$, $23$, $24$, $25$, $45$, $26$, $56$, $37$,  $67$, $123$, 
$245$, $256$, $367-236$. 
Let $\mathbf{G}$ be a minimal free resolution of $S/M$, 
then $\mathbf{G}$ has a Taylor basis which can 
be obtained from the Taylor basis of $\mathbf{F}$ by 
a change of basis map as in Subsection 2.1. 
Since we have that $\mbox{mdeg}(23)<\mbox{mdeg}(25)$ 
and $\mbox{mdeg}(23)<\mbox{mdeg}(26)$ in $L_M$, 
by using a rescaling of the basis elements, without the loss of generality  
we can assume that $\mathbf{G}$ has a Taylor basis: 
$\emptyset$, $1$, $\ldots$, $7$, $12$, $13$, $23$, $24$, 
$25+\lambda23$, $45$, $26+\mu23$, $56$, $37$,  $67$, $123$, 
$245$, $256$, $367-236$, where $\lambda, \mu$ are some scalars in $k$. 
Note that 
\begin{align*}
d(245)&=24-(25+\lambda23)+45+\lambda23 \\
d(256)&=(25+\lambda23)-(26+\mu23)+56+(-\lambda+\mu)23 \\
d(367-236)&=(26+\mu23)-37+67-(\mu+1)23. 
\end{align*}
Assume that $\mathbf{G}^{\approx}=\mathbf{G}$, then we have that 
$\lambda=-\lambda+\mu=\mu+1=0$, which is impossible. 
Thus, $\mathbf{G}^{\approx}\neq \mathbf{G}$ for any minimal free 
resolution $\mathbf{G}$ of $S/M$. So $M$ is not homology-linear. 
\end{example}

If we can find interesting classes of monomial ideals whose lcm-lattices 
have nice structures, then we may use their lcm-lattices to obtain 
formulas for their minimal free resolutions. Not much is known in this 
direction. 

In this paper, for convenience, we call the results in Subsection 3.2 
the atomic lattice resolution theory. We suggest calling all the results invloving 
the lcm-lattice of a monomial ideal the lcm-lattice resolution theory. 
Thus, this paper and most of the reference papers of this paper are about 
the lcm-lattice resolution theory. 

Example \ref{E:stable} gives us a glimpse that the study of monomial 
resolutions is a very diverse area of research. 
Besides the Stanley-Reisner theory, the cellular resolution theory, 
the lcm-lattice resolution theory, etc, there must exist other theories 
yet to be found.

\end{document}